\documentclass[11pt]{article}
\usepackage[utf8]{inputenc}
\usepackage{multirow}
\usepackage{tikz}
\usetikzlibrary{positioning}
\usepackage[a4paper, margin=2.3cm]{geometry}
\usepackage{amsmath,amssymb,amsthm,mathtools,mathrsfs}
\usepackage{thmtools}
\usepackage{thm-restate}

\usepackage{xcolor}
\usepackage{parskip}
\usepackage{float}
\usepackage[hidelinks,hyperfootnotes=false]{hyperref}
\usepackage{tikz-cd}
\usetikzlibrary{matrix,arrows.meta}

\usepackage{setspace}
\usepackage{makecell}

\usepackage{graphicx}
\usepackage{enumitem}
\usepackage{comment}

\newcommand{\F}{\mathbb{F}}
\newcommand{\C}{\mathbb{C}}
\newcommand{\1}{\mathbf{1}}

\usepackage{booktabs,tabularx,needspace}

\newtheorem{theorem}{Theorem}[section]

\newtheorem{corollary}[theorem]{Corollary}

\newtheorem{lemma}[theorem]{Lemma}

\newtheorem{proposition}[theorem]{Proposition}

\newtheorem{conjecture}[theorem]{Conjecture}

\newtheorem*{definition*}{Definition}
\newtheorem*{remark*}{Remark}

\makeatletter
\def\theHtheorem{\thesection.\arabic{theorem}}
\@for\ARevEnv:=remark,corollary,problem,lemma,question,proposition,definition,conjecture,example,claim,assumption\do{
  \expandafter\def\csname theH\ARevEnv\endcsname{\theHtheorem}}
\makeatother

\usepackage[T1]{fontenc}
\usepackage{lmodern}
\allowdisplaybreaks[2]
\usepackage{etoolbox}
\BeforeBeginEnvironment{proof}{\Needspace{5\baselineskip}}
\BeforeBeginEnvironment{conjecture}{\Needspace{10\baselineskip}}
\hypersetup{pdftitle={Sharp spherical extension theorem in F_q^(2m) and applications},pdfauthor={Thang Pham and Boqing Xue}}
\title{Sharp spherical extension theorem in $\F_q^{2m}$ and applications}
\author{Thang Pham \thanks{Institute of Mathematics and Interdisciplinary Sciences, Xidian University. \newline
\hspace*{0.45cm} Email: {\tt thangphammath@xidian.edu.cn}} \and Boqing Xue \thanks{Institute of Mathematical Sciences, ShanghaiTech University. ~Email: {\tt xuebq@shanghaitech.edu.cn}}}
\date{}

\begin{document}

\maketitle

\begin{abstract}
Let $q$ be an odd prime power and $m\geq 1$. Let $Q$ be a nondegenerate quadratic form on $\mathbb F_q^{2m}$. For every sphere $S_j=\{Q=j\}$ with $j\in \F_q^\times$, we prove the sharp extension estimate $R_{S_j}^*(2\to r)\lesssim_m1$ for $r\ge 2(m+1)/m$, uniformly in $q$, $Q$, and $j$. As an application, we show that if $E\subseteq\mathbb F_q^{2m}$ satisfies $|E|/q^{m+1/3}\to\infty$, then almost every pin $y\in E$ determines $(1-o(1))q$ values of $Q(x-y)$. The proof combines two arithmetic Hecke operator estimates with an induction in the dimension, a centered sphere--cone estimate, and an orthogonal decomposition.
\end{abstract}

\tableofcontents

\section{Introduction}\label{sec:main}
\subsection{Background and main results}

Let $q$ be an odd prime power, write $\F=\F_q$, and fix an integer
$m\ge1$. Throughout the paper, $d=2m$ and $Q$ is a nondegenerate
quadratic form on $V=\F^d$. Its associated symmetric bilinear form is
\[
 B(x,y)=\frac{Q(x+y)-Q(x)-Q(y)}2.
\]

For $j\in\F^\times$, put
\[
 S_j=S_j(Q)=\{x\in V:Q(x)=j\},\qquad
 d\sigma_j=|S_j|^{-1}\1_{S_j}.
\]
We call these nonzero level sets spheres. All ambient norms and
convolutions use counting measure, while surface norms use $d\sigma_j$. Thus,
\[
 \|g\|_{L^r(V)}=\left(\sum_{x\in V}|g(x)|^r\right)^{1/r},
 \qquad
 \|f\|_{L^u(S_j,d\sigma_j)}
 =\left(\frac1{|S_j|}\sum_{\xi\in S_j}|f(\xi)|^u\right)^{1/u},
\]
with the usual modifications at infinity.

Let $\chi$ be the canonical nontrivial additive character of $\F$. We use the bilinear form $B$ to identify $V$ with its dual, and define
\[
 \widehat g(\xi)=\sum_{x\in V}g(x)\chi(-B(x,\xi)),
 \qquad
 (f\,d\sigma_j)^\vee(x)
 =\frac1{|S_j|}\sum_{\xi\in S_j}f(\xi)\chi(B(x,\xi)).
\]
This convention gives the same global extension constants as the standard dot-product convention, by an invertible change of physical coordinates.

We write
$R_{S_j}^*(u\to r)\lesssim_{m,u,r}1$ if there is a constant $C_{m,u,r}>0$, uniformly in $q$, $Q$, and $j\ne0$, such that
\begin{equation}\label{extension}
 \|(f\,d\sigma_j)^\vee\|_{L^r(V)}
 \le C_{m,u,r}\|f\|_{L^u(S_j,d\sigma_j)}
\end{equation}
for every complex function $f$ on $S_j$. By duality, this is equivalent to
\begin{equation}\label{restriction}
 \|\widehat g\|_{L^{u'}(S_j,d\sigma_j)}
 \le C_{m,u,r}\|g\|_{L^{r'}(V)}
\end{equation}
for every complex function $g$ on $V$, where $u'$ and $r'$ are the conjugate exponents of $u$ and $r$, respectively.

Mockenhaupt and Tao~\cite{MT04} introduced the finite-field Fourier
restriction problem and its connections with Kakeya and incidence
geometry. Subsequent work has treated spheres, paraboloids, cones,
and related varieties. See, for example,
\cite{IK08,AK,Kohkang,KK2025,KKY,KohPhamVinh2021,Le13,european,Le15,RS}.
For the standard sphere defined by
$Q_0(x)=x_1^2+\cdots+x_d^2$, Iosevich and Koh~\cite{IK08} proved
the $L^2$ Stein--Tomas range $r\ge2(d+1)/(d-1)$. In even
dimensions the conjectured range is
\begin{equation}\label{ConjEven}
 R_{S_j}^*(2\to r)\lesssim_{d,r}1
 \quad\Longleftrightarrow\quad r\ge\frac{2(d+2)}d.
\end{equation}
The two-dimensional case was established in~\cite{IK08}. See also
\cite[Conjecture~1.4]{KK2025} for the even-dimensional formulation.
Our main result proves this range for every nondegenerate quadratic
form in even dimension over every odd finite field.

\begin{theorem} \label{thm:main-endpoint-three}

For every $m\ge1$,
\[
R_{S_j}^*(2\to r)\lesssim_m 1,\qquad r\geq \frac{2(m+1)}{m},
\]
where the implied constant is independent of $q$, $Q$ and $j\in \F^\times$.
\end{theorem}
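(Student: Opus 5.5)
By interpolation with the trivial bound $R^*_{S_j}(2\to\infty)\le 1$, it is enough to prove the endpoint $r_0=2(m+1)/m=2(d+2)/d$. I would argue on the restriction side. Let $g$ be a function on $V$ and write $\|\widehat g\|_{L^2(S_j,d\sigma_j)}^2=|S_j|^{-1}\langle \widehat g\,\1_{S_j},\widehat g\rangle$. Expanding this with $\widehat{\1_{S_j}}$ turns the problem into bounding $\langle g*\widehat{\1_{S_j}},g\rangle$. By Gauss sum evaluation, $\widehat{\1_{S_j}}(x)=q^{d}\delta_0(x)$ up to a constant, plus a Kloosterman-type term $K_j(Q(x))$ depending only on $Q(x)$. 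That term is of size $q^{(d-1)/2}\cdot q^{1/2}$ for $Q(x)\neq 0$ and $q^{d/2}$ on the light cone.

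The standard Stein--Tomas argument bounds $K_j$ in $L^\infty$ and loses exactly the factor that separates $2(d+1)/(d-1)$ from $2(d+2)/d$. The plan is therefore not to take absolute values, but to decompose $\widehat{\1_{S_j}}$ by level sets. Write
\[
\widehat{\1_{S_j}}=\sum_{t\in\F}c_t\,\1_{S_t},\qquad S_0=\text{cone},
\]
so that $g*\widehat{\1_{S_j}}$ becomes a combination of the sphere averaging operators $A_t g=g*\1_{S_t}$. These are the Hecke operators of the orthogonal group. I would then bound $\sum_t c_t\langle A_tg,g\rangle$ using two inputs. The first is an $L^2$ (spectral) estimate: $A_t$ is diagonalized by the Fourier transform, with eigenvalues $\widehat{\1_{S_t}}(\xi)$, which are again Kloosterman sums. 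The second is an $L^1\to L^\infty$ bound, $|S_t|\approx q^{d-1}$. Complex interpolation between these two at the exponent $r_0'$, with the cancellation in $\sum_t c_t\widehat{\1_{S_t}}(\xi)$ tracked exactly, should give the correct power. The key arithmetic fact is a Kloosterman-sum orthogonality $\sum_t K_j(t)\overline{K_\xi(t)}$, which gains $q^{1/2}$ over the trivial bound.

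In even dimension the cone $S_0$ carries the extra $q^{d/2}$ mass, and it is also the term where this orthogonality fails. I expect the main obstacle to be the cone contribution $\langle g*\1_{S_0},g\rangle$. For this term I would use a centered sphere--cone estimate together with induction on $m$. Decompose $V=H\perp W$ with $H$ a hyperbolic plane. For a point $x$ with $Q(x)=0$, the isotropic structure lets one write the cone as a union of sets of the form $\{(h,w):Q_H(h)=-Q_W(w)\}$. Summing over $h\in H$ with $Q_H(h)=s$ reduces the cone operator to sphere operators in dimension $2(m-1)$, for which the inductive hypothesis $R^*(2\to 2m/(m-1))$ holds in $W$. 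Combining this with Hölder in the $H$-variable should produce a bound of the form
\[
\bigl|\langle g*\1_{S_0},g\rangle\bigr|\lesssim q^{d/2+\,\text{(correct power)}}\|g\|_{r_0'}^2,
\]
with the numerology matching exactly at $r_0=2(m+1)/m$.

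The base case $m=1$ is the Iosevich--Koh two-dimensional result, whose proof carries over to any nondegenerate binary form: the circle has $q\pm1$ points, and additive energy estimates on conics give the result. Uniformity in $Q$ and $j$ follows because, up to scaling, there are only two isometry classes of forms in each dimension, and all constants come from Weil/Gauss bounds that are uniform.

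The step I would attack first, and the one I am least sure about, is making the induction lossless. The concern is that Hölder across the $q$ slices in $H$ could cost a logarithm or a power of $q$. If it does, I would replace the Hölder step with an orthogonal decomposition of $g$ into components that are Fourier-supported on spheres versus on the cone. I would then treat the cross terms separately, using the Hecke $L^2$ bound for the sphere part and the cone estimate for the cone part.
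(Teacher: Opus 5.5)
\paragraph{The central step has no mechanism behind it.}
If you keep the cancellation in $t$, then $\sum_t c_t\widehat{\1_{S_t}}$ is simply the Fourier multiplier of your kernel, namely $q^{2m}\1_{S_j}$ up to an additive constant. Your ``Kloosterman orthogonality'' is orthogonality of additive characters in $t$, which is exactly how that multiplier is computed, so it gives nothing beyond the multiplier bound $\approx q^{2m}$. Your other input is $\sup_{x\ne0}|\widehat{\1_{S_j}}(x)|\le 2q^{m-1/2}$; note also that the $L^1\to L^\infty$ norm of $g\mapsto g*\1_{S_t}$ is $1$, not $|S_t|$. These two norms are precisely the Stein--Tomas data, and interpolating them gives only $r\ge 2(2m+1)/(2m-1)$. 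Splitting into levels and summing $|c_t|\,\|A_t\|_{r'\to r}$ over $t$ only gives a weaker exponent. You name no analytic family or further estimate that uses more information, so ``should give the correct power'' is unsupported, and that step is the whole problem.

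What is missing is an input that sees the extremal affine isotropic $(m-1)$-planes in $S_j$, that is, additive-energy information. In the paper this is the refined fourth moment of Proposition~\ref{prop:refined-fourth-moment}, $\|\mathcal Ff\|_4^4\lesssim_m q^{-1}\|\mathcal Ff\|_\infty^2\|f\|_2^2+q^{-m-1}\|f\|_2^4$. It is obtained from the identity of Lemma~\ref{lem:product-block-identity} together with two further inputs:
\begin{itemize}
\item the bound $\|M_t\|\lesssim q^{m-1}$ for $Q(t)\ne j$, proved via Hecke operators over $\F_q(X)$ with Drinfeld's bound, the Weil representation, and induction on $m$;
\item projective polarity and the centered sphere--cone estimate for $t\in S_j$.
\end{itemize}
The endpoint then follows by extracting $O(q^{m-1})$ peak characters $a_{x_i}$ and interpolating the remainder between $L^2$ and $L^4$. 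This extraction is what makes the endpoint lossless, which neither $\varepsilon$-removal nor a H\"older step would do.

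\paragraph{The obstacle is misdiagnosed, and the induction targets the wrong operator.}
By Lemma~\ref{lem:size-kernel}, $\widehat{\1_{S_j}}$ equals $-\epsilon_Qq^{m-1}$ on $S_0\setminus\{0\}$ and has modulus at most $2q^{m-1/2}$ off the cone; neither value is $q^m$. So the cone carries the smallest coefficient $c_0$. By \eqref{eq:zero-sphere-fourier}, the contribution of $c_0\langle g*\1_{S_0},g\rangle$ at nonzero frequencies is $-q^{-1}\sum_{\xi\in S_0\setminus\{0\}}|\widehat g(\xi)|^2$ plus a harmless $O(q^{2m-2}\|g\|_2^2)$. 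The difficulty therefore lies entirely in the oscillating levels $t\ne0$; in the paper the cone enters only inside the fourth-moment argument, for translations $t\in S_j$.

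Moreover, slicing the cone into products $\{Q_H=s\}\times\{Q_W=-s\}$ produces sphere averaging operators $g\mapsto g*\1_{\{Q_W=-s\}}$ on $W$. The inductive hypothesis $R^*(2\to 2m/(m-1))$ in $W$ is, via $TT^*$, a bound for convolution with the Fourier transform of the sphere measure. These are different operators, so the hypothesis does not apply, and you give no exponent count. The paper's induction on $m$ is instead on the $L^2$ operator norm of the shifted-sphere matrices $M_t$, through the splitting $Q(b,c,z)=bc+P(z)$.
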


The implied constant is also independent of $r$, since $L^r$ norms decrease with $r$ for counting measure.

This range is sharp for both Witt types.
The endpoint is $4,3,8/3,5/2$ in dimensions $2,4,6,8$, respectively.
Sharpness follows from an affine totally isotropic subspace of
dimension $m-1$ contained in $S_j$. Indeed, if $x_0\in S_j$, the
odd-dimensional nondegenerate space $x_0^\perp$ has Witt index $m-1$.
Choosing a maximal totally isotropic subspace $W\subseteq x_0^\perp$
gives $x_0+W\subseteq S_j$. For its indicator, the ratio of the extension norm to the surface norm is comparable to $q^{(m+1)/r-m/2}$, which forces the stated range.
For $m=1$, this is the single-point example.

For $E\subseteq\F_q^{2m}$ and $y\in\F_q^{2m}$, write
\[
 \Delta_y(E)=\{Q(x-y):x\in E\}.
\]
Here $m$ is fixed and $q$ tends to infinity through all odd prime powers. Under a size condition with a fixed $\varepsilon>0$, the error terms below are uniform over $Q$ and the sets satisfying the stated size conditions. For a hypothesis expressed by a ratio tending to infinity, the errors also depend on that ratio.

\begin{corollary} \label{cor:same-set-seven-thirds}
Let $E\subseteq\F_q^{2m}$ satisfy $|E|/q^{m+1/3}\longrightarrow\infty$. Then there exists $E'\subseteq E$ such that $|E'|=(1-o(1))|E|$ and $|\Delta_y(E)\setminus\{0\}|=(1-o(1))q$ uniformly for every $y\in E'$.
\end{corollary}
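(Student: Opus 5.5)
We derive Corollary~\ref{cor:same-set-seven-thirds} from Theorem~\ref{thm:main-endpoint-three} with the endpoint exponent $r=2(m+1)/m$, through the usual pinned-distance $L^2$ argument. For $y\in E$ and $t\in\F$, let $\nu_y(t)=|\{x\in E:Q(x-y)=t\}|$. By Cauchy--Schwarz,
\[
 |\Delta_y(E)\setminus\{0\}|\ \ge\ \frac{\bigl(\sum_{t\ne0}\nu_y(t)\bigr)^2}{\sum_{t\ne0}\nu_y(t)^2}.
\]
The spheres have $|S_t|=q^{2m-1}(1+O(q^{-1}))$, with the zero cone absorbing a further $O(q^{-1})$ share. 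Hence for typical $y$ we expect $\sum_{t\ne0}\nu_y(t)=(1-o(1))|E|$ and $\sum_t\nu_y(t)^2\approx |E|^2/q$. The corollary will follow once we show
\[
 \sum_{y\in E}\sum_{t\ne0}\Bigl(\nu_y(t)-\tfrac{|E||S_t|}{q^{2m}}\Bigr)^2=o\bigl(|E|^3/q\bigr).
\]
Indeed, Chebyshev then shows that all but $o(|E|)$ pins satisfy $\sum_t(\nu_y(t)-|E||S_t|q^{-2m})^2=o(|E|^2/q)$. For those pins the Cauchy--Schwarz bound gives $(1-o(1))q$. A similar averaging controls the mass $\nu_y(0)$ on the cone: $\sum_y\nu_y(0)\approx|E|^2/q$ plus an error bounded the same way.

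To bound the variance, we expand $\nu_y(t)-|E||S_t|q^{-2m}$ in Fourier terms. It equals $q^{-2m}\sum_{\xi\ne0}\widehat{1_E}(\xi)\chi(\cdots)\widehat{1_{S_t}}(\xi)$, up to sign conventions. Summing over $y\in E$ and applying Cauchy--Schwarz in $y$ (Plancherel in $y$ after extending the sum to all of $V$ with weight $1_E$) reduces the quantity to bilinear sums of the form
\[
 \sum_{t\ne0}\ \sum_{y\in E}\Bigl|\bigl(\widehat{1_E}\,\widehat{1_{S_t}}\bigr)^\vee(y)\Bigr|^2 .
\]
The standard reduction, as in Iosevich--Rudnev and Chapman et al.\ for the distance problem, turns this into
\[
 \max_{t\ne0}\ |E|\,\Bigl(\sum_{\xi\in S_t}|\widehat{1_E}(\xi)|^2\Bigr)\cdot(\text{sphere Fourier decay }q^{-m}\text{-type factor}),
\]
up to sums over $t$ that are handled by Plancherel. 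The point is that for $\xi\ne0$, $\widehat{1_{S_t}}(\xi)$ is a Kloosterman/Salié-type sum of size $\lesssim q^{m}$ in even dimension $2m$.

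The key input is the restriction form \eqref{restriction} with $u'=2$ and $r'=(2(m+1)/m)'=2(m+1)/(m+2)$:
\[
 \frac1{|S_t|}\sum_{\xi\in S_t}|\widehat{1_E}(\xi)|^2\lesssim |E|^{(m+2)/(m+1)},
\]
which gives $\sum_{S_t}|\widehat{1_E}|^2\lesssim q^{2m-1}|E|^{(m+2)/(m+1)}$. If the Fourier decay bound were used alone it would recover only the exponent $(d+1)/2$. The improvement comes from combining Plancherel for the sum over $t$ (total $q^{2m}|E|$) with this restriction bound on each single sphere. I expect the bookkeeping gives an error of order $q^{a}|E|^{2+1/(m+1)}$, and that the exponents balance to $o(|E|^3/q)$ exactly when $|E|\gg q^{m+1/3}$. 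A sanity check at $m=1$ gives threshold $q^{4/3}$, which matches the known planar pinned result. This exponent matching is the step I expect to be the main obstacle.

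If the naive bound falls short, the fallback is to split the $\xi$-sum dyadically according to $|\widehat{1_E}(\xi)|$, or to use the symmetric form $\sum_t\sum_{\xi,\eta\in S_t}$. Applying the restriction estimate to both factors then yields the $|E|^{2(m+2)/(m+1)}$-type term, which we would balance against the trivial $|E|^3q^{-1}$ main term to find the threshold.
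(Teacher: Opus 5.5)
There is a genuine gap, and it is at the step you flagged. The global endpoint cannot produce $m+1/3$ when $m\ge2$.

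\textbf{What the endpoint actually gives.} Insert Theorem~\ref{thm:main-endpoint-three} into the pinned energy, either by H\"older on $E$ or by your duality argument (the two coincide). With $A_t=\sum_{\xi\in S_t}|\widehat{\1_E}(\xi)|^2$, this gives $\sum_{y\in E}|\Psi_t(y)|^2\lesssim |E|^{1/(m+1)}|S_t|A_t$. After Plancherel in $t$, the error term is $q^{2m-1}|E|^{1+1/(m+1)}$, to be compared with the main term $|E|^3/q$. The error is negligible only when $|E|\gg q^{2m(m+1)/(2m+1)}=q^{m+m/(2m+1)}$. This is exactly the paper's Corollary~\ref{cor_distance_2} with $F=E$.

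Since $m/(2m+1)=1/3$ only for $m=1$, your sanity check at $m=1$ is the one case where the two thresholds agree. For $m\ge2$ the exponent is strictly worse, for example $12/5$ instead of $7/3$ in $\F_q^4$.

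Your fallback of applying the restriction bound to each $A_t$ separately is worse still, because it discards Plancherel's $\sum_tA_t\le q^{2m}|E|$. The loss lies in H\"older itself. The endpoint is saturated by extensions spread over $q^{m+1}$ points (the isotropic example $\1_{x_0+W}$). So pairing it with a set of size $q^{m+1/3}$ caps the squared localized norm at $|E|^{1/(m+1)}$, which is too weak. The paper remarks that this never improves on its direct bound.

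\textbf{The missing ingredient.} You need a localized $L^2(E)$ extension bound, which the paper obtains from the orthogonal decomposition rather than from the endpoint. For each normalized $f_j=\widehat{\1_E}\1_{S_j}/\|\widehat{\1_E}\1_{S_j}\|_2$, Lemma~\ref{lem:coherent-decomposition} writes $f_j=h_j+r_j$. Here $h_j$ is a combination of $O(q^{m-1})$ nearly orthogonal characters, and $Th_j=b_j+e_j$ with $b_j$ the individual peaks and $\|e_j\|_\infty\lesssim q^{(m-1)/2}$. The remainder satisfies $\|Tr_j\|_\infty\le Aq^{m/2}$, so Proposition~\ref{prop:refined-fourth-moment} gives $\|\mathcal Fr_j\|_4^4\lesssim q^{-m-1}$.

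Bound $b_j$ in $L^2$, $e_j$ in $L^\infty$, and $r_j$ by Cauchy--Schwarz against $L^4$. This gives
$\|\mathcal Ff_j\|_{L^2(E)}^2\lesssim q^{-1}+|E|q^{-m-1}+|E|^{1/2}q^{-(m+1)/2}$.
For $m\ge2$ and $|E|<q^{m+1}$, the last term is smaller than the endpoint's $|E|^{1/(m+1)}q^{-1}$; this is the source of the gain.

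Sum over $j$ with Plancherel and use the exact pinned identity, including the zero-sphere bound of Lemma~\ref{lem:zero-shell}. For $|E|\ge q^m$ this yields
\[
 \mathcal I^\times(E;E)\le\frac{|E|^3}{q}\Bigl(1+C_m\frac{q^{(3m+1)/2}}{|E|^{3/2}}+\frac3q\Bigr),
\]
which is $(1+o(1))|E|^3/q$ exactly when $|E|/q^{m+1/3}\to\infty$.

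The rest of your outline matches Lemma~\ref{lem:almost-every-pin-energy-criterion}: Cauchy--Schwarz for each pin, Chebyshev, and the cone handled through Lemma~\ref{lem:zero-distance-incidence}. One minor correction: for $\xi\ne0$ the sphere transform is $O(q^{m-1/2})$, not $O(q^m)$.
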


In particular, the conclusion holds under
$|E|\geq q^{m+1/3+\varepsilon}$ for any fixed $\varepsilon>0$,
with errors $o_\varepsilon(1)$ uniform in the characteristic and
extension degree. The threshold is $4/3$ in dimension two and
$7/3$ in dimension four.

The global endpoint gives an asymmetric statement without any
additional restriction on the number of pins.

\begin{corollary}\label{cor_distance_2}
Let $\varepsilon>0$ and let $E,F\subseteq\F_q^{2m}$. If $|E||F|^{m/(m+1)}\geq q^{2m+\varepsilon}$, then there exists $F'\subseteq F$ such that
$|F'|=(1-o_\varepsilon(1))|F|$ and
$|\Delta_y(E)\setminus\{0\}|=(1-o_\varepsilon(1))q$
uniformly for every $y\in F'$.
\end{corollary}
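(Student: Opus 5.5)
We derive Corollary~\ref{cor_distance_2} from the restriction form \eqref{restriction} of Theorem~\ref{thm:main-endpoint-three} at the endpoint $r=2(m+1)/m$. Then $r'=2(m+1)/(m+2)$ and $u'=2$. For $y\in F$ and $t\in\F^\times$, let
\[
 \nu_y(t)=|\{x\in E: Q(x-y)=t\}|.
\]
The plan is to show that for most $y\in F$ the counts $\nu_y(t)$ are close to their expected value $|E||S_t|/q^{2m}$ for all but $o(q)$ values of $t$. In particular, $\nu_y(t)>0$ for $(1-o(1))q$ values of $t$.

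\textbf{Step 1: Fourier expansion.} Expand $\1_{S_t}(x-y)$ in characters:
\[
 \nu_y(t)=\frac{|E||S_t|}{q^{2m}}+\frac{|S_t|}{q^{2m}}\sum_{\xi\neq0}\widehat{\1_E}(\xi)\,\chi(\cdot)\,(d\sigma_t)^\vee(\xi).
\]
Write $R_y(t)$ for the error term. We estimate
\[
 \sum_{y\in F}\sum_{t\neq0}|R_y(t)|^2,
\]
either after reorganizing by $t$, or more conveniently by the standard dual approach. In the dual approach one fixes $t$ and bounds $\sum_{y\in F}|R_y(t)|^2$ by duality against a weight $g$ supported on $F$. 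This reduces to
\[
 \sum_{\xi\in S_t}|\widehat{g\ast\ldots}|^2
\]
type quantities on the sphere.

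\textbf{Step 2: apply the theorem per sphere.} For each $t$, apply \eqref{restriction} with $g=\1_F$ (or $g$ times a bounded function supported on $F$). This gives
\[
 \frac1{|S_t|}\sum_{\xi\in S_t}|\widehat g(\xi)|^2\lesssim |F|^{2/r'}=|F|^{(m+2)/(m+1)}.
\]
Summing over the $q-1$ spheres costs a factor $q$. Combined with Plancherel on the $E$ side, $\sum_\xi|\widehat{\1_E}|^2=q^{2m}|E|$, and with $|S_t|\approx q^{2m-1}$, this yields an $L^2$ bound for the errors. We then compare it with the main-term mass $\sum_{y,t}(|E|/q)^2\approx |F||E|^2/q$. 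The intended outcome is that the ratio of error to main term is
\[
 \lesssim \frac{q^{2m}}{|E|\,|F|^{m/(m+1)}},
\]
which equals $q^{-\varepsilon}$ under the hypothesis. The exponent $m/(m+1)=1-1/(m+1)$ arises exactly from $|F|^{2/r'}/|F|$, since $2/r'-1=1/(m+1)$.

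\textbf{Step 3: Chebyshev.} Call a pair $(y,t)$ bad if $|R_y(t)|\ge\frac12|E|/q$. The $L^2$ bound shows there are at most $q^{-\varepsilon}|F|q$ bad pairs. Hence at most $q^{-\varepsilon/2}|F|$ pins $y$ have more than $q^{1-\varepsilon/2}$ bad $t$. Discard these pins to obtain $F'$. Every $y\in F'$ has $\nu_y(t)>0$ for all but $o(q)$ values of $t\neq0$, which is the claim.

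\textbf{Main obstacle.} The delicate point is Step 1–2: arranging the duality so that the $F$-side norm is exactly $\|g\|_{L^{r'}}$ with $|g|\le\1_F$, while the $E$ side is handled only by Plancherel. Concretely, I would write
\[
 \sum_{y\in F}\bigl|\textstyle\sum_{\xi\in S_t}\widehat{\1_E}(\xi)\ldots\bigr|^2
\]
as a pairing and apply Cauchy--Schwarz in $\xi$, summed over $t$ jointly. The spheres partition $V\setminus\{Q=0\}$, so summing $|\widehat{\1_E}|^2$ over all $t$ stays within Plancherel. The theorem's uniformity in $t$ is essential there. The cone $\{Q=0\}$ contributes only to $t=0$, which is excluded.
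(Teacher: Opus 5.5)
The skeleton (extension on each nonzero frequency sphere, H\"older on $F$, Plancherel across spheres, Markov on pins) is right. Your exponent count $2/r'-1=1/(m+1)$ and the Chebyshev step are also fine. The gap is in Steps~1--2, where you identify the distance $t$ with the frequency sphere $S_t$.

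The error $R_y(t)$ is not a sum over $\xi\in S_t$. For $\xi\ne0$, $(d\sigma_t)^\vee(\xi)$ depends on $\xi$ only through a Kloosterman sum in $Q(\xi)$. Since $|S_t|=q^{m-1}(q^m-\epsilon_Q)$,
\[
 R_y(t)=\epsilon_Qq^{-m-1}\sum_{k\in\F}\Psi'_k(y)\sum_{s\ne0}\chi\Bigl(-ts-\frac{k}{4s}\Bigr),\qquad \Psi'_k(y)=\sum_{0\ne\xi\in S_k}\widehat{\1_E}(\xi)\chi(B(y,\xi)).
\]
So every distance sees every frequency sphere, with weights of size about $\sqrt q$. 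Fixing $t$ and dualizing therefore does not lead to $\widehat g$ on $S_t$. Bounding one distance at a time (triangle inequality in $k$, then Cauchy--Schwarz) loses a factor $q$ and gives only the ratio $q^{2m+1}/(|E||F|^{m/(m+1)})$. This matches your remark that summing over spheres costs a factor $q$; the claimed ratio does not follow from the steps as written.

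The missing idea is orthogonality in $t$. Plancherel over $t\in\F$, together with the bijection $s\mapsto-1/(4s)$ of $\F^\times$, gives
\[
 \sum_{t\in\F}\Bigl|\sum_{k\ne0}\Psi_k(y)\sum_{s\ne0}\chi\Bigl(-ts-\frac k{4s}\Bigr)\Bigr|^2\le q^2\sum_{k\ne0}|\Psi_k(y)|^2 .
\]
Only after this step can you apply the extension theorem sphere by sphere. Summing over $k$ then costs nothing, because each sphere's bound is proportional to $\|\widehat{\1_E}\1_{S_k}\|_2^2$ and Plancherel sums these. This is the centered form of Lemma~\ref{lem:exact-pinned-identity}.

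Your claim that the cone contributes only to $t=0$ is also false. Take $Q$ split and $E=W$ a maximal totally isotropic subspace. Then $\widehat{\1_E}=q^m\1_W$ is supported on the cone, so $\Psi_k=0$ for every $k\ne0$. Yet for $y\in W$ one has $\nu_y(t)=0$, i.e.\ $R_y(t)=-(q^{m-1}-q^{-1})$, for every $t\ne0$.

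In general, for $t\ne0$ the cone adds the $t$-independent term $-\epsilon_Qq^{-m-1}\Psi'_0(y)$ to $R_y(t)$. This term is harmless, but you have to check it. Its total square over $t\ne0$ and $y\in F$ is at most $q^{-2m-1}\sum_{y\in V}|\Psi'_0(y)|^2\le q^{2m-1}|E|$ by Plancherel. That is acceptable because $|E||F|\ge q^{2m+\varepsilon}$.

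With these two repairs your centered route works and is somewhat leaner than the paper's. The paper works with the uncentered $\sum_{t\ne0}\nu_y(t)^2$, so it must cancel the cone against $\nu_y(0)^2$ (Lemma~\ref{lem:zero-shell}) and control zero distances separately (Lemma~\ref{lem:zero-distance-incidence}).
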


The same conclusion holds if
$|E||F|^{m/(m+1)}/q^{2m}\to\infty$.
For $F=E$, this gives the exponent $2m(m+1)/(2m+1)$.
The orthogonal decomposition yields the following stronger result in the middle range when $m\geq2$.

\Needspace{9\baselineskip}
\begin{corollary}\label{cor:improved-middle-pins}
Let $\varepsilon>0$ and let $E,F\subseteq\F_q^{2m}$ satisfy
\[
 q^m\leq |F|\leq q^{m+1},
 \qquad |E||F|^{1/2}\geq q^{(3m+1)/2+\varepsilon}.
\]
Then there exists $F'\subseteq F$ such that
$|F'|=(1-o_\varepsilon(1))|F|$ and
$|\Delta_y(E)\setminus\{0\}|=(1-o_\varepsilon(1))q$
uniformly for every $y\in F'$.
\end{corollary}

\subsection{Main ideas}\label{subsec:proof-overview}

We describe the proof with counting measure on $S_j$ and on $V$.
Put $T f(x)=\sum_{\xi\in S_j}f(\xi)\chi(B(x,\xi))$ and
$\mathcal F=q^{-m}T$, with functions extended by zero off $S_j$.
Thus, $\mathcal F$ is the unitary ambient Fourier transform.

\paragraph{Shifted spheres and arithmetic operators.}
The convolution identity in Section~\ref{sec:exact-product-reduction}
reduces the problem to the centered matrices
\[
 M_t(x,y)=\1_{S_j}(x+y-t)-\frac{|S_j|}{q^{2m}},
 \qquad x,y\in S_j.
\]
We prove $\|M_t\|\lesssim_m q^{m-1}$ whenever $Q(t)\ne j$.
For $m\ge2$, choose a hyperbolic plane and write
$Q(b,c,z)=bc+P(z)$. On the part $c=0$, the matrix reduces to a
lower-dimensional shifted-sphere matrix, with the cross terms bounded
by an elementary Gram-matrix calculation. On $c\ne0$, Fourier
transformation in the $z$ variables reduces the remaining estimates
to matrices involving at most two scalar indices.

For a nonisotropic translation, Gaussian summation expresses the
oscillatory matrix as a weighted sum of finite Weil operators.
The operators form a representation of $\mathrm{SL}_2(\mathcal K)$
for a two-dimensional commutative algebra $\mathcal K$ over $\F_q$.
An exact realization as a compression of a degree-one automorphic
Hecke operator over $\F_q(X)$ gives the bound $2\sqrt q$ for the
weighted sum. The coefficient spaces, stabilizer weights, and
counting norms are included in this identification.

For a nonzero isotropic translation, the remaining matrix is a
Kloosterman-type matrix indexed by one field variable. First-jet Hecke
correspondences give its norm bound outside characteristic $3$.
In characteristic $3$, a separate Fourier transformation and
mixed character sums give a bound of the same order.
For $t=0$, we use Fourier transformation in both variables. In characteristic three, the matrix is a convolution operator on the additive group of the field. These arguments make the induction uniform in the
characteristic and the extension degree.

\paragraph{Why the sphere--cone estimate must be centered.}
For translations $t\in S_j$, projective polarity controls the part
where $Q(x-t)\ne0$. The remaining part involves the cone $S_0$.
The appropriate estimate in every even dimension is
\[
 \|\1_{S_0}(f*h)\|_2^2
 \le q^{-1}\|f*h\|_2^2+q^{m-1}\|f\|_2^2\|h\|_2^2,
 \qquad \operatorname{supp}f,\operatorname{supp}h\subseteq S_j.
\]
The density term $q^{-1}\|f*h\|_2^2$ is essential in dimensions
at least six. We prove the estimate by embedding the relevant
quadratic level sets into projective quadrics and computing the
spectrum of their orthogonality matrices. Degenerate sections are
handled by passing to the quotient by their radical.

\paragraph{Fourth moments and orthogonal decomposition.}
Combining these estimates gives the refined fourth moment
\[
 \|\mathcal F f\|_4^4
 \lesssim_m q^{-1}\|\mathcal F f\|_\infty^2\|f\|_2^2
                   +q^{-m-1}\|f\|_2^4.
\]
Fix a sufficiently large absolute constant $A>0$. For a function $f$ with $\|f\|_2=1$, successively project onto
the normalized characters
\[
 a_x(\xi)=|S_j|^{-1/2}\chi(-B(x,\xi))
\]
whenever the remaining
Fourier coefficient exceeds $Aq^{m/2}$. At most $O_m(q^{m-1})$
characters are selected. Their Gram matrix is uniformly invertible,
because the off-diagonal inner products are $O(q^{-m+1/2})$.
The resulting remainder $r$ has
$\|\mathcal F r\|_4^4\lesssim_m q^{-m-1}$. Interpolation with
Plancherel gives its $L^{2(m+1)/m}$ bound. The selected characters
are estimated using their individual peaks and the bounded
off-diagonal Fourier kernel. This proves the strong endpoint without
a logarithmic loss.

For the distance applications, we insert the decomposition into an
exact pinned-energy identity and sum over the nonzero Fourier
spheres. The individual peaks are controlled in $L^2$, the
other contributions from the selected characters in $L^\infty$, and the remainder in $L^4$.
Together with the sphere kernel and Plancherel, this gives the
asymmetric distance bounds and the exponent $m+1/3$. The
contribution of the zero sphere is estimated separately for the two
Witt types.

Section~\ref{sec:setup} records the Fourier and arithmetic conventions.
Section~\ref{sec:exact-product-reduction} gives the convolution identity.
Section~\ref{sec:hecke-matrix-estimates} proves the arithmetic bounds
and the induction for shifted spheres.
Section~\ref{sec:fourth-moments-sharp-extension} proves the centered
sphere--cone estimate, the fourth moment, and the endpoint.
Section~\ref{subsec:optimized-pinned-distances} proves the unconditional
pinned distance theorems directly from these estimates. Section~\ref{section_app} states a stronger localized extension conjecture over prime fields and proves its conditional distance consequences.

\subsection*{Scope of the results}

All spherical extension and restriction statements concern
nonzero radii. The zero sphere is used only as an auxiliary set in
the convolution and distance arguments. Characteristic two is not
included: polarization and the Gaussian and Weil formulas used here
require that $2$ be invertible.

Restriction estimates also have applications to incidence and projection problems \cite{KP.22,KLP22,HHHKP}.

\section{Fourier and arithmetic preliminaries}\label{sec:setup}

We first fix the Fourier conventions and record the Gaussian formulas in
even dimension. We then state the arithmetic operator bounds used in
the matrix reduction.

\subsection{Fourier conventions and quadratic forms}

Throughout, $q$ is an odd prime power, $\F=\F_q$, and
$p=\operatorname{char}\F$. We use the canonical nontrivial additive character
\[
 \chi(t)=\exp\!\left(\frac{2\pi i}{p}
                 \operatorname{Tr}_{\mathbb F_q/\mathbb F_p}(t)\right).
\]
Unless a measure is specified, norms and convolutions use
counting measure. Our inner product is linear in the first variable:
\[
 \langle f,g\rangle=\sum_x f(x)\overline{g(x)},
 \qquad (f*g)(z)=\sum_x f(x)g(z-x).
\]
Fix an integer $m\ge1$ and a nondegenerate quadratic form $Q$ on
$V=\F^{2m}$. Its associated symmetric bilinear form is
\[
 B(x,y)=\frac{Q(x+y)-Q(x)-Q(y)}2.
\]
Thus, $B(x,x)=Q(x)$. Orthogonality and $W^\perp$ refer to $B$, i.e., $W^\perp=\{x:B(x,w)=0\text{ for every }w\in W\}$.
In coordinates, write $Q(x)=x^{\mathsf T}Ax$, where $A$ is invertible
and symmetric, and put
\[
 \epsilon_Q=\eta((-1)^m\det A)\in\{1,-1\}.
\]
Here $\eta$ is the quadratic character of $\F^\times$. Changing coordinates multiplies $\det A$ by a square, so $\epsilon_Q$
does not depend on the chosen basis. A Witt decomposition identifies
$Q$ with $H^m$ when $\epsilon_Q=1$, and with $H^{m-1}$ plus an
anisotropic plane when $\epsilon_Q=-1$, where $H(u,v)=uv$.
We call the first case \emph{split}. For $m\ge2$, an isotropic form need not be split.

For $f:V\to\C$, the unnormalized Fourier transform and its inverse are
\[
 \widehat f(\xi)=\sum_x f(x)\chi(-B(x,\xi)),\qquad
 f(x)=q^{-2m}\sum_\xi\widehat f(\xi)\chi(B(x,\xi)).
\]
Thus, $\|\widehat f\|_2=q^m\|f\|_2$. For $j\ne0$, put $S_j=\{\xi:Q(\xi)=j\}$ and $d\sigma_j=|S_j|^{-1}\1_{S_j}$. One has $|S_j|=q^{2m-1}-\epsilon_Qq^{m-1}$. The cardinality formula is proved below. Extension is defined by
\[
 (f\,d\sigma_j)^\vee(x)
 =\frac1{|S_j|}\sum_{\xi\in S_j}f(\xi)\chi(B(x,\xi)).
\]
We also use
\begin{equation}\label{eq:unitary-transform-convention}
 Tf(x)=\sum_{\xi\in V}f(\xi)\chi(B(x,\xi)),
 \qquad \mathcal Ff=q^{-m}Tf.
\end{equation}
The nondegeneracy of $B$ makes $\mathcal F$ unitary. Moreover,
\begin{equation}\label{eq:unitary-convolution-convention}
 \mathcal F(f*g)=q^m(\mathcal Ff)(\mathcal Fg),\qquad
 (f\,d\sigma_j)^\vee=\frac{q^m}{|S_j|}\mathcal Ff,
 \qquad \|f\|_{L^2(S_j,d\sigma_j)}=|S_j|^{-1/2}\|f\|_2,
\end{equation}
where functions on $S_j$ are extended by zero. Also
$Tf(x)=\widehat f(-x)$.

\paragraph{Coordinate invariance.}
Suppose $Q(x)=Q'(Ux)$ for an invertible linear map $U$, and let $B'$
be half the polarization of $Q'$. Then
$B(x,\xi)=B'(Ux,U\xi)$ and $U(S_j(Q))=S_j(Q')$.
For $f_0(z)=f(U^{-1}z)$,
\[
(f\,d\sigma_j(Q))^\vee(x)=(f_0\,d\sigma_j(Q'))^\vee(Ux).
\]
The normalized surface norms agree, and sums over $E$ correspond to sums over $UE$.
With the standard dot-product Fourier pairing, the change of variables $x\mapsto Ux$ in this identity is replaced by $x\mapsto U^{-\mathsf T}x$.
The choice between $B$ and the dot product itself only reindexes physical space: since $B(x,\xi)=(Ax)\cdot\xi$, the extension defined using $B$ is obtained from the dot-product extension by the change of variables $x\mapsto Ax$.
Finally,
\[
 \{Q(x-y):x\in E\}=\{Q'(z-Uy):z\in UE\}.
\]
These identities preserve the global extension constants, cardinality
hypotheses, and counts of pins and distances.
They compare isometric forms; they do not identify the split and
nonsplit types. Both types are covered by the arguments below.

The standard form $Q_0(x)=x_1^2+\cdots+x_{2m}^2$ has
$\epsilon_{Q_0}=\eta((-1)^m)$. In particular, the sum of four squares
is split over every odd finite field. In general even dimension,
the results apply to the standard form regardless of its type.

\subsection{Gaussian sums and the sphere kernel}

Extend $\eta$ by $\eta(0)=0$, and put
$G_\eta=\sum_{u\in\F}\chi(u^2)$.

\begin{lemma}\label{lem:one-dimensional-gauss-sum}
One has $|G_\eta|=q^{1/2}$ and $G_\eta^2=\eta(-1)q$. For $s\ne0$,
\[
 \sum_{u\in\F}\chi(su^2+vu)
 =G_\eta\eta(s)\chi\left(-\frac{v^2}{4s}\right).
\]
\end{lemma}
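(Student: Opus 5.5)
We will prove the three assertions in the natural order: first $|G_\eta|^2=q$, then $G_\eta^2=\eta(-1)q$, and finally the completed-square formula.

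\emph{Step 1: the absolute value.} We rewrite the Gauss sum through the quadratic character. The number of $u\in\F$ with $u^2=t$ equals $1+\eta(t)$, using the convention $\eta(0)=0$. Hence
\[
 G_\eta=\sum_{t\in\F}(1+\eta(t))\chi(t)=\sum_{t\in\F^\times}\eta(t)\chi(t),
\]
because $\sum_t\chi(t)=0$ for the nontrivial character $\chi$. Expanding $|G_\eta|^2=\sum_{s,t\ne0}\eta(s)\overline{\eta(t)}\chi(s-t)$, we substitute $s=at$ with $a\in\F^\times$. This gives
\[
 |G_\eta|^2=\sum_{a\ne0}\eta(a)\sum_{t\ne0}\chi((a-1)t).
\]
The inner sum equals $q-1$ if $a=1$ and $-1$ otherwise. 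Since $\sum_{a\ne0}\eta(a)=0$, the whole expression is $(q-1)+1=q$.

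\emph{Step 2: the square.} Complex conjugation gives
\[
 \overline{G_\eta}=\sum_{t\ne0}\eta(t)\chi(-t)=\eta(-1)G_\eta,
\]
after the substitution $t\mapsto-t$. Therefore $G_\eta^2=\eta(-1)G_\eta\overline{G_\eta}=\eta(-1)q$, using $\eta(-1)^2=1$.

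\emph{Step 3: the general sum.} Since $q$ is odd and $s\ne0$, we complete the square:
\[
 su^2+vu=s\left(u+\frac{v}{2s}\right)^2-\frac{v^2}{4s}.
\]
Translating $u$ gives
\[
 \sum_{u\in\F}\chi(su^2+vu)=\chi\!\left(-\frac{v^2}{4s}\right)\sum_{u\in\F}\chi(su^2).
\]
It remains to show $\sum_u\chi(su^2)=\eta(s)G_\eta$. By the counting identity of Step 1,
\[
 \sum_{u\in\F}\chi(su^2)=\sum_{t\ne0}\eta(t)\chi(st).
\]
We then substitute $t\mapsto s^{-1}t$ and use $\eta(s^{-1})=\eta(s)$.

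There is no real obstacle in this argument. The only points needing care are the convention $\eta(0)=0$ in the counting identity and the use of the oddness of $q$ to divide by $2$. Nontriviality of $\chi$ is also needed, and it holds because $\operatorname{Tr}_{\F_q/\F_p}$ is surjective.
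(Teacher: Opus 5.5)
Your proof is correct and complete: each step checks, including the convention $\eta(0)=0$ in the count $1+\eta(t)$, the use of $\sum_t\chi(st)=0$ for $s\ne0$, and the relation $\overline{G_\eta}=\eta(-1)G_\eta$ giving $G_\eta^2=\eta(-1)|G_\eta|^2=\eta(-1)q$. The paper states this lemma without proof, treating it as the classical evaluation of the quadratic Gauss sum. Your argument is the standard derivation it implicitly relies on: rewrite $G_\eta$ via the quadratic character, compute $|G_\eta|^2$ by the substitution $s=at$, use conjugation symmetry for the square, and complete the square with a rescaling.
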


\begin{lemma}\label{lem:quadratic-gauss-transform}
Let $C$ be an invertible symmetric $k$-by-$k$ matrix over $\F$.
For $s\ne0$ and $v\in\F^k$,
\[
 \sum_{u\in\F^k}\chi(su^{\mathsf T}Cu+v^{\mathsf T}u)
 =G_\eta^k\eta(s)^k\eta(\det C)
       \chi\left(-\frac{v^{\mathsf T}C^{-1}v}{4s}\right).
\]
In particular, for the form $Q$ on $V=\F^{2m}$,
\begin{equation}\label{eq:even-gauss-transform}
 \sum_{\xi\in V}\chi(sQ(\xi)+B(x,\xi))
 =\epsilon_Qq^m\chi\left(-\frac{Q(x)}{4s}\right).
\end{equation}
\end{lemma}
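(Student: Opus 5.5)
The plan is to diagonalize $C$ and reduce to the one-dimensional formula of Lemma~\ref{lem:one-dimensional-gauss-sum}. Since $q$ is odd, choose an invertible matrix $P$ with $P^{\mathsf T}CP=D=\operatorname{diag}(d_1,\dots,d_k)$, all $d_i\ne0$. Substitute $u=Pw$; this is a bijection of $\F^k$. Then $u^{\mathsf T}Cu=w^{\mathsf T}Dw$ and $v^{\mathsf T}u=(P^{\mathsf T}v)^{\mathsf T}w$. Writing $v'=P^{\mathsf T}v$, the sum factors as
\[
\prod_{i=1}^k\sum_{w_i\in\F}\chi(sd_iw_i^2+v'_iw_i).
\]
Lemma~\ref{lem:one-dimensional-gauss-sum} with $s d_i\neq 0$ evaluates the $i$th factor as $G_\eta\eta(sd_i)\chi(-v_i'^2/(4sd_i))$. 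Multiplying these gives
\[
G_\eta^k\eta(s)^k\eta(d_1\cdots d_k)\,\chi\bigl(-v'^{\mathsf T}D^{-1}v'/(4s)\bigr).
\]

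Two identifications then finish the general formula. First, $\det D=\det(P)^2\det C$, so $\eta(\det D)=\eta(\det C)$. Second, $v'^{\mathsf T}D^{-1}v'=v^{\mathsf T}P(P^{-1}C^{-1}P^{-\mathsf T})P^{\mathsf T}v=v^{\mathsf T}C^{-1}v$.

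For \eqref{eq:even-gauss-transform}, take $k=2m$ and $C=A$, with $Q(\xi)=\xi^{\mathsf T}A\xi$. The bilinear form is $B(x,\xi)=x^{\mathsf T}A\xi$, so $v=Ax$ and $v^{\mathsf T}A^{-1}v=x^{\mathsf T}Ax=Q(x)$. The constant is
\[
G_\eta^{2m}\eta(s)^{2m}\eta(\det A)=(\eta(-1)q)^m\eta(\det A)=q^m\eta((-1)^m\det A)=\epsilon_Qq^m,
\]
using $G_\eta^2=\eta(-1)q$ and $\eta(s)^2=1$.

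None of these steps is a real obstacle. The only points needing care are the existence of the diagonalization, which uses that $2$ is invertible, and keeping the transposes straight in the change of variables for the linear term.
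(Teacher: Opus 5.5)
Your proposal is correct and follows the paper's route: diagonalize $C$ by congruence, apply Lemma~\ref{lem:one-dimensional-gauss-sum} coordinatewise, and specialize to $C=A$, $v=Ax$ using $G_\eta^2=\eta(-1)q$. The paper's "complete the square" step is already contained in the one-dimensional lemma you invoke, so the two arguments coincide.
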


\begin{proof}
Complete the square, diagonalize $C$ by congruence, and apply
Lemma~\ref{lem:one-dimensional-gauss-sum} in each coordinate.
For the second formula, take $C=A$ and $v=Ax$.
Then $v^{\mathsf T}A^{-1}v=Q(x)$, $\eta(s)^{2m}=1$, and
$G_\eta^{2m}\eta(\det A)=\epsilon_Qq^m$.
\end{proof}

We use the classical twisted Kloosterman bound in the following form;
see \cite{Weil1948}.

\begin{lemma}\label{lem:twisted-kloosterman}
Let $\psi$ be a multiplicative character of $\F^\times$. If
$(a,b)\ne(0,0)$, then
\[
 \left|\sum_{s\ne0}\psi(s)\chi(as+b/s)\right|\le2\sqrt q.
\]
When $a=b=0$, the sum is $q-1$ if $\psi=1$, and zero otherwise.
\end{lemma}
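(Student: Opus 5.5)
The statement is the classical twisted Kloosterman bound, which the paper cites from Weil. My plan is to recall the standard route rather than reprove the Riemann hypothesis for curves. First I dispose of the degenerate cases by direct computation. If $a=b=0$, the sum is $\sum_{s\ne0}\psi(s)$, which by orthogonality of multiplicative characters equals $q-1$ when $\psi=1$ and $0$ otherwise.

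Next suppose exactly one of $a,b$ is zero. For instance, with $b=0$ and $a\ne0$, the substitution $s\mapsto s/a$ turns the sum into $\bar\psi(a)\sum_{s\ne0}\psi(s)\chi(s)$.
\begin{itemize}
\item If $\psi$ is nontrivial, this is a Gauss sum of absolute value exactly $\sqrt q$.
\item If $\psi$ is trivial, it equals $-1$.
\end{itemize}
Either way the bound $2\sqrt q$ holds. The case $a=0$, $b\ne0$ is symmetric via $s\mapsto 1/s$, which replaces $\psi$ by $\bar\psi$.

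The main case is $ab\ne0$. I would invoke Weil's theorem through the $L$-function formulation. Set
\[
K_n=\sum_{s\in\F_{q^n}^\times}\psi(N s)\chi(\operatorname{Tr}(as+b/s)),
\]
where $N$ and $\operatorname{Tr}$ are the norm and trace from $\F_{q^n}$ to $\F$. The associated $L$-function is attached to the Kummer--Artin--Schreier sheaf on $\mathbb G_m$, which is tamely ramified at $0$ and $\infty$ through $\psi$ and wildly ramified with Swan conductor $1$ at each point because of $as$ and $b/s$. By Grothendieck--Ogg--Shafarevich (or by the elementary Stepanov/Bombieri analysis of the curve $y^p-y=as+b/s$ twisted by a Kummer cover), this $L$-function is a polynomial of degree $2$. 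Concretely, $\sum_n K_nT^n/n$ exponentiates to $(1-\alpha_1T)(1-\alpha_2T)$, and Weil's Riemann hypothesis for curves gives $|\alpha_i|=\sqrt q$. Since $K_1=-(\alpha_1+\alpha_2)$, the bound $|K_1|\le2\sqrt q$ follows.

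The only genuine obstacle is this last step: establishing that the $L$-polynomial has degree exactly $2$ and that its roots have weight one. This is Weil's theorem, and in the paper it is legitimately taken from \cite{Weil1948}. If a self-contained argument were wanted, I would first try the degree count via a rationality argument: expand $K_n$ as a sum over monic polynomials and check that the coefficients vanish beyond degree two. I would then obtain the weight through Stepanov's method applied to the associated curve.
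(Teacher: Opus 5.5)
Your proposal is correct and takes the same route as the paper, which gives no proof and simply cites Weil~\cite{Weil1948} for the twisted Kloosterman bound; your $L$-function sketch (rank one, Swan conductor $1$ at both $0$ and $\infty$, an $L$-polynomial of degree $2$, and $K_1=-(\alpha_1+\alpha_2)$ with $|\alpha_i|=\sqrt q$) is the standard justification of that citation. You also handle explicitly the degenerate cases that the paper leaves implicit: orthogonality when $a=b=0$, and, when exactly one of $a,b$ vanishes, a Gauss sum of modulus $\sqrt q$ or the value $-1$.
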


\Needspace{10\baselineskip}
\begin{lemma}\label{lem:size-kernel}
For every nondegenerate $Q$ on $V=\F^{2m}$ and every $j\ne0$,
\begin{equation}\label{eq:unnormalized-sphere-kernel}
 \sum_{\xi\in S_j}\chi(B(x,\xi))
 =q^{2m-1}\1_{\{0\}}(x)
 +\epsilon_Qq^{m-1}\sum_{s\ne0}
      \chi\left(-js-\frac{Q(x)}{4s}\right).
\end{equation}
Consequently, $|S_j|=q^{2m-1}-\epsilon_Qq^{m-1}$ and
\begin{equation}\label{eq:normalized-sphere-kernel}
 (d\sigma_j)^\vee(x)
 =\frac{q^m}{q^m-\epsilon_Q}\1_{\{0\}}(x)+\widetilde K_j(x),
 \qquad
 \widetilde K_j(x)=\frac{\epsilon_Q}{q^m-\epsilon_Q}
       \sum_{s\ne0}\chi\left(-js-\frac{Q(x)}{4s}\right).
\end{equation}
\end{lemma}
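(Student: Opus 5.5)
We detect the sphere with orthogonality of characters and then evaluate the resulting Gaussian sums with Lemma~\ref{lem:quadratic-gauss-transform}. Write
\[
 \1_{S_j}(\xi)=\frac1q\sum_{s\in\F}\chi\bigl(s(Q(\xi)-j)\bigr),
\]
so that
\[
 \sum_{\xi\in S_j}\chi(B(x,\xi))
 =\frac1q\sum_{\xi\in V}\chi(B(x,\xi))
 +\frac1q\sum_{s\ne0}\chi(-js)\sum_{\xi\in V}\chi\bigl(sQ(\xi)+B(x,\xi)\bigr).
\]

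\emph{The term $s=0$.} Since $B$ is nondegenerate, $\sum_{\xi}\chi(B(x,\xi))=q^{2m}\1_{\{0\}}(x)$. After dividing by $q$ this gives $q^{2m-1}\1_{\{0\}}(x)$.

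\emph{The terms $s\ne0$.} Apply \eqref{eq:even-gauss-transform}. The inner sum equals $\epsilon_Qq^m\chi(-Q(x)/(4s))$. After dividing by $q$ this gives exactly the second term of \eqref{eq:unnormalized-sphere-kernel}. The point to check is that the factor $\eta(s)^{2m}$ equals $1$ in even dimension, so no multiplicative twist survives. This is already built into the lemma.

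\emph{The cardinality.} Set $x=0$. The sum becomes
\[
 q^{2m-1}+\epsilon_Qq^{m-1}\sum_{s\ne0}\chi(-js).
\]
For $j\ne0$ we have $\sum_{s\ne0}\chi(-js)=-1$, so $|S_j|=q^{2m-1}-\epsilon_Qq^{m-1}$.

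\emph{The normalized kernel.} Divide \eqref{eq:unnormalized-sphere-kernel} by
\[
 |S_j|=q^{m-1}(q^m-\epsilon_Q).
\]
The delta coefficient becomes $q^{2m-1}/|S_j|=q^m/(q^m-\epsilon_Q)$. The coefficient of the $s$-sum becomes $\epsilon_Q/(q^m-\epsilon_Q)$. This is \eqref{eq:normalized-sphere-kernel}.

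There is no real obstacle here. The only care needed is the sign bookkeeping: matching the convention $\chi(+B(x,\xi))$ in the extension with the $+v^{\mathsf T}u$ in Lemma~\ref{lem:quadratic-gauss-transform}, and confirming $G_\eta^{2m}\eta(\det A)=\epsilon_Qq^m$.
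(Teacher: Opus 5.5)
Your proposal is correct and follows the paper's proof exactly. Like the paper, you insert the character expansion of $\1_{S_j}$, use orthogonality for $s=0$ and \eqref{eq:even-gauss-transform} for $s\ne0$, evaluate at $x=0$ using $\sum_{s\ne0}\chi(-js)=-1$, and divide by $|S_j|=q^{m-1}(q^m-\epsilon_Q)$; your bookkeeping $v=Ax$, $\eta(s)^{2m}=1$, $G_\eta^{2m}\eta(\det A)=\epsilon_Qq^m$ is right.
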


\begin{proof}
Insert
$\1_{S_j}(\xi)=q^{-1}\sum_s\chi(s(Q(\xi)-j))$.
Orthogonality gives $q^{2m-1}\1_{\{0\}}(x)$ for $s=0$, while
\eqref{eq:even-gauss-transform} evaluates the other terms.
At $x=0$, their scalar sum is $-1$ because $j\ne0$.
This gives $|S_j|$, and division by $|S_j|$ gives the normalized formula.
\end{proof}

\begin{lemma}\label{lem:kernel-linfty}
Uniformly in $q$, $m\ge1$, the nondegenerate form $Q$, and $j\ne0$,
\[
 \|\widetilde K_j\|_\infty\lesssim q^{-m+1/2},\qquad
 \left|\sum_{\xi\in S_j}\chi(B(x,\xi))\right|
 \le2q^{m-1/2}\quad(x\ne0).
\]
\end{lemma}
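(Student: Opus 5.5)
The plan is to read off both bounds from the explicit kernel formulas of Lemma~\ref{lem:size-kernel} and the Kloosterman estimate Lemma~\ref{lem:twisted-kloosterman}, applied with the trivial character $\psi=1$. The key quantity is the scalar sum
\[
 \Sigma(x)=\sum_{s\ne0}\chi\left(-js-\frac{Q(x)}{4s}\right),
\]
which is a Kloosterman sum with coefficients $a=-j$ and $b=-Q(x)/4$. Since $j\ne0$, we always have $(a,b)\ne(0,0)$, whatever the value of $Q(x)$. Lemma~\ref{lem:twisted-kloosterman} therefore gives $|\Sigma(x)|\le 2\sqrt q$ for every $x\in V$, including $x=0$ and isotropic $x$. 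When $Q(x)=0$ the sum is actually a complete additive character sum equal to $-1$, which is consistent with this bound.

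For the unnormalized bound, take $x\ne0$ in \eqref{eq:unnormalized-sphere-kernel}. The delta term vanishes, so
\[
\left|\sum_{\xi\in S_j}\chi(B(x,\xi))\right|=q^{m-1}|\Sigma(x)|\le 2q^{m-1/2}.
\]

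For the normalized kernel, \eqref{eq:normalized-sphere-kernel} gives
\[
|\widetilde K_j(x)|=\frac{|\Sigma(x)|}{q^m-\epsilon_Q}\le\frac{2\sqrt q}{q^m-1}.
\]
Since $q\ge3$ and $m\ge1$, we have $q^m-1\ge \tfrac23 q^m$. Hence $|\widetilde K_j(x)|\le 3q^{-m+1/2}$ uniformly in $x$, $q$, $m$, $Q$ and $j$. This is the claimed bound with an absolute implied constant.

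There is no real obstacle here. The only points to check are that the Kloosterman hypothesis $(a,b)\ne(0,0)$ holds because $j\ne0$ even when $Q(x)=0$, and that the denominator $q^m-\epsilon_Q$ is bounded below by a fixed multiple of $q^m$, which holds uniformly since $q\ge3$.
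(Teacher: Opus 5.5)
Your proposal is correct and follows the same route as the paper: bound the scalar sum in \eqref{eq:unnormalized-sphere-kernel} via Lemma~\ref{lem:twisted-kloosterman}, then use $q^m-\epsilon_Q\ge\tfrac23q^m$. The paper separates the case $Q(x)=0$, where the sum is exactly $-1$; you absorb that case into the Kloosterman bound, since $(a,b)=(-j,-Q(x)/4)\ne(0,0)$ whenever $j\ne0$.
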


\begin{proof}
The scalar sum in \eqref{eq:unnormalized-sphere-kernel} equals $-1$
when $Q(x)=0$ and has modulus at most $2\sqrt q$ otherwise, by
Lemma~\ref{lem:twisted-kloosterman}. Since
$q^m-\epsilon_Q\ge(2/3)q^m$, both bounds follow.
\end{proof}

The following lemma collects some elementary restriction bounds.

\begin{lemma}
\label{lem:global-auxiliary-bounds}
For every $f:V\to\C$ and every $j\ne0$,
\[
 \|\widehat f\|_{L^2(S_j,d\sigma_j)}
 \lesssim\|f\|_2+q^{-m/2+1/4}\|f\|_1,
 \qquad
 \|\widehat f\|_{L^2(S_j,d\sigma_j)}
 \le\frac{q^m}{\sqrt{|S_j|}}\|f\|_2
 \lesssim q^{1/2}\|f\|_2.
\]
The implicit constants are absolute.
\end{lemma}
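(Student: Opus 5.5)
Both bounds will come from expanding the square of $\widehat f$ on the sphere and reading off the sphere kernel of Lemma~\ref{lem:size-kernel}. We have
\[
 \|\widehat f\|_{L^2(S_j,d\sigma_j)}^2
 =\frac1{|S_j|}\sum_{\xi\in S_j}|\widehat f(\xi)|^2
 =\sum_{x,y}f(x)\overline{f(y)}\,\frac1{|S_j|}\sum_{\xi\in S_j}\chi(B(y-x,\xi))
 =\sum_{x,y}f(x)\overline{f(y)}\,(d\sigma_j)^\vee(y-x).
\]
We then insert the decomposition \eqref{eq:normalized-sphere-kernel}, which writes $(d\sigma_j)^\vee$ as a multiple of the delta at the origin plus $\widetilde K_j$.

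\textbf{First bound.} The delta part of the kernel contributes $\frac{q^m}{q^m-\epsilon_Q}\|f\|_2^2\le 3\|f\|_2^2$, since $q^m-\epsilon_Q\ge (2/3)q^m$. The part $\widetilde K_j$ is bounded trivially by Lemma~\ref{lem:kernel-linfty}:
\[
 \Bigl|\sum_{x,y}f(x)\overline{f(y)}\widetilde K_j(y-x)\Bigr|\le \|\widetilde K_j\|_\infty\|f\|_1^2\lesssim q^{-m+1/2}\|f\|_1^2 .
\]
Adding the two contributions gives
\[
 \|\widehat f\|^2_{L^2(S_j,d\sigma_j)}\lesssim \|f\|_2^2+q^{-m+1/2}\|f\|_1^2 .
\]
Taking square roots and using $\sqrt{a+b}\le\sqrt a+\sqrt b$ produces the exponent $-m/2+1/4$, with an absolute constant.

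\textbf{Second bound.} Here we do not use the kernel at all. Enlarge the sum from $S_j$ to all of $V$ and apply Plancherel, $\|\widehat f\|_2=q^m\|f\|_2$:
\[
 \frac1{|S_j|}\sum_{\xi\in S_j}|\widehat f(\xi)|^2\le \frac{q^{2m}}{|S_j|}\|f\|_2^2 .
\]
This is exactly the middle inequality. For the last step, Lemma~\ref{lem:size-kernel} gives $|S_j|=q^{2m-1}-\epsilon_Qq^{m-1}\ge q^{2m-1}(1-q^{-m})\ge \tfrac23 q^{2m-1}$. Hence $q^{2m}/|S_j|\le \tfrac32 q$, and so $q^m/\sqrt{|S_j|}\lesssim q^{1/2}$.

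There is no genuine obstacle. The only points needing care are that the constants stay absolute, uniformly in $m$, which holds since $q\ge3$ and $m\ge1$, and that the sign convention in $\chi(B(y-x,\xi))$ matches the definition of $(d\sigma_j)^\vee$. The sign is harmless in any case, because $\widetilde K_j$ depends only on $Q(x)$ and is therefore even.
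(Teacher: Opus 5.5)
Your proposal is correct and matches the paper's proof. You expand the square against the normalized kernel \eqref{eq:normalized-sphere-kernel}, bound the delta term by a constant times $\|f\|_2^2$ and the $\widetilde K_j$ term by $\|\widetilde K_j\|_\infty\|f\|_1^2$ via Lemma~\ref{lem:kernel-linfty}, and obtain the second bound from Plancherel together with $|S_j|\ge\tfrac23q^{2m-1}$, exactly as the paper does.
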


\begin{proof}
Expand the square of the restriction norm and apply
\eqref{eq:normalized-sphere-kernel}:
\[
 \|\widehat f\|_{L^2(S_j,d\sigma_j)}^2
 \le\frac{q^m}{q^m-\epsilon_Q}\|f\|_2^2
       +\|\widetilde K_j\|_\infty\|f\|_1^2
 \lesssim\|f\|_2^2+q^{-m+1/2}\|f\|_1^2.
\]
Taking square roots proves the first bound. Plancherel gives
$\sum_{\xi\in S_j}|\widehat f(\xi)|^2\le q^{2m}\|f\|_2^2$,
which proves the second.
\end{proof}

We use the following uniform $\varepsilon$-removal lemma.

\begin{lemma} \label{lem:epsilon-removal}
Fix $m\ge1$ and $2\le b<\infty$, and let $\mathcal Q$ be a family
of odd prime powers. Suppose that, for every
$\varepsilon>0$, there is $C_\varepsilon$ such that
\[
 \|(f\,d\sigma_j)^\vee\|_{L^b(V)}
 \le C_\varepsilon q^\varepsilon
       \|f\|_{L^2(S_j,d\sigma_j)}
\]
for every $q\in\mathcal Q$, every nondegenerate $Q$ on $V$,
every $j\ne0$, and every complex function $f$ on $S_j$.
Then, for every $r>b$,
\[
 \|(f\,d\sigma_j)^\vee\|_{L^r(V)}
 \le C_r\|f\|_{L^2(S_j,d\sigma_j)}
\]
uniformly in $q\in\mathcal Q$, $Q$, $j$, and $f$. The constant depends
only on $m$, $b$, $r$, and the constants in the hypothesis.
\end{lemma}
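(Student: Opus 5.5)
We follow the standard $\varepsilon$-removal scheme of Tao, adapted to the finite-field setting by Mockenhaupt--Tao: trade the $q^\varepsilon$ loss at exponent $b$ for a gain coming from the dispersive bound of Lemma~\ref{lem:kernel-linfty}. By duality it suffices to prove the restriction form \eqref{restriction} with $u'=2$, namely $\|\widehat g\|_{L^2(S_j,d\sigma_j)}\le C\|g\|_{L^{r'}(V)}$, and by a dyadic decomposition of $|g|$ (restricted weak type plus interpolation, or simply splitting $g$ into $O(\log q)$ level pieces and absorbing the logarithm into the polynomial gain below) it suffices to treat $g=\1_E\cdot \phi$ with $|\phi|\le 1$, i.e.\ to show
\[
 \|\widehat{g}\|_{L^2(S_j,d\sigma_j)}\lesssim |E|^{1/r'}\qquad(\operatorname{supp}g\subseteq E,\ |g|\le1).
\]
The log losses are harmless since the target exponent $r>b$ leaves a power of $q$ to spare; alternatively one interpolates restricted strong-type estimates at two exponents near $r$.

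The two available bounds for such $g$ are as follows. First, the hypothesis dualized gives $\|\widehat g\|_{L^2(S_j,d\sigma_j)}\le C_\varepsilon q^{\varepsilon}|E|^{1/b'}$. Second, Lemma~\ref{lem:global-auxiliary-bounds} gives $\|\widehat g\|_{L^2(S_j,d\sigma_j)}\lesssim |E|^{1/2}+q^{-m/2+1/4}|E|$. Since $r'<b'\le 2$, we have $1/r'>1/2$, so the term $|E|^{1/2}$ is always dominated by $|E|^{1/r'}$. The second term is at most $|E|^{1/r'}$ exactly when $|E|^{1-1/r'}=|E|^{1/r}\le q^{m/2-1/4}$, i.e.\ $|E|\le q^{r(m/2-1/4)}$. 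In the complementary range $|E|>q^{r(m/2-1/4)}$, the first bound yields $q^\varepsilon|E|^{1/b'}=q^{\varepsilon}|E|^{1/r'}|E|^{1/r-1/b}\le q^{\varepsilon-\delta}|E|^{1/r'}$ with $\delta=r(m/2-1/4)(1/b-1/r)>0$, using $m\ge1$. Choosing $\varepsilon=\delta$ finishes this case. The constant depends only on $m,b,r$ and $C_\delta$.

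The main obstacle is the reduction from general $g$ to the bounded-on-a-set case without losing $\log q$; I would handle it by proving the above restricted estimate at two exponents $r_0<r<r_1$ with $r_0>b$ (the argument works for each), then using Marcinkiewicz/real interpolation for restricted weak type to obtain the strong type at $r$. The other detail is to check that $q^m-\epsilon_Q$ and $|S_j|$ normalizations are comparable uniformly, which Lemma~\ref{lem:kernel-linfty} already records; the argument uses nothing about $Q$ or $j$ beyond those uniform constants, so the resulting bound is uniform over $q\in\mathcal Q$, $Q$ and $j\ne0$.
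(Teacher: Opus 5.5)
Your proof is correct, but it takes a different route from the paper's. The paper does not prove the $\varepsilon$-removal step itself. It uses Lemma~\ref{lem:kernel-linfty} to get the uniform decay $|(d\sigma_j)^\vee(x)|\le Cq^{-1/2}\le q^{-1/4}$ for $x\ne0$ and large $q$. It then invokes the general $\varepsilon$-removal lemma \cite[Lemma~16]{european} with source exponent two, asserts that its constants are uniform along $\mathcal Q$, and handles the finitely many small $q$ by the trivial bound $q^{2m/r}$.

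You instead prove the needed special case directly. Lemma~\ref{lem:global-auxiliary-bounds}, which is the $TT^*$ consequence of the same decay, handles sets with $|E|\le q^{r(m/2-1/4)}$. Above that threshold, the hypothesis with $\varepsilon=\delta=r(m/2-1/4)(1/b-1/r)$ has a power of $q$ to spare. Your case analysis and exponent bookkeeping are right. Because Lemma~\ref{lem:global-auxiliary-bounds} has absolute constants for every odd $q$, you need no separate small-$q$ case, and the uniformity in $q\in\mathcal Q$, $Q$, $j$ is visible rather than inherited from a citation. The paper's route buys brevity and a general-decay statement; yours is self-contained and shows the small-set/large-set mechanism explicitly.

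One correction to your first paragraph. Splitting $g$ into $O(\log q)$ level pieces and absorbing the logarithm into the polynomial gain does not work by itself: in the small-set regime the bound from Lemma~\ref{lem:global-auxiliary-bounds} has no power of $q$ to spare. Your fallback is the right fix, and it can be simplified. You only need the restricted estimate at a single exponent $r_1\in(b,r)$. Normalize $\|g\|_{r'}=1$, so $|g|\le1$ on counting measure. The level sets $F_k=\{2^{-k-1}<|g|\le2^{-k}\}$ then satisfy $|F_k|\le2^{(k+1)r'}$. Hence
\[
 \|\widehat{g\1_{F_k}}\|_{L^2(S_j,d\sigma_j)}\lesssim2^{-k}|F_k|^{1/r_1'}\lesssim2^{-k(1-r'/r_1')},
\]
which is a geometric series since $r'<r_1'$. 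This is the same device used in the proof of Proposition~\ref{thm:restriction}.

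If you prefer to interpolate between two exponents, note what makes it work. You have restricted \emph{strong} type into the fixed space $L^2$, so real interpolation of the $L^{p_i,1}\to L^2$ bounds gives strong type. Interpolating restricted weak type with equal target exponents would only yield $L^{2,\infty}$.
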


\begin{proof}
Lemma~\ref{lem:kernel-linfty} gives
$|(d\sigma_j)^\vee(x)|\le Cq^{-m+1/2}\le Cq^{-1/2}$ for $x\ne0$.
For sufficiently large $q$, this is at most $q^{-1/4}$.
Thus, the surfaces have a common positive Fourier dimension.
The $\varepsilon$-removal lemma of \cite[Lemma~16]{european}, with source
exponent two and target exponent $b$, gives the conclusion for
$r>b$. Its constants are uniform for this fixed decay bound and
the constants $C_\varepsilon$; the argument applies along the
prescribed family $\mathcal Q$. The bounded remaining field sizes
are covered by Cauchy--Schwarz:
\[
 \|(f\,d\sigma_j)^\vee\|_{L^r(V)}
 \le q^{2m/r}\|f\|_{L^2(S_j,d\sigma_j)}.
\]
\end{proof}

\subsection{A norm bound for Hecke operators}
\label{sec:weighted-hecke-operator-bound}

Let $K=\F_q(X)$ be the rational function field over $\F_q$, and let
$\mathbb A_K$ be its adele ring. Write $G=\mathrm{GL}_2$ for the
general linear algebraic group over $K$, and $Z$ for its center. Thus,
$G(K)=\mathrm{GL}_2(K)$ and
$G(\mathbb A_K)=\mathrm{GL}_2(\mathbb A_K)$. A unitary central character
$\omega$ is a continuous character of $Z(K)\backslash Z(\mathbb A_K)$.
The scalar automorphic space $\mathcal H_\omega$ consists of measurable
functions satisfying
\[
 F(\gamma z g)=\omega(z)F(g),\qquad
 \|F\|_{\mathcal H_\omega}^2
 =\int_{Z(\mathbb A_K)G(K)\backslash G(\mathbb A_K)}|F(g)|^2\,dg<\infty.
\]
Here $\gamma\in G(K)$ and $z\in Z(\mathbb A_K)$. The absolute value is
well defined on the central quotient because $\omega$ is unitary.

Let $U=\prod_w U_w$ be a compact open level subgroup and let
$\tau:U\to\mathrm U(\mathcal V)$ be a continuous finite-dimensional unitary representation.
We impose the central compatibility condition
\begin{equation}\label{eq:typed-central-compatibility}
 \tau(z)=\omega(z)^{-1}I_{\mathcal V}\qquad(z\in U\cap Z(\mathbb A_K)).
\end{equation}
The space with coefficient representation $\tau$ is
\[
 \mathcal H_\omega(U,\tau)
   =(\mathcal H_\omega\otimes \mathcal V)^{R\otimes\tau}(U),
 \qquad (R(k)F)(g)=F(gk).
\]
Equivalently, it consists of $\mathcal V$-valued functions with the convention
\begin{equation}\label{eq:typed-automorphic-convention}
 \Phi(\gamma z g k)=\omega(z)\tau(k)^{-1}\Phi(g),
 \qquad\|\Phi\|^2=\int_{Z(\mathbb A_K)G(K)\backslash G(\mathbb A_K)}\|\Phi(g)\|_{\mathcal V}^2\,dg.
\end{equation}
Condition~\eqref{eq:typed-central-compatibility} makes the two
transformation rules agree on central elements of $U$.

\begin{proposition}\label{prop:weighted-hecke-bound}
Let $v$ be a rational place. Suppose that
$U_v=\mathrm{GL}_2(\mathcal O_v)$, that $\tau$ is trivial on $U_v$,
and that $\mathcal H_\omega(U,\tau)$ has zero projection onto every
residual determinant representation. Normalize local Haar measure by
$\operatorname{vol}(U_v)=1$. Then the degree-one Hecke operator
\[
 H_v=\1_{U_v\operatorname{diag}(\varpi_v,1)U_v}
\]
has norm at most $2\sqrt q$ on $\mathcal H_\omega(U,\tau)$.
The residue field at a rational place is $\F_q$, so the double coset
is the union of $q+1$ right cosets, each with coefficient one.
The bound is independent of the dimension and conductor of $\tau$.
It also holds for every compression by isometric coordinate embeddings.
\end{proposition}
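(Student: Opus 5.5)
We will prove the bound by spectral decomposition of $\mathcal H_\omega(U,\tau)$ and the local Ramanujan bound at $v$. The operator $H_v$ commutes with right translation by $U$ and with the coefficient action, and it acts only through the $v$-component. Because $\tau$ is trivial on $U_v=\mathrm{GL}_2(\mathcal O_v)$, every vector of $\mathcal H_\omega(U,\tau)$ is right $U_v$-invariant. Hence $H_v$ preserves the space. Its adjoint is the Hecke operator of $U_v\operatorname{diag}(1,\varpi_v)U_v=U_v\operatorname{diag}(\varpi_v,1)U_v\cdot\varpi_v^{-1}$. Central elements act by the unitary character $\omega$, so $H_v$ is normal and its norm equals its spectral radius.

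First, decompose $L^2_\omega(Z(\mathbb A_K)G(K)\backslash G(\mathbb A_K))$ into the cuspidal spectrum, the residual spectrum and the continuous (Eisenstein) spectrum. Tensor with $\mathcal V$ and take $U$-invariants under $R\otimes\tau$. Each irreducible constituent $\pi=\otimes_w\pi_w$ contributes the space $(\pi\otimes\mathcal V)^U$. On this space $H_v$ acts as the scalar by which the spherical Hecke algebra acts on the unramified line $\pi_v^{U_v}$, tensored with the identity on the remaining factors. This is the step where the hypothesis that $\tau$ is trivial on $U_v$ is used. Write $\pi_v=\operatorname{Ind}(\mu_1,\mu_2)$ with unramified characters. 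With $\operatorname{vol}(U_v)=1$ and the $q+1$ cosets, the eigenvalue is $q^{1/2}(\mu_1(\varpi_v)+\mu_2(\varpi_v))$.

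Second, bound these eigenvalues in each part of the spectrum.
\begin{itemize}
\item \emph{Cuspidal part.} By Drinfeld's theorem (Ramanujan for $\mathrm{GL}_2$ over function fields), $\pi_v$ is tempered, so $|\mu_i(\varpi_v)|=1$ and the eigenvalue is at most $2\sqrt q$ in modulus.
\item \emph{Continuous part.} This is a direct integral of unitarily induced representations from unitary Hecke characters. The local $\mu_i$ are unitary, so the same bound holds. The norm on a direct integral is the essential supremum, which is still at most $2\sqrt q$.
\item \emph{Residual part.} These are the one-dimensional representations $\psi\circ\det$, with eigenvalue $q^{1/2}(q^{1/2}+q^{-1/2})\psi(\varpi_v)$ of modulus $q+1$. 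They are excluded by the hypothesis of zero projection.
\end{itemize}
Summing orthogonally gives $\|H_v\|\le 2\sqrt q$. The argument never refers to $\dim\mathcal V$ or to the conductor of $\tau$, which gives the stated uniformity. For the compression claim, let $J$ be an isometric embedding. Then $\|J^*H_vJ\|\le\|H_v\|$.

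The main obstacle is making the spectral decomposition with coefficients rigorous. Concretely, we must justify that $(\mathcal H_\omega\otimes\mathcal V)^{R\otimes\tau}(U)$ decomposes compatibly, including the continuous spectrum as a direct integral, and that $H_v$ acts fiberwise by the unramified scalar. We would handle this by choosing an orthonormal basis of $\mathcal V$, so that the space sits inside $\mathcal H_\omega^{\ker\tau}\otimes\mathcal V$, where $\ker\tau$ is open of finite index. We then apply the standard Langlands decomposition for the compact open level $\ker\tau$, where $H_v$ commutes with the projection onto $\tau$-isotypic vectors. We also need to check that the central compatibility condition \eqref{eq:typed-central-compatibility} and the normalization of $H_v$ produce exactly the eigenvalue formula above. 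This is a routine Satake computation at $v$, using that the residue field is $\F_q$.
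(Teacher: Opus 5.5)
Your proposal is correct and is essentially the paper's proof: Drinfeld's temperedness on the cuspidal part, unitary inductions on the continuous part, exclusion of the residual determinant lines by hypothesis, normality of $H_v$ via a central scalar, and passage to coefficients through $H_v\otimes I_{\mathcal V}$ followed by isometric compressions. Two small points: the paper explicitly twists by an unramified unitary character of the idele degree, reducing an arbitrary unitary $\omega$ to a finite-order central character before invoking Drinfeld; and your adjoint identity should read $(U_v\operatorname{diag}(\varpi_v,1)U_v)^{-1}=\varpi_v^{-1}U_v\operatorname{diag}(\varpi_v,1)U_v$, since $U_v\operatorname{diag}(1,\varpi_v)U_v$ is the original double coset.
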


\begin{proof}
Drinfeld's theorem \cite{Drinfeld} implies that every unramified local
component of a cuspidal representation of $\mathrm{GL}_2(\mathbb A_K)$
is tempered. No assumption is made on its ramification at the other
places. Thus, at $v$ its normalized Satake parameters $\alpha,\beta$
have modulus one. The eigenvalue of $H_v$ is
$\sqrt q(\alpha+\beta)$ and has modulus at most $2\sqrt q$.
To pass from the usual finite-order-central-character formulation to
an arbitrary unitary $\omega$, twist by a unitary character of the
idele degree. The restriction of $\omega$ to the compact degree-zero
idele class group has finite order. One can choose the twist $\nu$ so
that $\omega\nu^2$ is trivial on a degree-one generator. The twisted
central character then has finite order. Such a twist is unramified
everywhere and does not change the moduli of the Satake parameters.

The continuous spectrum is a direct integral of normalized parabolic
inductions from unitary characters $\mu_1,\mu_2$;
see \cite[Section~1.2]{Flicker}. When the local induction at $v$ has
$U_v$-fixed vectors, its Hecke eigenvalue is
\[
 \sqrt q\bigl(\mu_{1,v}(\varpi_v)+\mu_{2,v}(\varpi_v)\bigr),
\]
whose modulus is at most $2\sqrt q$. The residual spectrum consists
of determinant-character lines; see
\cite[Section~4, Proposition~4.29]{Flicker}. It is excluded by hypothesis.

The operator $H_v$ is normal. Indeed, its inverse double coset is the
original double coset multiplied by the central scalar
$\varpi_v^{-1}I$. Therefore, $H_v^*$ is a scalar multiple of
$H_v$ on $\mathcal H_\omega$, and the scalar has modulus one. The spectral bounds give the claimed
operator norm on the orthogonal complement
$\mathcal H_{\omega,\mathrm{nr}}$ of the residual spectrum.
Residual spectral projections commute with the right level action.
Hence, the hypothesis places the space with coefficients inside
$\mathcal H_{\omega,\mathrm{nr}}\otimes \mathcal V$. The operator there is
$H_v\otimes I_{\mathcal V}$. Tensoring with $I_{\mathcal V}$, passing to the closed invariant
space, and compressing by isometries do not increase its norm.
\end{proof}

At a full flag, a diagonal character $(\omega_0,1)$ with $\omega_0\ne1$
excludes every determinant representation: its two diagonal characters
would have to be equal. A nontrivial character on an element of
determinant one in a residue unipotent subgroup also excludes them.
We use both criteria below. The local coefficient characters determine
the inverse of the automorphic central character through
\eqref{eq:typed-central-compatibility}. When extending these characters
along the degree factor, we may choose a finite-order degree character,
for example the trivial one.

We next specify the coordinate weights. Put
$\overline G=Z(\mathbb A_K)\backslash G(\mathbb A_K)$, and let
$\overline\Gamma$ and $\overline U$ be the images of $G(K)$ and $U$.
For a representative $g_a$ of
$\overline\Gamma\backslash\overline G/\overline U$, define
\[
 \Gamma_a=\overline\Gamma\cap
                  \overline g_a\overline U\overline g_a^{-1},
 \qquad w_a=\frac{\operatorname{vol}(\overline U)}{|\Gamma_a|}.
\]
The group $\Gamma_a$ is finite. The allowed coefficient fiber is
\begin{equation}\label{eq:typed-stabilizer-fiber}
\mathcal V_a=\{v\in \mathcal V:\omega(z)\tau(k)v=v\text{ whenever }g_a k=\gamma z g_a\},
\end{equation}
where $\gamma\in G(K)$, $z\in Z(\mathbb A_K)$, and $k\in U$.
The scalar $\omega(z)$ is independent of the chosen rational central
factor, and \eqref{eq:typed-central-compatibility} removes the ambiguity
from $U\cap Z(\mathbb A_K)$. Thus, the fiber condition is well defined.

\Needspace{12\baselineskip}
\begin{lemma} \label{lem:typed-hecke-compression}
Under the hypotheses of Proposition~\ref{prop:weighted-hecke-bound},
evaluation on the representatives $g_a$ identifies the space with
coefficients isometrically with
\[
 \left\{(v_a):v_a\in \mathcal V_a,\quad\sum_a w_a\|v_a\|_{\mathcal V}^2<\infty\right\}.
\]
Let $h_1,\ldots,h_{q+1}$ represent the right $U_v$-cosets of
$U_v\operatorname{diag}(\varpi_v,1)U_v$, embedded at the place $v$.
For finite sets of source and target classes, the matrix in orthonormal
fiber coordinates has entries
\begin{equation}\label{eq:typed-compression-matrix}
H_{ab}=\sqrt{\frac{w_a}{w_b}}\,
 P_{\mathcal V_a}\sum_{\substack{i:\,[g_a h_i]=[g_b]}}\omega(z_i)\tau(k_i)^{-1}\big|_{\mathcal V_b},
 \qquad g_a h_i=\gamma_i z_i g_b k_i.
\end{equation}
Here $P_{\mathcal V_a}$ is orthogonal projection onto $\mathcal V_a$, and every right
coset is counted once. The matrix has norm at most $2\sqrt q$.
If the source and target weights are equal and their stabilizers act
trivially on $\mathcal V$, the weight ratio and the fiber projections disappear.
\end{lemma}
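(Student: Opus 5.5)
The plan is the standard double-coset decomposition of the adelic quotient, made explicit so that the weights, fibers and projections in the lemma appear naturally. It has three steps.

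\emph{Step 1: the isometry.} By \eqref{eq:typed-automorphic-convention}, a section $\Phi$ is determined by its values on the representatives $g_a$ of $\overline\Gamma\backslash\overline G/\overline U$. If $g_ak=\gamma zg_a$, the transformation rule gives $\Phi(g_a)=\omega(z)\tau(k)^{-1}\Phi(g_a)$, that is, $\omega(z)\tau(k)\Phi(g_a)=\Phi(g_a)$. Hence $\Phi(g_a)\in\mathcal V_a$. The well-definedness discussion preceding \eqref{eq:typed-stabilizer-fiber} shows this condition does not depend on the factorization.

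Conversely, any $v_a\in\mathcal V_a$ extends uniquely to the double coset $\overline\Gamma g_a\overline U$ by $\Phi(\gamma zg_ak)=\omega(z)\tau(k)^{-1}v_a$. Consistency of this extension is exactly the fiber condition, together with \eqref{eq:typed-central-compatibility}.

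For the norm, I would unfold the integral over $\overline\Gamma\backslash\overline G$ on the cell $\overline\Gamma g_a\overline U$. The map $\overline U\to\overline\Gamma\backslash\overline\Gamma g_a\overline U$, $k\mapsto g_ak$, is $|\Gamma_a|$-to-one, because its fibers are cosets of $g_a^{-1}\Gamma_ag_a$. The integrand $\|\Phi\|_{\mathcal V}^2$ is constant on the cell, since $\tau$ and $\omega$ are unitary. So the cell contributes $\operatorname{vol}(\overline U)|\Gamma_a|^{-1}\|v_a\|^2=w_a\|v_a\|^2$. Summing over $a$ gives the isometry.

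\emph{Step 2: the matrix.} With $H_v$ acting by $(H_v\Phi)(g)=\sum_i\Phi(gh_i)$ (using $\operatorname{vol}(U_v)=1$ and coefficient one on each coset), evaluate at $g_a$. For each $i$, write $g_ah_i=\gamma_iz_ig_bk_i$, where $b$ is the class of $g_ah_i$. Then $\Phi(g_ah_i)=\omega(z_i)\tau(k_i)^{-1}v_b$. Therefore
\[
(H_v\Phi)(g_a)=\sum_b\sum_{i:[g_ah_i]=[g_b]}\omega(z_i)\tau(k_i)^{-1}v_b .
\]
This vector lies automatically in $\mathcal V_a$, because $H_v\Phi$ is again a section. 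Inserting $P_{\mathcal V_a}$ is therefore harmless, and it makes the formula valid as a map $\mathcal V_b\to\mathcal V_a$.

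To pass to orthonormal coordinates for the weighted $\ell^2$ space, conjugate by $\operatorname{diag}(\sqrt{w_a})$. This produces the factor $\sqrt{w_a/w_b}$ and yields \eqref{eq:typed-compression-matrix}. The matrix does not depend on the chosen decompositions $g_ah_i=\gamma_iz_ig_bk_i$: different choices differ by an element of the stabilizer, which acts trivially on $\mathcal V_b$ by definition of the fiber.

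\emph{Step 3: the norm bound.} Restricting to finite sets of source and target classes composes $H_v$ with coordinate inclusion and projection. Both are isometric embeddings or their adjoints in the identified space. Proposition~\ref{prop:weighted-hecke-bound}, whose final sentence covers such compressions, then gives the bound $2\sqrt q$.

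The final remark of the lemma is immediate from the formula. If $w_a=w_b$ the weight ratio is $1$, and if the stabilizers act trivially on $\mathcal V$ then $\mathcal V_a=\mathcal V_b=\mathcal V$, so both projections are the identity.

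The main obstacle is the bookkeeping in the unfolding step. One must check that the cell measure is exactly $\operatorname{vol}(\overline U)/|\Gamma_a|$ after passing to central quotients. This means verifying that $\Gamma_a$ is finite and acts freely (modulo the kernel already quotiented out) on $\overline U$ by $k\mapsto g_a^{-1}\gamma g_ak$. I would settle this first, by working entirely in $\overline G$ with the image groups, before computing the matrix.
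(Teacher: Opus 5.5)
Your proposal is correct and follows the paper's proof essentially step for step: the image of $g_a\overline U$ has measure $\operatorname{vol}(\overline U)/|\Gamma_a|$ with constant integrand, the fiber $\mathcal V_a$ is exactly the consistency condition for extending a value at $g_a$, $(H_v\Phi)(g_a)=\sum_i\Phi(g_ah_i)$ because $\tau$ is trivial at $v$, and your conjugation by $\operatorname{diag}(\sqrt{w_a})$ is the paper's compression $J_a^*H_vJ_b$, where $J_a v$ has value $w_a^{-1/2}v$ at $g_a$. One harmless slip: from $g_ak=\gamma zg_a$ the transformation law gives $\tau(k)^{-1}\Phi(g_a)=\omega(z)\Phi(g_a)$, so your intermediate identity should carry $\omega(z)^{-1}$ rather than $\omega(z)$, although the fiber condition $\omega(z)\tau(k)\Phi(g_a)=\Phi(g_a)$ that you then state is the correct one.
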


\begin{proof}
The image of $g_a\overline U$ in
$\overline\Gamma\backslash\overline G$ has measure
$\operatorname{vol}(\overline U)/|\Gamma_a|$. Its stabilizer is the
finite group $\Gamma_a$, and the transformation law
\eqref{eq:typed-automorphic-convention} makes the norm of a section
constant on this orbit. It also shows that a value at $g_a$ extends
consistently over the orbit exactly when it lies in $\mathcal V_a$.
This proves the asserted Hilbert-space identification.

The isometric embedding $J_a:\mathcal V_a\to\mathcal H_\omega(U,\tau)$
has value $w_a^{-1/2}v$ at $g_a$ and vanishes on the other classes.
Since the type is trivial at $v$, right convolution gives
$(H_v\Phi)(g_a)=\sum_i\Phi(g_a h_i)$.
For $g_a h_i=\gamma_i z_i g_b k_i$, the summand is
$\omega(z_i)\tau(k_i)^{-1}\Phi(g_b)$.
Applying $J_a^*$ and $J_b$ gives
\eqref{eq:typed-compression-matrix}. Its norm is bounded by the norm
of $H_v$, as claimed.
\end{proof}

For the bundle descriptions used below, the base curve is
$\mathbb P^1$. Quotienting by the adelic center identifies bundles
that differ by tensoring with a line bundle. A rank-two bundle cannot
be isomorphic to its twist by a line bundle of nonzero degree; a
line bundle of degree zero on $\mathbb P^1$ is trivial. If the level
contains all scalar units, this identifies $\Gamma_a$ with
$\operatorname{Aut}(E_a)/\F_q^\times$. Consequently,
\[
 w_a=\operatorname{vol}(\overline U)
                \frac{q-1}{|\operatorname{Aut}(E_a)|}.
\]
Thus, the reciprocal-automorphism weights differ from the
central-quotient weights by one common positive factor. In particular,
two objects with scalar automorphism group $\F_q^\times$ have equal
weights. Their entire coefficient spaces may be used precisely when
the combined stabilizer action in \eqref{eq:typed-stabilizer-fiber}
is trivial.

\subsection{Finite group representations}

We use the unitary linear Weil representation of
$\mathrm{Sp}_4(\F)$ on $L^2(\F^2)$, as in
\cite[Theorem~1.3.2]{GH}. In
Section~\ref{sec:exact-weil-root-normalization}, we specify its action
on generators and derive its Gaussian kernel. These formulas fix
the character and sign conventions. Its restriction to any symplectic
subgroup is again a unitary linear representation. This also applies
to the subgroups arising from a nonreduced quadratic algebra below.

\begin{lemma}\label{lem:sl2-regular-class-bound}
Let $q$ be odd, let $\mathcal C$ be a regular semisimple conjugacy class
in $\mathrm{SL}_2(\F)$, and let $\chi_\pi$ be a nontrivial irreducible
character. For $g\in\mathcal C$,
\[
 \frac{|\mathcal C|\,|\chi_\pi(g)|}{\chi_\pi(1)}\le2q.
\]
\end{lemma}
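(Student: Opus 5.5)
We bound $|\mathcal C|\,|\chi_\pi(g)|/\chi_\pi(1)$ by direct inspection of the character table of $\mathrm{SL}_2(\F)$ for $q$ odd. A regular semisimple $g$ is either split, conjugate to $\operatorname{diag}(a,a^{-1})$ with $a\ne\pm1$, or nonsplit (elliptic), with eigenvalues $\lambda,\lambda^q=\lambda^{-1}$ in $\F_{q^2}^\times\setminus\F^\times$ of norm one, $\lambda\ne\pm1$. Centralizer orders give the class sizes: for split classes, $|\mathcal C|=|\mathrm{SL}_2(\F)|/(q-1)=q(q+1)$; for nonsplit classes, $|\mathcal C|=q(q-1)$.

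We then run through the nontrivial irreducible characters, with their values on these classes.
\begin{itemize}[leftmargin=*]
\item The Steinberg character has degree $q$ and value $\pm1$ on regular semisimple elements: $+1$ on split, $-1$ on nonsplit. Its ratio is at most $q(q+1)/q=q+1\le 2q$.
\item Principal series characters have degree $q+1$ and value $\mu(a)+\mu(a^{-1})$ on split classes, of modulus $\le2$, and $0$ on nonsplit classes. The ratio is at most $2q(q+1)/(q+1)=2q$, which attains the bound.
\item Discrete series characters have degree $q-1$, vanish on split classes, and take values $-(\theta(\lambda)+\theta(\lambda^{-1}))$, of modulus $\le2$, on nonsplit classes. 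The ratio is at most $2q(q-1)/(q-1)=2q$.
\item The half principal series have degree $(q+1)/2$ and values $\mu_0(a)$, with $\mu_0$ the quadratic character, so of modulus $1$, on split classes; they vanish on nonsplit classes. The ratio is $q(q+1)/((q+1)/2)=2q$.
\item The half discrete series have degree $(q-1)/2$ and values $-\theta_0(\lambda)$, of modulus $1$, on nonsplit classes; they vanish on split classes. The ratio is $q(q-1)/((q-1)/2)=2q$.
\end{itemize}
In each case the bound $2q$ holds, so the claimed estimate follows.

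The main obstacle is just the bookkeeping: recording correctly which characters vanish on which regular semisimple classes, and that the nonzero values there have modulus at most $2$ (full series) or $1$ (half series). We cite the standard table (e.g.\ Fulton--Harris, \S5.2, or Digne--Michel) rather than rederive it. A cross-check is available through Deligne--Lusztig theory: on a regular semisimple element $g$ in a torus $T$, the value of $R_T^\theta$ is $\pm(\theta(g)+\theta(g^{-1}))$ (a sum over the Weyl group of order $2$), and the value of $R_{T'}^\theta$ on $g$ vanishes for the non-conjugate torus $T'$. The half characters are the two constituents of the reducible $R_T^{\theta}$ with $\theta$ quadratic, so their values have modulus at most $1$ there. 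The Steinberg value $\pm1$ also follows, since $\mathrm{St}$ is $\pm$ the Alvis--Curtis dual of the trivial character.
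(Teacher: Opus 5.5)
Your proof is correct and is essentially the paper's argument. Both compute the regular semisimple class sizes $q(q+1)$ and $q(q-1)$ from centralizer orders, then read off the $\mathrm{SL}_2(\F)$ character table: principal series and cuspidal values have modulus at most $2$ with degrees $q\pm1$, the exceptional half-degree characters have modulus $1$, the Steinberg character has modulus $1$ with degree $q$, and the off-type values vanish, giving the bound $2q$ in every case.
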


\begin{proof}
A split regular class has size $q(q+1)$, and a nonsplit regular
class has size $q(q-1)$. The character table
\cite[Table~1]{BFL} gives modulus at most two for the principal-series
values on a split class, with degree $q+1$, and for the cuspidal
values on a nonsplit class, with degree $q-1$. The exceptional irreducible characters
have modulus one and half these degrees. The principal-series values on nonsplit regular classes and the cuspidal values on split regular classes vanish.
The Steinberg character has modulus one on these classes and degree
$q$. These bounds prove the assertion in every case.
\end{proof}

\section{A convolution identity and reduction to matrices}
\label{sec:exact-product-reduction}

Let $Q$ be a nondegenerate quadratic form on $V=\F^{2m}$,
and let $j\ne0$. Recall that $S_j=\{x:Q(x)=j\}$. For $t\in V$,
define the real symmetric matrix on $L^2(S_j)$ by
\begin{equation}\label{eq:centered-shell-matrix}
 M_t(x,y)=\1_{S_j}(x+y-t)-\frac{|S_j|}{q^{2m}},
 \qquad x,y\in S_j.
\end{equation}
Here we subtract the density of $S_j$ in $V$.
The following identity relates these matrices to convolution.

\begin{lemma}\label{lem:product-block-identity}
Let $f:V\to\C$ be supported on $S_j$, and let
$g:V\to\C$ be arbitrary. Put
\[
 a_t(x)=f(x)\overline{g(x-t)},\qquad x\in S_j.
\]
Then
\begin{equation}\label{eq:product-block-expansion}
 \|\1_{S_j}(f*g)\|_2^2-\frac{|S_j|}{q^{2m}}\|f*g\|_2^2
   =\sum_{t\in V}\langle M_ta_t,a_t\rangle,
\end{equation}
and
\begin{equation}\label{eq:product-vector-mass}
 \sum_{t\in V}\|a_t\|_2^2=\|f\|_2^2\|g\|_2^2.
\end{equation}
\end{lemma}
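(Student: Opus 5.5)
The plan is to expand both sides directly in terms of $f$ and $g$. Since $f$ is supported on $S_j$, write $(f*g)(z)=\sum_{x\in S_j}f(x)g(z-x)$. Then
\[
 |(f*g)(z)|^2=\sum_{x,y\in S_j}f(x)\overline{f(y)}\,g(z-x)\overline{g(z-y)}.
\]
Next reparametrize the pair $(z,x,y)$. I will look for a variable $t$ making $a_t(x)=f(x)\overline{g(x-t)}$ appear, which means arranging that $\overline{g(x-t)}$ pairs with $x$. So set $t=x+y-z$. Then $z-y=x-t$ and $z-x=y-t$, and the summand becomes
\[
 f(x)\overline{g(x-t)}\;\overline{f(y)\overline{g(y-t)}}=a_t(x)\overline{a_t(y)}.
\]
For fixed $x,y$, the map $z\mapsto t$ is a bijection of $V$. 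Hence, for any weight $w$ on $V$,
\[
 \sum_z w(z)|(f*g)(z)|^2=\sum_{t\in V}\sum_{x,y\in S_j}w(x+y-t)\,a_t(x)\overline{a_t(y)}.
\]

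I will apply this with two choices of weight. Taking $w=\1_{S_j}$ gives $\|\1_{S_j}(f*g)\|_2^2=\sum_t\sum_{x,y}\1_{S_j}(x+y-t)a_t(x)\overline{a_t(y)}$. Taking $w\equiv |S_j|/q^{2m}$ gives $\frac{|S_j|}{q^{2m}}\|f*g\|_2^2$. Subtracting the second from the first produces exactly $\sum_t\langle M_ta_t,a_t\rangle$, since $\langle Ma,a\rangle=\sum_{x,y}M(x,y)a(y)\overline{a(x)}$ and $M_t$ is symmetric. This proves \eqref{eq:product-block-expansion}.

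For \eqref{eq:product-vector-mass}, compute
\[
 \sum_t\sum_{x\in S_j}|f(x)|^2|g(x-t)|^2=\sum_{x}|f(x)|^2\sum_t|g(x-t)|^2=\|f\|_2^2\|g\|_2^2,
\]
using translation invariance of the sum over $t$.

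There is no real obstacle here. The only care needed is the bookkeeping: using the symmetry of $M_t$ (or the conjugation order in the inner product) so that the pairing matches the paper's convention, and noting that all sums are finite.
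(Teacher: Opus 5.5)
Your proposal is correct and follows the paper's proof essentially verbatim: the same expansion of $|(f*g)(z)|^2$, the same substitution $t=x+y-z$ to produce $a_t(x)\overline{a_t(y)}$, subtraction of the constant-weight expansion with symmetry of $M_t$ fixing the inner-product convention, and the same Fubini computation for the mass identity. Packaging both expansions as a single weighted identity in $w$ is only a cosmetic difference.
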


\begin{proof}
Since $\operatorname{supp}f\subseteq S_j$, expanding the square gives
\[
 \|\1_{S_j}(f*g)\|_2^2
 =\sum_z\1_{S_j}(z)\sum_{x,y\in S_j}
       f(x)g(z-x)\overline{f(y)g(z-y)}.
\]
For fixed $x,y$, make the bijective change of variables $t=x+y-z$.
The summand becomes
\[
 \1_{S_j}(x+y-t)
 f(x)g(y-t)\overline{f(y)g(x-t)}
 =\1_{S_j}(x+y-t)a_t(x)\overline{a_t(y)}.
\]
The same expansion without $\1_{S_j}(z)$ gives $\|f*g\|_2^2$.
Subtracting $|S_j|/q^{2m}$ times that identity proves
\eqref{eq:product-block-expansion}. Since $M_t$ is symmetric, the
resulting quadratic form is $\langle M_ta_t,a_t\rangle$ with our
inner-product convention.
Finally,
\[
 \sum_t\|a_t\|_2^2
 =\sum_{x\in S_j}|f(x)|^2\sum_t|g(x-t)|^2
 =\|f\|_2^2\|g\|_2^2.\qedhere
\]
\end{proof}

In the next section, we prove
\begin{equation}\label{eq:generic-shell-bound}
 \|M_t\|_{2\to2}\le C_mq^{m-1}
 \qquad\text{whenever }Q(t)\ne j;
\end{equation}
see Theorem~\ref{thm:completed-generic-blocks}. The constant $C_m$ depends only on $m$. The bound holds on
all of $L^2(S_j)$, for both Witt types and every odd characteristic.

\Needspace{8\baselineskip}
The estimate \eqref{eq:generic-shell-bound} applies only when
$Q(t)\ne j$. When $t\in S_j$, write $x=t+u$ and $y=t+v$.
Since $x,y,t\in S_j$,
\begin{equation}\label{eq:tangent-shell-polarity}
 Q(u)+2B(t,u)=Q(v)+2B(t,v)=0,
 \qquad Q(x+y-t)=j+2B(u,v).
\end{equation}
For nonzero $u,v$, the condition $B(u,v)=0$ is a projective
orthogonality relation.
In Section~\ref{sec:fourth-moments-sharp-extension}, we use it to
treat the terms with $Q(u),Q(v)\ne0$. A convolution estimate between
the sphere and the cone $\{Q=0\}$ treats the remaining terms, using
the vectors $a_t$ from Lemma~\ref{lem:product-block-identity}.
We then combine these estimates with
\eqref{eq:product-block-expansion}--\eqref{eq:product-vector-mass}
to prove a refined fourth-moment bound and the strong endpoint estimate.

\section{Hecke operators and matrix estimates}
\label{sec:hecke-matrix-estimates}

We prove the shifted-sphere estimate~\eqref{eq:generic-shell-bound}
in every even dimension. The arithmetic operators in the proof act
on functions of one or two variables over $\F_q$; their
bounds do not depend on the ambient dimension. We first prove the
representation-valued Hecke estimate and specify the finite Weil
normalization. We then treat the rank-one matrices, separating
characteristic three from the formulas that divide by $3$.
The final subsection applies these estimates after Fourier
transformation in Witt coordinates and completes the induction on $m$.
All finite-dimensional norms use counting measure, and each root
in a finite field is counted once.

\Needspace{14\baselineskip}
\subsection{A Hecke bound for a weighted operator sum}
\label{sec:arithmetic-whittaker-hecke}

\begin{theorem}\label{thm:arithmetic-whittaker-curve}
Let $q$ be odd, let $\mathcal K$ be a two-dimensional commutative
$\mathbb F_q$-algebra, and let $a\in\mathcal K$ satisfy
$a^2+a+b=0$ for $b\in\mathbb F_q^\times$. Suppose that
$a$ and $a^2-1$ are units and that $1,a^2$ are linearly independent.
For $t\in\mathbb F_q^\times$ put
\[
 h_t=\begin{pmatrix}a&t\\(a^2-1)/t&a\end{pmatrix}.
\]
Let $\rho$ be any finite-dimensional unitary representation of
$\mathrm{SL}_2(\mathcal K)$, and let $c\in\mathbb F_q$.
Then
\[
 \left\|\sum_{t\ne0}\eta(t)\chi(ct)\rho(h_t)\right\|
 \le 2\sqrt q.
\]
Here $\eta$ is the quadratic character. The assertion includes the
split algebra, a quadratic field, and the dual-number algebra.
\end{theorem}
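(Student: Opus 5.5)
The plan is to realize the weighted sum $\sum_{t\ne0}\eta(t)\chi(ct)\rho(h_t)$ as a compression of the degree-one Hecke operator $H_v$ at a rational place $v$ of $K=\F_q(X)$. Then Lemma~\ref{lem:typed-hecke-compression}, which rests on Proposition~\ref{prop:weighted-hecke-bound}, bounds it by $2\sqrt q$. Since every finite-dimensional unitary $\rho$ is a direct sum of irreducibles, it suffices to treat irreducible $\rho$; and if $\rho$ is trivial the sum is a scalar. That scalar is a twisted Kloosterman/Gauss-type sum, and when $c\ne0$ it has modulus at most $\sqrt q$ by Lemma~\ref{lem:one-dimensional-gauss-sum}, since it equals $\sum_{t\ne0}\eta(t)\chi(ct)$. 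When $c=0$ the scalar sum $\sum_{t\ne0}\eta(t)$ is zero.

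\textbf{Step 1: the level structure.} Write $\mathcal K=\mathcal O_w/\mathfrak m_w$ or $\mathcal O/\mathfrak a$ for a suitable effective divisor $D$ of degree two on $\mathbb P^1$ away from $v$. Concretely, take $D=w$ a degree-two place if $\mathcal K$ is a field, $D=w_1+w_2$ two rational places if $\mathcal K$ is split, and $D=2w$ if $\mathcal K$ is the dual numbers. Then $\mathrm{SL}_2(\mathcal K)$ is a quotient of $\mathrm{SL}_2(\mathcal O_D)$. Take the level $U$ to be the subgroup of $\prod_w\mathrm{GL}_2(\mathcal O_w)$ with determinant constraints making $\rho$ (inflated, twisted by a central character $\omega$ compatible via \eqref{eq:typed-central-compatibility}) a coefficient type $\tau$, with $\tau$ trivial at $v$. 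The quadratic character $\eta(t)$ and $\chi(ct)$ should come from an additional coefficient character at a second rational place (or from the $\mathrm{GL}_2$ versus $\mathrm{SL}_2$ determinant twist): $\eta$ from a diagonal character $(\omega_0,1)$ with $\omega_0$ quadratic, and $\chi(ct)$ from a unipotent character. These same characters exclude the residual determinant representations by the criteria stated after Proposition~\ref{prop:weighted-hecke-bound}. When they are absent (for example $c=0$ and $\rho$ nontrivial), I would instead argue that irreducible nontrivial $\rho$ of $\mathrm{SL}_2(\mathcal K)$ has no $\mathrm{SL}_2$-invariant vectors, so the projection onto determinant lines vanishes.

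\textbf{Step 2: identify the classes and the matrix.} Using Lemma~\ref{lem:typed-hecke-compression}, choose two bundle classes $[g_a],[g_b]$ on $\mathbb P^1$ with level structure. I would take the trivial bundle $\mathcal O\oplus\mathcal O$ with a level-$D$ structure in two general positions, each with scalar automorphism group, so that the weights are equal and the fibers are all of $\mathcal V$. The $q+1$ Hecke neighbours $g_ah_i$ correspond to lines in the fibre at $v$, i.e.\ to points $t\in\mathbb P^1(\F_q)$. One must compute, for each $t\ne0,\infty$, the rational element $\gamma_i$ carrying the modified bundle back to the base class. Its image in $\mathrm{SL}_2(\mathcal K)$ must be $h_t$, with coefficient $\eta(t)\chi(ct)$ coming from $\omega(z_i)$ and the auxiliary characters. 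The relation $a^2+a+b=0$, the unit conditions on $a$ and $a^2-1$, and $\det h_t=a^2-(a^2-1)=1$ should be exactly what makes the gluing matrix $\begin{pmatrix}a&t\\(a^2-1)/t&a\end{pmatrix}$ appear when $a$ is the image of the uniformizer data at $D$. The two points $t=0,\infty$ must contribute zero, either by landing in a different class or through a vanishing projection. The matrix entry $H_{ab}$ is then exactly the displayed sum, and its norm is at most $2\sqrt q$.

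\textbf{Main obstacle.} The hard part is Step 2: the explicit bundle computation producing $h_t$ uniformly across the three algebra types, including the nonreduced dual-number case. It also requires checking that the excluded neighbours $t\in\{0,\infty\}$ really fall outside the chosen classes; this is where the hypothesis that $1$ and $a^2$ are linearly independent enters. I would first carry out the split case, with two rational places, by hand via elementary column operations over $\F_q[X]$, then transcribe the same formulas with $\mathcal K$-coefficients, which handles the field and dual-number cases simultaneously.
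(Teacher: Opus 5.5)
Your overall strategy is the paper's. You compress a degree-one Hecke operator on $\mathbb P^1$ with level at a degree-two divisor $P$ satisfying $\mathcal O_P\simeq\mathcal K$, and you produce the weight $\eta(t)\chi(ct)$ from auxiliary types at two rational places. Those types also exclude the residual spectrum, so your separate treatment of trivial $\rho$ and of $c=0$ is unnecessary, though harmless. However, the concrete choice in your Step 2 cannot work. The parity of $\deg\det g$ is an invariant of $Z(\mathbb A_K)G(K)\backslash G(\mathbb A_K)/U$: rational elements and elements of $U$ do not change this degree, and central ideles change it by an even amount. Every right coset of $U_v\operatorname{diag}(\varpi_v,1)U_v$, however, changes it by one. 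Hence $H_v$ maps even classes to odd classes. If your source and target both have underlying bundle $\mathcal O\oplus\mathcal O$, whatever the level structures, every entry $H_{ab}$ in \eqref{eq:typed-compression-matrix} is zero. Concretely, an elementary modification of $\mathcal O\oplus\mathcal O$ at a rational point is $\mathcal O(\pm1)\oplus\mathcal O$, which is not a line-bundle twist of $\mathcal O\oplus\mathcal O$, so no rational $\gamma_i$ carries it back to the base class. The paper instead takes source $\mathcal O^2$ and target $\mathcal O(1)\oplus\mathcal O$, with $ZU_-$ reductions at $X=1$ and flags $e_1$, $e_2$ at infinity. It uses the Hecke operator at $X=0$, with modifications $A_z(X)=\left(\begin{smallmatrix}1&z(X-1)\\-1/z&1\end{smallmatrix}\right)$ and $\det A_z=X$. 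The two remaining kernel lines give targets outside this chart.

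The second missing ingredient is how $\rho$ is evaluated at $P$. The component at $P$ of a degree-one modification has determinant equal to the image in $\mathcal K^\times$ of a uniformizer at $v$, not $1$. So it does not lie in $\mathrm{SL}_2(\mathcal K)$, and the observation $\det h_t=1$ does not explain where $h_t$ comes from. The paper identifies $\F_q[X]/(P(X))\simeq\mathcal K$ via $X\mapsto a^{-2}$, so that this determinant is the square $a^{-2}$. This is where the linear independence of $1,a^2$ is used: it makes $a^{-2}$ a generator. The unit hypotheses keep $P$ away from $0,1,\infty$. Neither is used to exclude $t\in\{0,\infty\}$, as you guessed. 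The paper then extends $\rho$ to $G^+=\{g:\det g\in(\mathcal K^\times)^2\}$ by $r(g)=\vartheta_{\mathcal K}(\delta)\rho(g/\delta)$ with $\delta^2=\det g$. This requires first decomposing under the central involutions (in the split algebra there are four square roots of $1$) and choosing $\vartheta_{\mathcal K}$ to agree with $\rho$ on them. The paper balances this with $\mu=(\vartheta_{\mathcal K}|_{\F_q^\times}\eta)^{-1}$ at $X=1$, so that the three local central characters have trivial product on $\F_q^\times$. With $\delta=a^{-1}$, the identities $D_0\,aA_z(a^{-2})\,D_0^{-1}=h_z$ and $J_0h_zJ_0^{-1}=h_{1/z}$ give the coefficient $\vartheta_{\mathcal K}(a^{-1})\rho(h_{1/z})$ at $P$. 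Meanwhile $A_z(1)$ gives $\chi(c/z)$ and the infinity frame gives $\eta(-1/z)$. Without a target of odd degree and this square-root normalization, your Step 2 amounts only to the expectation that the gluing matrix ``should'' be $h_t$.
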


\begin{proof}
We construct the automorphic model and check its coefficient
space and measure. The base curve is $\mathbb P^1_X$.
Since $a^2$ generates $\mathcal K$, the element $a^{-2}$ does also,
and evaluation $X\mapsto a^{-2}$ identifies the degree-two algebra
\[
 \mathbb F_q[X]/(P(X))\simeq\mathcal K,
 \qquad
 P(X)=X^2-\frac{1-2b}{b^2}X+\frac1{b^2}.
\]
The divisor $P$ is disjoint from $0,1,\infty$, because $a$ and
$a^2-1$ are units. It may consist of two rational points, a degree-two
point, or a doubled rational point. In the last case all the level
conditions below mean reduction modulo the square of its local
parameter.

First suppose that $\rho(-I)=\epsilon I$, $\epsilon\in\{1,-1\}$.
The general case is the orthogonal sum of these two invariant spaces.
Choose a unitary character $\vartheta_{\mathcal K}$ of $\mathcal K^\times$ with
$\vartheta_{\mathcal K}(-1)=\epsilon$. Such a character exists because the character
group separates the nonidentity element $-1$. Write
\[
 G^+=\{g\in\mathrm{GL}_2(\mathcal K):\det g\in
                         (\mathcal K^\times)^2\}.
\]
There is a representation of $G^+$ defined by
\[
 r(g)=\vartheta_{\mathcal K}(\delta)\rho(g/\delta),\qquad \delta^2=\det g.
\]
This is independent of the choice of square root: changing
$\delta$ to $-\delta$ changes both factors by $\epsilon$.
It is multiplicative, since scalar matrices commute with every matrix.
For the split algebra the equation $\delta^2=\det g$ can have four
solutions, rather than two. In this case decompose further under the
central involutions of $\mathrm{SL}_2(\mathcal K)$, and choose $\vartheta_{\mathcal K}$
whose restriction to every element $u$ with $u^2=1$ equals the scalar
$\rho(uI)$. A character on this subgroup extends to
$\mathcal K^\times$, so the same formula is well-defined. This construction applies to all three algebra types.

Precomposing $\rho$ with a fixed $\mathrm{GL}_2(\mathcal K)$
conjugation preserves these properties. We now specify that
conjugation. Put
\[
 D_0=\operatorname{diag}(-a/(a^2-1),1),\qquad
 J_0=\begin{pmatrix}0&1\\a^2-1&0\end{pmatrix},\qquad C_0=J_0D_0.
\]
Use in the definition of $r$ the representation
$g\mapsto\rho(C_0gC_0^{-1})$.
At the divisor $P$, take the inverse image of $G^+$ as compact level,
with coefficient representation $r$. Together with the types at $1$
and infinity specified below, this defines the representation $\tau$ of the compact level subgroup. We use
$\Phi(\gamma z gk)=\omega(z)\tau(k)^{-1}\Phi(g)$ throughout.

At $X=1$ take the compact subgroup whose residue lies in
\[
 ZU_-=
 \left\{s\begin{pmatrix}1&0\\u&1\end{pmatrix}:
           s\in\mathbb F_q^\times,\ u\in\mathbb F_q\right\}.
\]
Give it the character
\[
 s\begin{pmatrix}1&0\\u&1\end{pmatrix}
 \longmapsto \mu(s)\chi(-cu),
 \qquad \mu=(\vartheta_{\mathcal K}|_{\mathbb F_q^\times}\eta)^{-1}.
\]
At infinity take an ordinary flag, with $\eta$ applied to the scalar
on its line. These three local coefficient characters have product
one on $\mathbb F_q^\times$. They therefore extend to a finite-order
idele class character: equivalently, they define a character of the
degree-zero ray class group, whose presentation on $\mathbb P^1$ is
the product of the residue unit groups modulo the diagonal constants.
Choose the trivial character on the degree factor, and let $\omega$
be the inverse of this extension. Then
$\tau(z)=\omega(z)^{-1}I$ on the compact center, as required by
Lemma~\ref{lem:typed-hecke-compression}. All remaining places have
maximal compact level.

At infinity the local type on the diagonal torus is $(\eta,1)$.
A determinant representation restricts there as $(\nu,\nu)$ for some
character $\nu$, so cannot contain this type: varying the two diagonal
entries independently would force both $\nu=\eta$ and $\nu=1$.
Thus, the selected automorphic $L^2$ space has no residual
determinant representations, including when $c=0$.
When $c\ne0$, the nontrivial unipotent character at $1$ gives a
second exclusion. Proposition~\ref{prop:weighted-hecke-bound} gives $\|H_0\|\le2\sqrt q$ for the unnormalized degree-one Hecke operator at $X=0$.
The maximal compact subgroup at $0$ has Haar volume one, so this
operator sums its $q+1$ right cosets with weight one. The finite index
of $G^+$ at $P$ changes neither this normalization nor the bound.

We next identify a compression of this operator. The source object is
$E=\mathcal O^2$, with the standard $ZU_-$ reduction at $1$, the standard
level at $P$, and the infinity flag $e_1$. The target object is
$F=\mathcal O(1)\oplus\mathcal O$, with the same standard reductions
at the finite places and infinity flag $e_2$. Both have automorphism
group exactly $\mathbb F_q^\times$. Indeed, the source automorphisms
fixing its infinity flag are upper triangular constant matrices,
whose intersection with $ZU_-$ is the center. A target automorphism
has the form
\[
 \begin{pmatrix}\alpha&\beta+\gamma X\\0&\delta\end{pmatrix}.
\]
In the infinity frame this has value
$\left(\begin{smallmatrix}\alpha&\gamma\\0&\delta\end{smallmatrix}\right)$.
Fixing the infinity flag $e_2$ forces $\gamma=0$; its reduction at $1$
belongs to $ZU_-$ only when $\beta=0$ and $\alpha=\delta$.
The coefficient action of each central constant is trivial after
multiplying the three local factors, by the choice of $\mu$.
Thus, the full representation space of $\rho$ occurs at both objects.
Their coordinate weights are equal to a common positive multiple of
$(q-1)^{-1}$, and the normalized embeddings $i_E,i_F$ of
Lemma~\ref{lem:typed-hecke-compression} are isometric.
Here quotienting by the adelic center introduces no relative factor.
Indeed, an isomorphism from either bundle to its tensor product with
a line bundle $L$ forces $2\deg L=0$. On $\mathbb P^1$, such $L$
is trivial, and all our compact levels contain the scalar unit groups.
The degree-zero and degree-one objects also remain distinct modulo
tensoring by line bundles, which changes their degree by an even
integer. This observation applies to the two bundle families used
later in the section as well. No projection to a proper subspace of
the coefficient fiber is needed.

For $z\in\mathbb F_q^\times$ consider
\[
 A_z(X)=\begin{pmatrix}1&z(X-1)\\-1/z&1\end{pmatrix},
 \qquad \det A_z(X)=X.
\]
This is an elementary modification $E\hookrightarrow F$ at $0$.
Its kernel at $0$ is $\langle(z,1)\rangle$. In the target infinity
frame the map has value
\[
 \left.\operatorname{diag}(X^{-1},1)A_z(X)\right|_{\infty}
 =\begin{pmatrix}0&z\\-1/z&1\end{pmatrix}.
\]
These are exactly the modifications between the two specified objects.
For completeness, a general bundle map has the form
\[
 \begin{pmatrix}\alpha+\beta X&\gamma+\delta X\\
                  e&f\end{pmatrix}.
\]
Preservation of the two infinity flags forces $\beta=0$ and $e\ne0$.
Membership of its value at $1$ in $ZU_-$ forces
$f=\alpha\ne0$ and $\gamma=-\delta$.
Its determinant vanishes at $0$ precisely when
$\delta e=-\alpha^2$. Dividing by the common central scalar $\alpha$
then gives exactly $A_z$, with $z=\delta/\alpha\ne0$.
The two other kernel lines at $0$ give targets outside the selected
chart. This proves completeness.

We specify representatives to identify the right-coset action. We use
the following matrix $w_0$ throughout the section:
\[
 w_0=\begin{pmatrix}0&1\\1&0\end{pmatrix},\qquad
 g_E=I,\qquad g_{F,\infty}=\operatorname{diag}(X,1)w_0,
\]
with $g_{F,v}=I$ at every finite place. Let $a_z$ be supported at
$0$, with component $A_z(X)$ there, and put $\gamma_z=A_z$ as a
rational matrix. Then
\[
 g_Fa_z=\gamma_zg_Ek_z,\qquad k_{z,0}=I,\qquad
 k_{z,v}^{-1}=g_{F,v}^{-1}A_zg_{E,v}\quad(v\ne0).
\]
The matrices $k_{z,v}$ belong to the stated compact levels. The
right cosets $a_zU_0$ are distinct, since the image lines of
$A_z(0)$ are $\langle(z,-1)\rangle$. Thus, every listed modification
occurs once in the Hecke action. Its coefficient is
$\tau(k_z)^{-1}$, namely the product of the coefficient actions of the framed maps
$g_{F,v}^{-1}A_zg_{E,v}$.

We now compute these coefficient factors. At $1$,
\[
 A_z(1)=\begin{pmatrix}1&0\\-1/z&1\end{pmatrix}
\]
gives $\chi(c/z)$. At infinity the framed map has reduction
\[
 w_0^{-1}\left.\operatorname{diag}(X^{-1},1)A_z(X)\right|_{\infty}
 =\begin{pmatrix}-1/z&1\\0&z\end{pmatrix},
\]
so its factor on the flagged line is $\eta(-1/z)$. At $P$, its determinant is
$a^{-2}$, so take the fixed square root $\delta=a^{-1}$. Direct
multiplication gives
\[
 D_0\,aA_z(a^{-2})\,D_0^{-1}=h_z,
 \qquad J_0h_zJ_0^{-1}=h_{1/z}.
\]
Thus, the $P$ coefficient is the constant scalar of modulus one
$\vartheta_{\mathcal K}(a^{-1})$ times $\rho(h_{1/z})$.
Lemma~\ref{lem:typed-hecke-compression} therefore gives the exact
identity
\begin{equation}\label{eq:weighted-hecke-exact-compression}
 i_F^*H_0i_E
 =\eta(-1)\vartheta_{\mathcal K}(a^{-1})
   \sum_{t\ne0}\eta(t)\chi(ct)\rho(h_t).
\end{equation}
This holds on each eigenspace of the central involutions used to choose
$\vartheta_{\mathcal K}$. The scalar has modulus one, so the norm
bound for $H_0$ proves the theorem on each summand and hence on their
orthogonal sum.

\end{proof}

The following normalization identifies the oscillatory matrices
that occur in the higher-dimensional Fourier reduction with this
operator sum.

\subsection{Gaussian normalization of the Weil operators}
\label{sec:exact-weil-root-normalization}

We use the finite Weil representation of $\mathrm{Sp}_4(\F_q)$ to
represent quadratic kernels on $\F_q^2$. The symplectic space has dimension four, independently of the dimension of the quadratic space in the restriction problem.
The normalization below applies in every odd characteristic and
the embedding below also applies to a nonreduced quadratic algebra. Recall that $G_\eta=\sum_{z\in\F_q}\chi(z^2)$.

\begin{lemma}
\label{lem:weil-generator-normalization}
Let $q$ be odd, and give $\F^2\oplus\F^2$ the symplectic form
\[
 \omega_{\mathrm{sp}}((a,b),(a',b'))
       =a^{\top}b'-b^{\top}a'.
\]
In the Schr\"odinger model associated with the additive character
$\chi$, the canonical linear Weil representation $\rho_{\mathrm W}$
has the following action. For a symmetric matrix $C$ and an invertible
matrix $A$, put
\[
 \ell(C)=\begin{pmatrix}I&0\\C&I\end{pmatrix},\qquad
 M(A)=\begin{pmatrix}A&0\\0&A^{-\top}\end{pmatrix},\qquad
 w=\begin{pmatrix}0&I\\-I&0\end{pmatrix}.
\]
Then
\begin{equation}\label{eq:weil-generator-normalization}
 \begin{aligned}
 \rho_{\mathrm W}(\ell(C))f(x)
   &=\chi(-x^{\top}Cx/2)f(x),\\
 \rho_{\mathrm W}(M(A))f(x)
   &=\eta(\det A)f(A^{-1}x),\\
 \rho_{\mathrm W}(w)f(x)   &=\frac{\eta(-1)}q\sum_{y\in\F^2}\chi(x^{\top}y)f(y).
 \end{aligned}
\end{equation}
Consequently, if a symplectic matrix has invertible upper-right block
$\beta$, its kernel is
\begin{equation}\label{eq:weil-open-cell-kernel}
 \rho_{\mathrm W}
 \begin{pmatrix}\alpha&\beta\\\gamma&\delta\end{pmatrix}(x,y)
 =\frac{\eta(-\det\beta)}q
 \chi\left(-\frac12\left[
 x^{\top}\delta\beta^{-1}x
 -2y^{\top}\beta^{-1}x
 +y^{\top}\beta^{-1}\alpha y\right]\right).
\end{equation}
\end{lemma}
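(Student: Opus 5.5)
The plan is to take the three generator formulas \eqref{eq:weil-generator-normalization} as the normalization of the canonical linear Weil representation of \cite[Theorem~1.3.2]{GH}, transported to our character $\chi$ and to the form $\omega_{\mathrm{sp}}$. I would first confirm that they are consistent with that source, and then obtain the open-cell kernel \eqref{eq:weil-open-cell-kernel} from a Bruhat-type factorization and multiplicativity of $\rho_{\mathrm W}$. Since $q$ is odd, the Weil representation of $\mathrm{Sp}_4(\F)$ is a genuine linear representation, with no cocycle. So once the generators are fixed, the operator of any product of generators is the corresponding product of operators.

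\textbf{Consistency of the generators.} The formulas are determined by the Schr\"odinger model only up to a character of $\mathrm{Sp}_4(\F)$, and that group is perfect for odd $q$. It therefore suffices to check that the three operators satisfy enough defining relations to pin the normalization, and that the scalar attached to $w$ is correct. The main relations are these.
\begin{itemize}
\item $M(A)$ and $\ell(C)$ satisfy $M(A)\ell(C)M(A)^{-1}=\ell(A^{-\top}CA^{-1})$. Both sides act by $f\mapsto\chi(-x^{\top}A^{-\top}CA^{-1}x/2)f(x)$, so the relation is compatible with the formulas.
\item $w^2=M(-I)$. Using $\sum_y\chi(x^{\top}y+y^{\top}z)=q^2\1_{\{x=-z\}}$, the square of the $w$-operator is $(\eta(-1)/q)^2q^2f(-x)=f(-x)$, which equals $\eta(\det(-I))f(-x)$.
\item The relation $(w\,\ell(-I))^3=$ central element. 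This one involves a Gauss sum. Diagonalizing, it reduces to $G_\eta^2=\eta(-1)q$ from Lemma~\ref{lem:one-dimensional-gauss-sum}.
\end{itemize}
The last relation explains the factor $\eta(-1)$ in the $w$-formula: it is $G_\eta^{2}/q$ for the two-dimensional Lagrangian.

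\textbf{Factorization.} Next I would write a symplectic matrix $g=\begin{pmatrix}\alpha&\beta\\\gamma&\delta\end{pmatrix}$ with invertible $\beta$ as
\[
 g=\ell(\delta\beta^{-1})\,M(\beta)\,w\,\ell(\beta^{-1}\alpha).
\]
Direct multiplication gives
\[
 \ell(C_1)M(A)w\,\ell(C_2)=\begin{pmatrix}AC_2&A\\C_1AC_2-A^{-\top}&C_1A\end{pmatrix},
\]
so we take $A=\beta$, $C_2=\beta^{-1}\alpha$ and $C_1=\delta\beta^{-1}$. The symplectic relations $\beta^{\top}\delta=\delta^{\top}\beta$ and $\alpha^{\top}\beta=\beta^{\top}\alpha$ make $C_1$ and $C_2$ symmetric. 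The relation $\alpha^{\top}\delta-\gamma^{\top}\beta=I$ then forces the lower-left block to equal $\gamma$.

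\textbf{Kernel.} Applying the operators from right to left:
\begin{itemize}
\item $\ell(C_2)$ multiplies $f(y)$ by $\chi(-y^{\top}\beta^{-1}\alpha y/2)$;
\item $w$ produces $\frac{\eta(-1)}q\sum_y\chi(x^{\top}y)(\cdot)$;
\item $M(\beta)$ substitutes $x\mapsto\beta^{-1}x$ and multiplies by $\eta(\det\beta)$, turning the phase into $\chi(y^{\top}\beta^{-1}x)$;
\item $\ell(C_1)$ multiplies by $\chi(-x^{\top}\delta\beta^{-1}x/2)$.
\end{itemize}
Collecting the scalars gives $\eta(-1)\eta(\det\beta)=\eta(-\det\beta)$, and the phases combine to exactly \eqref{eq:weil-open-cell-kernel}.

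The main obstacle is the first step: matching signs and character conventions with \cite{GH}. Their model may use $\chi(-\cdot)$ or a different symplectic form, and one must check that the resulting normalization is the canonical linear one rather than a twist. I would handle this by computing the trace of $\rho_{\mathrm W}(w)$, or equivalently verifying the braid-type relation above, and comparing it with the source.
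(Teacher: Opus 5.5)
Your factorization $g=\ell(\delta\beta^{-1})M(\beta)w\,\ell(\beta^{-1}\alpha)$ and the right-to-left evaluation of the kernel are correct, and they coincide with the last step of the paper's proof. There is one slip: $\beta^{-1}\alpha$ is symmetric because $\alpha\beta^{\top}=\beta\alpha^{\top}$, which follows from $gJg^{\top}=J$. The identity $\alpha^{\top}\beta=\beta^{\top}\alpha$ that you quote fails in general, for instance for $g=M(A)w\,\ell(C)$.

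\textbf{The gap is the first step, which is the real content of the lemma.} Nothing in your argument shows that the three displayed operators \emph{are} $\rho_{\mathrm W}$ of the generators.
\begin{itemize}
\item The Schr\"odinger model fixes each $\rho_{\mathrm W}(g)$ only up to a scalar, and only once you know the operator satisfies the Egorov identity. You never check that $L_Cf(x)=\chi(-x^{\top}Cx/2)f(x)$, $D_Af(x)=f(A^{-1}x)$ and $\mathcal F_+$ satisfy it for $\ell(C)$, $M(A)$ and $w$.
\item Checking ``enough defining relations'' is not carried out. Three relations are not a presentation of $\mathrm{Sp}_4(\F_q)$, so they do not show that your assignment extends to a homomorphism. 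That is the hypothesis you need before invoking perfectness.
\item The three relations you list cannot detect the factor $\eta(\det A)$. They stay true if $\rho_{\mathrm W}(M(A))$ is taken to be $\nu(\det A)D_A$ for an arbitrary character $\nu$ of $\F^\times$. The scalar cancels in the conjugation relation, $w^2=M(-I)$ only sees $\nu(\det(-I))=1$, and $(w\,\ell(-I))^3=I$ does not involve $M$ at all.
\end{itemize}
Hence the factor $\eta(\det\beta)$, and with it the prefactor $\eta(-\det\beta)$ in \eqref{eq:weil-open-cell-kernel}, is unjustified. Your closing suggestion of comparing the trace of $\rho_{\mathrm W}(w)$ with \cite{GH} would only settle $w$.

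\textbf{Repair within your framework.} After the Egorov check and irreducibility, write $\rho_{\mathrm W}(\ell(C))=c_1(C)L_C$, $\rho_{\mathrm W}(M(A))=\nu(\det A)D_A$ and $\rho_{\mathrm W}(w)=c_3\mathcal F_+$. The form $\nu(\det A)$ is forced because characters of $\mathrm{GL}_2(\F_q)$ factor through $\det$. Your relations then give:
\begin{itemize}
\item $c_1\equiv1$, since $c_1$ is a congruence-invariant additive character of symmetric matrices;
\item $c_3=\eta(-1)$, from $c_3^2=1$ and $c_3^3\eta(-1)=1$.
\end{itemize}
You still need one more relation to force $\nu=\eta$. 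For example, take $u(B)\,\ell(-B^{-1})\,u(B)=M(B)w$ with $u(B)=w^{-1}\ell(-B)w$ and $B$ symmetric with nonsquare determinant. Two Gauss sums show that the left side has kernel $q^{-1}\eta(-\det B)\chi(x^{\top}B^{-1}y)$, which forces $\nu(\det B)=\eta(\det B)$.

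\textbf{The paper's route.} The paper fixes all three scalars at once with the character formula $\operatorname{tr}\rho_{\mathrm W}(g)=\eta(\det(g-I))$, valid for $g-I$ invertible \cite[Theorem~2.2.1]{GH}.
\begin{itemize}
\item For $w$: $\det(w-I)=4$, compared with $\operatorname{tr}\mathcal F_+=\eta(-1)$.
\item For $M(A)$ with $A-I$ invertible: $\det(M(A)-I)=\det(A)^{-1}\det(A-I)^2$, compared with $\operatorname{tr}D_A=1$. This extends to all $A$ by showing the scalar is a character of $\mathrm{GL}_2(\F)$ depending only on $\det A$.
\item For the shear: $\ell(C)-I$ is never invertible, so the paper works with $g=\ell(C)M(cI)$, $c\ne0,1$.
\end{itemize}
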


\begin{proof}
We make the convention in the Schr\"odinger model explicit. The
Heisenberg group has multiplication
\[
 (a,b,z)(a',b',z')=
 \left(a+a',b+b',z+z'+
       \tfrac12(a^{\top}b'-b^{\top}a')\right)
\]
and representation
\[
 (\pi(a,b,z)f)(x)
   =\chi(z+b^{\top}x+a^{\top}b/2)f(x+a).
\]
It is irreducible with central character $\chi$. The canonical linear
representation of \cite[Theorem~1.3.2]{GH} satisfies the Egorov identity
\[
 \rho_{\mathrm W}(g)\pi(a,b,z)\rho_{\mathrm W}(g)^{-1}
       =\pi(g(a,b),z).
\]
Define the unitary operators
\[
 L_Cf(x)=\chi(-x^{\top}Cx/2)f(x),\quad
 D_Af(x)=f(A^{-1}x),\quad
 \mathcal F_+f(x)=q^{-1}\sum_y\chi(x^{\top}y)f(y).
\]
Direct substitution shows that they satisfy this identity for
$\ell(C)$, $M(A)$, and $w$, respectively. Indeed, their conjugations
send $(a,b)$ to $(a,b+Ca)$, $(Aa,A^{-\top}b)$, and $(b,-a)$.
By irreducibility, each differs from the corresponding Weil operator
by a scalar. We determine these scalars from the character formula.

For $g-I$ invertible, \cite[Theorem~2.2.1]{GH} gives, in dimension four,
\[
 \operatorname{tr}\rho_{\mathrm W}(g)
   =\frac{G_\eta^4}{q^2}
      \eta\bigl(\det(\mathfrak c(g)+I)\bigr)
   =\eta\bigl(\det(g-I)\bigr),
 \qquad \mathfrak c(g)=(g+I)(g-I)^{-1}.
\]
The last equality follows from $G_\eta^4=q^2$,
$\mathfrak c(g)+I=2g(g-I)^{-1}$, and $\det g=1$.
Since $\det(w-I)=4$, the trace of $\rho_{\mathrm W}(w)$ is one.
On the other hand,
\[
 \operatorname{tr}\mathcal F_+
    =q^{-1}\sum_{x\in\F^2}\chi(x^{\top}x)
    =G_\eta^2/q=\eta(-1).
\]
Thus, $\rho_{\mathrm W}(w)=\eta(-1)\mathcal F_+$.

If $A-I$ is invertible, then $D_A$ fixes only the basis vector indexed by zero,
so its trace is one. Moreover,
\[
 \det(M(A)-I)=\det(A)^{-1}\det(A-I)^2.
\]
The character formula therefore gives
$\rho_{\mathrm W}(M(A))=\eta(\det A)D_A$.
To obtain the formula for every $A$, write
$\rho_{\mathrm W}(M(A))=\xi(A)D_A$. The Egorov identity and
irreducibility show that $\xi(A)$ is a scalar, and multiplicativity
shows that $\xi$ is a character of $\mathrm{GL}_2(\F)$.
Conjugation by $\operatorname{diag}(-1,1)$ sends each elementary
upper or lower triangular unipotent matrix to its inverse. Its image
under $\xi$ therefore has order dividing two. The same image has odd
order, since the characteristic is odd, so it equals one. Elementary
unipotent matrices generate $\mathrm{SL}_2(\F)$; hence $\xi(A)$
depends only on $\det A$.
For each $u\in\F^\times$, choose $t\in\F$ with $t\ne1+u$ and put
\[
 A_u=\begin{pmatrix}0&-u\\1&t\end{pmatrix}.
\]
Then $\det A_u=u$ and $\det(A_u-I)=1-t+u\ne0$. The formula already
proved gives $\xi(A_u)=\eta(u)$, and hence
$\xi(A)=\eta(\det A)$ for every $A$. This argument includes $q=3$.

Finally, choose $c\in\F^\times\setminus\{1\}$. For
$g=\ell(C) M(cI)$, the determinant of $g-I$ is
$(c-1)^2(c^{-1}-1)^2$, and hence $\operatorname{tr}\rho_{\mathrm W}(g)=1$.
The operator $L_C D_{cI}$ satisfies the Egorov identity for $g$ and
also has trace one: only the basis vector indexed by zero contributes to the trace, and there the
phase equals one. Therefore,
$\rho_{\mathrm W}(g)=L_C D_{cI}$. Dividing by
$\rho_{\mathrm W}(M(cI))=D_{cI}$ proves the shear formula, including
when $C$ is singular.

For invertible $\beta$, the symplectic identities imply that
$\delta\beta^{-1}$ and $\beta^{-1}\alpha$ are symmetric and give
\[
 \begin{pmatrix}\alpha&\beta\\\gamma&\delta\end{pmatrix}
 =\ell(\delta\beta^{-1})\,M(\beta)\,w\,\ell(\beta^{-1}\alpha).
\]
Multiplying the operators in \eqref{eq:weil-generator-normalization}
in this order proves \eqref{eq:weil-open-cell-kernel}.
\end{proof}
\begin{corollary}
\label{cor:full-root-operator-bound}
Let $D,R$ be invertible symmetric two-by-two matrices over $\F_q$,
put $a=R^{-1}D$, and suppose that $\mathcal K=\F_q[a]$ is
two-dimensional. Assume
\[
 a^2+a+b=0,\qquad b\in\F_q^\times,
 \qquad a^2-I\text{ invertible},\qquad 1,a^2\text{ linearly independent}.
\]
For $x,y\in\F_q^2$, set
$H(x,y)=x^{\mathsf T}Dx+y^{\mathsf T}Dy-2x^{\mathsf T}Ry$.
Then, for every $c\in\F_q$,
\begin{equation}\label{eq:binary-gaussian-operator-bound}
 \left\|\sum_{s\ne0}\eta(s)\chi(c/s)
           [\chi(sH(x,y))]_{x,y}\right\|\le2q^{3/2}.
\end{equation}
In particular, the root matrices
$K_\lambda(x,y)=\sum_{z^2=H(x,y)}\chi(\lambda z)$ satisfy
\[
 \|K_\lambda\|\le2q\quad(\lambda\ne0),
 \qquad \|K_0-J\|\le2q,
\]
where $J$ is the all-ones matrix on $\F_q^2$.
\end{corollary}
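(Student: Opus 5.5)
The plan is to identify each matrix $[\chi(sH(x,y))]_{x,y}$ with a scalar multiple of a Weil operator $\rho_{\mathrm W}(g_s)$ via the open-cell kernel formula \eqref{eq:weil-open-cell-kernel}, so that the weighted sum becomes a weighted sum of Weil operators. Theorem~\ref{thm:arithmetic-whittaker-curve} then bounds it by $2\sqrt q$, and the factor $q$ in the normalization of \eqref{eq:weil-open-cell-kernel} gives $2q^{3/2}$.

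\textbf{Matching the kernel.} Compare $\chi(sH(x,y))$ with \eqref{eq:weil-open-cell-kernel}. For a symplectic matrix with blocks $\alpha,\beta,\gamma,\delta$, the kernel has phase $-\frac12[x^{\top}\delta\beta^{-1}x-2y^{\top}\beta^{-1}x+y^{\top}\beta^{-1}\alpha y]$. We therefore require
\[
\beta^{-1}=-sR,\qquad \delta\beta^{-1}=-2sD,\qquad \beta^{-1}\alpha=-2sD ,
\]
where $R$ is symmetric so that the cross term $y^{\top}\beta^{-1}x$ matches $x^{\top}Ry$. These conditions give $\beta=-(sR)^{-1}$, $\alpha=2R^{-1}D=2a$ and $\delta=2DR^{-1}=2a^{\top}$, and $\gamma$ is then determined by the symplectic relation $\alpha\delta^{\top}-\beta\gamma^{\top}=I$. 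The resulting matrices $g_s$ lie in the image of $\mathrm{SL}_2(\mathcal K)$ under the embedding of $\mathcal K=\F_q[a]$ that acts on the first factor $\F^2$ by multiplication and on the second by the adjoint. Here $R$ provides the identification $\F^2\cong\mathcal K$, with $R$-symmetry making $a$ self-adjoint. Up to conjugation by the fixed operator $M(R^{-1/2}\text{-type})$, or equivalently $M$ of the identification map, $g_s$ becomes a matrix of the form $\begin{pmatrix}2a&-1/s\\ s(4a^2-1)&2a\end{pmatrix}$ in $\mathrm{SL}_2(\mathcal K)$. We get $[\chi(sH)]=q\,\eta(-\det\beta)^{-1}\rho_{\mathrm W}(g_s)$, with $\eta(\det\beta)=\eta(\det R)$ constant in $s$, since $\eta(s^{2})=1$.

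\textbf{Reduction to the theorem.} The theorem is stated with $a$ and $h_t=\begin{pmatrix}a&t\\(a^2-1)/t&a\end{pmatrix}$, so the normalization must be adjusted. This is the step I expect to be the main obstacle: the factors of $2$ in $H$, the sign $-1/s$, and the substitution $t=-1/s$, under which $\eta(s)\chi(c/s)$ becomes $\eta(-1)\eta(t)\chi(-ct)$. I would absorb the factor $2$ by using $2$-scaled versions of $D$ in the hypothesis, or by conjugating by a diagonal $M(\lambda I)$ and rescaling $a$, and then check that the relations $a^2+a+b=0$, $a^2-I$ invertible and $1,a^2$ independent are exactly what the theorem requires for the resulting $h_t$. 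If the translation is off by a constant, I would reparametrize with the diagonal conjugation $\operatorname{diag}(\mu,1)$ in $\mathrm{GL}_2(\mathcal K)$, which rescales $t$ by a scalar and preserves the norm. Restricting $\rho_{\mathrm W}$ to $\mathrm{SL}_2(\mathcal K)$ gives a unitary representation, so Theorem~\ref{thm:arithmetic-whittaker-curve} with $c$ replaced by $-c$ yields \eqref{eq:binary-gaussian-operator-bound}.

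\textbf{Root matrices.} Write $\1[z^2=h]=q^{-1}\sum_{s}\chi(s(h-z^2))$. This gives
\[
K_\lambda(x,y)=q^{-1}\sum_s\chi(sH)\sum_z\chi(\lambda z-sz^2).
\]
The $s=0$ term contributes $J\,\1_{\lambda=0}$. For $s\neq0$, Lemma~\ref{lem:one-dimensional-gauss-sum} evaluates the inner sum as $G_\eta\eta(-s)\chi(\lambda^2/(4s))$. Hence
\[
K_\lambda-\1_{\lambda=0}J=q^{-1}G_\eta\eta(-1)\sum_{s\ne0}\eta(s)\chi(c/s)[\chi(sH)]
\qquad\text{with } c=\lambda^2/4 .
\]
Applying \eqref{eq:binary-gaussian-operator-bound} and $|G_\eta|=q^{1/2}$ gives the bound $q^{-1}q^{1/2}\cdot2q^{3/2}=2q$.
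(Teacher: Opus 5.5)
Your route is the paper's. You write $[\chi(sH(x,y))]_{x,y}=q\,\eta(-\det R)\,\rho_{\mathrm W}(g_s)$ for some $g_s$ in the image of $\mathrm{SL}_2(\mathcal K)$ in $\mathrm{Sp}_4(\F_q)$, apply Theorem~\ref{thm:arithmetic-whittaker-curve}, and get the root matrices from a one-dimensional Gauss sum. The root-matrix step is correct as you wrote it. The gap is in the kernel matching, which is the only substantive step of the reduction.

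In \eqref{eq:weil-open-cell-kernel} the cross term of the phase is $y^{\top}\beta^{-1}x$, and it must equal $-2s\,x^{\top}Ry$. Hence $\beta^{-1}=-2sR$, not $-sR$. Your factor-$2$ slip gives $\alpha=2a$ and $\delta=2DR^{-1}$, and with these blocks the kernel is $\chi\bigl(s(x^{\top}Dx+y^{\top}Dy-x^{\top}Ry)\bigr)$, not $\chi(sH)$. Your displayed element also has determinant $8a^2-1\ne1$.

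The discrepancy you then plan to absorb cannot be absorbed. Conjugation in $\mathrm{GL}_2(\mathcal K)$, by $\operatorname{diag}(\mu,1)$, $M(\lambda I)$, or anything else, preserves the trace. The trace is $4a$ for your element and $2a$ for every $h_t$. Replacing $a$ by $2a$ is also impossible: $(2a)^2+2a+b'=-2a-4b+b'\ne0$ for every $b'\in\F_q$, because $a\notin\F_q$. Scaling $D$ instead would change the statement being proved. So, as written, the reduction to the theorem does not go through.

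The repair is to redo the matching correctly. Take $\beta=-R^{-1}/(2s)$; then $\alpha=\beta(-2sD)=a$ and $\delta=-2sD\beta=DR^{-1}$. Since $Ra=D=a^{\top}R$, every element of $\mathcal K$ is $R$-self-adjoint. In the coordinates $(x,\zeta)\mapsto(x,R\zeta)$ that you had in mind, these blocks are exactly
\[
 h_{-1/(2s)}=\begin{pmatrix}a&-1/(2s)\\ 2s(1-a^2)&a\end{pmatrix}\in\mathrm{SL}_2(\mathcal K),
\]
with $\eta(-\det\beta)=\eta(-\det R)$. The substitution $t=-1/(2s)$ turns $\eta(s)\chi(c/s)$ into $\eta(-2)\eta(t)\chi(-2ct)$. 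The theorem with $-2c$ in place of $c$ then gives the bound $2q^{3/2}$.

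The paper does the same computation in the $D$-self-adjoint coordinates $\zeta\mapsto D\zeta$. There the element is $g_s=\begin{pmatrix}a&-a/(2s)\\ 2s(a^{-1}-a)&a\end{pmatrix}$, and the paper needs the extra conjugation $g_s=Eh_{1/s}E^{-1}$ with $E=\operatorname{diag}(-a/2,1)$ and $t=1/s$. Once corrected, your $R$-coordinate version avoids that conjugation. Still, the correct identification of the kernel with an element of the form $h_t$ is exactly what the proof of this corollary consists of, and your proposal does not supply it.
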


\begin{proof}
Every element of $\mathcal K$ is self-adjoint for $D$, since
$Da=DR^{-1}D$ is symmetric. Thus, $\mathrm{SL}_2(\mathcal K)$
acts symplectically on $\F_q^2\oplus\F_q^2$ with pairing
\[
 \omega_D((x,\zeta),(y,\theta))
 =x^{\mathsf T}D\theta-\zeta^{\mathsf T}Dy.
\]
Restrict the Weil representation to this subgroup. No trace form
on $\mathcal K$ is used, so this construction also applies when
$\mathcal K$ is nonreduced. In these coordinates put
\[
 g_s=\begin{pmatrix}a&-a/(2s)\\2s(a^{-1}-a)&a\end{pmatrix}.
\]
On replacing $\zeta$ by $D\zeta$ to obtain the standard symplectic coordinates, its
upper-right block is $-R^{-1}/(2s)$. The formula in
Lemma~\ref{lem:weil-generator-normalization} gives exactly
\begin{equation}\label{eq:exact-binary-weil-kernel}
 \rho_{\mathrm W}(g_s)(x,y)
 =q^{-1}\eta(-\det R)\chi(sH(x,y)).
\end{equation}
Indeed, the two diagonal quadratic blocks in the phase are
$-2sD$, and $\eta(-\det(-R^{-1}/(2s)))=\eta(-\det R)$.
Put
\[
 E=\operatorname{diag}(-a/2,1),\qquad
 h_t=\begin{pmatrix}a&t\\(a^2-1)/t&a\end{pmatrix}.
\]
Then $g_s=Eh_{1/s}E^{-1}$. Conjugation by $E$ is a group
automorphism of $\mathrm{SL}_2(\mathcal K)$, so
$\widetilde\rho(h)=\rho_{\mathrm W}(EhE^{-1})$ is a unitary
representation. Changing variables $t=s^{-1}$ in
\eqref{eq:exact-binary-weil-kernel} and applying
Theorem~\ref{thm:arithmetic-whittaker-curve} gives
\[
 \left\|\sum_{s\ne0}\eta(s)\chi(c/s)
            [\chi(sH(x,y))]_{x,y}\right\|
 =q\left\|\sum_{t\ne0}\eta(t)\chi(ct)\widetilde\rho(h_t)\right\|
 \le2q^{3/2}.
\]
Finally, additive orthogonality and a one-dimensional Gauss sum give
\[
 K_\lambda-\1_{\{\lambda=0\}}J
 =\frac{G_\eta}{q}\sum_{s\ne0}\eta(-s)
       \chi\left(\frac{\lambda^2}{4s}\right)
       [\chi(sH(x,y))]_{x,y}.
\]
Since $|G_\eta|=\sqrt q$, the first bound proves both assertions.
\end{proof}

\subsection{Fourier transform of the rank-one matrix}
\label{sec:rankone-affine-fourier}

Assume first that $\operatorname{char}\mathbb F_q>3$.
The case of a nonzero isotropic translation leads to the matrix
\[
 L_c(H,K)=\sum_{z\ne0}\chi\left(z+
       \frac{c(H^2+HK+K^2-1/3)}z\right),\qquad c\ne0.
\]
We keep the constant $-1/3$ in this definition. The next three
subsections prove $\|L_c\|\le10q$ for every $c\ne0$.

For $\beta\ne0$, the Sali\'e identity \cite{Salie} is
\[
 \sum_{z\ne0}\eta(z)\chi(\alpha z+ \beta/z)=G_\eta\,\eta(\beta)\sum_{v^2=4\alpha\beta}\chi(v),
\]
where each root is counted once. To prove this identity, take the unnormalized Fourier transform in $\alpha$. At a nonzero frequency $u$,
the left side transforms to $q\eta(u)\chi(\beta/u)$. The right side
becomes $G_\eta\,\eta(\beta)\sum_v\chi(v-uv^2/(4\beta))$, which has the same
value by completion of the square and $G_\eta^2=\eta(-1)q$.
Both transforms vanish at $u=0$.

Let $\mathcal F_1(K,Y)=q^{-1/2}\chi(KY)$ be the unitary
one-dimensional Fourier matrix. Completing the square in $K$ gives
\begin{align*}
 (L_c\mathcal F_1)(H,Y)
 &=\frac{G_\eta\,\eta(c)}{\sqrt q}\chi(-HY/2)
       \sum_{z\ne0}\eta(z)\chi(A_Yz+B_H/z),\\
 A_Y&=1-Y^2/(4c),\qquad B_H=c(9H^2-4)/12.
\end{align*}
Consequently, whenever $9H^2-4\ne0$,
\[
 (L_c\mathcal F_1)(H,Y)
 =\sqrt q\,\eta\left(-\frac{9H^2-4}{12}\right)\chi(-HY/2)
       \sum_{v^2=(9H^2-4)(4c-Y^2)/12}\chi(v).
\]
Every entry has modulus at most $2\sqrt q$. On the at most two
rows with $B_H=0$, the original formula is an elementary Gauss sum or
zero, and the sharper entry bound $\sqrt q$ holds. We use these entry bounds below when adding the omitted rows and
columns.

\subsection{A first-jet Hecke correspondence}
\label{sec:wild-jet-hecke-root}

Put $R=\mathbb F_q[X]/(X^2)$. A first-jet flag in a rank-two
bundle is a free rank-one direct summand of its fiber over $R$.
Thus, it records a flag at $X=0$ together with its first-order
variation. For example, $R(1,uX)\subset R^2$ reduces to the line
$\mathbb F_q(1,0)$ at zero, while $u$ specifies the first-order
variation.
Fix an odd prime power $q$, a nontrivial additive character $\chi$,
$\xi\in\mathbb F_q\setminus\{0,1\}$, and $c\ne0$.
At zero use a first-jet flag, and at $1$ and infinity use ordinary
flags. In degree zero let $E_u=\mathcal O\oplus\mathcal O$, with flags
\[
 (1,uX)\pmod{X^2},\qquad(1,1),\qquad(0,1).
\]
In degree one let $F_U=\mathcal O\oplus\mathcal O(1)$, with flags
\[
 (1,UX)\pmod{X^2},\qquad(1,1),\qquad(1,0),
\]
where the last vector is in the infinity frame. At zero put
\[
 \vartheta(a_0+a_1X+\cdots)=\chi(ca_1/a_0)
\]
on the flag line, and use the trivial character on the quotient
and on the ordinary flags.

\begin{lemma} \label{lem:first-jet-modifications}
For $u,U\notin\{0,1\}$, the elementary modifications
$E_u\hookrightarrow F_U$ at $\xi$ that preserve the specified flags
are, up to a common scalar,
\begin{equation}\label{eq:first-jet-injection}
 G_h(X)=\begin{pmatrix}-h&1\\-hX/z_1&\xi/z_1\end{pmatrix},
 \qquad z_1=\frac{h-\xi}{h-1},
\end{equation}
where $h\in\mathbb F_q\setminus\{0,1,\xi\}$ satisfies
\begin{equation}\label{eq:first-jet-output}
 U h(h-\xi)=(h-1)(h-\xi u).
\end{equation}
Each such $h$ occurs once, and its coefficient on the flag line at
zero is $\chi(-cu/h)$.
Both bundles with level structure have automorphism group $\mathbb F_q^\times$,
and distinct parameters give nonisomorphic objects within each family.
These scalars act trivially on the coefficient lines, and the
coordinate sections have the same Petersson mass.
\end{lemma}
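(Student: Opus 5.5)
We classify the modifications directly, following the completeness argument in the proof of Theorem~\ref{thm:arithmetic-whittaker-curve}. We then read off the coefficients.

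\textbf{Form of the map.} An injection $E_u=\mathcal O^2\hookrightarrow F_U=\mathcal O\oplus\mathcal O(1)$ is a matrix whose first row consists of constants and whose second row consists of polynomials of degree at most one. Write
\[
 G(X)=\begin{pmatrix}\alpha&\beta\\ \gamma+\delta X&e+fX\end{pmatrix}.
\]
Its determinant has degree at most one, and for an elementary modification at $\xi$ it must vanish exactly at $X=\xi$.

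\textbf{Imposing the flags.} At infinity, the framed value of the second row is $(\delta,f)$. The source flag $(0,1)$ must map into the target flag $(1,0)$ in the infinity frame, and this forces $f=0$; the map is then not surjective at infinity, and $\beta\ne0$. At $X=1$, the vector $(1,1)$ must map to a multiple of $(1,1)$. This gives one linear condition, $\alpha+\beta=\gamma+\delta+e$. Scaling by the common scalar, normalize $\beta=1$ and set $h=-\alpha$. Then $\det G(\xi)=0$ and the condition at $1$ determine the second row up to one parameter, which we call $z_1$. This should reproduce~\eqref{eq:first-jet-injection}, with $z_1=(h-\xi)/(h-1)$.

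\textbf{First-jet condition.} The first-jet flag at zero imposes
\[
 G(X)(1,uX)^{\mathsf T}\in R\cdot(1,UX)^{\mathsf T}\pmod{X^2}.
\]
The image is
\[
 \bigl(-h+uX,\;\xi/z_1-hX/z_1\bigr).
\]
Matching the constant terms requires the ratio of the two components to have constant term $0$. This is why the fiber at $0$ must be arranged so that the constant term of the second entry vanishes modulo the frame. I therefore expect a quick check that the frames are chosen so that this holds. Matching the $X$-coefficients gives a linear relation in $U$, which after clearing denominators becomes~\eqref{eq:first-jet-output}.

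The image of the flag generator is $-h(1+(-u/h)X)$ times the generator $(1,UX)$. Hence $\vartheta$ contributes $\chi(c\cdot(-u/h))=\chi(-cu/h)$. The excluded values $h=0,1,\xi$ are exactly those for which $z_1$ is undefined or zero, or for which the map fails to be injective or to preserve the flags.

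\textbf{Uniqueness and distinctness.} Each $h$ gives a distinct image line at $\xi$, so each occurs once. For the automorphism groups, argue as in Theorem~\ref{thm:arithmetic-whittaker-curve}.
\begin{itemize}[label={}]
\item For $E_u$: constant matrices fixing the lines $(0,1)$ and $(1,1)$ and the line $(1,0)$ at zero are scalars, since $u\ne0$ pins the jet.
\item For $F_U$: an automorphism
\[
 \begin{pmatrix}\alpha&0\\ \beta+\gamma X&\delta\end{pmatrix}
\]
is forced to be scalar by the infinity flag, the flag at $1$, and the jet.
\end{itemize}
Distinct $u$ (respectively $U$) give nonisomorphic objects, because $u$ is recovered as the first-order jet invariant relative to the other two flags. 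The scalars act trivially on the coefficient lines, because $\vartheta$ is trivial on constants: $a_1=0$. Equal Petersson mass then follows from the weight formula $w_a\propto(q-1)/|\operatorname{Aut}|$.

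The main obstacle is the jet bookkeeping at zero, namely making the frame conventions consistent so that the first-order condition yields exactly~\eqref{eq:first-jet-output} and the character value $\chi(-cu/h)$ with the right sign.
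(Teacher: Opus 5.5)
Your route is the paper's: classify bundle maps $E_u\to F_U$ by the flag and determinant conditions, then read the coefficient off the image of the first-jet line. However, the step that actually produces \eqref{eq:first-jet-injection} is missing.

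After setting $f=0$, $\beta=1$, $\alpha=-h$, you impose only $\det G(\xi)=0$ and the flag at $1$. These are two linear equations in the three unknowns $\gamma,\delta,e$, so a free parameter survives. You call it $z_1$ and then assert $z_1=(h-\xi)/(h-1)$, which contradicts its being free. The condition you have not imposed is the \emph{reduced} flag at zero: $G(0)$ must map $(1,0)$ into $\mathbb F_q(1,0)$, so the constant term $\gamma$ of the lower-left entry vanishes. With $G=\left(\begin{smallmatrix}-h&1\\ \delta X&e\end{smallmatrix}\right)$, the length-one cokernel at $\xi$ gives $-he=\delta\xi$, and the flag at $1$ gives $\delta+e=1-h$. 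Solving yields $\delta=-h/z_1$ and $e=\xi/z_1$. Invertibility at $0$ excludes $h=0$ and $h=1$, and solvability excludes $h=\xi$. This is the completeness half of the lemma; without it your list of modifications is not shown to be exhaustive.

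The jet computation is also wrong as written. The correct image is $G_h(X)(1,uX)^{\mathsf T}=\bigl(-h+uX,\ (\xi u-h)X/z_1\bigr)$, not $(-h+uX,\ \xi/z_1-hX/z_1)$; you seem to have applied the second row to $(1,1)$. The second entry has no constant term because the $(2,1)$ entry of $G_h$ vanishes at $X=0$. No adjustment of frames is involved, so the ``quick check'' you defer is really the missing condition $\gamma=0$ above. Modulo $X^2$ the image is $(-h+uX)\bigl(1,\frac{h-\xi u}{hz_1}X\bigr)$. This gives $U=(h-\xi u)/(hz_1)$, which is \eqref{eq:first-jet-output}, and the scalar $-h+uX$ gives $\chi(-cu/h)$ as you say.

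Two smaller points. First, $\beta\ne0$ (and $\delta\ne0$) comes from the modification being an \emph{isomorphism} at infinity, i.e.\ invertibility of the framed value there, not from non-surjectivity. Second, you do not check that $\vartheta$ is the local restriction of a global central character. The paper verifies this with a residue character built from $\frac cX\frac{da}{a}$, which is needed for the coefficient lines, and hence the Petersson masses, to be defined at all.
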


\begin{proof}
For $E_u$, the three reduced flag lines are distinct, so their
common stabilizer is the scalar group. For $F_U$, an automorphism is
$\left(\begin{smallmatrix}a&0\\b+\gamma X&e\end{smallmatrix}\right)$;
the flags at infinity, zero, and one successively force $\gamma=0$, $b=0$,
and $a=e$. The same calculation applies to an isomorphism between
two objects in either family. Since a scalar does not change the
first-jet parameter, the parameters give distinct isomorphism classes.

The character $\vartheta$ defines a character of the compact subgroup
preserving the first-jet flag. It is compatible with a global
central character: with $K=\mathbb F_q(X)$, set
\[
 \omega((a_s)_s)=
 \chi\left(-\sum_s\operatorname{Tr}_{k_s/\mathbb F_q}
      \operatorname{Res}_s\left(\frac cX\frac{da_s}{a_s}\right)\right).
\]
This is trivial on $K^\times$ by the residue theorem, restricts to
$\vartheta^{-1}$ on the scalar units at zero, and is unramified
elsewhere. Thus, the compact type satisfies
$\tau(z)=\omega(z)^{-1}$ on the compact center.
The local type excludes every determinant character: varying the two
diagonal units independently would force the same character to equal
both $\vartheta$ and the trivial character. Constant scalars have
trivial action on the coefficient lines.

The map \eqref{eq:first-jet-injection} acts on the first-jet flag by
\[
 G_h(X)(1,uX)
   =(-h+uX)\left(1,\frac{h-\xi u}{hz_1}X\right)
        \pmod{X^2}.
\]
Thus, the image flag satisfies \eqref{eq:first-jet-output}, and its
coefficient is $\vartheta(-h+uX)=\chi(-cu/h)$.

We verify that the list of modifications is complete. A bundle map
$E_u\to F_U$ has constant first-row entries and affine-linear
second-row entries. Preservation of the infinity flag and of the
reduced zero flag forces the form
\[
 G(X)=\begin{pmatrix}a&b\\\gamma X&e\end{pmatrix}.
\]
The isomorphism at infinity gives $b\ne0$, and the isomorphism at zero gives $a,e\ne0$. Divide by the common scalar to put $b=1$ and
write $a=-h$. A length-one cokernel at $\xi$ gives
$-he=\gamma\xi$, while preservation of the flag at $1$ gives
$\gamma+e=1-h$. Solving yields
\[
\gamma=-\frac{h(h-1)}{h-\xi}=-h/z_1,\qquad
 e=\frac{\xi(h-1)}{h-\xi}=\xi/z_1.
\]
The isomorphism and flag conditions exclude $h=0,1,\xi$.
The first-jet flag condition is \eqref{eq:first-jet-output}.
Conversely, these formulas give a bundle injection with determinant
$(h/z_1)(X-\xi)$, isomorphic at all marked points.
Its kernel at $\xi$ is $\langle(1,h)\rangle$, so distinct $h$ give
distinct elementary modifications. Its image at $\xi$ is
$\langle(1,\xi/z_1)\rangle$. Since $\xi\ne0,1$, these lines are distinct for distinct $h$. These image lines distinguish the right cosets in
$H_\xi$. A repeated root contributes one coset.

For $u,U\notin\{0,1\}$, the polynomial in
\eqref{eq:first-jet-output} is nonzero at $h=0,1,\xi$; its projective
root at infinity can occur only when $U=1$. Thus, no modification
is lost on the stated interior indices.
The equal scalar automorphism groups give the coordinate sections
the same Petersson mass, proportional to $(q-1)^{-1}$. The earlier
central-quotient argument applies, so the scalar action is trivial
and both normalized coordinate embeddings are isometric.

For clarity, choose the following local representatives. Write
$S_u(X)=\left(\begin{smallmatrix}1&0\\uX&1\end{smallmatrix}\right)$
and $g_1=\left(\begin{smallmatrix}1&0\\1&1\end{smallmatrix}\right)$.
Take $g_{E_u,0}=S_u$, $g_{F_U,0}=S_U$, and
$g_{E_u,1}=g_{F_U,1}=g_1$. At infinity take
$g_{E_u,\infty}=w_0$ and
$g_{F_U,\infty}=\operatorname{diag}(1,X)$; elsewhere take the identity.
For a modification $G_h$, let $a_h$ have component $G_h$ at $\xi$
and the identity elsewhere. Then
\[
 g_{F_U}a_h=G_hg_{E_u}k_h,\qquad k_{h,\xi}=I,\qquad
 k_{h,v}^{-1}=g_{F_U,v}^{-1}G_hg_{E_u,v}\quad(v\ne\xi).
\]
The flag conditions put every $k_{h,v}$ in the specified compact
subgroup. Hence, Lemma~\ref{lem:typed-hecke-compression} assigns to this
coset the coefficient $\tau(k_h)^{-1}$, exactly the factor on the flag line
computed above. For a fixed target and $h$, equation
\eqref{eq:first-jet-output} determines $u$ uniquely; thus, each contributing right coset occurs once for each fixed target.
\end{proof}

\subsection{Square parameters in the rank-one matrix}
\label{sec:rankone-square-hecke}

Changing the tame characters in the first-jet model gives the
Sali\'e weight needed for the rank-one matrix at square parameters.

\begin{theorem}\label{thm:rankone-square-hecke}
Let $\operatorname{char}\mathbb F_q>3$, let $c\in\mathbb F_q^{\times2}$, and define
\[
 L_c(H,K)=\sum_{z\ne0}\chi\left(z+
       \frac{c(H^2+HK+K^2-1/3)}z\right).
\]
Then $\|L_c\|\le10q$ in counting measure.
\end{theorem}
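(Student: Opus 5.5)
We work with the partially Fourier-transformed matrix $L_c\mathcal F_1$ from the rank-one subsection. Since $\mathcal F_1$ is unitary, $\|L_c\|=\|L_c\mathcal F_1\|$, so it suffices to bound the latter.

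Away from the at most two rows with $9H^2=4$, the entries are
\[
 \sqrt q\,\eta\!\left(-\tfrac{9H^2-4}{12}\right)\chi(-HY/2)\sum_{v^2=(9H^2-4)(4c-Y^2)/12}\chi(v).
\]
Because $c$ is a nonzero square, write $c=\gamma^2$. Then $4c-Y^2=(2\gamma-Y)(2\gamma+Y)$ factors over $\F_q$, and after substituting $H=\tfrac23 H'$ the factor $9H^2-4=4(H'^2-1)$ factors as well. So the root condition has the shape $v^2=\kappa\,(H'-1)(H'+1)(Y-2\gamma)(Y+2\gamma)$ with a fixed constant $\kappa$.

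The plan is to identify the matrix
\[
 \eta(\cdot)\,\chi(-HY/2)\sum_{v}\chi(v)
\]
as a compression of a degree-one Hecke operator $H_\xi$ in the first-jet model of Lemma~\ref{lem:first-jet-modifications}, with the tame characters changed. The additive first-jet character at zero, $\vartheta(a_0+a_1X)=\chi(ca_1/a_0)$, should produce the oscillating factor $\chi(-HY/2)$. Placing quadratic characters $\eta$ on the flag lines at $1$ and $\infty$ should produce the Sali\'e weights $\eta(\cdot)$ attached to the factors $H'\pm1$ and $Y\pm2\gamma$.

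Concretely, choose $\xi$ and Möbius reparametrizations $u=u(H)$ and $U=U(Y)$ sending the four points $H'=\pm1$ and $Y=\pm2\gamma$ to the excluded values $0,1$ (and $\infty$). The quadratic relation \eqref{eq:first-jet-output} in $h$ then has discriminant proportional to the product above. By the Sali\'e identity, the sum over its roots $h$, weighted by $\chi(-cu/h)$ times the tame factors, reproduces $\sum_{v^2=\cdots}\chi(v)$ up to the unimodular row factor $\eta(\cdot)$ and column phases.

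Next, verify the hypotheses of Lemma~\ref{lem:typed-hecke-compression}. The objects must have automorphism group $\F_q^\times$ with trivial scalar action; this requires the three local characters to have product one on constants, which fixes the choice of the $\eta$'s. The determinant-exclusion condition holds through the nontrivial first-jet character. Proposition~\ref{prop:weighted-hecke-bound} then bounds the compressed matrix by $2\sqrt q$, so the main block of $L_c\mathcal F_1$ (which carries the extra factor $\sqrt q$) has norm at most $2q$. Diagonal unitary factors and the unimodular constants do not change norms.

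It remains to restore the rows and columns omitted from the interior index set. These are the at most two rows with $B_H=0$, plus the $O(1)$ values of $H$ and $Y$ sent to $\{0,1,\infty\}$, together with any boundary values of $U$. Each such row or column has $q$ entries of modulus at most $2\sqrt q$, so each has norm at most $2q$. A bounded number of these (say four: two rows and two columns) add at most $8q$, giving the total $\|L_c\|\le 10q$.

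The main obstacle is the exact matching in the second step. The Möbius parametrization and the tame characters must be chosen so that the Hecke coefficient $\chi(-cu/h)$, summed over the two roots $h$, equals the Sali\'e root sum with the correct constant and phase $\chi(-HY/2)$. The first thing to try is a direct computation: eliminate $h$ via \eqref{eq:first-jet-output}, write $h^{-1}$ in terms of the symmetric functions of the roots, and adjust $\xi$ and the scaling of $c$ until $-cu(h_1^{-1}+h_2^{-1})$ and $h_1^{-1}-h_2^{-1}$ match $-HY/2$ and $v$.
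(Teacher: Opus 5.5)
\textbf{There is a genuine gap.} The step you flag as the main obstacle cannot be done as you pose it. No choice of $\xi$ and of M\"obius reparametrizations $u=u(H)$, $U=U(Y)$ makes $L_c\mathcal F_1$ agree, up to unimodular diagonal factors, with the position-space first-jet Hecke matrix.

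The relation \eqref{eq:first-jet-output} reads $(U-1)h^2+(1+\xi u-\xi U)h-\xi u=0$, with discriminant
\[
 \Delta(u,U)=(1+\xi u-\xi U)^2+4\xi u(U-1).
\]
As a polynomial in $U$ it has leading coefficient $\xi^2$ and constant term $(\xi u-1)^2$. Its two zeros therefore have product $(\xi u-1)^2/\xi^2$, and they move with the row $u$. A column reparametrization $U(Y)$ does not depend on the row, so it cannot send these zeros to $Y=\pm2\gamma$ in every row. Hence your claim that the discriminant becomes proportional to $(H'^2-1)(Y^2-4c)$ is false. For large $q$ this has a concrete consequence: on a positive proportion of entries the $h$-quadratic has no root while $v^2=(9H^2-4)(4c-Y^2)/12$ has two. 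There one matrix vanishes and the other does not, since $\chi(v)+\chi(-v)\ne0$ in odd characteristic.

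The phases fail too. For the two roots, $-\tfrac{cu}{2}(h_1^{-1}+h_2^{-1})=-c(1+\xi u-\xi U)/(2\xi)$ splits into a row term plus a column term, so the bilinear phase $\chi(-HY/2)$ cannot appear. Finally, each position-space Hecke entry is already a sum over at most two roots, so there is no Kloosterman-type sum for the Sali\'e identity to evaluate.

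\textbf{The missing idea:} Fourier transform the Hecke matrix as well, and compare $L_c\mathcal F_1$ with $A\mathcal F_1$ rather than with $A$.
\begin{enumerate}
\item The paper first completes the first-jet matrix to all $u,U\in\F_q$ as in \eqref{eq:rankone-completed-jet}. This adds the root $h=\infty$ at $U=1$ and the root $h=1$ at $U=0$. The completion is necessary because $\mathcal F_1$ mixes all columns.
\item It proves $\|A\|\le6\sqrt q$. The Hecke bound gives $2\sqrt q$ on $u,U\notin\{0,1\}$; the quadratic characters on the flag line at $1$ and on the quotient at $\infty$ produce the weight $\eta(h/(h-\xi))$. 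The two boundary rows and columns have entries of modulus at most $2$.
\item Summing over $U$ frees $h$, and the substitution $s=h/(h-\xi)$ turns each entry of $A\mathcal F_1$ into the Sali\'e sum \eqref{eq:rankone-jet-salie}. There $4ab$ is a function of $u$ times a function of the dual variable $y$, and the bilinear phase comes from the kernel $\chi(Uy)$.
\item With $\xi=4$, $c_0=16\gamma/3$, $u=\tfrac12+\tfrac34H$ and $y=-\tfrac83\gamma-\tfrac43Y$, one gets $4ab=(9H^2-4)(4c-Y^2)/12$. So off $H=\pm2/3$ and $Y=\pm2\gamma$, the matrix $L_c\mathcal F_1$ equals $\sqrt q\,A\mathcal F_1$ up to affine permutations and unimodular row and column factors.
\end{enumerate}
The degenerate columns $Y=\pm2\gamma$ correspond to $y\in\{-c_0,0\}$ on the dual side, not to $U\in\{0,1\}$, so your idea of sending them to the excluded flag values conflates $U$ with its Fourier dual. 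The count is then $6q$ on the interior plus a Hilbert--Schmidt bound of $4q$ on the four removed lines, which gives $10q$.
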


\subsubsection*{A weighted first-jet matrix}
Let $\xi\ne0,1$ and $c_0\ne0$. Complete the first-jet correspondence
by the following explicit matrix on all $u,U\in\mathbb F_q$:
\begin{equation}\label{eq:rankone-completed-jet}
 A(u,U)=
 \sum_{\substack{h\in\mathbb F_q\setminus\{0,\xi\}:\\
 U h(h-\xi)=(h-1)(h-\xi u)}}
 \eta\left(\frac h{h-\xi}\right)\chi(-c_0u/h)
       +\1_{U=1}.
\end{equation}
The last term accounts for the projective root $h=\infty$ of the homogenized polynomial. The term $h=1$ at $U=0$ is also included in $A$. These terms need not correspond to injections between our bundle charts. Only the interior block $u,U\notin\{0,1\}$ is identified with a Hecke
compression. We bound the added rows and columns entry by entry.

\begin{lemma}\label{lem:rankone-completed-jet-bound}
The matrix \eqref{eq:rankone-completed-jet} satisfies
$\|A\|\le6\sqrt q$.
\end{lemma}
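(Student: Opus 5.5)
Write $A=A_{\mathrm{int}}+A_{\mathrm{bd}}$. Here $A_{\mathrm{int}}$ is the restriction of \eqref{eq:rankone-completed-jet} to the interior block $u,U\notin\{0,1\}$, and $A_{\mathrm{bd}}$ collects the at most two rows $u\in\{0,1\}$ and two columns $U\in\{0,1\}$. The aim is $\|A_{\mathrm{int}}\|\le2\sqrt q$ and $\|A_{\mathrm{bd}}\|\le4\sqrt q$.

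\textbf{Interior block.} I would rerun the first-jet construction of Lemma~\ref{lem:first-jet-modifications}, modifying only the tame characters. The additive character $\vartheta$ on the first-jet flag at $0$ is kept, since it produces $\chi(-c_0u/h)$. The Sali\'e weight $\eta(h/(h-\xi))$ should come from quadratic characters on the ordinary flags. The kernel line of $G_h$ at $\xi$ is $\langle(1,h)\rangle$, and its image line is $\langle(1,\xi/z_1)\rangle$. Evaluating the framed maps $g_{F_U,v}^{-1}G_hg_{E_u,v}$ at $v=1$ and $v=\infty$ shows that the diagonal entries on the flag lines are rational functions of $h$. Examples are $-h$ at infinity and quantities involving $h-1$ and $h-\xi$ at $1$.

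Choose $\eta$ on the flag line at $1$ and/or at infinity so that the product of these factors, up to a constant sign, equals $\eta(h)\eta(h-\xi)=\eta(h/(h-\xi))$. If needed, also put $\eta$ on a quotient line, or add a tame $\eta$-type at $\xi$'s complement. The local characters must multiply to one on $\F_q^\times$, so that a global finite-order central character exists as in the proof of Theorem~\ref{thm:arithmetic-whittaker-curve}. The additive residue character from Lemma~\ref{lem:first-jet-modifications} handles the wild part. Residual determinant representations are excluded as before, because the flag at zero carries the type $(\vartheta,1)$ with $\vartheta\ne1$.

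With automorphism groups $\F_q^\times$ acting trivially, Lemma~\ref{lem:typed-hecke-compression} then identifies $A_{\mathrm{int}}$, up to a unimodular constant, with a compression of $H_\xi$. Each $h$ is counted once, by the completeness part of Lemma~\ref{lem:first-jet-modifications}. Proposition~\ref{prop:weighted-hecke-bound} gives $\|A_{\mathrm{int}}\|\le2\sqrt q$.

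\textbf{Boundary rows and columns.} For fixed $u$, the entries $A(u,U)$ summed over $U$ reparametrize as a sum over $h$, since each $h$ determines $U$. The same holds for fixed $U$ via the quadratic \eqref{eq:first-jet-output} in $h$, which has at most two roots. So each column $U\in\{0,1\}$ has entries of modulus at most $3$, apart from the $\1_{U=1}$ term.

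A naive Hilbert--Schmidt bound on a row or column gives only $O(\sqrt q)$ when the entries are bounded but there are $q$ of them. That is acceptable, since $\|\text{row}\|_2\le\sqrt{\sum_U|A(u,U)|^2}$ and a row of $q$ entries of size $O(1)$ has $\ell^2$ norm $O(\sqrt q)$. I would check row $u$: each $h$ gives exactly one $U$ and each $U$ arises from at most two $h$, so $\sum_U|A(u,U)|^2\le 4q+O(1)$ roughly. Hence each boundary row and column has norm at most about $2\sqrt q+O(1)$.

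A rank-$\le4$ matrix supported on two rows and two columns has norm at most the sum of the two row-strip and two column-strip norms. This needs to be organized to give the total $\le4\sqrt q$. One convenient split is (rows $u\in\{0,1\}$) plus (columns $U\in\{0,1\}$ restricted to interior rows), each strip having norm at most $2\sqrt q$ after care with the constants. Then $\|A\|\le2\sqrt q+4\sqrt q=6\sqrt q$.

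\textbf{Main obstacle.} I expect the main obstacle to be the character bookkeeping in the interior step: producing exactly the weight $\eta(h/(h-\xi))$ from tame types at $1$ and infinity, while keeping the central compatibility \eqref{eq:typed-central-compatibility} and trivial scalar action. I would first try $\eta$ on the flag line at infinity, which gives $\eta(-h)$, together with $\eta$ on the flag line at $1$. I would then adjust by also twisting the quotient line if the factor at $1$ comes out as $\eta(h-1)$ rather than $\eta(h-\xi)$. The weight may need to come from the $\xi$-image relation $z_1=(h-\xi)/(h-1)$ instead.
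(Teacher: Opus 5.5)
Your decomposition matches the paper's: an interior block $u,U\notin\{0,1\}$ bounded by $2\sqrt q$ as a Hecke compression, plus the removed rows and columns. However, the step that carries the lemma is left as a trial, and the concrete candidate you propose fails. With the representatives of Lemma~\ref{lem:first-jet-modifications}, the framed reductions of $G_h$ at $1$ and at infinity are
\[
 g_1^{-1}G_h(1)\,g_1=\begin{pmatrix}1-h&1\\0&h(\xi-1)/(h-\xi)\end{pmatrix},
 \qquad
 \left.g_{F_U,\infty}^{-1}G_h\,g_{E_u,\infty}\right|_{\infty}
 =\begin{pmatrix}1&-h\\0&-h/z_1\end{pmatrix}.
\]
So the flag line at infinity is scaled by the constant $1$, not by $-h$. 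At infinity the $h$-dependence sits on the quotient, namely $-h/z_1=-h(h-1)/(h-\xi)$. Putting $\eta$ on the flag lines at $1$ and at infinity, as you first suggest, therefore compresses to a matrix with weight $\eta(1-h)$. That is not the matrix $A$ of \eqref{eq:rankone-completed-jet}, and it would also destroy the Sali\'e structure used in \eqref{eq:rankone-jet-salie}.

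The paper instead puts $\eta$ on the flag line at $1$ and on the quotient at infinity. This choice has the following properties:
\begin{enumerate}
\item The weight is $\eta\bigl((1-h)(-h/z_1)\bigr)=\eta\bigl(h(h-1)^2/(h-\xi)\bigr)=\eta(h/(h-\xi))$.
\item The two tame factors give $\eta(a)^2=1$ on constant scalars, so the full coefficient lines survive and the embeddings stay isometric.
\item The central character is the residue character of $-c_0/X$ times the quadratic tame-symbol character of $X-1$.
\item The wild type at $0$ still excludes determinant characters.
\end{enumerate}
Note also that the compression is $A_{\mathrm{int}}^{\top}$, which has the same norm. Without this explicit computation, your interior bound is not established.

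Your boundary accounting also does not deliver $4\sqrt q$, and hence not the constant $6$. The indicator $\1_{U=1}$ is not an extra term on top of two roots. It is the projective root $h=\infty$ of the homogenized form of \eqref{eq:first-jet-output}, whose coefficients $U-1$, $1+\xi u-\xi U$, $-\xi u$ cannot all vanish since $\xi\ne1$. Hence there are at most two projective roots, and every entry of $A$ has modulus at most $2$, not $3$.

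Moreover, your claim that a two-row strip and a two-column strip each have norm at most $2\sqrt q$ is not supported by your own bounds. Those bounds give $2\sqrt{2q}$ per strip, hence only $(2+4\sqrt2)\sqrt q$ overall. The fix is a single Hilbert--Schmidt estimate on the union of the two removed rows and two removed columns. It has at most $4q$ entries, each of modulus at most $2$, so its norm is at most $4\sqrt q$, and $\|A\|\le 2\sqrt q+4\sqrt q=6\sqrt q$ follows.
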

\begin{proof}
Use the bundles with level structure from Lemma~\ref{lem:first-jet-modifications},
with $c=c_0$. Retain the wild character
$\vartheta(a_0+a_1X)=\chi(c_0a_1/a_0)$ on the flagged line at zero.
At $1$ put $\eta$ on the flagged line, and at infinity put $\eta$
on the quotient by the flag. The automorphic central character is
the product of the residue character associated with $-c_0/X$ and
the quadratic tame-symbol character associated with $X-1$ and
$\eta$. It is therefore inverse to the character of the compact
center on the coefficient line.
This type excludes residual determinant characters at zero.
Proposition~\ref{prop:weighted-hecke-bound} therefore bounds the
unnormalized Hecke operator at $\xi$ by $2\sqrt q$.
The two added tame factors have product $\eta(a)^2=1$ on a constant
scalar $a$. Thus, the scalar stabilizers still act trivially, and
the coordinate embeddings from Lemma~\ref{lem:first-jet-modifications}
remain isometric.

For the injection $G_h$ in \eqref{eq:first-jet-injection}, the ordinary
flag line at $1$ has scaling factor $1-h$. With the chosen infinity
representatives, the framed map has reduction
\[
 \left.g_{F_U,\infty}^{-1}G_hg_{E_u,\infty}\right|_{\infty}
 =\begin{pmatrix}1&-h\\0&-h/z_1\end{pmatrix}.
\]
Its map on the quotient line has scaling factor $-h/z_1$, where
$z_1=(h-\xi)/(h-1)$. Their product has quadratic character
\[
 \eta\bigl((1-h)(-h/z_1)\bigr)
       =\eta\left(\frac h{h-\xi}\right).
\]
The first-jet flag at zero gives $\chi(-c_0u/h)$.
Let $A_{\mathrm{int}}$ be the restriction of
\eqref{eq:rankone-completed-jet} to $u,U\notin\{0,1\}$. By
Lemma~\ref{lem:first-jet-modifications} and
Lemma~\ref{lem:typed-hecke-compression},
\begin{equation}\label{eq:rankone-jet-exact-compression}
 i_F^*H_\xi i_E=A_{\mathrm{int}}^{\top}.
\end{equation}
The transpose occurs because $A(u,U)$ has the source parameter as
its row index, whereas the compression has the target as its row
index. The ordinary transpose preserves the norm of a complex
matrix, since $A^{\top}=\overline{A^*}$. Thus,
$\|A_{\mathrm{int}}\|\le2\sqrt q$.

For arbitrary $u,U$, the homogenized quadratic polynomial is
not identically zero. Indeed, its three coefficients are
$U-1$, $1+\xi u-\xi U$, and $-\xi u$; their simultaneous vanishing
would imply $U=1,u=0,\xi=1$. It has at most two projective roots.
Thus, every entry of $A$ has modulus at most two, including its
added root at infinity. The two omitted rows and columns have Hilbert--Schmidt norm
at most $4\sqrt q$. Adding the two bounds gives
$\|A\|\le6\sqrt q$.
\end{proof}

\subsubsection*{Fourier transform of the first-jet matrix}
Let $\mathcal F_1(U,y)=q^{-1/2}\chi(Uy)$, and set
\[
 a(u,y)=\frac{(\xi-1)y(1-u)}\xi,
 \qquad b(u,y)=\frac{u(c_0+y)}\xi,
 \qquad C(u,y)=\frac{y+uy(\xi-2)-c_0u}\xi.
\]
We have
\begin{equation}\label{eq:rankone-jet-salie}
 (A\mathcal F_1)(u,y)=
 q^{-1/2}\chi(C(u,y))
       \sum_{s\ne0}\eta(s)\chi\bigl(a(u,y)s+b(u,y)/s\bigr).
\end{equation}
To verify it, sum first over $U$ in
\eqref{eq:rankone-completed-jet} and make the M\"obius substitution
$s=h/(h-\xi)$. The allowed projective $h$ values correspond exactly
to $s\in\mathbb F_q^\times$; in particular, $h=\infty$ becomes $s=1$.
Before the substitution the phase is
\[
 y-\frac{u(c_0+y)}h+
       \frac{(\xi-1)y(1-u)}{h-\xi}.
\]
The resulting phase is $C+as+b/s$ and the character is
$\eta(s)$.

Writing $G_\eta=\sum_z\chi(z^2)$, the Sali\'e identity gives, when
$b(u,y)\ne0$,
\begin{equation}\label{eq:rankone-jet-root-product}
 (A\mathcal F_1)(u,y)=\frac{G_\eta}{\sqrt q}\eta(b(u,y))\chi(C(u,y))
                \sum_{v^2=4a(u,y)b(u,y)}\chi(v).
\end{equation}
The factor $G_\eta/\sqrt q$ is unimodular.

\begin{proof}[Proof of Theorem~\ref{thm:rankone-square-hecke}]
Choose $k\in\mathbb F_q^\times$ with $k^2=c$. In the preceding
identities put
\[
 \xi=4,\qquad c_0=16k/3,\qquad
 u=\frac12+\frac{3H}4,\qquad y=-\frac{8k}3-\frac{4Y}3.
\]
These are bijective affine changes of the row and column variables.
Direct calculation gives
\[
 4a(u,y)b(u,y)=\frac{(9H^2-4)(4c-Y^2)}{12},
\]
and
\[
 C(u,y)=-HY/2-2kH-2Y/3-2k.
\]
On the other hand, the exact Fourier calculation in
Subsection~\ref{sec:rankone-affine-fourier} gives
\[
 (L_c\mathcal F_1)(H,Y)=
 \sqrt q\,\eta\left(-\frac{9H^2-4}{12}\right)\chi(-HY/2)
       \sum_{v^2=(9H^2-4)(4c-Y^2)/12}\chi(v),
\]
provided $9H^2-4\ne0$.
Remove $H=\pm2/3$ and $Y=\pm2k$. On the remaining indices,
$L_c\mathcal F_1$ is exactly $\sqrt q$ times
$A\mathcal F_1$, after the displayed affine coordinate permutations
and row and column factors of modulus one. Indeed, the additional phase
$-2kH-2Y/3-2k$ separates, and the quotient of the quadratic-character
factors is
\[
 \eta\left(\frac{1-u}{3(c_0+y)}\right),
\]
which also separates into row and column factors of modulus one.
The remaining constant $(G_\eta/\sqrt q)^{-1}$ is also unimodular.
Lemma~\ref{lem:rankone-completed-jet-bound} therefore bounds this
interior matrix by $6q$.

The displayed Fourier formula shows that every entry of
$L_c\mathcal F_1$ has modulus at most $2\sqrt q$, including the
exceptional values: when its Sali\'e coefficient $B_H$ vanishes,
the remaining sum is a quadratic Gauss sum or zero.
The two omitted rows and columns have Hilbert--Schmidt norm
at most $4q$.
Since $\mathcal F_1$ is unitary, $\|L_c\|\le10q$ follows.
\end{proof}

\subsection{Nonsquare parameters in the rank-one matrix}
\label{sec:rankone-nonsplit-type}

For nonsquare parameters, we replace the split first-jet character
by a nonsplit semisimple local type. The Hecke point is still
rational. This gives the estimate for all nonzero parameters.

\begin{theorem}\label{thm:rankone-all-parameters}
For $\operatorname{char}\mathbb F_q>3$ and every $c\in\mathbb F_q^\times$, the matrix
\[
 L_c(H,K)=\sum_{z\ne0}\chi\left(z+
       \frac{c(H^2+HK+K^2-1/3)}z\right)
\]
satisfies $\|L_c\|\le10q$. If $c$ is nonsquare, the proof below gives
$\|L_c\|\le(2+\sqrt2)q$.
\end{theorem}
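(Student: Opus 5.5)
The square case is Theorem~\ref{thm:rankone-square-hecke}, so only a nonsquare $c$ needs a new argument; there we aim for $(2+\sqrt2)q$. We follow the same two-step pattern as before. First, Subsection~\ref{sec:rankone-affine-fourier} gives the Fourier transform of $L_c$ explicitly. Away from the rows $9H^2-4=0$, $(L_c\mathcal F_1)(H,Y)$ equals $\sqrt q\,\eta(-(9H^2-4)/12)\chi(-HY/2)$ times the root sum over $v^2=(9H^2-4)(4c-Y^2)/12$. When $c$ is a nonsquare, $4c-Y^2$ never vanishes, so no columns are exceptional. Only the (at most two) rows $H=\pm2/3$ must be removed. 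Their entries have modulus at most $\sqrt q$, so their Hilbert--Schmidt contribution is at most $\sqrt2\,q$. This accounts for the $\sqrt2$ in the constant. The remaining task is to show that the interior matrix, namely the product of a row phase, the kernel $\chi(-HY/2)$, and the root sum $\sum_{v^2=4\alpha\beta}\chi(v)$, has norm at most $2q$. Equivalently, the corresponding normalized matrix must have norm at most $2\sqrt q$, so it should be an exact Hecke compression with no added rows.

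To realize it, we replace the first-jet flag at zero by a nonsplit semisimple type at a rational point. We take a degree-zero family $E_u$ and a degree-one family $F_U$ on $\mathbb P^1$. At zero, the level is an Iwahori-type level whose reduction preserves a pair of conjugate lines defined over $\mathbb F_{q^2}$; equivalently, it is the normalizer of a nonsplit torus $\mathbb F_{q^2}^\times$. On this torus we put a character that is nontrivial on the norm-one part, so the wild parameter is replaced by a character of $\mathbb F_{q^2}^\times$ encoding the nonsquare $c$. At $1$ and infinity we keep ordinary flags carrying $\eta$, exactly as in Lemma~\ref{lem:rankone-completed-jet-bound}. Three checks are then needed. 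Central compatibility should follow from a residue/tame-symbol construction of $\omega$, as before. The type should exclude determinant characters, which holds because a nonsplit character that is nontrivial on norm-one elements cannot factor through the determinant. Automorphism groups should equal $\mathbb F_q^\times$, so that Lemma~\ref{lem:typed-hecke-compression} applies with equal weights and full fibers.

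Next we enumerate the elementary modifications at a rational point $\xi$. In the affine parameter, the incidence relation should again be a projective quadratic in $h$, with coefficients linear in $u$ and $U$. A Möbius substitution should then turn the Fourier transform in $U$ into a twisted Salié sum $\sum_s\eta(s)\chi(as+b/s)$, now with $4ab$ equal to a nonsquare multiple of $(9H^2-4)(4c-Y^2)$ after affine reparametrization. The point of the nonsplit type is that every projective $h$ corresponds to a genuine injection. There is then no boundary root at $h=\infty$ and no excluded indices $u,U\in\{0,1\}$, because the torus-stable lines at zero are not rational. So the compression should be the whole matrix, with norm at most $2\sqrt q$ by Proposition~\ref{prop:weighted-hecke-bound}. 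Matching the Salié identity with Subsection~\ref{sec:rankone-affine-fourier}, after unimodular row and column factors, gives the $2q$ bound. Adding the removed rows yields $(2+\sqrt2)q\le10q$.

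The main obstacle is the explicit bundle computation: finding representatives for the nonsplit level and verifying that the coefficient of each modification is exactly $\eta(s)\chi(\cdot)$, with the correct dependence on $c$. If a clean match fails, a fallback is to pass to $\mathbb F_{q^2}$, where $c$ becomes a square. Theorem~\ref{thm:rankone-square-hecke} then applies to a Galois-descended version of the matrix, though keeping the constants under control may be harder.
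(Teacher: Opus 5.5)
Your skeleton matches the paper's. The square case is Theorem~\ref{thm:rankone-square-hecke}. For nonsquare $c$, only the rows $H=\pm2/3$ of $L_c\mathcal F_1$ are removed, at Hilbert--Schmidt cost $\sqrt2\,q$. The rest must be $\sqrt q$ times a full Hecke compression of norm $2\sqrt q$, built from a nonsplit type at $X=0$ with $\eta$ on the flags at $1$ and $\infty$.

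The gap is the type at $X=0$, which is where all the content lies. You take a depth-zero character $\theta$ of $\F_{q^2}^\times$, nontrivial on norm-one elements, and say it replaces the wild parameter. That cannot produce a Sali\'e sum. If the Hecke coefficients carry no additive character, then in any geometry the only phase after the one-sided Fourier transform is $\chi(YK)$, where $K$ is the target parameter as a function of the summation variable and the row index. So in $\chi(az+b/z)$ both $a$ and $b$ are multiples of the dual variable $Y$, and $ab$ has a double root in $Y$. But neither $9H_0^2-4$ (two simple roots) nor $4c-Y_0^2$ (no roots) is an affine reparametrization of $Y^2$.

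Now take the geometry the nonsplit level forces. It preserves no rational line at $0$, so your first-jet parameters $u,U$ have no counterpart. The parameters become the flags $(1,H),(1,K)$ at $X=1$, and the modifications are $G_t$ with $G_t(0)=tI+J_C$. There your type also leaves the non-Sali\'e factor $(\theta\cdot(\eta\circ N))(z-CH+\delta)$, with $\delta^2=C$ and $N$ the norm. Removing it forces $\theta=\eta\circ N$, which is a determinant type. Incidentally, a one-dimensional character of the normalizer of the torus is automatically trivial on the norm-one subgroup. So ``equivalently, the normalizer'' contradicts your nontriviality requirement.

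The missing idea is to keep a wild, depth-one character, now attached to an elliptic element. The paper works modulo $X^2$ on the preimage of $\mathcal T=\{aI+bJ_C\}$, where $J_C=\bigl(\begin{smallmatrix}0&C\\1&0\end{smallmatrix}\bigr)$ and $C$ is nonsquare. It uses $\tau(\mathsf B(I+XY))=\eta(\det\mathsf B)\chi(\operatorname{Tr}(J_CY))$. This is trivial on the norm-one torus. Determinant characters are excluded instead by $I+XsE_{12}$, which has determinant one and value $\chi(s)$.

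The wild part contributes $\chi(t/\xi)$ to each modification. This shifts $a$ from $Y/(C\xi)$ to $(C+Y)/(C\xi)$, so $4ab\propto Y(C+Y)(1-CH^2)$. With $C=64c/9$, $\xi=4$, and an affine exchange of rows and columns, this is exactly $(9H_0^2-4)(4c-Y_0^2)/12$. Thus the nonsquareness of $c$ enters through $J_C$, which makes $1-CH^2$ and $4c-Y_0^2$ nonvanishing, not through a multiplicative character.

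Your fallback over $\F_{q^2}$ does not close the gap. The square-case bound there is $10q^2$, a factor $q$ too weak. Also, the $\F_q$ matrix is not a compression of the $\F_{q^2}$ one, since both the trace character and the range of $z$ change.
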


\begin{proof}
The square case is Theorem~\ref{thm:rankone-square-hecke}. We prove
the nonsquare case by a nonsplit local type.

\subsubsection*{The local character and its automorphic Hilbert space}
Let $C\in\mathbb F_q^\times$ be nonsquare and put
\[
 J_C=\begin{pmatrix}0&C\\1&0\end{pmatrix},\qquad
 \mathcal T=\{aI+bJ_C:(a,b)\in\mathbb F_q^2\setminus\{(0,0)\}\}.
\]
This is the nonsplit torus centralizing $J_C$; every displayed nonzero
matrix is invertible. At the place $X=0$, take the compact subgroup
whose reduction modulo $X$ lies in $\mathcal T$, and work modulo
$X^2$. Every element has a unique form
$\mathsf B(I+XY)$ modulo $X^2$, with $\mathsf B\in\mathcal T$ and
$Y\in\operatorname{Mat}_2(\mathbb F_q)$. Define
\begin{equation}\label{eq:rankone-nonsplit-local-character}
 \tau_{\mathrm{ns}}\bigl(\mathsf B(I+XY)\bigr)
       =\eta(\det \mathsf B)\chi(\operatorname{Tr}(J_C Y)).
\end{equation}
This is a character: conjugation by $\mathsf B$ preserves $J_C$ and hence the
trace pairing. Its restriction to scalar units is trivial, since
$\eta(a^2)=1$ and $\operatorname{Tr}J_C=0$.

At $X=1$ use an ordinary flag with character $\eta$ on its line;
at infinity use an ordinary flag with character $\eta$ on its
quotient. Use maximal compact level elsewhere. Let $\omega$ be the
quadratic tame-symbol character associated with $X-1$. It is
unramified at zero and is its own inverse. Consequently,
$\tau(z)=\omega(z)^{-1}$ on the compact center, so these types
define an automorphic $L^2$ space with the fixed convention above.

No determinant character occurs in this type, irrespective of its
conductor. Indeed, the local element $I+XsE_{12}$ has determinant
exactly one, whereas \eqref{eq:rankone-nonsplit-local-character}
takes the value $\chi(s)$ on it, since
$\operatorname{Tr}(J_C E_{12})=1$. Choosing $s$ with $\chi(s)\ne1$
excludes every representation of the form $\mu\circ\det$.
Proposition~\ref{prop:weighted-hecke-bound} therefore gives
$\|H_\xi\|\le2\sqrt q$ at every rational
$\xi\notin\{0,1\}$.

\subsubsection*{The matrix of the Hecke compression}
In degree zero take $E_H=\mathcal O\oplus\mathcal O$, with the
standard local frame at zero with the character
\eqref{eq:rankone-nonsplit-local-character}, and flags
\[
 (1,H)\text{ at }1,\qquad e_2\text{ at infinity}.
\]
In degree one take $F_K=\mathcal O\oplus\mathcal O(1)$, with the
same standard local type at zero, and flags
\[
 (1,K)\text{ at }1,\qquad e_1\text{ at infinity}.
\]
Here $H,K\in\mathbb F_q$. In the target frame at infinity, the second affine coordinate is divided by $X$.

Both automorphism groups are precisely $\mathbb F_q^\times$.
For $E_H$, an automorphism preserving the local type has reduction
in $\mathcal T$; fixing $e_2$ forces it to be scalar.
For $F_K$, an automorphism has the form
$\left(\begin{smallmatrix}a&0\\b+\gamma X&e\end{smallmatrix}\right)$.
Its infinity flag forces $\gamma =0$, and its constant reduction lies in
$\mathcal T$ only when $b=0$ and $a=e$.
The scalar character is trivial, so the coefficient lines are
well defined on both families. The same calculations show that
different finite parameters give nonisomorphic objects.
Their coefficient lines have equal Petersson mass, a common
multiple of $1/(q-1)$ by the central-quotient argument above. Let
$i_E,i_F$ be the corresponding isometric embeddings from counting
measure.

For $t\in\mathbb F_q$ put $D_t=t^2-C\ne0$ and define
\begin{equation}\label{eq:rankone-nonsplit-injection}
 G_t(X)=\begin{pmatrix}
 t&C\\1+D_tX/(C\xi)&t
 \end{pmatrix}.
\end{equation}
It has
\[
 \det G_t(X)=D_t(1-X/\xi),\qquad
 \ker G_t(\xi)=\langle(1,-t/C)\rangle.
\]
Thus, it is an elementary upper modification at $\xi$.
Its value at zero is $tI+J_C\in\mathcal T$, so it identifies the
specified local coefficient fibers. Its infinity flag is $e_1$.
Its image flag at $1$ has finite coordinate
\begin{equation}\label{eq:rankone-nonsplit-output}
 K=\frac{1+D_t/(C\xi)+tH}{t+CH}.
\end{equation}
When $t+CH=0$, the numerator is
$(\xi-1)(1-CH^2)/\xi\ne0$, so the image flag is the omitted
point at infinity.

To see that this list is exhaustive, start with a general bundle map
\[
 G(X)=\begin{pmatrix}a&b\\c+\gamma X&e+fX\end{pmatrix}.
\]
Preservation of the infinity flags forces $f=0$ and $b\ne0$.
Its reduction at zero must belong to $\mathcal T$, so $e=a$ and
$b=Cc$, with $c\ne0$. Dividing by the scalar $c$, which has trivial
total coefficient character, gives
$\left(\begin{smallmatrix}t&C\\1+\gamma X&t\end{smallmatrix}\right)$.
Its determinant has its unique simple zero at $\xi$ precisely when
$\gamma=(t^2-C)/(C\xi)$, giving \eqref{eq:rankone-nonsplit-injection}.
Thus, these formulas account for every elementary modification that
contributes to the stated source and target families.
The $q$ kernel lines $\langle(1,-t/C)\rangle$ are distinct and
exhaust all kernel lines except $e_2$. The image lines at $\xi$ are
$\langle(C,t)\rangle$, so the corresponding right cosets are also
distinct. The omitted kernel line $e_2$ gives a modification whose
infinity flag lies in the degree-one summand, so it cannot belong to
the target family, whose infinity flag is transverse to that
summand. For each of the remaining lines,
\eqref{eq:rankone-nonsplit-injection} gives the
unique elementary modification. The finite coordinates of its image flag are given by \eqref{eq:rankone-nonsplit-output}; the equal scalar
stabilizers introduce no multiplicity or measure factor.

Take identity representatives at zero,
$g_{E_H,1}=\left(\begin{smallmatrix}1&0\\H&1\end{smallmatrix}\right)$,
and $g_{F_K,1}=\left(\begin{smallmatrix}1&0\\K&1\end{smallmatrix}\right)$.
At infinity take $g_{E_H,\infty}=w_0$ and
$g_{F_K,\infty}=\operatorname{diag}(1,X)$, and use the identity
elsewhere. If $a_t$ is supported at $\xi$ with component $G_t$, then
\[
 g_{F_K}a_t=G_tg_{E_H}k_t,\qquad k_{t,\xi}=I,\qquad
 k_{t,v}^{-1}=g_{F_K,v}^{-1}G_tg_{E_H,v}\quad(v\ne\xi).
\]
These elements belong to the specified compact subgroups. For fixed $t$ and a finite image coordinate
$K$, the isomorphism $G_t(1)$ determines at most one source flag $H$.
Thus, each right coset contributes once to the full coordinate matrix,
and its coefficient is $\tau(k_t)^{-1}$.

Write $G_t(X)=G_t(0)(I+XY_t)$. Direct multiplication gives
\[
 Y_t=\begin{pmatrix}-1/\xi&0\\t/(C\xi)&0\end{pmatrix},
 \qquad\operatorname{Tr}(J_C Y_t)=t/\xi.
\]
The local coefficient at zero is consequently
$\eta(D_t)\chi(t/\xi)$. The line at $1$ scales by $t+CH$. At
infinity the framed map has reduction
\[
 \left.g_{F_K,\infty}^{-1}G_tg_{E_H,\infty}\right|_{\infty}
 =\begin{pmatrix}C&t\\0&D_t/(C\xi)\end{pmatrix},
\]
so the quotient scales by $D_t/(C\xi)$. Their product is
\[
 \eta(D_t)\eta(t+CH)\eta(D_t/(C\xi))\chi(t/\xi)
 =\eta(C\xi)\eta(t+CH)\chi(t/\xi).
\]
Define
\[
 \mathsf R(H,K)=
 \sum_{t^2+C\xi(H-K)t-C^2\xi HK+C(\xi-1)=0}
      \eta(t+CH)\chi(t/\xi).
\]
Lemma~\ref{lem:typed-hecke-compression} gives the exact identity
\begin{equation}\label{eq:rankone-nonsplit-exact-compression}
 i_F^*H_\xi i_E=\eta(C\xi)\mathsf R^{\top}.
\end{equation}
Here again the displayed matrix uses the source as its row index;
its ordinary transpose has the same norm. Each root is counted once.
Consequently,
\begin{equation}\label{eq:rankone-nonsplit-root-bound}
 \|\mathsf R\|\le2\sqrt q.
\end{equation}

\subsubsection*{Fourier transformation and the affine Kloosterman matrix}
Set $z=t+CH$. Equation \eqref{eq:rankone-nonsplit-output} becomes
\[
 K=\frac z{C\xi}+\frac{(\xi-2)H}{\xi}
                  +\frac{(\xi-1)(1-CH^2)}{\xi z}.
\]
The image flag has a finite coordinate exactly when $z\in\mathbb F_q^\times$.
Consequently, with the same unitary Fourier matrix as before,
\[
 (\mathsf R\mathcal F_1)(H,Y)
  =q^{-1/2}\chi\left(\frac{(\xi-2)HY-CH}{\xi}\right)
       \sum_{z\ne0}\eta(z)\chi(az+b/z),
\]
where
\[
 a=\frac{C+Y}{C\xi},\qquad
 b=\frac{Y(\xi-1)(1-CH^2)}{\xi}.
\]
When $Y\ne0$, the Sali\'e identity applies because
$1-CH^2\ne0$.

Let the Kloosterman parameter $c$ be nonsquare and put
\[
 C=64c/9,\qquad\xi=4,\qquad
 H=-\frac{3Y_0}{16c},\qquad
 Y=-\frac{32c}{9}+\frac{16c}{3}H_0.
\]
Then
\[
 4ab=\frac{(9H_0^2-4)(4c-Y_0^2)}{12},
 \qquad
 \frac{(\xi-2)HY-CH}{\xi}=-H_0Y_0/2+2Y_0/3.
\]
The row and column roles are exchanged by these substitutions.
The resulting expression contains the same sum over roots as $L_c\mathcal F_1$ in
Subsection~\ref{sec:rankone-affine-fourier}. By the Sali\'e identity, the factor outside the root sum for $\mathsf R\mathcal F_1$ has modulus one. The corresponding factor for $L_c\mathcal F_1$ has modulus $\sqrt q$.
After removing $H_0=\pm2/3$, the ratio of its character factors is
\[
 \eta\left(-\frac{3H_0+2}{4(4c-Y_0^2)}\right),
\]
which separates into row and column factors of modulus one.
The additional phase $2Y_0/3$ depends only on the column, and $G_\eta/\sqrt q$
is unimodular. Since $c$ is nonsquare, $4c-Y_0^2$ never vanishes.
Thus, \eqref{eq:rankone-nonsplit-root-bound} bounds this part of
$L_c\mathcal F_1$ by $2q$, after transposition, coordinate permutations,
and multiplication by unitary diagonal matrices.

On its two omitted rows, the original Sali\'e expression reduces
to a Gauss sum and every entry has modulus at most $\sqrt q$.
Their Hilbert--Schmidt norm is at most $\sqrt2\,q$. This proves
$\|L_c\|\le(2+\sqrt2)q$ for nonsquare $c$.
The square case is Theorem~\ref{thm:rankone-square-hecke}; together
they prove Theorem~\ref{thm:rankone-all-parameters}.
\end{proof}

\subsection{The rank-one matrix in characteristic three}
\label{sec:rankone-characteristic-three}

In characteristic three, the preceding change of variables involving $1/3$ is not defined.
We instead transform the unshifted matrix in both variables and
use multiplicative characters of a split or a nonsplit torus.
We use the following consequence of the
mixed character-sum estimate of Fu and Wan
\cite[Theorem~4.6]{FuWanIncomplete}.

\Needspace{8\baselineskip}
\begin{lemma}
\label{lem:rankone-mixed-character-sums}
Let $\chi$ be a nontrivial additive character of an odd finite field
$\mathbb F_q$.
For multiplicative characters $\alpha,\beta$ of $\mathbb F_q^\times$
and $\gamma\ne0$,
\begin{equation}\label{eq:rankone-split-mixed-sum}
 \left|\sum_{s\ne0,-1}\alpha(s)\beta(s+1)\chi(\gamma s)\right|
 \le2\sqrt q.
\end{equation}
If $\delta\in\mathbb F_{q^2}\setminus\mathbb F_q$, $\Omega$ is any
multiplicative character of $\mathbb F_{q^2}^\times$, and
$\gamma\ne0$, then
\begin{equation}\label{eq:rankone-nonsplit-mixed-sum}
 \left|\sum_{s\in\mathbb F_q}\Omega(s+\delta)\chi(\gamma s)\right|
 \le2\sqrt q.
\end{equation}
\end{lemma}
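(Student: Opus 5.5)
The plan is to deduce both bounds from the Weil/Deligne bound for mixed character sums on the affine line, in the form of \cite[Theorem~4.6]{FuWanIncomplete}. In that form, a sum $\sum_{x}\prod_i\psi_i(f_i(x))\chi(g(x))$ has modulus at most $(\deg g+\#\{\text{distinct zeros of }\prod f_i\}-1)\sqrt q$ whenever the summand is not a constant on the Zariski-open set. Applying it is then a matter of three checks: identify the rational functions, count the poles and zeros, and rule out degeneracy. Throughout, the additive part is the nonconstant linear polynomial $\gamma s$ with $\gamma\ne0$, of degree one with its only pole at infinity.

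For \eqref{eq:rankone-split-mixed-sum}, take $f_1(s)=s$ and $f_2(s)=s+1$ with characters $\alpha,\beta$, and $g(s)=\gamma s$. The multiplicative part is ramified at most at $0$ and $-1$, and the additive part is wildly ramified only at infinity with Swan conductor one. The Grothendieck--Ogg--Shafarevich count then gives an $H^1_c$ of dimension at most $2+1-1=2$, pure of weight one, and $H^0_c=H^2_c=0$. Degeneracy cannot occur: since $\gamma\ne0$, the sheaf $\mathcal L_\chi(\gamma s)$ is not tamely ramified at infinity, so $H^2_c$ vanishes even when $\alpha=\beta=1$. In that trivial case the sum is $\sum_{s\ne0,-1}\chi(\gamma s)=-1-\chi(-\gamma)$, of modulus at most $2\le2\sqrt q$, which can also be checked directly. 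So the bound $2\sqrt q$ holds in all cases.

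For \eqref{eq:rankone-nonsplit-mixed-sum}, we use that $\Omega\circ(s+\delta)$ restricted to $\mathbb F_q$ is a product of Kummer sheaves. Over $\mathbb F_{q^2}$, the norm identity shows $\Omega(s+\delta)$ is the trace function on $\mathbb A^1_{\mathbb F_q}$ of the rank-one sheaf obtained by Weil restriction, equivalently of $\mathcal L_{\Omega}(s+\delta)$. Geometrically, this sheaf is tamely ramified exactly at the two conjugate points $-\delta,-\delta^q$. If $\Omega$ is trivial on the relevant part, the sum reduces to $\sum_s\chi(\gamma s)=0$. Otherwise the same conductor count gives $2+1-1=2$ for the dimension of $H^1_c$, no $H^2_c$ because of the wild ramification at infinity, and hence the bound $2\sqrt q$. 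The alternative is to cite \cite[Theorem~4.6]{FuWanIncomplete} directly, with one degree-two irreducible factor whose splitting contributes two geometric zeros.

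The main obstacle is the nonsplit case. One has to confirm that the restriction to $\mathbb F_q$-points of a character of $\mathbb F_{q^2}^\times$ composed with $s+\delta$ really fits the hypotheses of the cited theorem, which is stated over the base field with a multiplicative character of a field extension. The approach is to write $\Omega(s+\delta)$, for $s\in\mathbb F_q$, as $\Omega'$ of the $\mathbb F_q$-rational point on the norm torus. Alternatively, one applies the estimate over $\mathbb F_{q^2}$ to a twisted Frobenius. We also need to check that the conductor bound does not pick up an extra contribution at infinity from $\Omega$. It does not, because $s+\delta$ has degree one in $s$, so the tame character at infinity merely combines with the wild additive part, leaving the Swan conductor equal to one.
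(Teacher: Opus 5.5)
Your proposal is correct and is essentially the paper's argument. Both bounds are read off from Fu--Wan's Theorem~4.6: two distinct finite zeros plus a linear phase give $2+1-1=2$, the nonsplit case uses the extension-degree-two form, and your observation that $\mathcal L_\chi(\gamma s)$ is wild at infinity (so $H^2_c=0$) is the same check as the paper's remark that a nonconstant linear polynomial is not of the form $r^p-r$.

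Two small corrections. First, when $\alpha$, $\beta$ or $\Omega$ is trivial, $H^1_c$ is only mixed of weights $\le1$, not pure of weight one; this still gives $2\sqrt q$. Second, a ``degree-two irreducible factor over $\mathbb F_q$'' with a character of $\mathbb F_q^\times$ would only cover $\Omega$ that factor through the norm. You should therefore cite the extension-degree-two version, as the paper does with $f(s)=s+\delta$ and $g(s)=\gamma s/2$.
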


\begin{proof}
For \eqref{eq:rankone-split-mixed-sum}, choose a multiplicative
character $\rho$ of order $q-1$ and integers $1\le a,b\le q-1$
with $\alpha=\rho^a$ and $\beta=\rho^b$. Apply the cited theorem
over the base field to $f(s)=s^a(s+1)^b$ and $g(s)=\gamma s$.
The sum of the degrees of the distinct irreducible factors of $f$
is two, and $g$ is a linear polynomial. The theorem gives
$2\sqrt q$, including when either multiplicative character is
trivial.
For \eqref{eq:rankone-nonsplit-mixed-sum}, apply the same theorem
with extension degree two, $f(s)=s+\delta$, and $g(s)=\gamma s/2$.
Then $\operatorname{Tr}_{\mathbb F_{q^2}/\mathbb F_q}g(s)=\gamma s$,
and its bound is again $2\sqrt q$.
In both applications the required nondegeneracy condition holds:
a nonconstant linear polynomial is not of the form $r^p-r$ over
an algebraic closure, because every pole of such a nonconstant
rational function has order divisible by the characteristic $p$.
\end{proof}

\begin{theorem}\label{thm:rankone-characteristic-three}
Suppose $\operatorname{char}\mathbb F_q=3$. For $c\ne0$, the matrix
\[
 L_c^{(3)}(h,k)=\sum_{z\ne0}\chi\left(z+
             \frac{c((h-k)^2+h+k)}z\right),
 \qquad h,k\in\mathbb F_q,
\]
satisfies $\|L_c^{(3)}\|\le4q$ in counting measure.
If $c$ is nonsquare, $\|L_c^{(3)}\|\le2q$.
\end{theorem}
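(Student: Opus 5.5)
The plan is to diagonalize the quadratic part of $L_c^{(3)}$ by a two-sided Fourier transform and then exploit the Frobenius additivity $(x+y)^3=x^3+y^3$ special to characteristic three. Conjugate $L_c^{(3)}$ by the unitary matrix $\mathcal F_1$ on both sides, so that we study $N=\mathcal F_1L_c^{(3)}\mathcal F_1^{*}$, with entries
\[
N(x,y)=q^{-1}\sum_{h,k}L_c^{(3)}(h,k)\chi(xh-yk).
\]
Since $2$ is invertible, substitute $u=h-k$ and $v=h+k$; the inner phase is then $c(u^2+v)/z$. The sum over $v$ is a complete additive sum. For $x=y$ it vanishes for every $z\ne0$, and for $x\ne y$ it forces the single value $z=z(x,y)=-2c/(x-y)$. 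The sum over $u$ is then a one-dimensional Gauss sum, evaluated by Lemma~\ref{lem:one-dimensional-gauss-sum}. Hence $N$ vanishes on the diagonal, and off the diagonal
\[
|N(x,y)|=\sqrt q,\qquad N(x,y)=\sqrt q\,\eta\bigl(c(x-y)\bigr)\,\theta\,\chi\Bigl(-\frac{2c}{x-y}+\kappa\,\frac{(x+y)^2(x-y)}{c^2}\Bigr),
\]
where $\theta$ is a constant of modulus one and $\kappa\in\F^\times$ is an explicit constant. The claim reduces to bounding the unimodular kernel $N/\sqrt q$ by $4\sqrt q$, with $2\sqrt q$ in the nonsquare case.

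Next, use characteristic three. Expand
\[
(x+y)^2(x-y)=x^3-y^3+x^2y-xy^2 .
\]
The pure cubes separate into unimodular row and column factors and can be discarded. What remains is the kernel
\[
\eta(x-y)\,\chi\Bigl(\kappa' xy(x-y)-\frac{2c}{x-y}\Bigr),\qquad x\ne y.
\]
I would reparametrize by $w=x-y\ne0$ and one further coordinate, for example $y=sx$ on $x\ne0$, or a multiplicative coordinate adapted to the torus $\{s\}$ when $c$ is a square and to $\mathbb F_{q^2}^\times$ when $c$ is nonsquare. The aim is to diagonalize the dilation symmetries of the kernel by multiplicative characters. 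After a Mellin expansion in the multiplicative variable, the operator should split into blocks. Each block entry, or each eigenvalue of a convolution-type block, should be a sum of the form $\sum_{s}\alpha(s)\beta(s+1)\chi(\gamma s)$ in the split case, or $\sum_s\Omega(s+\delta)\chi(\gamma s)$ in the nonsplit case; here $\delta$ would come from a square root of $c$, or of a related quantity, in $\mathbb F_{q^2}\setminus\F$. Lemma~\ref{lem:rankone-mixed-character-sums} bounds every such sum by $2\sqrt q$ whenever $\gamma\ne0$.

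Finally, the square and nonsquare cases should come out differently. When $c$ is nonsquare, the relevant quadratic $s^2-c$ has no root in $\F$, so there are no exceptional indices and the bound is $2\sqrt q\cdot\sqrt q=2q$. When $c$ is a square, the split factorization produces $O(1)$ exceptional rows and columns, namely the roots and the points $s=0,-1$, with entries of modulus at most $\sqrt q$. As in the proofs of Theorems~\ref{thm:rankone-square-hecke} and~\ref{thm:rankone-all-parameters}, their Hilbert--Schmidt contribution is at most $2q$, which gives $\|L_c^{(3)}\|\le4q$.

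The main obstacle is the middle step: finding the right coordinates in which the kernel becomes block diagonal under a multiplicative transform. The phase contains terms of degrees $3$ and $-1$ in $x-y$, so it is not homogeneous, and a naive dilation does not diagonalize it. I would first try the coordinates $w=x-y$ and $p=xy$, completing the Kloosterman-type factor $\chi(-2c/w)\eta(w)$ by a Sali\'e-style evaluation so that it combines with $\chi(\kappa' pw)$. Failing that, I would pass to $NN^*$ and check whether, after summing over the shared index, it becomes a convolution on the additive group of $\F$ whose Fourier multipliers are exactly the mixed sums of Lemma~\ref{lem:rankone-mixed-character-sums}. This matches the remark in the overview that in characteristic three the matrix becomes a convolution operator on the additive group.
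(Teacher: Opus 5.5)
\paragraph{Verdict: there is a genuine gap.} The Fourier kernel you write down is miscomputed, so the rest of the plan is aimed at the wrong operator. The structural step that makes the estimate work is also left unresolved.

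\paragraph{The kernel.} Your opening reduction is correct and is the paper's. Conjugating by $\mathcal F_1$ makes the diagonal vanish and leaves $\sqrt q$ times a unimodular kernel, so it suffices to bound that kernel by $4\sqrt q$, and by $2\sqrt q$ for nonsquare $c$. However, in the $u$-sum the quadratic coefficient is $c/z=-(x-y)/2$, so completing the square puts $x-y$ in the \emph{denominator}. Lemma~\ref{lem:one-dimensional-gauss-sum} gives
\[
 N(x,y)=G_\eta\,\eta\Bigl(-\frac{x-y}2\Bigr)\,
 \chi\Bigl(-\frac{2c}{x-y}+\frac{(x+y)^2}{8(x-y)}\Bigr)
 \qquad(x\ne y).
\]
In characteristic three this equals $G_\eta\,\eta(x-y)\,\chi\bigl((c-(x+y)^2)/(x-y)\bigr)$. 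There is no term $(x+y)^2(x-y)/c^2$, so there are no cubes to split off by Frobenius. The kernel $\eta(x-y)\chi\bigl(\kappa'xy(x-y)-2c/(x-y)\bigr)$ you go on to study is not the one that has to be bounded.

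The characteristic-three identity actually needed is $(x+y)^2=(x-y)^2+xy$, coming from $4=1$. It gives
\[
 \frac{c-(x+y)^2}{x-y}=-(x-y)+\frac{c-xy}{x-y}.
\]
The factor $\chi(-(x-y))$ splits into row and column phases. What remains is $\eta(x-y)\chi\bigl((c-xy)/(x-y)\bigr)$, which is $\eta(-1)$ times the transpose of the paper's $\mathsf S_c(a,b)=\eta(a-b)\chi\bigl(\frac{ab-c}{a-b}\bigr)$.

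\paragraph{The missing idea.} The step you label the main obstacle is exactly what is missing. The function $(ab-c)/(a-b)$ is invariant under the M\"obius torus fixing $\pm\sqrt c$, and this is what makes the kernel a convolution.
\begin{itemize}
\item For $c=k^2$, delete $a,b=\pm k$ and substitute $a=k\frac{u+1}{u-1}$, $b=k\frac{v+1}{v-1}$ with $u,v\in\F_q^\times\setminus\{1\}$. Then $a-b=\frac{2k(v-u)}{(u-1)(v-1)}$ and $\frac{ab-k^2}{a-b}=k\frac{u+v}{v-u}$. Up to unitary diagonal factors, the kernel becomes $\eta(v-u)\chi\bigl(k\frac{u+v}{v-u}\bigr)=\eta(u)\mathcal K(v/u)$ with $\mathcal K(r)=\eta(r-1)\chi\bigl(k\frac{1+r}{r-1}\bigr)$, a multiplicative convolution on $\F_q^\times$. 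After $s=1/(r-1)$, its eigenvalues under multiplicative characters $\nu$ are exactly the sums $\chi(k)\sum_{s\ne0,-1}(\eta\overline\nu)(s)\nu(s+1)\chi(2ks)$ of Lemma~\ref{lem:rankone-mixed-character-sums}.
\item For nonsquare $c=\delta^2$ with $\delta\in\mathbb F_{q^2}$, the classes $[s+\delta]$ exhaust the nonidentity elements of $\mathbb F_{q^2}^\times/\F_q^\times$. The identity $\frac{s+\delta}{t+\delta}=\frac{t-s}{t^2-c}\bigl(\frac{st-c}{t-s}+\delta\bigr)$ exhibits the kernel as a convolution on this nonsplit torus, with eigenvalues $\sum_{w}\Omega(w+\delta)\chi(-w)$.
\end{itemize}

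\paragraph{Your fallback routes.} Neither reaches this structure. The pair $(x-y,xy)$ is not a product of row and column coordinates, so rewriting the kernel in those variables exhibits no group on which it acts by convolution. There is also no reason for $NN^*$ to be an additive convolution, because the kernel is not translation invariant even up to row and column phases. The overview's remark about an additive convolution concerns the $t=0$ matrix $K_c$, where only $(h-k)^2$ occurs.

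Your closing bookkeeping agrees with the paper: for square $c$, two exceptional rows and columns at $\pm\sqrt c$ contributing $2q$ in Hilbert--Schmidt norm; for nonsquare $c$, none. But it becomes a proof only once the torus structure above is supplied.
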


\begin{proof}
Let $\mathcal F_1(h,a)=q^{-1/2}\chi(ha)$, and define
\[
 \mathsf S_c(a,b)=
 \begin{cases}
 \displaystyle\eta(a-b)\chi\left(\frac{ab-c}{a-b}\right),&a\ne b,\\
 0,&a=b.
 \end{cases}
\]
Put $G_\eta=\sum_u\chi(u^2)$, so $|G_\eta|=\sqrt q$.
We first claim that
\begin{equation}\label{eq:rankone-characteristic-three-fourier}
 (\mathcal F_1^*L_c^{(3)}\mathcal F_1)(a,b)
   =G_\eta\eta(-1)\chi(a-b)\mathsf S_c(a,b).
\end{equation}
Indeed, in the Fourier sum write $h=u+v$, $k=v$.
Since $2=-1$, summing over $v$ forces
$b-a=c/z$. There is no such $z$ when $a=b$.
For $a\ne b$, the remaining sum is
\[
 \sum_u\chi\left(\frac c{b-a}+(b-a)u^2+(a+b)u\right)
 =G_\eta\eta(b-a)
   \chi\left(\frac{c-(a+b)^2}{b-a}\right).
\]
Here $4=1$, so completing the square gives the displayed expression.
Finally,
\[
 \frac{c-(a+b)^2}{b-a}
 =a-b+\frac{ab-c}{a-b}
\]
in characteristic three, proving
\eqref{eq:rankone-characteristic-three-fourier}.
The factor $\chi(a-b)$ separates into unitary row and column
factors. Consequently,
\begin{equation}\label{eq:rankone-characteristic-three-norm-transfer}
 \|L_c^{(3)}\|=\sqrt q\,\|\mathsf S_c\|.
\end{equation}

\subsubsection*{Square parameters}
Write $c=k^2$, with $k\ne0$, and first remove the rows and columns
$a,b=\pm k$. The changes of variables
\[
 a=k\frac{u+1}{u-1},\qquad b=k\frac{v+1}{v-1}
\]
identify the remaining indices with $u,v\in\mathbb F_q^\times
\setminus\{1\}$. They give
\[
 a-b=\frac{2k(v-u)}{(u-1)(v-1)},\qquad
 \frac{ab-k^2}{a-b}=k\frac{u+v}{v-u}.
\]
After multiplication by row and column factors of modulus one, the resulting matrix is a
compression of the matrix on $\mathbb F_q^\times$ with entries
\[
 \eta(v-u)\chi\left(k\frac{u+v}{v-u}\right)
 =\eta(u)\mathcal K(v/u),
\]
where
\[
 \mathcal K(r)=
 \begin{cases}
 \displaystyle\eta(r-1)\chi\left(k\frac{1+r}{r-1}\right),&r\ne1,\\
 0,&r=1.
 \end{cases}
\]
Multiplication by $\eta(u)$ is unitary. The matrix with entries
$\mathcal K(v/u)$ is diagonalized by the multiplicative characters
$\nu$ of $\mathbb F_q^\times$, with eigenvalues
\[
 \sum_{r\ne0,1}\mathcal K(r)\nu(r)
 =\chi(k)\sum_{s\ne0,-1}
       (\eta\overline\nu)(s)\nu(s+1)\chi(2ks),
 \qquad s=\frac1{r-1}.
\]
Since $2k\ne0$, Lemma~\ref{lem:rankone-mixed-character-sums}
bounds their moduli by $2\sqrt q$. Thus, the retained block of
$\mathsf S_c$ has norm at most $2\sqrt q$.
Every entry of $\mathsf S_c$ has modulus at most one. The union of
the two removed rows and columns contains at most $4q$ entries,
so its Hilbert--Schmidt norm is at most $2\sqrt q$.
We conclude that $\|\mathsf S_c\|\le4\sqrt q$.

\subsubsection*{Nonsquare parameters}
Let $\mathbb E=\mathbb F_{q^2}=\mathbb F_q(\delta)$, with $\delta^2=c$,
and put $\mathcal T=\mathbb E^\times/\mathbb F_q^\times$.
Choose a multiplicative character $\lambda$ of $\mathbb E^\times$
whose restriction to $\mathbb F_q^\times$ is $\eta$; such an
extension exists because these groups are cyclic.
The elements $[s+\delta]$, $s\in\mathbb F_q$, are exactly the
nonidentity elements of $\mathcal T$, each written once.
Define a function on this group by
\[
 \mathcal K(1)=0,\qquad
 \mathcal K([w+\delta])=\lambda(w+\delta)^{-1}\chi(-w).
\]
For $s\ne t$, put $w=(st-c)/(t-s)$. The identity
\[
 \frac{s+\delta}{t+\delta}=\frac{t-s}{t^2-c}(w+\delta)
\]
gives
\begin{equation}\label{eq:rankone-nonsplit-torus-compression}
 \mathsf S_c(s,t)
 =\eta(-1)\lambda(s+\delta)\lambda(t+\delta)^{-1}\eta(t^2-c)
      \mathcal K([s+\delta][t+\delta]^{-1}).
\end{equation}
Both sides vanish when $s=t$. All factors outside $\mathcal K$
in \eqref{eq:rankone-nonsplit-torus-compression} have modulus one and
separate into row and column factors. Hence, $\mathsf S_c$ has the
same norm as a compression of convolution by $\mathcal K$ on
$\mathcal T$.

The characters $\nu$ of $\mathcal T$ diagonalize this convolution.
Its eigenvalues, after replacing $\nu$ by $\nu^{-1}$ if necessary, are
\[
 \sum_{w\in\mathbb F_q}\lambda(w+\delta)^{-1}\nu([w+\delta])\chi(-w)
 =\sum_{w\in\mathbb F_q}\Omega(w+\delta)\chi(-w),
\]
where $\Omega(z)=\lambda(z)^{-1}\nu([z])$ is a multiplicative
character of $\mathbb E^\times$.
Lemma~\ref{lem:rankone-mixed-character-sums} bounds each eigenvalue
by $2\sqrt q$. Therefore, $\|\mathsf S_c\|\le2\sqrt q$.
The asserted estimates now follow from
\eqref{eq:rankone-characteristic-three-norm-transfer}.
\end{proof}

\subsection{The shifted-shell estimate in even dimensions}
\label{sec:completed-generic-blocks}

The arithmetic estimates above concern matrices indexed by at most two finite-field variables.
They bound the shifted-shell matrices in every even dimension through
Fourier transformation and induction. Both Witt types are included.

\begin{theorem}\label{thm:completed-generic-blocks}
Let $q$ be an odd prime power, let $d=2m\ge2$, and let $Q$ be a
nondegenerate quadratic form on $\mathbb F_q^d$. For $j\ne0$, put
$S_j=\{x:Q(x)=j\}$ and $\rho_d=|S_j|/q^d$. The matrix
\[
 M_t(x,y)=\1_{S_j}(x+y-t)-\rho_d,\qquad x,y\in S_j,
\]
satisfies
\[
 \|M_t\|_{2\to2}\le C_mq^{m-1}\qquad\text{if }Q(t)\ne j.
\]
The constant depends only on $m$, uniformly in the field, the form,
the nonzero radius, and the translation. One may take $C_1=4$ and,
for $m\ge2$, $C_m=3m+6$.
\end{theorem}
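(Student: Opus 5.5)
We will argue by induction on $m$. The case $m=1$ is handled separately, and for $m\ge2$ the work is to split off a hyperbolic plane.

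\emph{Base case $m=1$.} Here $|S_j|\approx q$ and the claim is $\|M_t\|\le 4$. Fix $t$ with $Q(t)\ne j$. For each $x\in S_j$, the condition $x+y-t\in S_j$ with $y\in S_j$ describes the intersection of two conics, $S_j$ and the translate $S_j+t-x$. These are distinct, because $x\ne t$: equality $x=t$ would force $Q(t)=j$. Both conics have the same quadratic part, so subtracting their equations gives a linear equation. Hence the two conics meet in at most two points.

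It follows that $\1_{S_j}(x+y-t)$, restricted to $S_j\times S_j$, has at most two nonzero entries in each row and each column, so its norm is at most $2$. The constant matrix $\rho_d J$ on $S_j$ has norm $\rho_d|S_j|\le 2$. This gives $C_1=4$.

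\emph{Inductive step $m\ge2$.} By coordinate invariance we may write $Q(b,c,z)=bc+P(z)$, with $P$ nondegenerate on $\F^{2m-2}$, and $t=(t_b,t_c,t_z)$. Split $S_j$ into the part $S^0=\{c=0\}$ and the part $S^1=\{c\ne0\}$. On $S^1$ the coordinate $b$ is determined by $b=(j-P(z))/c$, so $S^1$ is parametrized by $(c,z)\in\F^\times\times\F^{2m-2}$. On $S^0$ we have $P(z)=j$ with $b$ free, so $S^0\cong \F\times S_j(P)$.

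This splits $M_t$ into four blocks.

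\emph{The $S^0\times S^0$ block.} Here $x+y-t$ has $c$-coordinate $-t_c$. If $t_c\ne0$, membership in $S_j$ fixes $b$ linearly in terms of $z$. If $t_c=0$, membership reduces to the $(2m-2)$-dimensional condition $P(z+z'-t_z)=j$, with $b$-coordinates free. In that case the block is $J_{\F}\otimes(\text{lower shifted-sphere matrix})$ plus constants. The induction hypothesis gives $q\cdot C_{m-1}q^{m-2}$, provided $P(t_z)\ne j$. If instead $P(t_z)=j$, then $Q(t)=j$ (since $t_c=0$), which is excluded.

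\emph{The cross blocks $S^0\times S^1$.} We bound these by computing the Gram matrix $N^*N$ directly. The entries are counts of solutions of one linear and one quadratic equation, and should give $O(q^{2m-2})$, hence norm $O(q^{m-1})$.

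\emph{The $S^1\times S^1$ block.} This is the main term. We expand $\1_{S_j}(w)=q^{-1}\sum_s\chi(s(Q(w)-j))$. The $s=0$ term cancels against $\rho_d$ up to the $S^0$ correction. Because $c\ne0$, we can Fourier transform in the $z$ and $z'$ variables, and Lemma~\ref{lem:quadratic-gauss-transform} evaluates the resulting Gaussian integrals in $z$. This collapses the $(2m-2)$-dimensional dependence into a factor $\epsilon_P q^{m-1}$ times an operator in the scalar variables $(c,c')$ and $s$. The case analysis then depends on $t$.

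\emph{(a) $t$ nonisotropic.} After rescaling, the kernel becomes $\sum_s \eta(s)\chi(\cdot/s)\chi(sH(x,y))$, with $H$ a binary quadratic form in two scalar coordinates, namely $c$ and the coordinate of $z$ along $t_z$. Corollary~\ref{cor:full-root-operator-bound} bounds it by $2q^{3/2}$, which with normalizations gives $O(q^{m-1})$. I will need to verify that the resulting $a=R^{-1}D$ satisfies $a^2+a+b=0$ with $a^2-I$ invertible, presumably because $Q(t)\ne j$ and $Q(t)\ne0$.

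\emph{(b) $t\ne0$ isotropic.} Choose the Witt coordinates so that $t$ lies in the hyperbolic plane, say $t=(t_b,0,0)$. The kernel then reduces to a one-variable Kloosterman-type matrix equivalent to $L_c$ or $L^{(3)}_c$. Theorems~\ref{thm:rankone-all-parameters} and~\ref{thm:rankone-characteristic-three} give $10q$, and the $q^{m-2}$ Fourier factor makes this $O(q^{m-1})$.

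\emph{(c) $t=0$.} Transform both scalar variables and use Lemma~\ref{lem:twisted-kloosterman}, or in characteristic $3$ diagonalize as a convolution.

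\emph{Combining.} Summing the block bounds (one lower-dimensional block of size $C_{m-1}$, plus $O(1)$ bounded contributions) should yield a linear recursion of the form $C_m=C_{m-1}+3$, that is, $C_m=3m+6$.

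\emph{Main obstacle.} I expect the hardest step to be the nonisotropic case (a): matching the Gaussian-transformed kernel exactly to the $H(x,y)$ and $(D,R)$ shape of Corollary~\ref{cor:full-root-operator-bound}, and handling degenerate parameter values where its hypotheses fail by entrywise or Hilbert--Schmidt bounds on a few rows.
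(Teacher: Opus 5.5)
Your outline follows the paper: the base case via at most two ones per row, and a hyperbolic plane adapted to $t$. That means one inside $t^\perp$ when $Q(t)\ne0$ and one through $t$ when $t\ne0$ is isotropic. Fix this at the start, so that $t_c=0$ in every case and your unresolved branch ``$t_c\ne0$'' never arises. The rest also matches: the boundary block is $J_q\otimes M^P_T$ plus a small constant, handled by induction, and the chart block is reduced by a fibre Fourier transform to Corollary~\ref{cor:full-root-operator-bound}, Theorems~\ref{thm:rankone-all-parameters} and~\ref{thm:rankone-characteristic-three}, and the $t=0$ computation.

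\textbf{Main gap: the cross block.} Knowing that the Gram entries are $O(q^{2m-2})$ gives nothing, because the Gram matrix has about $q^{2m-2}$ rows. In fact the uncentered block has norm of order $q^{2m-5/2}$: each boundary row has about $q^{2m-2}$ ones and each chart column about $q^{2m-3}$. The bound $q^{m-1}$ comes from an exact cancellation that you must exhibit.
\begin{itemize}
\item In the chart $h=1/c$, $u=z/c$, extend the columns to $h=0$. The boundary row $(b,0,z)$ then becomes the affine hyperplane $b-\tau+hP(z-T)+2B(z-T,u)=0$ in $(h,u)\in\F\times\F^{2m-2}$.
\item Rows with the same $z$ are disjoint. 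Rows with different $z$ meet in exactly $q^{2m-3}$ points, because their coefficient vectors are not parallel.
\item That non-parallelism is exactly where $P(T)\ne j$, i.e.\ $Q(t)\ne j$, enters.
\item Centering at $q^{-1}$ then gives the identity $(\widetilde{\mathcal B}-q^{-1}J)(\widetilde{\mathcal B}-q^{-1}J)^*=\bigoplus_z(q^{n}I_q-q^{n-1}J_q)$ with $n=2m-2$, hence norm $q^{m-1}$.
\item Re-centering at $\rho_d$ costs only $O(q^{m-5/2})$.
\end{itemize}

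\textbf{Chart block.} State explicitly that, in the rescaled variable $u=z/c$ rather than $z$, the fibre dependence is only through $u-v$, plus terms linear along $T$. The block is therefore a direct sum over fibre frequencies $\xi$, and its norm is the maximum over $\xi$. The arithmetic parameter depends on $\xi$: for example $P_\perp^*(\xi)/4$ in (a) and $P^*(\xi)/(4j)$ in (b).

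Contrary to your stated obstacle, nothing degenerates in (a). With $a_0=Q(t)$ one gets $b=2j/(a_0-j)$ and $\det(a^2-I)=b(b+2)\ne0$, and the corollary holds for every value of its parameter.

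The degenerate modes are in (b) and (c). There the frequencies with $P^*(\xi)=0$, including $\xi=0$, are not covered by the results that need $c\ne0$. For these modes the multiplier is $q\1_{\{D_j=0\}}-J$, which has norm at most $3q$ because $D_j(h,\cdot)=0$ has at most two roots.

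In (c), Lemma~\ref{lem:twisted-kloosterman} applied entrywise only gives $O(q^{3/2})$, which is too weak. What works is the exact double Fourier transform, which turns $K_c$ into a conic incidence matrix (or, in characteristic three, an additive convolution).

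\textbf{Constants.} Bound the self-adjoint block operator by the larger diagonal block plus the cross block, not by summing all blocks. This gives $C_m\le\max\{C_{m-1}+1,10\}+2$. The chart bound $10$ dominates at $m=2$, so $C_2=12$ and then $C_m=3m+6$. Your recursion $C_m=C_{m-1}+3$ started at $C_1=4$ would give $3m+1$ and miss the chart-block constant.
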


\begin{proof}
All norms use counting measure. Write $Q(x)=x^{\top}Ax$ and let
$\epsilon_Q=\eta((-1)^m\det A)$. Quadratic Gauss sums give
\begin{equation}\label{eq:shifted-shell-density}
 |S_j|=q^{2m-1}-\epsilon_Q q^{m-1},\qquad
 \rho_d=q^{-1}-\epsilon_Q q^{-m-1}.
\end{equation}
For $d=2$, fixing $x\in S_j$ leaves a nonconstant affine linear
equation for $y\in S_j$, since $x-t\ne0$. A nonzero level of a
nondegenerate binary form contains no affine line, so each row and
column of the uncentered matrix has at most two ones. Its norm is
at most two, and
\[
 \|M_t\|\le2+\frac{|S_j|^2}{q^2}<4.
\]
We now assume $m\ge2$ and use induction on $m$.

Choose a hyperbolic plane and write
\begin{equation}\label{eq:shifted-witt-coordinates}
 Q(b,c,z)=bc+P(z),\qquad z\in W,\qquad \dim W=n=d-2.
\end{equation}
Here $P$ is nondegenerate and has the same sign $\epsilon_Q$ in its
even-dimensional counting formulas. If $Q(t)\ne0$, the odd-dimensional
nondegenerate space $t^\perp$, of dimension at least three, contains
a hyperbolic plane. Choose this plane in \eqref{eq:shifted-witt-coordinates}.
Then $t=(0,0,T)$ and $P(T)=Q(t)$. If $t\ne0$ is isotropic, choose a
hyperbolic plane through $t$ and arrange $t=(1,0,0)$. For $t=0$,
choose any hyperbolic plane. Thus, in all cases,
\[
 t=(\tau,0,T),\qquad
 \tau=0\ \text{or}\ (\tau,T)=(1,0),\qquad P(T)\ne j.
\]

Separate $S_j$ into the boundary $c=0$ and the chart $c\ne0$.
The boundary consists of $(b,0,z)$ with $P(z)=j$. The chart has
the bijective parametrization
\begin{equation}\label{eq:shifted-shell-chart}
 (h,u)\longmapsto
 \left(jh-\frac{P(u)}h,\frac1h,\frac uh\right),
 \qquad h\in\mathbb F_q^\times,\quad u\in W.
\end{equation}

\subsubsection*{The boundary and the cross-blocks}
Let $S_j(P)=\{z\in W:P(z)=j\}$. The boundary diagonal block is
\[
 J_q\otimes M_T^P+(\rho_{d-2}-\rho_d)J,
 \qquad \rho_{d-2}=q^{-1}-\epsilon_Q q^{-m},
\]
where $M_T^P$ is the lower-dimensional centered matrix. Since
$|S_j(P)|\le2q^{2m-3}$, induction gives
\begin{equation}\label{eq:shifted-boundary-bound}
 \|M_t|_{c=c'=0}\|
 \le C_{m-1}q^{m-1}+2q^{m-2}.
\end{equation}

Let $\mathcal B$ be the uncentered block from boundary rows
$(b,0,z)$ to chart columns $(h,u)$. Its incidence equation is
\begin{equation}\label{eq:shifted-cross-incidence}
 b-\tau+hP(z-T)+2B(z-T,u)=0.
\end{equation}
Extend the columns by allowing $h=0$, and denote the resulting matrix by $\widetilde{\mathcal B}$. For distinct $z,z'$ with
$P(z)=P(z')=j$, the coefficient vectors
\[
 \bigl(P(z-T),2B(z-T,\cdot)\bigr),\qquad
 \bigl(P(z'-T),2B(z'-T,\cdot)\bigr)
\]
are nonzero and nonparallel. Indeed, proportionality would give
$z'-T=\kappa(z-T)$ with $\kappa\ne0$ and
$\kappa(\kappa-1)P(z-T)=0$. If $\kappa\ne1$, then $P(z-T)=0$;
the identities $P(z)=P(z')=j$ consequently imply
$(\kappa-1)(j-P(T))=0$, a contradiction.

Every row of the extended matrix has $q^n$ ones. Rows with the
same $z$ and different $b$ have disjoint supports, whereas rows
with different $z$ have $q^{n-1}$ common columns. Subtracting the
constant matrix therefore gives the exact Gram identity
\[
 (\widetilde{\mathcal B}-q^{-1}J)(\widetilde{\mathcal B}-q^{-1}J)^*
 =\bigoplus_{P(z)=j}(q^nI_q-q^{n-1}J_q).
\]
Restriction to $h\ne0$ cannot increase the norm, so
$\|\mathcal B-q^{-1}J\|\le q^{n/2}=q^{m-1}$. Changing the
centering from $q^{-1}$ to $\rho_d$ adds a matrix of norm at most
\[
 q^{-m-1}\sqrt{q |S_j(P)|(q-1)q^n}
 \le\sqrt2\,q^{m-5/2}.
\]
In particular, the centered cross-block has norm at most $2q^{m-1}$.
It remains to estimate the diagonal block on the chart.

\subsubsection*{Nonisotropic translations}
Suppose $t=(0,0,T)$ and put $a_0=P(T)\ne0,j$. Substitution in
\eqref{eq:shifted-shell-chart} gives the incidence condition
\[
 P(u-v)=j(h^2+k^2)+(j+a_0)hk
          -2kB(u,T)-2hB(v,T).
\]
Write $u=\alpha T+w$ and $v=\beta T+w'$, where
$w,w'\in W_0=T^\perp\cap W$. The restriction $P_\perp$ to $W_0$
is nondegenerate and has odd dimension $k_0=d-3$. The equation is
\[
 P_\perp(w-w')=H(X,Y),\qquad X=(\alpha,h),\quad Y=(\beta,k),
\]
where
\begin{equation}\label{eq:shifted-binary-phase}
 \begin{aligned}
 H(X,Y)&=X^{\top}DX+Y^{\top}DY-2X^{\top}RY,\\
 D&=\begin{pmatrix}-a_0&0\\0&j\end{pmatrix},\qquad
 R=\begin{pmatrix}-a_0&a_0\\a_0&-(j+a_0)/2\end{pmatrix}.
 \end{aligned}
\end{equation}

In coordinates on $W_0$, write $P_\perp(w)=w^{\top}A_\perp w$
and $P_\perp^*(\xi)=\xi^{\top}A_\perp^{-1}\xi$. The unitary
Fourier transform in $w$ gives the matrix multipliers
\begin{equation}\label{eq:shifted-nonisotropic-multiplier}
(q^{k_0-1}-\rho_dq^{k_0})\1_{\{\xi=0\}}J+\frac{G_\eta^{k_0}\eta(\det A_\perp)}q
  \sum_{s\ne0}\eta(s)
   \chi\left(-\frac{P_\perp^*(\xi)}{4s}\right)
   [\chi(-sH(X,Y))]_{X,Y}.
\end{equation}
These are first considered on all $X,Y\in\mathbb F_q^2$ and then
compressed to $h,k\ne0$. There is no extra normalization factor:
Fourier transformation diagonalizes convolution in $w$ by its
unnormalized Fourier multipliers.

We check the arithmetic reduction, including its scalar. Put
$a=R^{-1}D$ and $\mathcal K=\mathbb F_q[a]$. Direct computation gives
\[
 a^2+a+b=0,\qquad b=\frac{2j}{a_0-j},\qquad
 \det(a^2-I)=b(b+2)\ne0.
\]
The matrix $a$ is nonscalar and invertible, and $1,a^2$ are linearly
independent. Hence, $D,R$ satisfy the hypotheses of
Corollary~\ref{cor:full-root-operator-bound}. Replacing $s$ by $-s$
in the sum in \eqref{eq:shifted-nonisotropic-multiplier} changes it,
up to the unit scalar $\eta(-1)$, into the sum in
\eqref{eq:binary-gaussian-operator-bound} with
$c=P_\perp^*(\xi)/4$. This includes $\xi=0$ and the other frequencies
where $P_\perp^*(\xi)=0$. Since $|G_\eta|=\sqrt q$, the oscillatory
term in \eqref{eq:shifted-nonisotropic-multiplier} has norm at most
\[
 q^{k_0/2-1}\,2q^{3/2}=2q^{m-1}.
\]
The remaining constant matrix, present only at $\xi=0$, has norm
$q^{m-2}$ on all $q^2$ indices. Compression to $h,k\ne0$
decreases the norm. The chart block is consequently bounded by
$3q^{m-1}$.

\subsubsection*{Nonzero isotropic translations}
Suppose $t=(1,0,0)$. The incidence equation is
\[
 P(u-v)=D_j(h,k),\qquad
 D_j(h,k)=j(h^2+hk+k^2)-(h+k).
\]
Write $P(u)=u^{\top}A_Pu$ and $P^*(\xi)=\xi^{\top}A_P^{-1}\xi$.
Because $n=2m-2$ is even, the Fourier multipliers in $u$ are
\begin{equation}\label{eq:shifted-isotropic-multiplier}
 (q^{n-1}-\rho_dq^n)\1_{\{\xi=0\}}J
 +\frac{G_\eta^n\eta(\det A_P)}q
 \left[\sum_{s\ne0}
 \chi\left(-sD_j(h,k)-\frac{P^*(\xi)}{4s}\right)\right]_{h,k}.
\end{equation}
The scalar outside the sum has modulus $q^{m-2}$. If $P^*(\xi)=0$,
the sum is the matrix $q\1_{\{D_j=0\}}-J$. The quadratic equation
has at most two solutions in each row and column, so this matrix
has norm at most $3q$ on all $h,k\in\mathbb F_q$.

If $P^*(\xi)\ne0$, substitute $z=-P^*(\xi)/(4s)$. For
$\operatorname{char}\mathbb F_q\ne3$, put
$H=jh-1/3$ and $K=jk-1/3$. The identity
\[
 jD_j(h,k)=H^2+HK+K^2-1/3
\]
identifies the resulting matrix with that of
Theorem~\ref{thm:rankone-all-parameters}, with
$c=P^*(\xi)/(4j)\ne0$. Its norm is at most $10q$.
In characteristic three, instead put $H=-jh$ and $K=-jk$. Then
\[
 jD_j(h,k)=(H-K)^2+H+K,
\]
so Theorem~\ref{thm:rankone-characteristic-three} bounds the
resulting matrix by $4q$. These affine substitutions are bijections
on the full field; the original nonzero $h,k$ ranges give compressions.

The constant term in \eqref{eq:shifted-isotropic-multiplier} has
norm $q^{m-2}$. It occurs only at $\xi=0$, where the other term
is bounded by $3q^{m-1}$. Thus, the chart block has norm at most
$10q^{m-1}$ in every odd characteristic, including all powers of three.

\subsubsection*{The translation $t=0$}
For $t=0$ the calculation \eqref{eq:shifted-isotropic-multiplier}
holds with $D_j(h,k)=j(h^2+hk+k^2)$. The modes with $P^*(\xi)=0$
are again bounded by $3q^{m-1}$. At the other modes the remaining
matrix, before multiplication by the scalar of modulus $q^{m-2}$,
is
\[
 K_c(h,k)=\sum_{s\ne0}\chi\left(s+
                     \frac{c(h^2+hk+k^2)}s\right),
 \qquad c=\frac{jP^*(\xi)}4\ne0.
\]
If the characteristic is not three, let
$\mathcal F_1(\alpha,h)=q^{-1/2}\chi(\alpha h)$. A binary Gauss
sum gives the exact identity
\begin{equation}\label{eq:shifted-central-double-fourier}
 (\mathcal F_1K_c\mathcal F_1^{\top})(\alpha,\beta)
 =\eta(-3)\bigl(q\1_{\{\alpha^2-\alpha\beta+\beta^2=3c\}}-1\bigr).
\end{equation}
Indeed, the matrix of $h^2+hk+k^2$ has determinant $3/4$ and
inverse $\frac43\bigl(\begin{smallmatrix}1&-1/2\\-1/2&1\end{smallmatrix}\bigr)$.
The two Fourier factors are unitary. The conic matrix on the
right has at most two ones in each row and column, proving
$\|K_c\|\le3q$.

In characteristic three, $h^2+hk+k^2=(h-k)^2$, so $K_c$ is
a convolution matrix on the additive group of the field. At
frequency $\lambda$ its eigenvalue is
\[
 \sum_{u\in\mathbb F_q}\sum_{s\ne0} \chi(s+cu^2/s-\lambda u)
 =G_\eta\eta(c)\sum_{s\ne0}\eta(s) \chi\left(\left(1-\frac{\lambda^2}{4c}\right)s\right) =q\,\eta(\lambda^2-4c),
\]
where $\eta(0)=0$. Hence, $\|K_c\|\le q$. After the constant
remainder is included, the central chart block has norm at most
$4q^{m-1}$ in every odd characteristic.

Finally, the norm of a self-adjoint two-by-two block operator is
at most the larger diagonal-block norm plus the cross-block norm.
Together with \eqref{eq:shifted-boundary-bound}, the preceding
estimates therefore allow
\[
 C_m\le\max\{C_{m-1}+1,10\}+2,
\]
since $2/q\le1$. Starting from $C_1=4$, this gives $C_2\le12$
and $C_m\le3m+6$ for all $m\ge2$, as asserted.
\end{proof}
\Needspace{10\baselineskip}
\section{Fourth moments and the sharp extension estimate}
\label{sec:fourth-moments-sharp-extension}
\label{sec:refined-moment-estimates}

Throughout this section, $q$ is an odd prime power, $m\ge1$, and
$Q$ is a nondegenerate quadratic form on $V=\F^{2m}$. Both Witt
types are allowed. Fix $j\ne0$ and put $S_j=\{Q=j\}$. Recall that
$|S_j|=q^{2m-1}-\epsilon_Qq^{m-1}\asymp q^{2m-1}$, where
$\epsilon_Q\in\{1,-1\}$ distinguishes the two Witt types.
All unmarked norms use counting measure. We write
\[
 Tf(x)=\sum_{\xi\in S_j}f(\xi)\chi(B(x,\xi)),
 \qquad \mathcal F=q^{-m}T,
\]
where functions on $S_j$ are extended by zero before applying the
unitary ambient Fourier transform $\mathcal F$. Constants may depend
on $m$, but not on $q$, $Q$, or $j$.

Theorem~\ref{thm:completed-generic-blocks} bounds the centered
matrices $M_t$ when $Q(t)\ne j$. To handle $t\in S_j$, we first
prove a centered estimate for the zero cone. The centering is
essential in higher dimensions.

\subsection{A centered sphere--zero-cone estimate}
\label{subsec:mixed-sphere-zero-cone}

The estimate needed in higher dimensions is centered at
$q^{-1}$, the average density of the quadratic levels. This removes a constant contribution
that is too large for the uncentered four-dimensional argument.
We first prove two polarity estimates for auxiliary quadratic spaces.

\Needspace{14\baselineskip}
\begin{lemma}
\label{lem:projective-null-polarity}
Let $U$ be a nondegenerate quadratic space over $\F_q$ of dimension $D\ge2$, with quadratic form $Q_U$ and associated bilinear form $B_U$.
Let $\mathcal N$ be the set of
one-dimensional subspaces on which $Q_U$ vanishes, and define matrices on
$\ell^2(\mathcal N)$ by
\[
 H([u],[v])=\1_{\{B_U(u,v)=0\}},\qquad J([u],[v])=1.
\]
In particular, the diagonal entries of $H$ are one. Then
\begin{equation}\label{eq:projective-null-polarity-bound}
 \|H-q^{-1}J\|_{2\to2}\le q^{\lfloor(D-2)/2\rfloor}.
\end{equation}
If $\mathcal N$ is empty, the matrix is the zero operator.
\end{lemma}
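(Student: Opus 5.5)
The matrix $H$ is the adjacency matrix, with loops, of the polarity graph on the points of the projective quadric $\mathcal N\subseteq\mathbb P(U)$. I would prove the bound by computing its spectrum. The first step is to check that $H$ is strongly regular. For $[u]\in\mathcal N$, the set $u^\perp\cap\mathcal N$ is the tangent cone at $[u]$. Since $u\in u^\perp$ and $u^\perp/\langle u\rangle$ is a nondegenerate space of dimension $D-2$ of the same Witt type, its points are $[u]$ together with the lines $[u+w]$ lying over nonzero isotropic vectors of $u^\perp/\langle u\rangle$. Their number is
\[
 k=1+q\,N_{D-2},
\]
where $N_{D-2}$ is the number of projective isotropic points of the quotient. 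So every row sum of $H$ equals $k$, and $J$ commutes with $H$.

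The second step is the joint count. For distinct $[u],[v]\in\mathcal N$, I would count the $[w]\in\mathcal N$ with $B_U(w,u)=B_U(w,v)=0$. If $B_U(u,v)=0$, the line $\langle u,v\rangle$ is totally isotropic. Its perp $\langle u,v\rangle^\perp$ then has radical $\langle u,v\rangle$, and its quotient is nondegenerate of dimension $D-4$. If $B_U(u,v)\neq0$, the plane $\langle u,v\rangle$ is hyperbolic and $\langle u,v\rangle^\perp$ is nondegenerate of dimension $D-2$ with the opposite sign data, i.e.\ of the same Witt type as the quotient above. Counting isotropic points in a degenerate space with a radical is done by passing to the quotient by the radical, as the paper indicates. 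This yields explicit constants $\lambda$ and $\mu$ such that
\[
 H^2=(\lambda-\mu)H+\mu J+(k-\mu+\text{diag correction})I.
\]
Here the diagonal entries of $H$ equal $1$ must be tracked carefully.

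The third step is the spectral bound. On $\1^\perp$ the eigenvalues of $H$ are the roots of the resulting quadratic. Since the quadric points, together with the tangent-hyperplane structure, form the classical polar-space graph, these nontrivial eigenvalues should be $1\pm q^{(D-2)/2}$ for $D$ even, adjusted by the loop shift of $+1$. For $D$ odd they should be $1\pm q^{(D-3)/2}$ and, after the shift, of modulus at most $q^{\lfloor(D-2)/2\rfloor}$. On the constant vector, $H-q^{-1}J$ acts by $k-q^{-1}|\mathcal N|$. Using $|\mathcal N|=(q^{D-1}-1)/(q-1)+O(q^{D/2-1})$ and the formula for $k$, this value is also $O(q^{\lfloor (D-2)/2\rfloor})$. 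I expect it to equal exactly one of the nontrivial eigenvalue types, or $1-q^{-1}$ in the anisotropic-quotient case, and hence to be bounded by $q^{\lfloor(D-2)/2\rfloor}$.

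Because $H-q^{-1}J$ is symmetric and commutes with $J$, its norm is the maximum of these eigenvalue moduli. Small cases are handled separately. When $D=2,3$ the quadric has at most $q+1$ points, and the graph is either discrete or consists of two points. When $\mathcal N$ is empty the matrix is the zero operator, as stated. The main obstacle is the bookkeeping of the loop and the sign $\epsilon$ in the split versus nonsplit even cases and in the odd case. I would verify the claimed eigenvalues there by computing the trace of $H$, which equals $|\mathcal N|$, and the trace of $H^2$ as consistency checks, rather than trusting the parameter formulas blindly.
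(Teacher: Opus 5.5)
Your plan is the same as the paper's: compute the row sum $k=1+qn_{D-2}$, count common perpendicular null points for perpendicular and non-perpendicular pairs, derive a quadratic relation for $H^2$, and read off the spectrum on the constants and on $\1^\perp$. The problem is that you never carry out the step that actually proves the inequality, and the values you predict for it contradict the lemma. The bound is attained. For $D=2s+1$ the nonconstant eigenvalues are exactly $\pm q^{s-1}$, and for split $D=2s$ one of them is $q^{s-1}$. So no additive slack is available. Your eigenvalues $1\pm q^{s-1}$, or $2\pm q^{s-1}$ after the loop shift, have modulus larger than $q^{\lfloor(D-2)/2\rfloor}$, so as written the third step fails. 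For the same reason, an estimate $O(q^{\lfloor(D-2)/2\rfloor})$ for the constant eigenvalue is not enough, since the lemma has constant one. That eigenvalue is also not equal to any nonconstant eigenvalue, so it must be computed exactly.

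The missing computation runs as follows. Use the exact counts $n_{2s+1}=(q^{2s}-1)/(q-1)$ and $n_{2s}=(q^{2s-1}-1)/(q-1)+\epsilon_Uq^{s-1}$. Count $[u]$ and $[v]$ themselves; this is where the loops enter. Then $b=n_{D-2}$ for non-perpendicular pairs. For distinct perpendicular pairs, $a=q+1+q^2n_{D-4}$: the $q+1$ points of the isotropic line, plus $q^2$ lifts of each null point of the quotient. Hence $H^2=(k-a)I+(a-b)H+bJ$.

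\begin{itemize}
\item For $D=2s+1$ this gives $a=b$ and $k-a=q^{2s-2}$. The nonconstant eigenvalues are $\pm q^{s-1}$, and the centered constant eigenvalue is $k-|\mathcal N|/q=-1/q$.
\item For $D=2s$ it gives $a-b=\epsilon_U(q-1)q^{s-2}$ and $k-a=q^{2s-3}$. The nonconstant eigenvalues are $\epsilon_Uq^{s-1}$ and $-\epsilon_Uq^{s-2}$. The centered constant eigenvalue is $\epsilon_U(q-1)q^{s-2}-1/q$, of modulus at most $q^{s-1}$.
\end{itemize}

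In loopless terms the nonconstant eigenvalues are $-1\pm q^{s-1}$, respectively $-1+\epsilon_Uq^{s-1}$ and $-1-\epsilon_Uq^{s-2}$, not $1\pm q^{(D-2)/2}$.

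Finally, the count $a$ and the quadratic-relation argument need Witt index at least two. Besides $D=2,3$, you must also treat the nonsplit case $D=4$ separately. There the $q^2+1$ null points are pairwise non-perpendicular, so $H=I$. The centered constant eigenvalue is $1-(q^2+1)/q$, of modulus less than $q$.
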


\begin{proof}
We first record the required point counts. Write $Q_U(x)=x^{\mathsf T}A_Ux$
in a basis of $U$. Additive orthogonality and the quadratic Gauss sum
$G_\eta=\sum_{t\in\F_q}\chi(t^2)$ give
\[
 |\{x:Q_U(x)=0\}|
 =q^{D-1}+q^{-1}\eta(\det A_U)G_\eta^D
       \sum_{s\ne0}\eta(s)^D,
 \qquad G_\eta^2=\eta(-1)q.
\]
Consequently, the number of projective null points is
\begin{align*}
n_{2s+1}=\frac{q^{2s}-1}{q-1}, \quad
 n_{2s}=\frac{q^{2s-1}-1}{q-1}+\epsilon_U q^{s-1},
 \quad \epsilon_U=\eta((-1)^s\det A_U)\in\{1,-1\}.
\end{align*}
Here $\epsilon_U=1$ denotes the split form in even dimension. These
counts depend only on the dimension and, in even dimension, the Witt
type. Put $n_0=n_1=0$ when a quotient of dimension zero or one occurs.

First suppose that the Witt index is at least two.
For a null point $[u]$, the quotient $u^\perp/\F_q u$ is nondegenerate
of dimension $D-2$. Each projective null point in this quotient has
$q$ projective lifts to $u^\perp$, in addition to the point $[u]$.
Thus, every row sum of $H$ equals $k=1+q n_{D-2}$.
The subscript refers to the corresponding quadratic quotient; removing
a hyperbolic plane preserves $\epsilon_U$ in even dimension.

For distinct null points $[u],[v]$ with $B_U(u,v)\ne0$, their span is a
hyperbolic plane. The number of null points perpendicular to both is
therefore $b=n_{D-2}$.
If $B_U(u,v)=0$, their span is a totally isotropic plane. Its orthogonal
complement has this plane as its radical, with $q+1$ projective points
in the radical. Each null point of the nondegenerate quotient has
$q^2$ projective lifts. In this case the number is $a=q+1+q^2 n_{D-4}$.
These counts, including the diagonal entries, prove the matrix identity
\begin{equation}\label{eq:null-polarity-square}
 H^2=(k-a)I+(a-b)H+bJ.
\end{equation}

For $D=2s+1$, substitution gives
\[
 a=b,\qquad k-a=q^{2s-2},\qquad k-\frac{|\mathcal N|}q=-\frac1q.
\]
Since $H$ is real symmetric and has constant row sum, its eigenvalues
on the orthogonal complement of the constants belong to
$\{q^{s-1},-q^{s-1}\}$. On constants, $H-q^{-1}J$ has eigenvalue
$-q^{-1}$. This proves \eqref{eq:projective-null-polarity-bound} in
odd dimension.

For $D=2s$, the corresponding identities are
\[
 a-b=\epsilon_U(q-1)q^{s-2},\qquad
 k-a=q^{2s-3},\qquad
 k-\frac{|\mathcal N|}q=\epsilon_U(q-1)q^{s-2}-\frac1q.
\]
Equation~\eqref{eq:null-polarity-square} implies that each nonconstant
eigenvalue belongs to $\{\epsilon_U q^{s-1},-\epsilon_U q^{s-2}\}$.
The centered constant eigenvalue also has absolute value at most
$q^{s-1}$, proving the required estimate.

It remains to consider Witt index at most one. An anisotropic binary
form has no null points. A split binary form has two null points and
$H=I$. A nondegenerate ternary form has $q+1$ null points and $H=I$.
Finally, a nonsplit four-dimensional form has $q^2+1$ null points and
$H=I$: two distinct perpendicular null points would span a totally
isotropic plane. In each nonempty case the centered eigenvalues are
$1$ on the orthogonal complement of the constants and $1-|\mathcal N|/q$ on
constants. Their absolute values satisfy
\eqref{eq:projective-null-polarity-bound} directly.
\end{proof}

\begin{lemma}
\label{lem:odd-cone-polarity}
Let $U$ be nondegenerate of dimension $2k -1$, where $k \ge1$, with quadratic form $Q_U$ and associated bilinear form $B_U$. Put
$C=\{u\in U:Q_U(u)=0\}$, including the zero vector. The matrix
\[
 H_C(u,v)=\1_{\{B_U(u,v)=0\}}-q^{-1},\qquad u,v\in C,
\]
satisfies
\begin{equation}\label{eq:odd-cone-polarity-norm}
 \|H_C\|_{2\to2}=(q-1)q^{k-2}.
\end{equation}
\end{lemma}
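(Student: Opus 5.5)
We decompose $C=\{0\}\cup(C\setminus\{0\})$ and reduce $H_C$ to the projective null polarity matrix of Lemma~\ref{lem:projective-null-polarity} with $D=2k-1$. Index the nonzero cone vectors as pairs $([u],\lambda)$, where $[u]\in\mathcal N$ and $\lambda\in\F_q^\times$ is a scalar relative to a fixed representative. The relation $B_U(u,v)=0$ is invariant under scaling. Hence, on $C\setminus\{0\}$,
\[
 \1_{\{B_U=0\}}=H\otimes J_{q-1},
\]
where $H$ is the projective null matrix and $J_{q-1}$ is the all-ones matrix on $\F_q^\times$. The zero vector is orthogonal to everything, so its row and column in $\1_{\{B_U=0\}}$ consist entirely of ones.

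The first step is to compute the spectrum of $H$ exactly. The proof of Lemma~\ref{lem:projective-null-polarity} gives this in odd dimension $D=2s+1$ with $s=k-1$. The row sum of $H$ is $k_H=|\mathcal N|/q-1/q$. On the complement of the constants, the eigenvalues are $\pm q^{s-1}=\pm q^{k-2}$, valid when the Witt index is at least two, i.e. $k\ge3$. The small cases $k=1,2$ are handled directly: for $k=1$, $C=\{0\}$; for $k=2$, the form is ternary and $H=I$ on $q+1$ points.

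Tensoring with $J_{q-1}$, which has eigenvalues $q-1$ on constants and $0$ elsewhere, shows the following. On vectors that are nonconstant in the $\lambda$-fibres, $H_C$ acts as $-q^{-1}J$ restricted, which is zero on such vectors. On vectors that are fibre-constant and orthogonal to all constants, $H_C$ acts with eigenvalues $(q-1)(\pm q^{k-2})$. This is already the claimed value $(q-1)q^{k-2}$, and it is attained, which gives the lower bound.

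It remains to check the two-dimensional invariant subspace spanned by $\1_{\{0\}}$ and $\1_{C\setminus\{0\}}$. Here $H_C$ acts by an explicit $2\times2$ symmetric matrix. Write $N=|C\setminus\{0\}|=q^{2k-2}-1$. Each nonzero $u$ has $(q-1)k_H+1$ orthogonal cone vectors, counting $0$. Subtracting the $q^{-1}$ centering gives the entries
\[
 \begin{pmatrix}1-q^{-1} & (1-q^{-1})\sqrt N\\ (1-q^{-1})\sqrt N & (q-1)k_H-q^{-1}N\end{pmatrix}
\]
in orthonormal coordinates. Using $(q-1)k_H=(q-1)(|\mathcal N|-1)/q=(N-q+1)/q$, the lower-right entry becomes $-(q-1)/q$. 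The matrix is then $(1-q^{-1})\begin{pmatrix}1&\sqrt N\\\sqrt N&-1\end{pmatrix}$, with eigenvalues $\pm(1-q^{-1})\sqrt{N+1}=\pm(q-1)q^{k-2}$. This equals the value on the other invariant subspace, which is why the norm is exactly $(q-1)q^{k-2}$.

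The main obstacle is this bookkeeping: the contribution of the zero vector and the centering must combine to give exactly $\sqrt{N+1}=q^{k-1}$. It therefore needs the exact counts $|\mathcal N|=n_{2k-1}$ and the row sum of $H$ from Lemma~\ref{lem:projective-null-polarity}, not just the norm bound. The degenerate cases $k=1$ (where $C=\{0\}$ and $H_C=1-q^{-1}$) and $k=2$ are verified directly by the same $2\times2$ computation.
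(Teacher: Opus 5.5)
Your proposal is correct and follows the paper's proof: it uses the same splitting of $\ell^2(C)$ into fibre-sum-zero vectors (annihilated by $H_C$), fibre-constant vectors orthogonal to the constants (eigenvalues $(q-1)$ times the projective polarity spectrum $\pm q^{k-2}$), and the $2\times2$ block on $\1_{\{0\}}$, $\1_{C\setminus\{0\}}$ with eigenvalues $\pm(1-q^{-1})\sqrt{N+1}=\pm(q-1)q^{k-2}$. The only cosmetic difference is that you obtain the lower-right entry $-(1-q^{-1})$ from the explicit row sum $k_H=(|\mathcal N|-1)/q$, whereas the paper cites the equivalent fact that $H-q^{-1}J$ has constant eigenvalue $-q^{-1}$ in odd dimension.
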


\begin{proof}
For $k =1$, the cone consists only of zero and the assertion is
$\|H_C\|=1-q^{-1}$. Suppose that $k \ge2$. The cone has $|C|=q^{2k-2}$ points. Each projective null point has $q-1$ nonzero vector representatives. The matrix $H_C$ annihilates functions
that vanish at the origin and have sum zero on each set of
representatives. On the orthogonal complement of these functions,
use the normalized indicator of each set of representatives. The
block on the nonzero vectors is then $(q-1)(H-q^{-1}J)$,
where $H$ is the projective polarity matrix in dimension $2k -1$.
On the orthogonal complement of the projective constants,
Lemma~\ref{lem:projective-null-polarity} bounds its eigenvalues in
absolute value by $(q-1)q^{k -2}$.

The remaining two-dimensional space is spanned by the point mass at
zero and the normalized constant function on $C\setminus\{0\}$.
In this orthonormal basis, the matrix is
\[
 \begin{pmatrix}
  1-q^{-1}&(1-q^{-1})\sqrt{|C|-1}\\
  (1-q^{-1})\sqrt{|C|-1}&-(1-q^{-1})
 \end{pmatrix}.
\]
For the lower-right entry, we used the fact that $H-q^{-1}J$ has constant eigenvalue $-q^{-1}$ in odd dimension, as shown in the preceding proof. Since $|C|=q^{2k -2}$, this block has eigenvalues
$\pm(q-1)q^{k -2}$. This proves the equality in
\eqref{eq:odd-cone-polarity-norm}.
\end{proof}

\begin{theorem}
\label{thm:centered-sphere-zero-cone}
Let $d=2m\ge2$, let $Q$ be a nondegenerate quadratic form on
$V=\F_q^{2m}$, and put $S_j=\{x:Q(x)=j\}$ for $j\ne0$ and
$S_0=\{x:Q(x)=0\}$. For arbitrary complex functions $f,h$ supported
on $S_j$,
\begin{equation}\label{eq:centered-sphere-cone-convolution}
 \|\1_{S_0}(f*h)\|_2^2-q^{-1}\|f*h\|_2^2
 \le q^{m-1}\|f\|_2^2\|h\|_2^2.
\end{equation}
Equivalently, one has the correlation estimate
\begin{equation}\label{eq:centered-null-correlation}
 \sum_{z\in S_0}
   \left|\sum_y f(y)\overline{h(y+z)}\right|^2
 -q^{-1}\|f*h\|_2^2
 \le q^{m-1}\|f\|_2^2\|h\|_2^2.
\end{equation}
\end{theorem}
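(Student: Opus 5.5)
The plan is to reduce \eqref{eq:centered-sphere-cone-convolution} to a uniform operator bound for one family of matrices, exactly as Lemma~\ref{lem:product-block-identity} reduces the shifted-sphere problem to the matrices $M_t$. Repeat that proof with $\1_{S_0}$ in place of $\1_{S_j}$ and with centering constant $q^{-1}$ in place of $|S_j|/q^{2m}$. With $a_t(x)=f(x)\overline{h(x-t)}$ this gives
\[
 \|\1_{S_0}(f*h)\|_2^2-q^{-1}\|f*h\|_2^2=\sum_{t\in V}\langle N_ta_t,a_t\rangle,
 \qquad N_t(x,y)=\1_{S_0}(x+y-t)-q^{-1},\quad x,y\in S_j .
\]
By \eqref{eq:product-vector-mass}, $\sum_t\|a_t\|_2^2=\|f\|_2^2\|h\|_2^2$. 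Hence it suffices to prove
\[
 \|N_t\|_{2\to2}\le q^{m-1}\qquad\text{for every } t\in V .
\]
The correlation form \eqref{eq:centered-null-correlation} is the same statement after the substitution $z=x+y-t$ in the expansion.

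The key identity is a polarity formula. For $x,y\in S_j$,
\[
 Q(x+y-t)=2B(x-t,y-t)+2j-Q(t).
\]
Put $u=x-t$, $v=y-t$ and $\kappa=j-Q(t)/2$. The incidence condition then becomes $B(u,v)+\kappa=0$, and the constraint $Q(x)=j$ becomes the affine quadratic condition $Q(u)+2B(u,t)=j-Q(t)$. I would homogenize by adjoining coordinates: work in $U=V\oplus\F^2$ with a form of the type $Q(u)+\kappa\alpha^2+\alpha\beta$, possibly after completing the square in the $t$-direction. The aim is to attach to each $x\in S_j$ a vector $X_x$ with $\alpha=1$ such that three things hold:
\begin{enumerate}
\item $X_x$ lies on the null cone of an auxiliary quadratic space;
\item $B'(X_x,X_y)$ is a nonzero multiple of $B(u,v)+\kappa$;
\item $x\mapsto[X_x]$ is injective.
\end{enumerate}
Then $N_t$ is the compression of $H-q^{-1}J$ from Lemma~\ref{lem:projective-null-polarity} to the image points. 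Compression does not increase the norm, and that lemma bounds $H-q^{-1}J$ by $q^{\lfloor(D-2)/2\rfloor}$. This gives $q^{m-1}$ as soon as the relevant nondegenerate quadric has dimension $D\le 2m+1$. The image consists of the points of the cone off one hyperplane section, which is why that section has to be removed first.

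I expect the main obstacle to be this embedding step. The auxiliary form may be degenerate, for instance when $\kappa=0$, i.e.\ $Q(t)=2j$, or when $t$ is isotropic. In those cases I would pass to the quotient by the radical. If the quotient has odd dimension $2k-1$, the image becomes a subset of a vector cone $C$, and Lemma~\ref{lem:odd-cone-polarity} supplies $\|H_C\|=(q-1)q^{k-2}\le q^{m-1}$ when $k\le m+1$. If the quotient has even dimension, Lemma~\ref{lem:projective-null-polarity} applies again.

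The point to check carefully in every case is that the centering constant produced by the embedding is exactly $q^{-1}$. Lemma~\ref{lem:projective-null-polarity} is centered at $q^{-1}J$ and Lemma~\ref{lem:odd-cone-polarity} at $q^{-1}$, which matches. One must also confirm that the radical fibres contribute no multiplicity beyond what the cone lemma absorbs. After that, each case of $t$ reduces to bookkeeping of the dimension and Witt type of the quotient.
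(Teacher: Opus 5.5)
\textbf{The reduction asks for a false bound.} You claim it suffices to prove $\|N_t\|_{2\to2}\le q^{m-1}$ on all of $\ell^2(S_j)$, but this fails already for $m=1$. Take $q=11$, $Q(x)=x_1x_2$, $j=1$, $t=(1,2)$, and parametrize $S_1$ by $x=(s,1/s)$.

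\begin{itemize}
\item The condition $x+y-t\in S_0$ holds exactly when $s+s'=1$ or $1/s+1/s'=2$.
\item On the parameter sets $\{2,4,8,10\}$ and $\{3,5,7,9\}$, the incidence graph is two disjoint $4$-cycles. There are no loops, and no edges leave these sets.
\item Hence $v=\1_{\{2,4,8,10\}}-\1_{\{3,5,7,9\}}$ satisfies $Jv=0$ and $N_tv=2v$, so $\|N_t\|\ge 2>q^{m-1}$.
\end{itemize}

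Here $I_t=S_j\cap(t+S_j)$ is empty, since $2s^2-2s+1=0$ has no root mod $11$, so the theorem itself is unaffected. It follows that no embedding of the kind you describe can exist on all of $S_j$. That is, you cannot have null vectors $X_x$ whose pairing is proportional to $B(u,v)+\kappa$, living in a quadric of dimension at most $2m+1$: compression would then give the false bound. Your own formulas show the obstruction. On $S_j$, $Q(u)=j-Q(t)-2B(u,t)$ is not constant. Forcing $X_x$ onto the null cone therefore needs an extra coordinate, and that coordinate contaminates the pairing with terms depending on $x$ and $y$ separately.

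\textbf{The missing observation.} The vector $a_t(x)=f(x)\overline{h(x-t)}$ vanishes unless $x-t\in S_j$ as well. So you only need the compression of $N_t$ to $I_t$. On $I_t$, subtracting $Q(x-t)=j$ from $Q(x)=j$ gives $B(x,t)=Q(t)/2$, a constant. Write $x=t/2+u$ and $y=t/2+v$. Then $u,v\in t^\perp$, $Q(u)=Q(v)=a:=j-Q(t)/4$, and $Q(x+y-t)=Q(u+v)=2a+2B(u,v)$. After the reflection $v\mapsto-v$, the incidence is the pure polarity $B(u,v)=a$. The map $u\mapsto[u:1]$ then embeds the index set into the null cone of $Q|_{t^\perp}-az^2$, with centering exactly $q^{-1}$.

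\textbf{How your cases must be reorganized.} Once this is in place, your case analysis becomes the paper's, but the cases are governed by $a$, not by your $\kappa=j-Q(t)/2$.
\begin{itemize}
\item The degenerate cone case is $a=0$, i.e.\ $Q(t)=4j$, not $Q(t)=2j$. It is handled by Lemma~\ref{lem:odd-cone-polarity} on $t^\perp$, which has dimension $2m-1$.
\item For nonzero isotropic $t$, the radical fibres produce a tensor factor $J_q$ of norm $q$. This is compensated only because the quotient $t^\perp/\F_q t$ has dimension $2m-2$, so its sphere matrix is bounded by $q^{m-2}$ via Lemma~\ref{lem:projective-null-polarity}. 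The cone lemma does not absorb this multiplicity, contrary to what your sketch suggests.
\end{itemize}
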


\begin{proof}
For $t\in V$, define
\[
 I_t=\{x:Q(x)=Q(x-t)=j\},\qquad
 a_t(x)=f(x)\overline{h(x-t)}.
\]
Expanding the squared convolution and changing variables by
$t=x+y-z$ yields the exact identity
\begin{equation}\label{eq:centered-cone-product-identity}
\|\1_{S_0}(f*h)\|_2^2-q^{-1}\|f*h\|_2^2
=\sum_{t\in V}\sum_{x,y\in I_t}
 \bigl(\1_{\{Q(x+y-t)=0\}}-q^{-1}\bigr)
 a_t(x)\overline{a_t(y)}.
\end{equation}
Indeed, the product before this change of variables is
$f(x)h(z-x)\overline{f(y)h(z-y)}$, and afterwards it is
$a_t(x)\overline{a_t(y)}$. Also,
\begin{equation}\label{eq:centered-cone-correlation-mass}
 \sum_{t\in V}\|a_t\|_2^2=\|f\|_2^2\|h\|_2^2.
\end{equation}
We will bound every matrix in
\eqref{eq:centered-cone-product-identity} by $q^{m-1}$ in operator
norm.

Write $x=t/2+u$ and $y=t/2+v$. The conditions defining $I_t$ become
\[
 u,v\in t^\perp,\qquad Q(u)=Q(v)=a,
 \qquad a=j-\frac{Q(t)}4.
\]
On this index set, $Q(u+v)=0$ is equivalent to $B(u,v)=-a$.
Reflection $v\mapsto-v$ is a permutation of the index set and
preserves operator norms. It is therefore enough to estimate the
matrix
\begin{equation}\label{eq:centered-tangent-matrix}
 \1_{\{B(u,v)=a\}}-q^{-1}.
\end{equation}
There are four cases.

If $t=0$, then $a=j\ne0$. The map $u\mapsto[u:1]$ embeds $S_j$
into the projective null points of the nondegenerate form
\[
 (u,z)\longmapsto Q(u)-jz^2
\]
in dimension $2m+1$. Orthogonality of the images is precisely
$B(u,v)=j$. Thus, \eqref{eq:centered-tangent-matrix} is a principal
compression of the centered projective polarity matrix.
Lemma~\ref{lem:projective-null-polarity} bounds its norm by
$q^{m-1}$.

If $Q(t)\ne0$ and $a\ne0$, the space $t^\perp$ is nondegenerate
of dimension $2m-1$. Apply the same embedding to the form
\[
 (u,z)\longmapsto Q|_{t^\perp}(u)-az^2.
\]
Its dimension is $2m$, so
Lemma~\ref{lem:projective-null-polarity} again gives the bound
$q^{m-1}$, for either Witt type of the enlarged space.

If $Q(t)=4j$, then $a=0$ and $t^\perp$ is nondegenerate of dimension
$2m-1$. Lemma~\ref{lem:odd-cone-polarity} gives the bound
$(q-1)q^{m-2}\le q^{m-1}$ directly.

Finally, suppose that $t\ne0$ and $Q(t)=0$. The radical of
$t^\perp$ is $\F_q t$, and $W=t^\perp/\F_q t$
is nondegenerate of dimension $2m-2$. Here $a=j\ne0$.
Each point of the nonzero level in $W$ has exactly $q$ lifts to
$t^\perp$, and both $Q$ and $B$ are constant on these fibers.
Consequently, the matrix in \eqref{eq:centered-tangent-matrix} is,
after ordering its indices, a tensor product of $J_q$ and the
corresponding centered tangent matrix on the quotient sphere.
For $m\ge2$, embedding that sphere into the projective null points
of a form of dimension $2m-1$ bounds the latter matrix by
$q^{m-2}$. Since $\|J_q\|=q$, the desired bound follows.
For $m=1$, the quotient has dimension zero and its nonzero level is
empty.

All four cases also cover dimension two: the first two use the
ternary and binary cases of Lemma~\ref{lem:projective-null-polarity},
the third uses the one-point cone, and the fourth is empty.
Combining the matrix bounds with
\eqref{eq:centered-cone-product-identity} and
\eqref{eq:centered-cone-correlation-mass} proves
\eqref{eq:centered-sphere-cone-convolution}.

To obtain \eqref{eq:centered-null-correlation}, define
$\widetilde h(x)=\overline{h(-x)}$. Since $S_j=-S_j$, this function
is also supported on $S_j$, and
\[
 (f*\widetilde h)(-z)=\sum_y f(y)\overline{h(y+z)}.
\]
For the Fourier transform associated with the nondegenerate pairing
$B$, one has
$\widehat{\widetilde h}=\overline{\widehat h}$.
Plancherel therefore gives
$\|f*\widetilde h\|_2=\|f*h\|_2$.
Apply \eqref{eq:centered-sphere-cone-convolution} to $f,\widetilde h$
and use $S_0=-S_0$. The same involution proves the converse.
\end{proof}

\Needspace{12\baselineskip}
\subsection{Exceptional translations and the fourth moment}
\label{subsec:exceptional-translations-positive-compression}

\begin{lemma}\label{lem:exceptional-regular}
Let $t\in S_j$, and put
\[
 R_t=\{x\in S_j:Q(x-t)\ne0\}.
\]
For the centered matrix $M_t$ defined in
\eqref{eq:centered-shell-matrix},
\[
 \|\1_{R_t}M_t\1_{R_t}\|_{2\to2}\lesssim q^{m-1}.
\]
The implied constant is absolute.
\end{lemma}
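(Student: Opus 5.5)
Write $x=t+u$, $y=t+v$ with $u,v\ne0$. By \eqref{eq:tangent-shell-polarity}, $x\in S_j$ means $Q(u)=-2B(t,u)$, and $Q(x+y-t)=j$ means $B(u,v)=0$. On $R_t$ we also have $Q(u)\ne0$, hence $B(t,u)=-Q(u)/2\ne0$. So the uncentered part of $\1_{R_t}M_t\1_{R_t}$ is the polarity matrix $\1_{\{B(u,v)=0\}}$ on the index set
\[
 \mathcal U=\{u\ne0:\ Q(u)+2B(t,u)=0,\ Q(u)\ne0\}.
\]
The plan is to realize this matrix as a principal compression of a centered projective polarity matrix and then invoke Lemma~\ref{lem:projective-null-polarity}.

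Consider the map $u\mapsto[u]\in\mathbb P(V)$. A line $\F_q u$ with $Q(u)\ne0$ meets $\{Q(w)+2B(t,w)=0\}\setminus\{0\}$ exactly once, at $w=\lambda u$ with $\lambda=-2B(t,u)/Q(u)$. This point is nonzero precisely when $B(t,u)\ne0$. Hence $\mathcal U$ is in bijection with the projective points $[u]$ satisfying $Q(u)\ne0$ and $B(t,u)\ne0$. The condition $B(u,v)=0$ is projectively invariant, so the uncentered matrix is the principal compression of
\[
 H_{\mathrm{nn}}([u],[v])=\1_{\{B(u,v)=0\}}
\]
on nonisotropic projective points to the subset where $B(t,u)\ne0$.

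To put this in the form of Lemma~\ref{lem:projective-null-polarity}, embed each such line into the null quadric of the nondegenerate form $Q(w)-c\,z^2$ in dimension $2m+1$. The obstacle is that $Q(u)$ is not constant on $\mathcal U$. I would fix this by splitting $\mathcal U$ according to the square class of $Q(u)$. Alternatively, normalize representatives: for each projective point $[u]$ with $Q(u)\in c\,\F_q^{\times 2}$, choose $u$ with $Q(u)=c$ (two choices $\pm u$). Then $[u:1]$ is a null point of $Q-cz^2$. Orthogonality of $[u:1]$ and $[v:1]$ reads $B(u,v)=c$, which is not $B(u,v)=0$.

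A better embedding is the form $Q\oplus(-Q)$, but that increases the dimension. The cleanest approach is probably the following. The points $[u]$ with $B(u,v)=0$ and $Q(u)\ne0$ form the nonisotropic part of the projective polarity on $\mathbb P(V)$, whose spectrum I would compute directly, as in the proof of Lemma~\ref{lem:projective-null-polarity}. For each fixed square class, the matrix $\1_{\{B(u,v)=0\}}$ restricted to nonisotropic points of that class is a strongly regular type incidence. Its square is determined by the nondegenerate quotients $u^\perp\cap v^\perp$ of dimension $2m-2$ (or $2m-1$ with a one-dimensional radical when $\langle u,v\rangle$ is degenerate). This gives a relation
\[
 H^2=\alpha I+\beta H+\gamma J+E,
\]
where $E$ is the error coming from degenerate pairs, and the nonconstant eigenvalues are then of size $q^{m-1}$.

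Having that bound, pass from vectors in $\mathcal U$ to projective points. This is a bijection, so no multiplicity arises. The centering $-|S_j|/q^{2m}\approx -q^{-1}$ matches the $q^{-1}J$ centering up to a rank-one error of norm
\[
 |\mathcal U|\cdot O(q^{-m-1})\lesssim q^{m-2}.
\]
Compression does not increase norms, and the two square classes contribute at most twice the bound. This gives $\|\1_{R_t}M_t\1_{R_t}\|\lesssim q^{m-1}$.

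The main obstacle is the spectral computation for the nonisotropic polarity, because its coefficients depend on whether $\langle u,v\rangle$ is degenerate. I would first try to avoid it by embedding the square class $\{Q(u)=c\}$ as a sphere and writing $B(u,v)=0$ as a polarity on the null quadric of $Q\oplus\langle -c\rangle\oplus\langle c\rangle$ in dimension $2m+2$. This costs a factor that must be checked against $q^{m-1}$. If that loses a factor $q$, I would fall back on the direct strongly regular computation and bound $E$ by row sums.
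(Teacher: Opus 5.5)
Your reduction is correct and matches the paper's. On $R_t$ the map $x\mapsto[x-t]$ is injective, and $\1_{R_t}M_t\1_{R_t}$ is the principal compression of $\1_{\{B(u,v)=0\}}-\rho J$, with $\rho=|S_j|/q^{2m}$, to the projective points with $Q(u)\ne0\ne B(t,u)$. The gap is the spectral estimate itself: you never establish it, and none of the routes you list delivers it.

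\begin{itemize}
\item \emph{The embedding into the null quadric of $Q-cz^2$.} It produces the relation $B(u,v)=c$, as you note yourself.
\item \emph{The $(2m+2)$-dimensional embedding.} It can realize $B(u,v)=0$ as orthogonality only through a rectangular compression, for example $u\mapsto[u:1:0]$, $v\mapsto[v:0:1]$ for $Q\oplus\langle -c\rangle\oplus\langle -c\rangle$. Lemma~\ref{lem:projective-null-polarity} then gives only $q^{\lfloor 2m/2\rfloor}=q^m$, which is a full factor of $q$ too weak.
\item \emph{The fallback computation.} It rests on a false premise: the polarity on nonisotropic points is not strongly regular up to an error on degenerate pairs. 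For $m\ge2$ and distinct nonisotropic $[u],[v]$, the number of isotropic points in $u^\perp\cap v^\perp$ is
\[
 \frac{q^{2m-3}-1}{q-1}+\epsilon_Q\,q^{m-2}\,\eta\bigl(B(u,v)^2-Q(u)Q(v)\bigr),
\]
with $\eta(0)=0$. So your $E$ has entries $\pm q^{m-2}$ on essentially all pairs, not only degenerate ones. Its row sums are of order $q^{3m-3}$, whereas you need $q^{2m-2}$. Bounding $E$ would require genuine cancellation in a quadratic-character sum.
\end{itemize}

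The missing observation is to compress from the polarity $H([u],[v])=\1_{\{B(u,v)=0\}}$ on all of $\mathbb P^{2m-1}(\F)$, isotropic points included. You already use the fact that compression does not increase norms, so no restricted spectrum is needed. Since $B$ is nondegenerate, two distinct hyperplanes $u^\perp\ne v^\perp$ always meet in a subspace of dimension $2m-2$, whether or not $\langle u,v\rangle$ is degenerate. With $n=\frac{q^{2m}-1}{q-1}$ points and $k=\frac{q^{2m-1}-1}{q-1}$ this gives
\[
 H\1=k\1,\qquad H^2=q^{2m-2}I+\frac{q^{2m-2}-1}{q-1}J.
\]
Hence $H$ has eigenvalues $\pm q^{m-1}$ off the constants. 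The centered constant eigenvalue $k-\rho n=-q^{-1}+\epsilon_Qq^{-m-1}n$ is $O(q^{m-1})$, so $\|H-\rho J\|\lesssim q^{m-1}$. Your matrix is a principal compression of $H-\rho J$, and no square-class splitting is needed. That splitting would in any case also require bounding the cross-class blocks, which your proposal omits. This is exactly the paper's argument.
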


\begin{proof}
Write $u=x-t$. The identities
\eqref{eq:tangent-shell-polarity} give
\[
 Q(u)+2B(t,u)=0,\qquad
 Q(x+y-t)=j+2B(x-t,y-t).
\]
On every projective direction with $Q(u)\ne0$, the first equation
has at most one nonzero solution. Thus, $x\mapsto[x-t]$ is injective
on $R_t$, and the restricted incidence matrix is a principal
submatrix of the polarity matrix $H$ on $\mathbb P^{2m-1}(\F)$,
whose entries are $\1_{\{B(u,v)=0\}}$.

Put
\[
 n=\frac{q^{2m}-1}{q-1},\qquad
 k=\frac{q^{2m-1}-1}{q-1}.
\]
Counting projective hyperplanes and their intersections gives
\[
 H\mathbf1=k\mathbf1,\qquad
 H^2=q^{2m-2}I+\frac{q^{2m-2}-1}{q-1}J,
\]
where $J$ is the all-ones matrix. These formulas include $m=1$:
in that case $H$ is a permutation matrix and $H^2=I$.
On the orthogonal complement of the constants, $H$ has norm
$q^{m-1}$. If $\rho=|S_j|/q^{2m}$, its centered constant
eigenvalue is
\[
 k-\rho n=-q^{-1}+\epsilon_Qq^{-m-1}n,
\]
whose modulus is $O(q^{m-1})$. Hence,
$\|H-\rho J\|\lesssim q^{m-1}$. Compression to the indicated
projective points proves the assertion.
\end{proof}

\begin{proposition}\label{prop:refined-fourth-moment}
For every complex function $f$ supported on $S_j$,
\begin{equation}\label{eq:refined-fourth-moment}
 \|f*f\|_2^2
 \lesssim_m q^{m-1}\|f\|_2^4+
 q^{-1}\|Tf\|_\infty^2\|f\|_2^2.
\end{equation}
Equivalently,
\begin{equation}\label{eq:refined-unitary-fourth}
 \|\mathcal Ff\|_4^4
 \lesssim_m q^{-1}\|\mathcal Ff\|_\infty^2\|f\|_2^2
             +q^{-m-1}\|f\|_2^4.
\end{equation}
\end{proposition}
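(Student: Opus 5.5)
We apply Lemma~\ref{lem:product-block-identity} with $g=f$. Since $f*f$ need not be supported on $S_j$, we first split $\|f*f\|_2^2$ according to the quadratic level of the output point.

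\textbf{Decomposition by levels.} Write $\|f*f\|_2^2=\sum_{\lambda\in\F}\|\1_{S_\lambda}(f*f)\|_2^2$. For $\lambda\ne0$ we rescale: if $\lambda/j$ is a square $\kappa^2$, dilation by $\kappa$ maps $S_j$ to $S_\lambda$. In general it is cleaner not to rescale and instead to repeat the argument of Lemma~\ref{lem:product-block-identity} with $\1_{S_\lambda}$ in place of $\1_{S_j}$. The matrices then become $\1_{S_\lambda}(x+y-t)-|S_\lambda|/q^{2m}$ on $S_j$, and the proof of Theorem~\ref{thm:completed-generic-blocks} is insensitive to whether the outer level equals the inner one, apart from the exceptional set of $t$.

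The simpler route is to use the Fourier side directly. We have $\|f*f\|_2^2=q^{2m}\|\mathcal F f\|_4^4$, and $\mathcal F f\cdot\mathcal F f$ is, up to $q^m$, the transform of $f*f$. Split $V=S_j\cup S_0\cup\bigcup_{\lambda\ne0,j}S_\lambda$ and average over dilations. Since the transform of $\sum_\lambda \1_{S_\lambda}$ is a delta, the centered pieces sum to the exact total. For each $\lambda$ we get
\[
\|\1_{S_\lambda}(f*f)\|_2^2-\rho_\lambda\|f*f\|_2^2=\sum_t\langle M^{(\lambda)}_ta_t,a_t\rangle .
\]
By the Theorem~\ref{thm:completed-generic-blocks}-type bound, the translations with $Q(t)\ne\lambda$ contribute $O(q^{m-1})\sum_t\|a_t\|^2=O(q^{m-1})\|f\|_2^4$ per level. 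That loses a factor $q$ when summed over levels, so per-level summation is not the plan.

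\textbf{The plan actually followed.} Treat only the level $S_j$, using the reflection $x\mapsto -x$ to see that $\|f*f\|_2^2$ is controlled by the full correlation structure. Concretely, we would:
\begin{enumerate}
\item Write $\|f*f\|_2^2=\langle f*f, f*f\rangle$ and expand as $\sum_t\sum_{x,y\in S_j}a_t(x)\overline{a_t(y)}\,\1_{\{x+y-t\in S_j\}}$-type sums with all levels. Equivalently, insert $1=\rho+(\1_{\{Q(x+y-t)=\cdot\}}-\rho)$ for the relevant incidence.
\item Bound the translations with $Q(t)\ne j$ using Theorem~\ref{thm:completed-generic-blocks}. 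This gives $C_mq^{m-1}\|f\|_2^4$ by \eqref{eq:product-vector-mass}.
\item For $t\in S_j$, split $S_j=R_t\cup(S_j\setminus R_t)$. The regular block is bounded by Lemma~\ref{lem:exceptional-regular}.
\item The blocks involving $S_j\setminus R_t=\{x\in S_j:Q(x-t)=0\}$ are rewritten as a sphere--cone convolution. Summing over $t\in S_j$, the vectors $a_t$ restricted to these $x$ are $f(x)\overline{f(x-t)}$ with $x-t\in S_0$. The resulting quadratic form is exactly $\|\1_{S_0}(f*\tilde f)\|_2^2$-type. Theorem~\ref{thm:centered-sphere-zero-cone} bounds it by $q^{-1}\|f*\tilde f\|_2^2+q^{m-1}\|f\|_2^4$. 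The cross blocks $R_t\times(S_j\setminus R_t)$ follow by Cauchy--Schwarz from the two diagonal ones.
\end{enumerate}

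\textbf{The density term.} The main term $\rho\sum_t|\langle a_t,\1\rangle|^2=\rho\|f*f\|_2^2$ has $\rho\approx q^{-1}$, which is strictly less than $1$. It therefore absorbs into the left-hand side, provided the remaining error is $\ll\|f*f\|_2^2$ plus the allowed terms. The leftover piece $q^{-1}\|f*\tilde f\|_2^2$ restricted to the $t\in S_j$ contributions is where $\|Tf\|_\infty$ enters. Indeed,
\[
\sum_{t\in S_j}|(f*\tilde f)(t)|^2=q^{-2m}\sum_x |Tf(x)|^2\,\bigl((|Tf|^2)*\widehat{\1_{S_j}}\bigr)\text{-type},
\]
and bounding one factor $|Tf|^2$ by $\|Tf\|_\infty^2$, then using Plancherel and Lemma~\ref{lem:kernel-linfty} for the sphere kernel, gives $\lesssim q^{-1}\|Tf\|_\infty^2\|f\|_2^2$ after accounting for normalizations.

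\textbf{Main obstacle.} The bookkeeping of the $t\in S_j$ terms is the delicate step. We must match the centering $q^{-1}$ in Theorem~\ref{thm:centered-sphere-zero-cone} with the centering $\rho$ of $M_t$, and ensure that the uncentered density remainders collect into $q^{-1}\|Tf\|_\infty^2\|f\|_2^2$ rather than $\|f*f\|_2^2$ with a constant not less than $1$. The equivalence with \eqref{eq:refined-unitary-fourth} is then immediate from \eqref{eq:unitary-convolution-convention}, since $\|\mathcal Ff\|_4^4=q^{-2m}\|f*f\|_2^2$ and $Tf=q^m\mathcal Ff$.
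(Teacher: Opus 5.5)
Your proposal has a genuine gap: none of its steps actually bounds $\|f*f\|_2^2$.

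\textbf{What your steps give.} Lemma~\ref{lem:product-block-identity} with $g=f$, combined with Theorem~\ref{thm:completed-generic-blocks} and Lemma~\ref{lem:exceptional-regular}, yields only
\[
 \|\1_{S_j}(f*f)\|_2^2\le\rho\|f*f\|_2^2+C_mq^{m-1}\|f\|_2^4,\qquad \rho=\frac{|S_j|}{q^{2m}}.
\]
This says nothing about the mass of $f*f$ on the levels $\lambda\ne j$.

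\textbf{The absorption step.} It treats the left side above as if it were $\|f*f\|_2^2$, which it is not. The bound absorption would produce, $\|f*f\|_2^2\lesssim q^{m-1}\|f\|_2^4$, is false for $m\ge2$. Take $f(\xi)=|S_j|^{-1/2}\chi(-B(x,\xi))$ on $S_j$. Then $Tf(x)=|S_j|^{1/2}$, so $\|f*f\|_2^2=q^{-2m}\|Tf\|_4^4\gtrsim q^{2m-2}$, which exceeds $q^{m-1}\|f\|_2^4$ by a factor of order $q^{m-1}$.

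\textbf{Step~4.} It is inconsistent with $g=f$. Since $f$ vanishes off $S_j$, for $t\in S_j$ the vector $a_t(x)=f(x)\overline{f(x-t)}$ is supported on $\{Q(x-t)=j\}\subseteq R_t$. So the cone blocks are empty and Theorem~\ref{thm:centered-sphere-zero-cone} never enters.

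\textbf{The Fourier sketch for $\|Tf\|_\infty$.} Your computation of $\sum_{t\in S_j}|(f*\widetilde f)(t)|^2$ does not give the claimed bound. The pointwise kernel bound $2q^{m-1/2}$ controls the off-diagonal part only by $O(q^{m-1/2}\|f\|_2^4)$, which exceeds the allowed $q^{m-1}\|f\|_2^4$. Replacing a factor $|Tf|^2$ by its supremum makes it worse.

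\textbf{The missing idea.} You need a duality step that reduces the whole fourth moment to the $S_j$-component of a convolution against an \emph{arbitrary} function. Put $\widetilde f(x)=\overline{f(-x)}$ and $g=f*\widetilde f$. The conventions \eqref{eq:unitary-convolution-convention} give exactly
\[
 \|f*f\|_2^2=\langle f*g,f\rangle=\langle\1_{S_j}(f*g),f\rangle,
 \qquad \|g\|_2=\|f*f\|_2,
\]
the second equality because $f$ is supported on $S_j$. By Cauchy--Schwarz it therefore suffices to prove
\[
 \|\1_{S_j}(f*g)\|_2^2\lesssim_m\bigl(q^{-1}\|Tf\|_\infty^2+q^{m-1}\|f\|_2^2\bigr)\|g\|_2^2
\]
for every $g$ on $V$. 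The paper phrases the same step as $\|\mathcal Ff\|_4^4\le\|PD_{\mathcal Ff}\|^2\|f\|_2^2$ with $P=\mathcal F\1_{S_j}\mathcal F^*$.

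\textbf{Proving the operator bound.} Because $g$ is arbitrary, and $f*\widetilde f$ does charge the cone, split $g=g\1_{S_0}+g_1$.
\begin{itemize}
\item For $g_1$, the vectors $a_t(x)=f(x)\overline{g_1(x-t)}$ with $t\in S_j$ live on $R_t$. So Theorem~\ref{thm:completed-generic-blocks} and Lemma~\ref{lem:exceptional-regular} apply. The density term is bounded by the multiplier estimate $\rho\|f*g_1\|_2^2\le\rho\|Tf\|_\infty^2\|g_1\|_2^2$. This, not an absorption, is where $\|Tf\|_\infty$ enters.
\item For $g\1_{S_0}$, pair $\1_{S_j}(f*g\1_{S_0})$ with $h$ supported on $S_j$ and apply the correlation form \eqref{eq:centered-null-correlation}. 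Bound $q^{-1}\|f*\widetilde h\|_2^2$ by $q^{-1}\|Tf\|_\infty^2\|h\|_2^2$ in the same way.
\end{itemize}
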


\begin{proof}
For an arbitrary ambient function $g$, write
\[
 g=g_0+g_1,\qquad g_0=g\1_{S_0},\qquad g_1=g-g_0.
\]
Apply the product-vector identity
\eqref{eq:product-block-expansion} to $f,g_1$. If $t\in S_j$,
then its vector
\[
 a_t(x)=f(x)\overline{g_1(x-t)}
\]
is supported on $R_t$. The generic bound from
Theorem~\ref{thm:completed-generic-blocks},
Lemma~\ref{lem:exceptional-regular}, and
\eqref{eq:product-vector-mass} therefore give
\[
 \|\1_{S_j}(f*g_1)\|_2^2
 \le \frac{|S_j|}{q^{2m}}\|f*g_1\|_2^2
          +C_mq^{m-1}\|f\|_2^2\|g_1\|_2^2.
\]
Convolution by $f$ has Fourier multiplier $Tf$. Since
$|S_j|/q^{2m}\lesssim q^{-1}$, it follows that
\[
 \|\1_{S_j}(f*g_1)\|_2^2
 \lesssim_m
 \bigl(q^{-1}\|Tf\|_\infty^2+q^{m-1}\|f\|_2^2\bigr)
 \|g_1\|_2^2.
\]

For $g_0$, use the correlation consequence of
Theorem~\ref{thm:centered-sphere-zero-cone}. Namely, for every $h$
supported on $S_j$, with $\widetilde h(x)=\overline{h(-x)}$,
\[
 \sum_{z\in S_0}
 \left|\sum_yf(y)\overline{h(y+z)}\right|^2
 \le q^{-1}\|f*\widetilde h\|_2^2
          +q^{m-1}\|f\|_2^2\|h\|_2^2
 \le\bigl(q^{-1}\|Tf\|_\infty^2+q^{m-1}\|f\|_2^2\bigr)
          \|h\|_2^2.
\]
Here the correlation is $(f*\widetilde h)(-z)$ and $S_0=-S_0$.
The identity
\[
 \langle\1_{S_j}(f*g_0),h\rangle
 =\sum_{z\in S_0}g_0(z)
             \sum_yf(y)\overline{h(y+z)}
\]
and Cauchy--Schwarz in $z$ consequently imply
\[
 \|\1_{S_j}(f*g_0)\|_2^2
 \le\bigl(q^{-1}\|Tf\|_\infty^2+q^{m-1}\|f\|_2^2\bigr)
          \|g_0\|_2^2.
\]
Combining the last two estimates by the triangle inequality and
Cauchy--Schwarz, and using
$\|g_0\|_2^2+\|g_1\|_2^2=\|g\|_2^2$, proves
\begin{equation}\label{eq:positive-convolution-bound}
 \|\1_{S_j}(f*g)\|_2^2
 \lesssim_m
 \bigl(q^{-1}\|Tf\|_\infty^2+q^{m-1}\|f\|_2^2\bigr)
 \|g\|_2^2.
\end{equation}

Let $P=\mathcal F\1_{S_j}\mathcal F^*$, the orthogonal projection
onto the Fourier transforms of functions supported on $S_j$.
Since
\[
 \mathcal F(f*g)=q^m(\mathcal Ff)(\mathcal Fg),
\]
dividing \eqref{eq:positive-convolution-bound} by $q^{2m}$ gives
\[
 \|PD_{\mathcal Ff}\|_{2\to2}^2
 =\|PD_{|\mathcal Ff|^2}P\|_{2\to2}
 \lesssim_m q^{-1}\|\mathcal Ff\|_\infty^2
                      +q^{-m-1}\|f\|_2^2.
\]
Here $D_u$ denotes multiplication by $u$; the equality follows by
multiplying $PD_{\mathcal Ff}$ by its adjoint. Since
$P\mathcal Ff=\mathcal Ff$, taking its quadratic form at
$\mathcal Ff$ proves \eqref{eq:refined-unitary-fourth}. Finally,
\[
 \|f*f\|_2^2=q^{2m}\|\mathcal Ff\|_4^4,
 \qquad \|\mathcal Ff\|_\infty=q^{-m}\|Tf\|_\infty,
\]
which gives \eqref{eq:refined-fourth-moment}.
\end{proof}

\subsection{An orthogonal decomposition}
\label{subsec:coherent-extraction}

The following decomposition will also be used in the pinned-distance
energy estimates.
We put $r_m=2(m+1)/m$, so that $2<r_m\le4$.

\begin{lemma}\label{lem:coherent-decomposition}
There is an absolute constant $A$ with the following property.
Let $f$ be supported on $S_j$, with counting norm $\|f\|_2=1$,
and define
\[
 a_x(\xi)=|S_j|^{-1/2}\chi(-B(x,\xi)),\qquad \xi\in S_j.
\]
There are distinct points $x_1,\ldots,x_L$ and coefficients $c_i$
such that
\[
 f=h+r,\qquad h=\sum_{i=1}^Lc_i a_{x_i},\qquad
 L\le\frac{|S_j|}{A^2q^m}\lesssim q^{m-1},
\]
\[
 \|h\|_2,\|r\|_2\le1,\qquad
 \sum_i|c_i|^2\lesssim1,\qquad
 \sum_i|c_i|^{r_m}\lesssim1,\qquad
 \|Tr\|_\infty\le Aq^{m/2}.
\]
The remainder satisfies
\begin{equation}\label{eq:coherent-residual-fourth}
 \|\mathcal Fr\|_4^4\lesssim_m q^{-m-1}.
\end{equation}
Moreover, $Th=b+e$, where
\begin{equation}\label{eq:coherent-atomic-main}
 b=\sqrt{|S_j|}\sum_i c_i\1_{\{x_i\}},\qquad
 \|b\|_2^2\lesssim |S_j|,\qquad
 \|b\|_{r_m}^{r_m}\lesssim |S_j|^{r_m/2},
\end{equation}
and
\begin{equation}\label{eq:coherent-atomic-tail}
 \|e\|_\infty\lesssim q^{(m-1)/2},\qquad
 \|e\|_2\lesssim q^m.
\end{equation}
\end{lemma}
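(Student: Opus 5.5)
The plan is a greedy orthogonal extraction driven by one identity, $\langle g,a_x\rangle=|S_j|^{-1/2}\,Tg(x)$ for $g$ supported on $S_j$. By Lemma~\ref{lem:kernel-linfty}, the Gram entries satisfy
\[
 |\langle a_y,a_x\rangle|=|S_j|^{-1}\Bigl|\sum_{\xi\in S_j}\chi(B(x-y,\xi))\Bigr|\le 2q^{m-1/2}|S_j|^{-1}\lesssim q^{-m+1/2}
 \qquad(x\ne y).
\]

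\textbf{Greedy construction.} Start with $r_0=f$ and an empty selection. At step $i$, if some $x$ has $|Tr_{i-1}(x)|>Aq^{m/2}$, select $x_i=x$. Let $h_i$ be the orthogonal projection of $f$ onto $\operatorname{span}\{a_{x_1},\dots,a_{x_i}\}$, and put $r_i=f-h_i$.
\begin{itemize}
\item Since $r_{i-1}\perp a_{x_1},\dots,a_{x_{i-1}}$, we have $Tr_{i-1}(x_k)=0$ for $k<i$, so the selected points are distinct.
\item Write $a_{x_i}^\perp$ for the component of $a_{x_i}$ orthogonal to the previous span; it has norm at most one. Then
\[
 \|r_{i-1}\|_2^2-\|r_i\|_2^2\ \ge\ |\langle r_{i-1},a_{x_i}^\perp\rangle|^2=|\langle r_{i-1},a_{x_i}\rangle|^2>A^2q^m/|S_j|.
\]
\item Since $\|f\|_2=1$, the process stops after $L\le |S_j|/(A^2q^m)\lesssim q^{m-1}$ steps.
\end{itemize}
At termination, $h=h_L$ and $r=r_L$ are orthogonal with $\|h\|_2^2+\|r\|_2^2=1$, so both norms are at most one. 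The stopping rule gives $\|Tr\|_\infty\le Aq^{m/2}$. Proposition~\ref{prop:refined-fourth-moment} with $\|\mathcal Fr\|_\infty=q^{-m}\|Tr\|_\infty\le Aq^{-m/2}$ then gives
\[
 \|\mathcal F r\|_4^4\lesssim_m A^2q^{-m-1}+q^{-m-1},
\]
which is \eqref{eq:coherent-residual-fourth}.

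\textbf{Coefficients.} Write $h=\sum_i c_ia_{x_i}$. The Gram matrix is $G=I+E$, where $E$ has zero diagonal and off-diagonal entries $O(q^{-m+1/2})$. By Schur's test,
\[
 \|E\|\lesssim L\,q^{-m+1/2}\lesssim A^{-2}q^{-1/2},
\]
which is at most $1/2$ once $A$ is a large absolute constant. Hence $\sum_i|c_i|^2\le 2\langle Gc,c\rangle=2\|h\|_2^2\le2$. In particular every $|c_i|\lesssim1$, and since $r_m\ge2$ this also gives $\sum_i|c_i|^{r_m}\lesssim1$.

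\textbf{Splitting $Th$.} Lemma~\ref{lem:kernel-linfty} gives
\[
 Ta_{x_i}(x)=|S_j|^{1/2}\1_{\{x_i\}}(x)+|S_j|^{-1/2}K(x-x_i)\1_{\{x\ne x_i\}},\qquad |K|\le2q^{m-1/2}.
\]
Summing against the $c_i$ yields $Th=b+e$ with $b$ as in \eqref{eq:coherent-atomic-main}. The bounds on $b$ follow at once from the coefficient estimates, because the $x_i$ are distinct.
\begin{itemize}
\item For the $L^\infty$ bound on $e$, use Cauchy--Schwarz on the coefficients:
\[
 \|e\|_\infty\le 2q^{m-1/2}|S_j|^{-1/2}\sum_i|c_i|\lesssim q^{-1/2}\sqrt L\lesssim q^{(m-1)/2}.
\]
\item For the $L^2$ bound, use $\|e\|_2\le\|Th\|_2+\|b\|_2\le q^m\|h\|_2+O(|S_j|^{1/2})\lesssim q^m$.
\end{itemize}

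\textbf{Main obstacle.} The delicate point is the interplay between the bound on $L$ and the invertibility of $G$. The counting argument bounds $L$ without using $G$, and only then does this bound make $\|E\|$ small. The selection threshold $Aq^{m/2}$ is exactly what makes $Lq^{-m+1/2}\ll1$, with room $q^{-1/2}$ to spare. I would check carefully that the orthogonality of $r_{i-1}$ to the previous atoms is used correctly in the energy decrement.
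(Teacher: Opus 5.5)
Your proposal is correct and follows the paper's proof essentially step for step. The shared ingredients are the greedy projection onto the normalized characters $a_x$ at threshold $Aq^{m/2}$, the energy decrement that bounds $L$, diagonal dominance of the Gram matrix, Proposition~\ref{prop:refined-fourth-moment} for the remainder, and the peak/off-diagonal splitting of $Th$.

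There is one small arithmetic slip in the $\|e\|_\infty$ estimate. Since $|S_j|\asymp q^{2m-1}$, the factor $2q^{m-1/2}|S_j|^{-1/2}$ is of order one. The intermediate bound should therefore read $\|e\|_\infty\lesssim\sum_i|c_i|\lesssim\sqrt L$, not $q^{-1/2}\sqrt L$. The final bound $q^{(m-1)/2}$ is unaffected.
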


\begin{proof}
Lemmas~\ref{lem:size-kernel} and~\ref{lem:kernel-linfty} give
\[
 \|a_x\|_2=1,\qquad
 |\langle a_x,a_y\rangle|\lesssim q^{-m+1/2}\quad(x\ne y),
\]
\[
 Ta_x(x)=\sqrt{|S_j|},\qquad
 |Ta_x(y)|\lesssim1\quad(y\ne x).
\]
The constants here are absolute for all $m\ge1$, both Witt types,
and all odd prime powers.

Start with the zero subspace $W$, and at each stage put
$r=f-P_Wf$. If $\|Tr\|_\infty>Aq^{m/2}$, choose a point $x$
at which this inequality holds and enlarge $W$ by $a_x$.
Previously selected points satisfy
\[
 Tr(x_i)=\sqrt{|S_j|}\langle r,a_{x_i}\rangle=0,
\]
so the selected points are distinct. Put $u=a_x-P_Wa_x$.
Then $u\ne0$, $\|u\|_2\le1$, and the decrease of the squared
remainder norm is
\[
 \frac{|\langle r,u\rangle|^2}{\|u\|_2^2}
 \ge |\langle r,a_x\rangle|^2
 >\frac{A^2q^m}{|S_j|}.
\]
Thus, at most $|S_j|/(A^2q^m)$ points are selected.

The Gram matrix of the selected functions has diagonal entries one
and off-diagonal row sums bounded by
\[
 CLq^{-m+1/2}\le\frac{C'}{A^2\sqrt q}.
\]
Choose $A$ so that this is at most $1/2$ for all odd $q$.
The Gram matrix is then uniformly invertible. At termination,
$h=P_Wf$ and $r=f-h$, so their norms are at most one and
$\sum_i|c_i|^2\lesssim1$. Since $r_m\ge2$,
$\sum_i|c_i|^{r_m}\lesssim1$ as well. The stopping rule gives
$\|Tr\|_\infty\le Aq^{m/2}$. This construction also applies when
$m=1$; the selected family can be empty.

Applying Proposition~\ref{prop:refined-fourth-moment} to $r$ gives
\[
 \|r*r\|_2^2
 \lesssim_m q^{m-1}+q^{-1}(Aq^{m/2})^2
 \lesssim_m q^{m-1}.
\]
The convolution normalization proves
\eqref{eq:coherent-residual-fourth}.

Define $b$ by \eqref{eq:coherent-atomic-main}, and put $e=Th-b$.
The points $x_i$ are distinct, giving the stated norms of $b$.
For each $y$, the term with $x_i=y$ is removed exactly by $b$;
all other terms satisfy the off-diagonal kernel bound. Hence,
\[
 \|e\|_\infty\lesssim\sum_i|c_i|
 \le\sqrt L\left(\sum_i|c_i|^2\right)^{1/2}
 \lesssim q^{(m-1)/2}.
\]
Finally, Plancherel gives $\|Th\|_2\le q^m$, while
$\|b\|_2\lesssim\sqrt{|S_j|}\lesssim q^{m-1/2}$.
Their sum proves the bound for $\|e\|_2$.
\end{proof}

\Needspace{5\baselineskip}
\subsection{The strong endpoint and its sharpness}
\label{subsec:strong-endpoint-three}

\begin{proof}[Proof of Theorem~\ref{thm:main-endpoint-three}]
Normalize the counting norm of $f$ on $S_j$ to one, and use
Lemma~\ref{lem:coherent-decomposition}. For the remainder,
interpolation between $L^2$ and $L^4$ gives
\[
 \|\mathcal Fr\|_{r_m}^{r_m}
 \le \|\mathcal Fr\|_2^{4-r_m}
       \bigl(\|\mathcal Fr\|_4^4\bigr)^{(r_m-2)/2}
 \lesssim_m q^{-(m+1)/m}=q^{-r_m/2}.
\]
For $m=1$ this is simply the fourth-moment estimate, so no
interpolation is needed.

For the extracted part, $\mathcal Fh=q^{-m}(b+e)$ and
\[
 q^{-mr_m}\|b\|_{r_m}^{r_m}
 \lesssim q^{-mr_m}|S_j|^{r_m/2}
 \lesssim q^{-r_m/2}.
\]
The bounds for $e$ imply
\[
 q^{-mr_m}\|e\|_{r_m}^{r_m}
 \le q^{-mr_m}\|e\|_\infty^{r_m-2}\|e\|_2^2 \lesssim q^{-mr_m+(m-1)/m+2m}
 =q^{-r_m/2}.
\]
The triangle inequality and homogeneity give
\[
 \|\mathcal Ff\|_{r_m}\lesssim_m q^{-1/2}\|f\|_2.
\]
Since
\[
 (f\,d\sigma_j)^\vee=\frac{q^m}{|S_j|}\mathcal Ff,
 \qquad
 \|f\|_{L^2(S_j,d\sigma_j)}=|S_j|^{-1/2}\|f\|_2,
\]
we obtain
\[
 \|(f\,d\sigma_j)^\vee\|_{r_m}
 \lesssim_m
 \frac{q^{m-1/2}}{\sqrt{|S_j|}}
 \|f\|_{L^2(S_j,d\sigma_j)}
 \lesssim_m\|f\|_{L^2(S_j,d\sigma_j)}.
\]
The comparison is uniform because
$|S_j|/q^{2m-1}=1-\epsilon_Qq^{-m}\ge2/3$.
Counting-measure monotonicity gives the estimate for every
$r\ge r_m$, and duality gives its restriction formulation.

For sharpness, choose $x_0\in S_j$. The orthogonal complement
$x_0^\perp$ is a nondegenerate quadratic space of dimension
$2m-1$, so it contains a totally isotropic subspace $W$ of
dimension $m-1$. This statement includes $W=\{0\}$ when $m=1$.
Since $x_0\perp W$, the affine subspace $x_0+W$ lies in $S_j$.
For $f=\1_{x_0+W}$, additive orthogonality shows that its extension
has modulus $q^{m-1}/|S_j|$ on $W^\perp$, of size $q^{m+1}$,
and vanishes elsewhere. Thus,
\[
 \frac{\|(f\,d\sigma_j)^\vee\|_r}
      {\|f\|_{L^2(S_j,d\sigma_j)}}
 =\frac{q^{(m+1)/r+(m-1)/2}}{\sqrt{|S_j|}}
 \asymp q^{(m+1)/r-m/2}.
\]
Uniform boundedness requires $r\ge2(m+1)/m$. This proves
sharpness for both Witt types in every even dimension.
\end{proof}
\section{The pinned distance theorem}
\label{subsec:optimized-pinned-distances}

Throughout this section, $Q$ is any nondegenerate quadratic form on
$\F^{2m}$, and $B$ is its associated bilinear form. We use the unnormalized
Fourier transform with phase $\chi(-B(x,\xi))$ and normalized surface
measure. Thus, the same form describes distances and Fourier level
sets. Put $\epsilon_Q=1$ when $Q$ is split and $\epsilon_Q=-1$
otherwise. Constants may depend on $m$, but not on $Q$ or $q$.
For $E,F\subseteq\F^{2m}$ and $t\in\F$, define
\[
 \nu_{E,F}(t)=\#\{(x,y)\in E\times F:Q(x-y)=t\}.
\]
Then $\sum_t\nu_{E,F}(t)=|E||F|$.

The quadratic Gauss-sum identity gives
\begin{equation}\label{eq:zero-sphere-fourier}
 \widehat{\1_{S_0}}(\xi)
 =q^{2m-1}\1_{\{0\}}(\xi)
   +\epsilon_Q q^m\1_{S_0}(\xi)-\epsilon_Q q^{m-1}.
\end{equation}
This follows by writing the indicator as
$q^{-1}\sum_{s\in\F}\chi(sQ(x))$ and completing the square for
$s\ne0$. The quadratic Gauss sum for $s\ne0$ is $\epsilon_Q q^m$,
independent of $s$ because the dimension is even. In particular,
\[
 |S_j|=q^{2m-1}-\epsilon_Q q^{m-1}\asymp q^{2m-1},
 \qquad j\ne0.
\]

\begin{lemma}\label{lem:zero-distance-incidence}
For all $E,F\subseteq\F^{2m}$,
\begin{equation}\label{eq:zero-incidence}
 \nu_{E,F}(0)
 \leq\frac{|E||F|}{q}+2q^m|E|^{1/2}|F|^{1/2}.
\end{equation}
\end{lemma}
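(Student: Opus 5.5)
We expand $\nu_{E,F}(0)$ in Fourier space and insert the exact zero-sphere transform \eqref{eq:zero-sphere-fourier}. Write
\[
 \nu_{E,F}(0)=\sum_{x,y}\1_E(x)\1_F(y)\1_{S_0}(x-y)
 =q^{-2m}\sum_{\xi}\widehat{\1_{S_0}}(\xi)\,\overline{\widehat{\1_E}(\xi)}\,\widehat{\1_F}(\xi),
\]
up to harmless sign conventions in the frequency variable, which will not affect absolute values. We use $\widehat g(\xi)=\sum_x g(x)\chi(-B(x,\xi))$ and Fourier inversion. Substituting \eqref{eq:zero-sphere-fourier} splits the sum into three pieces.

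The term $q^{2m-1}\1_{\{0\}}(\xi)$ contributes exactly $q^{-1}|E||F|$, which is the main term.

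The constant term $-\epsilon_Qq^{m-1}$ contributes
\[
 -\epsilon_Qq^{m-1}q^{-2m}\sum_\xi\overline{\widehat{\1_E}}\widehat{\1_F}
 =-\epsilon_Qq^{m-1}|E\cap F|
\]
by Parseval. Its modulus is at most $q^{m-1}\min(|E|,|F|)$, which is bounded by $q^{m-1}|E|^{1/2}|F|^{1/2}$.

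The cone term $\epsilon_Qq^m\1_{S_0}(\xi)$ contributes at most
\[
 q^{m}q^{-2m}\sum_{\xi}|\widehat{\1_E}(\xi)||\widehat{\1_F}(\xi)|
 \le q^{-m}\|\widehat{\1_E}\|_2\|\widehat{\1_F}\|_2=q^{m}|E|^{1/2}|F|^{1/2}
\]
by Cauchy--Schwarz and Plancherel, using $\|\widehat g\|_2=q^m\|g\|_2$.

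Adding the three pieces gives
\[
 \nu_{E,F}(0)\le \frac{|E||F|}{q}+(q^m+q^{m-1})|E|^{1/2}|F|^{1/2},
\]
and $q^{m-1}\le q^m$ yields the stated constant $2$.

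The only point requiring care is the bookkeeping: the sign convention relating $\widehat{\1_{S_0}}$ to the kernel $\1_{S_0}(x-y)$, and the symmetry $S_0=-S_0$ that makes $\widehat{\1_{S_0}}$ real. This affects no absolute value, so no genuine obstacle remains. For the error terms we may simply bound everything in absolute value, and we never need to use the sign $\epsilon_Q$.
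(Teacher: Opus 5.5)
Your proof is correct and follows the paper's argument exactly. Fourier inversion with \eqref{eq:zero-sphere-fourier} splits $\nu_{E,F}(0)$ into the main term $|E||F|/q$, the cone term bounded by $q^m|E|^{1/2}|F|^{1/2}$ via Cauchy--Schwarz and Plancherel, and the term $-\epsilon_Qq^{m-1}|E\cap F|$. Your sign caveat is unnecessary, since with the paper's conventions $\sum_{x,y}\1_E(x)\1_F(y)\chi(B(x-y,\xi))=\overline{\widehat{\1_E}(\xi)}\,\widehat{\1_F}(\xi)$ holds exactly.
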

\begin{proof}
Fourier inversion and \eqref{eq:zero-sphere-fourier} yield
\[
 \nu_{E,F}(0)=\frac{|E||F|}{q}
 +\epsilon_Qq^{-m}\sum_{\xi\in S_0}
       \widehat{\1_E}(\xi)\overline{\widehat{\1_F}(\xi)}
 -\epsilon_Qq^{m-1}|E\cap F|.
\]
Cauchy--Schwarz and Plancherel bound the absolute value of the
middle term by $q^m|E|^{1/2}|F|^{1/2}$. The last term has absolute
value at most $q^{m-1}|E|^{1/2}|F|^{1/2}$, proving the claim.
\end{proof}

For fixed $E\subseteq\F^{2m}$, define
\[
\Psi_j(y)=\sum_{\xi\in S_j}\widehat{\1_E}(\xi)
                  \chi(B(y,\xi)),
 \qquad
 \nu_y(j)=|\{x\in E:Q(x-y)=j\}|.
\]
The next identity is independent of the Witt type. An averaged
version appears in \cite[Section~5]{chapman}; we include the proof
to specify the normalization and the contribution of radius zero.

\begin{lemma}\label{lem:exact-pinned-identity}
For every $y\in\F^{2m}$,
\begin{align*}
 \sum_{j\in\F^\times}\nu_y(j)^2
 = \frac{|E|^2}{q}
   +q^{-2m}\sum_{j\in\F^\times}|\Psi_j(y)|^2 \bigl(q^{-2m}|\Psi_0(y)|^2-\nu_y(0)^2
                    -q^{2m-1}\1_E(y)\bigr).
\end{align*}
\end{lemma}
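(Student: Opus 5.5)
The plan is to relate both sides through the one-variable function
\[
 g(u)=\sum_{x\in E}\chi\bigl(uQ(x-y)\bigr)=\sum_{t\in\F}\nu_y(t)\chi(ut),
 \qquad u\in\F .
\]
I will read the right-hand side as $|E|^2/q+q^{-2m}\sum_{j\ne0}|\Psi_j(y)|^2+q^{-2m}|\Psi_0(y)|^2-\nu_y(0)^2-q^{2m-1}\1_E(y)$. That is, the first factor multiplies only the sum over $j\ne0$, and the remaining terms are added separately. The target is the equivalent full-sum identity
\begin{equation}\label{eq:plan-full-sum}
 q^{-2m}\sum_{j\in\F}|\Psi_j(y)|^2
 =q^{2m-1}\1_E(y)+\sum_{t\in\F}\nu_y(t)^2-\frac{|E|^2}{q}.
\end{equation}
Moving the $j=0$ term and $\nu_y(0)^2$ across then gives the statement. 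This grouping is the only one I find consistent with the computation sketched below.

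\textbf{Step 1: kernel expansion.} Write $\Psi_j(y)=\sum_{x\in E}\sum_{\xi\in S_j}\chi(B(y-x,\xi))$ and insert \eqref{eq:unnormalized-sphere-kernel}. This is valid for every $j\in\F$: the proof of Lemma~\ref{lem:size-kernel} never uses $j\ne0$ before evaluating at $x=0$, and for $j=0$ the same computation reproduces \eqref{eq:zero-sphere-fourier}. Since $Q(y-x)=Q(x-y)$, this gives
\[
 \Psi_j(y)=q^{2m-1}\1_E(y)+\epsilon_Qq^{m-1}\sum_{s\ne0}\chi(-js)\,g\!\left(-\frac1{4s}\right).
\]

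\textbf{Step 2: Parseval in the radius.} The second term is, up to the factor $\epsilon_Qq^{m-1}$, the one-dimensional Fourier transform in $j$ of the function $s\mapsto g(-1/(4s))$ on $\F^\times$, extended by $0$ at $s=0$. Summing $|\Psi_j(y)|^2$ over all $j\in\F$:
\begin{itemize}
 \item The cross terms with the constant $q^{2m-1}\1_E(y)$ vanish, because $\sum_j\chi(-js)=0$ for $s\ne0$.
 \item The constant contributes $q\cdot q^{4m-2}\1_E(y)$.
 \item Parseval gives $q^{2m-2}\cdot q\sum_{s\ne0}|g(-1/(4s))|^2=q^{2m-1}\sum_{u\ne0}|g(u)|^2$, since $s\mapsto-1/(4s)$ permutes $\F^\times$.
\end{itemize}

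\textbf{Step 3: evaluate $\sum_{u\ne0}|g(u)|^2$.} Apply Parseval again, now to $t\mapsto\nu_y(t)$:
\[
 \sum_{u\in\F}|g(u)|^2=q\sum_t\nu_y(t)^2,
 \qquad g(0)=|E|.
\]
Hence $\sum_{u\ne0}|g(u)|^2=q\sum_t\nu_y(t)^2-|E|^2$. Dividing by $q^{2m}$ gives \eqref{eq:plan-full-sum}.

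The only real point of care is the bookkeeping. Specifically, one must check that the kernel formula is legitimate at $j=0$, that $\nu_y(0)$ is handled exactly (including the contribution of $x=y$ when $y\in E$), and that the normalizations of the two Parseval steps match the displayed powers of $q$. There is no analytic obstacle. The identity holds for both Witt types, because $\epsilon_Q$ enters only through $|\epsilon_Q|^2=1$.
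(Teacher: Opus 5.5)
Your proposal is correct, and it is essentially the paper's argument run in the opposite direction. The paper uses orthogonality in $t$ to write $\sum_t\nu_y(t)^2=|E|^2/q+q^{-1}\sum_{s\ne0}|U_s(y)|^2$, where $U_s(y)$ is your $g(s)$. It then completes the square to express $U_s(y)=\epsilon_Q q^{-m}\sum_{j}\Psi_j(y)\chi(-j/(4s))$ and applies Parseval in the radius $j$; there the absent zero frequency, $\sum_j\Psi_j(y)=q^{2m}\1_E(y)$, produces the term $q^{2m-1}\1_E(y)$. You instead invert this relation through the sphere kernel, which is valid at $j=0$ as you note, so the same term appears as the constant Fourier mode of $j\mapsto\Psi_j(y)$.

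Your additive reading of the right-hand side is also the one the paper intends. Its proof ends by subtracting $\nu_y(0)^2$ from the full-sum identity, and \eqref{eq:optimized-pinned-energy-start} uses the statement in that form.
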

\begin{proof}
Set $U_s(y)=\sum_{x\in E}\chi(sQ(x-y))$. Orthogonality gives
\[
 \sum_{t\in\F}\nu_y(t)^2
 =\frac{|E|^2}{q}+\frac1q\sum_{s\ne0}|U_s(y)|^2.
\]
Completing the square in the Fourier inversion formula gives, for
$s\ne0$,
\[
 U_s(y)=\epsilon_Qq^{-m}\sum_{\xi\in\F^{2m}}
 \widehat{\1_E}(\xi)\chi(B(y,\xi))
                    \chi\left(-\frac{Q(\xi)}{4s}\right).
\]
The factor $\epsilon_Q$ disappears after taking the squared absolute value. Since
\[
 \sum_{s\ne0}\chi\left(\frac{a}{4s}\right)
 =q\1_{\{a=0\}}-1,
\]
expansion and grouping frequencies according to their $Q$-values
show that
\[
 \frac1q\sum_{s\ne0}|U_s(y)|^2
 =q^{-2m}\sum_{j\in\F}|\Psi_j(y)|^2
  -q^{-2m-1}\left|\sum_{\xi\in\F^{2m}}
        \widehat{\1_E}(\xi)\chi(B(y,\xi))\right|^2.
\]
The sum in the last term equals $q^{2m}\1_E(y)$.
Subtracting $\nu_y(0)^2$ proves the identity.
\end{proof}

\begin{lemma}\label{lem:zero-shell}
For every $E\subseteq\F^{2m}$ and $y\in\F^{2m}$,
\[
 q^{-2m}|\Psi_0(y)|^2-\nu_y(0)^2-q^{2m-1}\1_E(y)
 \leq\frac{3|E|^2}{q^2}.
\]
For split $Q$, the constant $3$ can be replaced by $1$.
\end{lemma}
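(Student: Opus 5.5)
The plan is to evaluate $\Psi_0(y)$ exactly in terms of $\nu_y(0)$, $|E|$ and $\1_E(y)$ using \eqref{eq:zero-sphere-fourier}, and then to reduce the claimed inequality to an elementary estimate in these three quantities.

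\emph{Step 1: exact formula for $\Psi_0$.} Insert the definition of $\widehat{\1_E}$ and swap the sums:
\[
 \Psi_0(y)=\sum_{x\in E}\sum_{\xi\in S_0}\chi(B(y-x,\xi))=\sum_{x\in E}\widehat{\1_{S_0}}(x-y),
\]
where the sign inside is harmless because $S_0=-S_0$. Formula \eqref{eq:zero-sphere-fourier} then gives
\[
 \Psi_0(y)=q^{2m-1}\1_E(y)+\epsilon_Qq^m\nu_y(0)-\epsilon_Qq^{m-1}|E|.
\]
Here $\nu_y(0)$ counts $x=y$ when $y\in E$. Write $N=|E|/q$, $\nu=\nu_y(0)$ and $e=\1_E(y)\in\{0,1\}$. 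Since $\epsilon_Q^2=1$,
\[
 q^{-m}|\Psi_0(y)|=|\nu-N+\epsilon_Qq^{m-1}e|.
\]
Hence the left side of the lemma equals
\[
 N^2-2\nu N+e\bigl(q^{2m-2}-q^{2m-1}+2\epsilon_Qq^{m-1}(\nu-N)\bigr).
\]

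\emph{Step 2: the case $e=0$.} The expression is $N^2-2\nu N\le N^2$, since $\nu,N\ge0$.

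\emph{Step 3: the case $e=1$.} This is the step that needs care, because $\nu$ can be as large as the cone, of size about $q^{2m-1}$. For the crude bound one would like $2q^{m-1}\nu$ to be dominated by $q^{2m-1}$, and that fails for $m\ge2$. The remedy is to keep the term $-2\nu N$ and use $\nu\le|E|=Nq$.

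\begin{itemize}
\item Split case, $\epsilon_Q=1$. The $\nu$-terms combine to $2\nu(q^{m-1}-N)$. If $N\ge q^{m-1}$ this is nonpositive. Otherwise it is at most $2Nq(q^{m-1}-N)\le q^{2m-1}/2$, maximizing over $N$. The expression is therefore at most
\[
 N^2+q^{2m-2}-\tfrac12q^{2m-1}-2q^{m-1}N\le N^2,
\]
since $q\ge2$. This gives the constant $1$.
\item Nonsplit case, $\epsilon_Q=-1$. Drop the nonpositive terms $-2\nu N-2q^{m-1}\nu$, leaving
\[
 N^2+2q^{m-1}N+q^{2m-2}-q^{2m-1}.
\]
By AM--GM, $2q^{m-1}N\le2N^2+\tfrac12q^{2m-2}$. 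So the expression is at most $3N^2+\tfrac32q^{2m-2}-q^{2m-1}\le3N^2$, since $q\ge3$.
\end{itemize}

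Since $N^2=|E|^2/q^2$, both cases give the lemma. The only genuine point is Step 3, where the term $-2\nu N$ and the trivial bound $\nu\le|E|$ must be used, instead of a cone-size bound on $\nu$.
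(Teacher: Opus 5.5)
Your proposal is correct and follows the paper's proof essentially step for step. It uses the same exact formula for $\Psi_0(y)$ from \eqref{eq:zero-sphere-fourier}, the same expansion, and in the split case the same bound $\nu_y(0)\le|E|$ when $|E|<q^m$. The only difference is in the nonsplit case, where you absorb $2q^{m-1}N$ by AM--GM while the paper splits according to $|E|<q^m$ or $|E|\ge q^m$; both routes give the constant $3$.
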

\begin{proof}
Write $v=\nu_y(0)$, and $I=\1_E(y)$. By
\eqref{eq:zero-sphere-fourier},
\[
 \Psi_0(y)=\epsilon_Qq^m v-\epsilon_Qq^{m-1}|E|
                      +q^{2m-1}I.
\]
Consequently, the left-hand side is
\begin{equation}\label{eq:zero-shell-expanded-even}
 \frac{|E|^2}{q^2}-\frac{2|E|v}{q}
 +I\left(2\epsilon_Qq^{m-1}v
       -2\epsilon_Qq^{m-2}|E|-(q-1)q^{2m-2}\right).
\end{equation}
For $I=0$, the claimed bound with constant $1$ is immediate.
Suppose $I=1$.

If $\epsilon_Q=1$ and $|E|\geq q^m$, the coefficient of $v$ after
combining its two terms is nonpositive, and all remaining terms
apart from $|E|^2/q^2$ are nonpositive. If $|E|<q^m$, use $v\leq |E|$
and discard the term $-2q^{m-2}|E|$. The terms apart from $|E|^2/q^2$
are at most
\[
 2q^{m-1}|E|-\frac{2|E|^2}{q}-(q-1)q^{2m-2}
 \leq\frac{q^{2m-1}}2-(q-1)q^{2m-2}\leq0,
\]
since $q\geq3$. This proves the split assertion.

If $\epsilon_Q=-1$, the terms containing $v$ are nonpositive.
For $|E|<q^m$, the remaining bracket in
\eqref{eq:zero-shell-expanded-even} is at most
$(3-q)q^{2m-2}\leq0$. For $|E|\geq q^m$, its positive term satisfies
$2q^{m-2}|E|\leq2|E|^2/q^2$. This proves the stated bound for the
nonsplit form as well.
\end{proof}

Put
\[
 T_y=\sum_{j\ne0}\nu_y(j)^2,
 \qquad
 \mathcal I^\times(E;F)=\sum_{y\in F}T_y.
\]
Thus, $\mathcal I^\times(E;F)$ counts triples $(x,x',y)\in E^2\times F$
with $Q(x-y)=Q(x'-y)\ne0$. The preceding lemmas give
\begin{equation}\label{eq:optimized-pinned-energy-start}
 \mathcal I^\times(E;F)
 \leq\frac{|E|^2|F|}{q}
 +q^{-2m}\sum_{j\ne0}\sum_{y\in F}|\Psi_j(y)|^2
 +\frac{3|E|^2|F|}{q^2}.
\end{equation}

\Needspace{11\baselineskip}
\begin{lemma}\label{lem:almost-every-pin-energy-criterion}
Let $E,F\subseteq\F^{2m}$ be nonempty. Suppose that
$\varepsilon_q,\rho_q\geq0$ tend to zero and that
\begin{equation}\label{eq:almost-every-pin-IEF-hypothesis}
 \mathcal I^\times(E;F)
 \leq(1+\varepsilon_q)\frac{|E|^2|F|}{q},
\end{equation}
\begin{equation}\label{eq:almost-every-pin-zero-hypothesis}
 \frac{\nu_{E,F}(0)}{|E||F|}\leq\rho_q.
\end{equation}
Then there is $F'\subseteq F$ with $|F'|=(1-o(1))|F|$ such that
$|\Delta_y(E)\setminus\{0\}|=(1-o(1))q$ uniformly for $y\in F'$.
\end{lemma}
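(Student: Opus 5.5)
The plan is a second-moment argument for each pin, followed by Cauchy--Schwarz to convert a small second moment into many distinct values.

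\emph{Per-pin variance.} For $y\in F$ write $N_y=\sum_{j\ne0}\nu_y(j)=|E|-\nu_y(0)$. Cauchy--Schwarz over the nonzero values gives
\[
 N_y^2\le|\Delta_y(E)\setminus\{0\}|\,T_y .
\]
So it suffices to show that for most $y$ we have $N_y\ge(1-o(1))|E|$ and $T_y\le(1+o(1))|E|^2/q$. Then $|\Delta_y(E)\setminus\{0\}|\ge(1-o(1))q$, and the trivial bound $|\Delta_y(E)\setminus\{0\}|\le q-1$ gives the two-sided statement.

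\emph{Controlling $N_y$.} Summing over $y$ gives $\sum_{y\in F}\nu_y(0)=\nu_{E,F}(0)\le\rho_q|E||F|$. By Markov, the set of $y\in F$ with $\nu_y(0)>\sqrt{\rho_q}|E|$ has size at most $\sqrt{\rho_q}|F|$. Outside it, $N_y\ge(1-\sqrt{\rho_q})|E|$.

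\emph{Controlling $T_y$.} This is the step I expect to need the most care, since we only know an average upper bound on $T_y$. The remedy is a matching pointwise lower bound. By Cauchy--Schwarz, $T_y\ge N_y^2/(q-1)$. Let $G$ be the good set from the previous step. On $G$,
\[
 T_y\ge(1-\sqrt{\rho_q})^2|E|^2/q .
\]
Write $T_y=(1-\sqrt{\rho_q})^2|E|^2/q+D_y$ with $D_y\ge0$ for $y\in G$. Then
\[
 \sum_{y\in G}D_y\le\mathcal I^\times(E;F)-|G|(1-\sqrt{\rho_q})^2|E|^2/q .
\]
Hypothesis \eqref{eq:almost-every-pin-IEF-hypothesis} together with $|G|\ge(1-\sqrt{\rho_q})|F|$ bounds this by $\delta_q|E|^2|F|/q$, where
\[
 \delta_q=\varepsilon_q+O(\sqrt{\rho_q})\to0 .
\]
Here the terms $T_y\ge0$ for $y\notin G$ have simply been dropped. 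Markov applied to $D_y$ shows that, apart from at most $\sqrt{\delta_q}|F|$ points of $G$, we have $D_y\le\sqrt{\delta_q}|E|^2/q$.

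\emph{Conclusion.} Let $F'$ be the remaining set, so $|F'|\ge(1-\sqrt{\rho_q}-\sqrt{\delta_q})|F|$. For $y\in F'$,
\[
 |\Delta_y(E)\setminus\{0\}|\ge\frac{(1-\sqrt{\rho_q})^2|E|^2}{(1-\sqrt{\rho_q})^2|E|^2/q+\sqrt{\delta_q}|E|^2/q}=(1-o(1))q .
\]
The $o(1)$ depends only on $\varepsilon_q$ and $\rho_q$, so the bound is uniform in $y\in F'$.
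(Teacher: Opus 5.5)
Your proposal is correct and follows essentially the same route as the paper. Both arguments apply Markov to $\nu_y(0)$, use the Cauchy--Schwarz lower bound $T_y\ge(|E|-\nu_y(0))^2/(q-1)$ to make the excess nonnegative on the good set, apply a second Markov to that excess via the energy hypothesis, and convert with $|\Delta_y(E)\setminus\{0\}|\ge(|E|-\nu_y(0))^2/T_y$. The differences are cosmetic: you measure the excess from the pointwise lower bound rather than through a positive part above $|E|^2/q$, and you use the threshold $\sqrt{\rho_q}$ instead of $(\rho_q+q^{-1})^{1/2}$.
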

\begin{proof}
Set $\tau=(\rho_q+q^{-1})^{1/2}=o(1)$, and let $F_0=\{y\in F:\nu_y(0)\leq\tau |E|\}$. By Markov's inequality, $|F\setminus F_0|\leq\tau|F|$.
For $y\in F_0$, Cauchy--Schwarz gives
\[
 T_y\geq\frac{(|E|-\nu_y(0))^2}{q-1}
       \geq(1-2\tau)\frac{|E|^2}{q}.
\]
It follows from \eqref{eq:almost-every-pin-IEF-hypothesis} that
\[
 \sum_{y\in F_0}\max\left\{\frac{qT_y}{|E|^2}-1,0\right\}
 \leq(\varepsilon_q+3\tau)|F|.
\]
With $\delta=(\varepsilon_q+3\tau)^{1/2}=o(1)$, a second application
of Markov's inequality shows that at most $\delta|F|$ points of
$F_0$ satisfy $T_y>(1+\delta)|E|^2/q$. Remove these points to obtain
$F'$. For every remaining point,
\[
 |\Delta_y(E)\setminus\{0\}|
 \geq\frac{(|E|-\nu_y(0))^2}{T_y}
 \geq\frac{(1-\tau)^2}{1+\delta}q=(1-o(1))q.
\]
The bound $|\Delta_y(E)\setminus\{0\}|\leq q-1$
completes the proof.
\end{proof}

For $1\leq M\leq q^{2m}$, define the numerical factor
\[
 K_m(M)=\min\left\{1+Mq^{-m+1/2},
          1+\frac{M}{q^m}+\sqrt{\frac{M}{q^{m-1}}},\ q\right\}.
\]
We next bound pinned energy directly. The three terms come from the
sphere kernel, the orthogonal decomposition, and Plancherel,
respectively.

\begin{proposition} \label{prop:direct-pinned-energy}
For all nonempty $E,F\subseteq\F_q^{2m}$,
\begin{equation}\label{eq:quantitative-pinned-energy}
 \frac{\mathcal I^\times(E;F)}{|E|^2|F|/q}
 \leq1+C_m\frac{q^{2m}K_m(|F|)}{|E||F|}+\frac3q.
\end{equation}
\end{proposition}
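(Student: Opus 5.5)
Starting from \eqref{optimized-pinned-energy-start}, it suffices to show
\[
 q^{-2m}\sum_{j\ne0}\sum_{y\in F}|\Psi_j(y)|^2\lesssim_m q^{2m-1}|E|\,K_m(|F|),
\]
since dividing by $|E|^2|F|/q$ gives the middle term of \eqref{eq:quantitative-pinned-energy}. Write $\Psi_j=T(\widehat{\1_E}\1_{S_j})$, where $T$ is the unnormalized extension from $S_j$. Put $f_j=\widehat{\1_E}\1_{S_j}$, so that $\sum_j\|f_j\|_2^2\le q^{2m}|E|$ by Plancherel. The plan is to prove, for each $j\ne0$ and each of the three terms in the minimum defining $K_m$, a bound
\[
 \sum_{y\in F}|Tf_j(y)|^2\lesssim_m q^{2m-1}K\,\|f_j\|_2^2.
\]
Summing over $j$ then gives the claim.

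\emph{Plancherel term.} Extend the sum to all $y$. Then $\sum_y|Tf_j|^2=q^{2m}\|f_j\|_2^2$, which is $q^{2m-1}\cdot q\,\|f_j\|_2^2$.

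\emph{Kernel term.} By duality, $\sum_{y\in F}|Tf_j(y)|^2=\sup\,|\langle f_j,\widehat g\rangle|^2$, the supremum over $g$ supported on $F$ with $\|g\|_2=1$. Estimate $\sum_{\xi\in S_j}|\widehat g(\xi)|^2$ by the sphere kernel, as in Lemma~\ref{lem:global-auxiliary-bounds}. The diagonal contributes $\lesssim q^{2m-1}\|g\|_2^2$. By Lemma~\ref{lem:kernel-linfty}, the off-diagonal contributes $\le 2q^{m-1/2}\|g\|_1^2\le 2q^{m-1/2}|F|$. This gives $q^{2m-1}(1+|F|q^{-m+1/2})$.

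\emph{Decomposition term.} Normalize $\|f_j\|_2=1$ and apply Lemma~\ref{lem:coherent-decomposition}, writing $Tf_j=b+e+Tr$.
\begin{itemize}
\item The atomic peaks give $\sum_{y\in F}|b(y)|^2\le\|b\|_2^2\lesssim|S_j|\lesssim q^{2m-1}$.
\item The tail gives $\sum_{y\in F}|e(y)|^2\le|F|\,\|e\|_\infty^2\lesssim|F|q^{m-1}=q^{2m-1}\cdot|F|/q^m$.
\item For the remainder, H\"older and \eqref{eq:coherent-residual-fourth} give $\sum_{y\in F}|Tr|^2\le|F|^{1/2}\|Tr\|_4^2$. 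Since $\|Tr\|_4^4=q^{4m}\|\mathcal Fr\|_4^4\lesssim q^{3m-1}$, this is $\lesssim |F|^{1/2}q^{(3m-1)/2}=q^{2m-1}\sqrt{|F|/q^{m-1}}$.
\end{itemize}
Adding the three pieces by the triangle inequality (Cauchy--Schwarz) gives the second expression in $K_m$.

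Taking the best of the three bounds for each $j$ gives the minimum $K_m(|F|)$. Note that the choice among the three bounds depends only on $|F|$, not on $j$, so it is uniform.

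The main obstacle I expect is bookkeeping of normalizations: the relations between $\Psi_j$, $T$ and $\mathcal F$, the power of $q^{2m}$ in Plancherel for $\widehat{\1_E}$, and checking that the decomposition lemma applies uniformly to each $f_j/\|f_j\|_2$. Once the per-sphere inequality is stated with the correct powers, the rest is routine.
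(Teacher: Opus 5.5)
Your proposal is correct and follows essentially the same route as the paper. You reduce via \eqref{eq:optimized-pinned-energy-start} and Plancherel ($\sum_j\|f_j\|_2^2\le q^{2m}|E|$) to a per-sphere bound on $\sum_{y\in F}|Tf_j(y)|^2$, and you obtain the three terms of $K_m(|F|)$ from the sphere kernel (Lemma~\ref{lem:kernel-linfty}), the orthogonal decomposition (Lemma~\ref{lem:coherent-decomposition}) and Plancherel, exactly as the paper does. The only difference is cosmetic: for the kernel term you dualize and bound the off-diagonal contribution by $\|g\|_1^2$, where the paper applies a maximum-row-sum bound to the Gram matrix $A_{F,j}A_{F,j}^*$; both are the same $TT^*$ computation and give the same estimate.
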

\begin{proof}
For $j\ne0$, let
\[
 g_j=\widehat{\1_E}\,\1_{S_j},\qquad a_j=\|g_j\|_2.
\]
The functions $g_j$ have disjoint frequency supports. Thus, Plancherel
implies
\begin{equation}\label{eq:direct-pinned-shell-mass}
 \sum_{j\ne0}a_j^2
 =\sum_{Q(\xi)\ne0}|\widehat{\1_E}(\xi)|^2
 \leq q^{2m}|E|.
\end{equation}
Terms with $a_j=0$ contribute nothing. Otherwise put $f_j=g_j/a_j$,
so that $\|f_j\|_2=1$ in counting measure. With
$\mathcal F=q^{-m}T$ as in Section~\ref{sec:fourth-moments-sharp-extension},
we have $\Psi_j=a_jTf_j$ and therefore
\begin{equation}\label{eq:direct-pinned-shell-energy}
 q^{-2m}\sum_{j\ne0}\|\Psi_j\|_{L^2(F)}^2
 =\sum_{j\ne0}a_j^2\|\mathcal F f_j\|_{L^2(F)}^2.
\end{equation}

Apply Lemma~\ref{lem:coherent-decomposition} separately to every $f_j$.
Write $f_j=h_j+r_j$ and $Th_j=b_j+e_j$. The lemma gives
\[
 \|b_j\|_2^2\lesssim|S_j|,\qquad
 \|e_j\|_\infty^2\lesssim q^{m-1},\qquad
 \|\mathcal F r_j\|_4^4\lesssim_m q^{-m-1}.
\]
The triangle inequality followed by Cauchy--Schwarz on $F$ yields
\begin{align*}
 \|\mathcal F f_j\|_{L^2(F)}^2
 &\lesssim q^{-2m}\|b_j\|_2^2
       +q^{-2m}|F|\|e_j\|_\infty^2
       +|F|^{1/2}\|\mathcal F r_j\|_4^2\\
 &\lesssim_m q^{-1}+|F|q^{-m-1}
                    +|F|^{1/2}q^{-(m+1)/2}.
\end{align*}
Inserting this in \eqref{eq:direct-pinned-shell-energy} and using
\eqref{eq:direct-pinned-shell-mass} proves
\begin{equation}\label{distance_use_later}
 q^{-2m}\sum_{j\ne0}\|\Psi_j\|_{L^2(F)}^2
 \lesssim_m q^{2m-1}|E|
       \left(1+\frac{|F|}{q^m}+\sqrt{\frac{|F|}{q^{m-1}}}\right).
\end{equation}

For the estimate from the sphere kernel, let $A_{F,j}$ be the matrix from
$\ell^2(S_j)$ to $\ell^2(F)$ with entries
$q^{-m}\chi(B(y,\xi))$. Its Gram matrix has entries
\[
 (A_{F,j}A_{F,j}^*)(y,y')
 =q^{-2m}\sum_{\xi\in S_j}\chi(B(y-y',\xi)).
\]
The diagonal entries are $q^{-2m}|S_j|\lesssim q^{-1}$.
Lemma~\ref{lem:kernel-linfty} bounds the absolute value of each off-diagonal entry by
$Cq^{-m-1/2}$. The maximum absolute row sum
therefore bounds the operator norm by
\[
 \|A_{F,j}A_{F,j}^*\|_{2\to2}
 \lesssim q^{-1}+|F|q^{-m-1/2}.
\]
Since $A_{F,j}f_j=(\mathcal Ff_j)|_F$, equations
\eqref{eq:direct-pinned-shell-energy} and
\eqref{eq:direct-pinned-shell-mass} consequently give the alternative
bound $Cq^{2m-1}|E|(1+|F|q^{-m+1/2})$.

Finally, unitarity of the ambient Fourier transform gives
$\|\mathcal Ff_j\|_{L^2(F)}^2\leq\|f_j\|_2^2=1$.
The same two equations then bound the sum in
\eqref{eq:direct-pinned-shell-energy} by
$q^{2m}|E|$. Taking the minimum of these three bounds and inserting
it into \eqref{eq:optimized-pinned-energy-start} proves
\eqref{eq:quantitative-pinned-energy}.
\end{proof}

\Needspace{10\baselineskip}
\begin{corollary}\label{cor_distance_concrete}
Let $\varepsilon>0$ and let $E,F\subseteq\F^{2m}$ be nonempty.
Suppose one of the following conditions holds:
\begin{enumerate}
\item $1\leq|F|\leq q^{m-1/2}$ and
      $|E||F|\geq q^{2m+\varepsilon}$;
\item $q^{m-1/2}\leq|F|\leq q^m$ and
      $|E|\geq q^{m+1/2+\varepsilon}$;
\item $q^m\leq|F|\leq q^{m+1}$ and
      $|E||F|^{1/2}\geq q^{(3m+1)/2+\varepsilon}$;
\item $q^{m+1}\leq|F|\leq q^{2m}$ and
      $|E||F|\geq q^{2m+1+\varepsilon}$.
\end{enumerate}
Then there is $F'\subseteq F$ such that
$|F'|=(1-o_\varepsilon(1))|F|$ and
$|\Delta_y(E)\setminus\{0\}|=(1-o_\varepsilon(1))q$ uniformly
for every $y\in F'$.
\end{corollary}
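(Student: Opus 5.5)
The plan is to feed Proposition~\ref{prop:direct-pinned-energy} and Lemma~\ref{lem:zero-distance-incidence} into Lemma~\ref{lem:almost-every-pin-energy-criterion}. Hypothesis \eqref{eq:almost-every-pin-IEF-hypothesis} follows from \eqref{eq:quantitative-pinned-energy} once we show
\[
 \varepsilon_q:=C_m\frac{q^{2m}K_m(|F|)}{|E||F|}+\frac3q\longrightarrow0 .
\]
Hypothesis \eqref{eq:almost-every-pin-zero-hypothesis} follows from \eqref{eq:zero-incidence}, which gives
\[
 \frac{\nu_{E,F}(0)}{|E||F|}\le\frac1q+\frac{2q^m}{(|E||F|)^{1/2}}=:\rho_q .
\]
So it suffices, in each of the four ranges, to show that $q^{2m}K_m(|F|)/(|E||F|)\le q^{-\varepsilon'}$ and $|E||F|\ge q^{2m+\varepsilon'}$ for some $\varepsilon'>0$ depending on $\varepsilon$.

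The second condition is checked range by range. In range 1 it is the hypothesis. In range 2, $|E||F|\ge q^{m+1/2+\varepsilon}q^{m-1/2}=q^{2m+\varepsilon}$. In range 3, $|E||F|=|E||F|^{1/2}|F|^{1/2}\ge q^{(3m+1)/2+\varepsilon+m/2}\ge q^{2m+\varepsilon}$. Range 4 is immediate.

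For the first condition we pick the appropriate branch of the minimum defining $K_m$ in each range.
\begin{itemize}
\item Range 1 uses the sphere-kernel branch: $K_m\le1+|F|q^{-m+1/2}\le2$, so the ratio is at most $2q^{2m}/(|E||F|)\le2q^{-\varepsilon}$.
\item Range 2 uses the same branch: $K_m\le2|F|q^{-m+1/2}$, so the ratio is at most $2q^{m+1/2}/|E|\le2q^{-\varepsilon}$.
\item Range 3 uses the orthogonal-decomposition branch. Since $q^m\le|F|\le q^{m+1}$, both $1$ and $|F|/q^m$ are at most $\sqrt{|F|/q^{m-1}}$, so $K_m\le3\,|F|^{1/2}q^{-(m-1)/2}$. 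The ratio is then at most $3q^{2m-(m-1)/2}/(|E||F|^{1/2})=3q^{(3m+1)/2}/(|E||F|^{1/2})\le3q^{-\varepsilon}$.
\item Range 4 uses the Plancherel branch $K_m\le q$, giving ratio at most $q^{2m+1}/(|E||F|)\le q^{-\varepsilon}$.
\end{itemize}

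At the boundary values of $|F|$ both adjacent branches apply, so the ranges overlap harmlessly. All constants depend only on $m$, so $\varepsilon_q,\rho_q\lesssim_m q^{-\varepsilon}+q^{-1}=o_\varepsilon(1)$ uniformly in $Q$, $E$, $F$. Lemma~\ref{lem:almost-every-pin-energy-criterion} then produces $F'$ with the stated properties. The errors it gives are explicit functions of $\varepsilon_q$ and $\rho_q$, which gives the uniformity.

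The only step needing any care is range 3: one must confirm that the square-root term dominates throughout $q^m\le|F|\le q^{m+1}$. This is exactly the reason for the two endpoints of that range. Everything else is bookkeeping of exponents.
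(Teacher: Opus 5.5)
Your proposal is correct and is the paper's own proof with the exponent bookkeeping written out: in each range you choose the matching branch of $K_m(|F|)$ to make the middle term of \eqref{eq:quantitative-pinned-energy} $O_m(q^{-\varepsilon})$, check $|E||F|\ge q^{2m+\varepsilon}$ so that \eqref{eq:zero-incidence} controls zero distances, and then apply Lemma~\ref{lem:almost-every-pin-energy-criterion}. One harmless slip: \eqref{eq:zero-incidence} gives $\rho_q\le q^{-1}+2q^{-\varepsilon/2}$ rather than $O(q^{-1}+q^{-\varepsilon})$, but this still tends to zero uniformly.
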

\begin{proof}
The definition of $K_m$ gives, respectively,
\[
 K_m(|F|)\lesssim
 \begin{cases}
 1,&1\leq |F|\leq q^{m-1/2},\\
 |F|q^{-m+1/2},&q^{m-1/2}\leq |F|\leq q^m,\\
 |F|^{1/2}q^{-(m-1)/2},&q^m\leq |F|\leq q^{m+1},\\
 q,&q^{m+1}\leq |F|\leq q^{2m}.
 \end{cases}
\]
Each hypothesis makes the second term on the right of
\eqref{eq:quantitative-pinned-energy} at most $C_mq^{-\varepsilon}$.
Each also implies $|E||F|\geq q^{2m+\varepsilon}$, so
\eqref{eq:zero-incidence} makes the proportion of zero-distance pairs
tend to zero. Lemma~\ref{lem:almost-every-pin-energy-criterion}
proves the conclusion.
\end{proof}

\begin{proof}[Proof of Corollary~\ref{cor_distance_2}]
Put $r_m=2(m+1)/m$, and
$g_j=\widehat{\1_E}\,\1_{S_j}$. The global endpoint in
Theorem~\ref{thm:main-endpoint-three} gives
\[
 \|\Psi_j\|_{L^{r_m}(V)}
 =|S_j|\|(g_j\,d\sigma_j)^\vee\|_{L^{r_m}(V)}
 \lesssim_m |S_j|^{1/2}\|g_j\|_2.
\]
H\"older's inequality on $F$ and Plancherel consequently imply
\begin{align*}
 q^{-2m}\sum_{j\ne0}\|\Psi_j\|_{L^2(F)}^2
 &\leq q^{-2m}|F|^{1/(m+1)}
                   \sum_{j\ne0}\|\Psi_j\|_{L^{r_m}(V)}^2\\
 &\lesssim_m q^{-2m}|F|^{1/(m+1)}q^{2m-1}
                   \sum_{j\ne0}\|g_j\|_2^2\\
 &\leq C_mq^{2m-1}|E||F|^{1/(m+1)}.
\end{align*}
Substitution in \eqref{eq:optimized-pinned-energy-start} gives
\[
 \frac{\mathcal I^\times(E;F)}{|E|^2|F|/q}
 \leq1+C_m\frac{q^{2m}}{|E||F|^{m/(m+1)}}+\frac3q
 =1+o_\varepsilon(1).
\]
The hypothesis also implies $|E||F|\geq q^{2m+\varepsilon}$, so
\eqref{eq:zero-incidence} controls zero distances. The conclusion
follows from Lemma~\ref{lem:almost-every-pin-energy-criterion}.
The same estimates prove the assertion whenever
$|E||F|^{m/(m+1)}/q^{2m}\to\infty$.
\end{proof}

\begin{proof}[Proof of Corollary~\ref{cor:improved-middle-pins}]
This is case \textup{(3)} of Corollary~\ref{cor_distance_concrete}.
\end{proof}

\begin{proof}[Proof of Corollary~\ref{cor:same-set-seven-thirds}]
Put $F=E$. By the hypothesis, $|E|\geq q^m$ for all
sufficiently large $q$. The definition of $K_m$ gives
\[
 K_m(|E|)\lesssim\sqrt{\frac{|E|}{q^{m-1}}}
 \qquad(q^m\leq |E|\leq q^{2m}).
\]
For $|E|\leq q^{m+1}$ this follows from the second term defining
$K_m$, and for $|E|\geq q^{m+1}$ it follows from $K_m(|E|)\leq q$.
The direct energy bound \eqref{eq:quantitative-pinned-energy} gives
\[
 \frac{\mathcal I^\times(E;E)}{|E|^3/q}
 \leq1+C_m\frac{q^{(3m+1)/2}}{|E|^{3/2}}+\frac3q
 =1+o(1).
\]
Also, \eqref{eq:zero-incidence} gives
\[
 \frac{\nu_{E,E}(0)}{|E|^2}\leq q^{-1}+\frac{2q^m}{|E|}=o(1).
\]
Lemma~\ref{lem:almost-every-pin-energy-criterion} completes the proof.
The error is uniform whenever $|E|/q^{m+1/3}\geq\omega(q)$ for a
specified function $\omega(q)\to\infty$, and in particular is
$o_\varepsilon(1)$ under the hypothesis
$|E|\geq q^{m+1/3+\varepsilon}$.
\end{proof}

\Needspace{10\baselineskip}
\section{A localized extension conjecture over prime fields}
\label{section_app}

The global restriction and distance theorems above hold over all odd
finite fields and for both Witt types. We now discuss a stronger
concentration problem over prime fields. Throughout this section,
$p$ is an odd prime, $V=\F_p^{2m}$, and $Q$ is a nondegenerate
quadratic form on $V$. We retain the Fourier transform defined by
its associated bilinear form $B$ and normalized surface measure on
$S_j=\{Q=j\}$, where $j\ne0$.

\subsection{Localized norms and bounds} \label{sec:proof-thm-main_conjecture}

For $0\le\vartheta\le2m$, define $\Lambda_{S_j}(\vartheta)$ to be
the infimum of all $\lambda\ge0$ such that
\begin{equation}\label{eq:localized-horizontal-extension}
 \|(f\,d\sigma_j)^\vee\|_{L^2(F)}
 \le p^\lambda\|f\|_{L^2(S_j,d\sigma_j)}
\end{equation}
for every $F\subseteq V$ with $1\le|F|\le p^\vartheta$ and every
complex function $f$ on $S_j$. By duality, this is equivalent to
\begin{equation}\label{eq:localized-horizontal-restriction}
 \|\widehat g\|_{L^2(S_j,d\sigma_j)}
 \le p^\lambda\|g\|_{L^2(V)}
 \qquad(\operatorname{supp}g\subseteq F)
\end{equation}
for every such $F$ and $g$. For a fixed $F$, the two operators in
these inequalities are adjoints with respect to counting measure on
$F$ and normalized measure on $S_j$. The infima are attained, since
there are finitely many sets $F$ and each operator acts between
finite-dimensional Hilbert spaces. The notation suppresses the
dependence on $p$ and $Q$.

The decomposition used to prove the global endpoint also gives the
following bound.

\begin{theorem} \label{thm_main_Lambda}
For every $F\subseteq V$ and every complex function
$f$ on $S_j$,
\begin{equation}\label{eq:localized-four-range-norm}
 \begin{split}
 \|(f\,d\sigma_j)^\vee\|_{L^2(F)}^2
 \lesssim_m
 \min\left\{1+|F|p^{-m+1/2},\,
       1+\frac{|F|}{p^m}+\sqrt{\frac{|F|}{p^{m-1}}},\,p\right\}
 \|f\|_{L^2(S_j,d\sigma_j)}^2.
 \end{split}
\end{equation}
Consequently, for every $\varepsilon>0$ and all sufficiently large
odd primes $p$, uniformly over $Q$, $j\ne0$, and
$0\le\vartheta\le2m$,
\[
 \Lambda_{S_j}(\vartheta)
 \le\Lambda_{\sharp,m}(\vartheta)+\varepsilon,
\]
where
\begin{equation}\label{eq:Lambda-sharp-definition}
 \Lambda_{\sharp,m}(\vartheta)=
 \begin{cases}
 0,&0\le\vartheta\le m-\tfrac12,\\[1mm]
 (2\vartheta-2m+1)/4,&m-\tfrac12\le\vartheta\le m,\\[1mm]
 (\vartheta-m+1)/4,&m\le\vartheta\le m+1,\\[1mm]
 1/2,&m+1\le\vartheta\le2m.
 \end{cases}
\end{equation}
\end{theorem}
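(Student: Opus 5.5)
The bound \eqref{eq:localized-four-range-norm} is the single-sphere version of the three estimates already obtained in the proof of Proposition~\ref{prop:direct-pinned-energy}. I would rerun them for an arbitrary $f$ and then convert. First normalize. By \eqref{eq:unitary-convolution-convention}, $(f\,d\sigma_j)^\vee=(q^m/|S_j|)\mathcal Ff$ and $\|f\|_{L^2(S_j,d\sigma_j)}^2=|S_j|^{-1}\|f\|_2^2$. Hence
\[
\|(f\,d\sigma_j)^\vee\|_{L^2(F)}^2\big/\|f\|_{L^2(S_j,d\sigma_j)}^2
=\frac{q^{2m}}{|S_j|}\,\frac{\|\mathcal Ff\|_{L^2(F)}^2}{\|f\|_2^2}.
\]
Since $q^{2m}/|S_j|\asymp q$, it suffices to show that for $\|f\|_2=1$ each of the following holds:
\[
\|\mathcal Ff\|_{L^2(F)}^2\lesssim_m q^{-1}+|F|q^{-m-1/2},\qquad
q^{-1}+|F|q^{-m-1}+|F|^{1/2}q^{-(m+1)/2},\qquad
1.
\]

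The three bounds come from the three arguments in Proposition~\ref{prop:direct-pinned-energy}, each applied to a single normalized $f$.

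\emph{Sphere kernel.} Bound $\|A_{F,j}A_{F,j}^*\|$ by its maximal row sum. The diagonal entries are $q^{-2m}|S_j|$, and Lemma~\ref{lem:kernel-linfty} gives off-diagonal entries $O(q^{-m-1/2})$. This yields the first bound.

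\emph{Orthogonal decomposition.} Apply Lemma~\ref{lem:coherent-decomposition} to write $f=h+r$ with $Th=b+e$. Use the triangle inequality, then Cauchy--Schwarz on $F$ for the $e$ and $r$ terms. The estimates $\|b\|_2^2\lesssim|S_j|$, $\|e\|_\infty^2\lesssim q^{m-1}$ and \eqref{eq:coherent-residual-fourth} give $q^{-1}+|F|q^{-m-1}+|F|^{1/2}q^{-(m+1)/2}$.

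\emph{Plancherel.} Unitarity of $\mathcal F$ gives the bound $1$.

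Multiplying each by $q$ and taking the minimum gives exactly the three terms in \eqref{eq:localized-four-range-norm}, with $p=q$. The only things to confirm are that none of these steps used that $f$ came from $\widehat{\1_E}$, and that the constants are uniform in $Q$ and $j$. Both hold, since the lemmas are stated for arbitrary $f$ supported on $S_j$.

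For the consequence, fix $F$ with $|F|\le p^\vartheta$ and read off which term of the minimum is smallest in each range of $\vartheta$, exactly as in the proof of Corollary~\ref{cor_distance_concrete}:
\begin{itemize}
\item for $\vartheta\le m-\tfrac12$, the first term is $O(1)$;
\item for $m-\tfrac12\le\vartheta\le m$, the first term is $O(p^{\vartheta-m+1/2})$;
\item for $m\le\vartheta\le m+1$, the second term is $O(p^{(\vartheta-m+1)/2})$;
\item for $\vartheta\ge m+1$, the minimum is at most $p$.
\end{itemize}
Taking square roots gives $\|(f\,d\sigma_j)^\vee\|_{L^2(F)}\le C_m p^{\Lambda_{\sharp,m}(\vartheta)}\|f\|$. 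The constant $C_m$ is absorbed into $p^\varepsilon$ once $p$ is large depending on $m$ and $\varepsilon$, and this happens uniformly in $\vartheta$, $Q$ and $j$. Hence $\Lambda_{S_j}(\vartheta)\le\Lambda_{\sharp,m}(\vartheta)+\varepsilon$.

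The step I expect to need the most care is the bookkeeping in the conversion between normalizations, together with checking that the piecewise exponents match the minimum at the breakpoints. The analytic content is already contained in Lemma~\ref{lem:coherent-decomposition} and Proposition~\ref{prop:refined-fourth-moment}.
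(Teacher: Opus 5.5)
Your proposal is correct and follows essentially the paper's proof. The middle bound comes from Lemma~\ref{lem:coherent-decomposition} with Cauchy--Schwarz on $F$, the other two from the sphere kernel and Plancherel, and the exponents are then read off range by range with $C_m\le p^\varepsilon$. The only cosmetic difference is that you prove the first and last bounds on the extension side (the Gram matrix $A_{F,j}A_{F,j}^*$ and unitarity of $\mathcal F$, as in Proposition~\ref{prop:direct-pinned-energy}), whereas the paper proves the dual restriction bounds from Lemma~\ref{lem:global-auxiliary-bounds} and then invokes duality; the computation is the same.
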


\begin{proof}
The assertion is immediate if $F$ is empty or $f=0$. To prove the
middle bound, normalize the counting $L^2$ norm of $f$ to one. Write
$T f(x)=\sum_{\xi\in S_j}f(\xi)\chi(B(x,\xi))$, so that
$\mathcal Ff=p^{-m}Tf$. By
Lemma~\ref{lem:coherent-decomposition}, write $f=h+g$ and $Th=b+e$,
where
\[
 \|\mathcal Fg\|_4^4\lesssim_m p^{-m-1},\qquad
 \|b\|_2^2\lesssim |S_j|,\qquad
 \|e\|_\infty\lesssim p^{(m-1)/2}.
\]
Cauchy--Schwarz on $F$ gives
\[
 \begin{aligned}
 \|\mathcal Ff\|_{L^2(F)}^2
 &\lesssim p^{-2m}\|b\|_2^2
        +p^{-2m}|F|\|e\|_\infty^2
        +|F|^{1/2}\|\mathcal Fg\|_4^2\\
 &\lesssim_m p^{-1}+|F|p^{-m-1}
                     +|F|^{1/2}p^{-(m+1)/2}.
 \end{aligned}
\]
Since the counting $L^2$ norm of $f$ is one, the ratio of its squared
extension and surface norms is $p^{2m}/|S_j|$ times
$\|\mathcal Ff\|_{L^2(F)}^2$. The relation $|S_j|\asymp p^{2m-1}$ proves the middle
bound in \eqref{eq:localized-four-range-norm}, and homogeneity
removes the normalization.

For the first bound, let $g$ be supported on $F$.
Lemma~\ref{lem:global-auxiliary-bounds} and Cauchy--Schwarz give
\[
 \|\widehat g\|_{L^2(S_j,d\sigma_j)}^2
 \lesssim(1+|F|p^{-m+1/2})\|g\|_2^2.
\]
Duality proves the first bound. Finally,
the second estimate in Lemma~\ref{lem:global-auxiliary-bounds} yields
\[
 \|\widehat g\|_{L^2(S_j,d\sigma_j)}^2
 \le\frac{p^{2m}}{|S_j|}\|g\|_2^2\lesssim p\|g\|_2^2,
\]
which proves the last bound by duality.

If $|F|\le p^\vartheta$, the first term in the minimum is bounded
by a constant for $\vartheta\le m-1/2$ and by
$Cp^{\vartheta-m+1/2}$ for $m-1/2\le\vartheta\le m$. When
$m\le\vartheta\le m+1$, the middle term is at most
$Cp^{(\vartheta-m+1)/2}$. For $\vartheta\ge m+1$, use the last
term. Taking square roots gives the bound in
\eqref{eq:Lambda-sharp-definition}. A single constant $C_m$ works
throughout the range, and $C_m\le p^\varepsilon$ for all sufficiently
large $p$.
\end{proof}

For comparison, the global endpoint and H\"older's inequality give
the squared operator norm bound $|F|^{1/(m+1)}$. The inequality
\[
 2\Lambda_{\sharp,m}(\vartheta)\le\frac{\vartheta}{m+1}
 \qquad(0\le\vartheta\le2m)
\]
shows that this does not improve the preceding estimate. In
particular, the global endpoint alone does not settle the stronger
localized conjecture below.

\subsection{The conjecture and its lower bound}

Define
\begin{equation}\label{eq:prime-field-localized-profile}
 \Phi_m(\vartheta)
 :=\max\left\{0,\min\left\{\frac{\vartheta-m}{2},\frac12\right\}\right\}
 =\begin{cases}
 0,&0\le\vartheta\le m,\\[1mm]
 (\vartheta-m)/2,&m\le\vartheta\le m+1,\\[1mm]
 1/2,&m+1\le\vartheta\le2m.
 \end{cases}
\end{equation}

We formulate the following conjecture.

\begin{conjecture}
\label{conj_main}
Fix $m\ge1$. For every $\varepsilon>0$, there is
$p_0(m,\varepsilon)$ such that, for every odd prime
$p\ge p_0(m,\varepsilon)$ and every nondegenerate quadratic form
$Q$ on $\F_p^{2m}$,
\[
 \sup_{j\in\F_p^\times}\Lambda_{S_j}(\vartheta)
 \le\Phi_m(\vartheta)+\varepsilon,
 \qquad 0\le\vartheta\le2m.
\]
\end{conjecture}

The proved and conjectured bounds agree on
$[0,m-\tfrac12]\cup[m+1,2m]$. At $\vartheta=m$, the proved
exponent is $1/4$, while the conjecture predicts zero. The following
example shows that the conjectured bound cannot be improved for
either Witt type.

\Needspace{12\baselineskip}
\begin{proposition} \label{prop:lower-bounds-Lambda}
Let $\epsilon_Q=1$ for split forms and $\epsilon_Q=-1$ for nonsplit
forms. For every $j\ne0$ and $0\le\vartheta\le2m$,
\begin{equation}\label{eq:affine-plane-localized-lower}
 \Lambda_{S_j}(\vartheta)
 \ge\max\left\{0,\frac12\log_p
 \frac{\min\{\lfloor p^\vartheta\rfloor,p^{m+1}\}}
      {p^m-\epsilon_Q}\right\}.
\end{equation}
In particular, uniformly over both Witt types,
\[
 \Lambda_{S_j}(\vartheta)
 \ge\Phi_m(\vartheta)-\frac{\log3}{2\log p}.
\]
For split forms, the sharper bound
$\Lambda_{S_j}(\vartheta)\ge\Phi_m(\vartheta)$ holds.
\end{proposition}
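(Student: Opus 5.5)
Fix $j\ne0$ and $\vartheta$, put $N=\min\{\lfloor p^\vartheta\rfloor,p^{m+1}\}$, and exhibit one pair $(F,f)$ with $|F|\le p^\vartheta$ whose extension ratio realizes the bound. The natural test function is the one from the sharpness argument for Theorem~\ref{thm:main-endpoint-three}. Choose $x_0\in S_j$ and a totally isotropic $W\subseteq x_0^\perp$ of dimension $m-1$, so that $x_0+W\subseteq S_j$, and take $f=\1_{x_0+W}$. Additive orthogonality gives
\[
 (f\,d\sigma_j)^\vee(x)=\frac{1}{|S_j|}\chi(B(x,x_0))\,p^{m-1}\1_{W^\perp}(x),
\]
so $|(f\,d\sigma_j)^\vee|=p^{m-1}/|S_j|$ on the $(m+1)$-dimensional space $W^\perp$ and vanishes elsewhere. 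Also $\|f\|_{L^2(S_j,d\sigma_j)}^2=p^{m-1}/|S_j|$.

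Next choose $F\subseteq W^\perp$ with exactly $N$ elements. This is possible since $N\le p^{m+1}=|W^\perp|$, and $|F|\le\lfloor p^\vartheta\rfloor\le p^\vartheta$ with $N\ge1$. Then
\[
 \frac{\|(f\,d\sigma_j)^\vee\|_{L^2(F)}^2}{\|f\|_{L^2(S_j,d\sigma_j)}^2}
 =\frac{N p^{2m-2}/|S_j|^2}{p^{m-1}/|S_j|}=\frac{Np^{m-1}}{|S_j|}
 =\frac{N}{p^m-\epsilon_Q},
\]
using $|S_j|=p^{2m-1}-\epsilon_Qp^{m-1}$. Hence any admissible $\lambda$ in \eqref{eq:localized-horizontal-extension} satisfies $p^{2\lambda}\ge N/(p^m-\epsilon_Q)$. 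Together with $\lambda\ge0$, which is trivial, this gives \eqref{eq:affine-plane-localized-lower}.

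For the consequence, compare with $\Phi_m$ by cases in $\vartheta$. If $\vartheta\le m$, the claim $\Lambda\ge0-\frac{\log 3}{2\log p}$ is trivial. If $\vartheta\ge m$, then $\lfloor p^\vartheta\rfloor\ge p^m$ because $p^m$ is an integer, and $N\ge p^{\min\{\vartheta,m+1\}}/(1+p^{-m})$ in general. It is cleaner to bound $\lfloor p^\vartheta\rfloor\ge p^\vartheta/2$ for $\vartheta\ge0$ and $p^m-\epsilon_Q\le\frac{3}{2}p^m$. This gives
\[
 \frac{N}{p^m-\epsilon_Q}\ge\frac{p^{\min\{\vartheta,m+1\}-m}}{3},
\]
and taking $\frac12\log_p$ yields $\Phi_m(\vartheta)-\frac{\log3}{2\log p}$.

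For split forms, $p^m-\epsilon_Q=p^m-1$, so a sharper count is needed to reach $\Lambda\ge\Phi_m(\vartheta)$ exactly. I expect this to be the only delicate point. It suffices to check $N\ge p^{\min\{\vartheta,m+1\}-m}(p^m-1)$ when $\vartheta\ge m$. When $\vartheta\ge m+1$ this holds since $N=p^{m+1}$. When $m\le\vartheta<m+1$, note that $p^{\vartheta-m}(p^m-1)=p^\vartheta-p^{\vartheta-m}\le p^\vartheta-1<\lfloor p^\vartheta\rfloor+0$, because $p^{\vartheta-m}\ge1$ and $\lfloor y\rfloor>y-1$. So the floor costs nothing. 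For $\vartheta<m$, $\Phi_m=0$ and the bound is trivial.
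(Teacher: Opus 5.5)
Your proposal is correct and is essentially the paper's argument in dual form. The paper works on the restriction side with $g_A=\1_A\chi(B(\cdot,x_0))$ for $A\subseteq U^\perp$, and bounds $\|\widehat{g_A}\|$ from below using only the points $x_0+U$. Your extension-side pair $(\1_{x_0+W},\,F\subseteq W^\perp)$ is its adjoint and gives the ratio $N/(p^m-\epsilon_Q)$ exactly. The comparisons with $\Phi_m$, namely $\lfloor p^\vartheta\rfloor\ge p^\vartheta/2$ in general and $\lfloor p^\vartheta\rfloor\ge p^\vartheta-1\ge p^\vartheta(1-p^{-m})$ in the split case, are the same as in the paper.
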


\begin{proof}
Choose $x_0\in S_j$. Since $Q(x_0)\ne0$, the orthogonal complement
$x_0^\perp$ is a nondegenerate space of dimension $2m-1$. Its Witt
index is $m-1$, so it contains a totally isotropic subspace $U$ of
that dimension. Thus, $x_0+U\subseteq S_j$. This includes $m=1$,
when $U=\{0\}$. Put $H=U^{\perp}$; then $|H|=p^{m+1}$.
For any nonempty $A\subseteq H$, define
\[
 g_A(x)=\1_A(x)\chi(B(x,x_0)).
\]
For every $u\in U$, one has $\widehat{g_A}(x_0+u)=|A|$. Therefore,
\[
 \frac{\|\widehat{g_A}\|_{L^2(S_j,d\sigma_j)}^2}{\|g_A\|_2^2}
 \ge\frac{p^{m-1}|A|}{|S_j|}
 =\frac{|A|}{p^m-\epsilon_Q}.
\]
Take $|A|=\min\{\lfloor p^\vartheta\rfloor,p^{m+1}\}$ to prove
\eqref{eq:affine-plane-localized-lower}. Since
$\lfloor p^\vartheta\rfloor\ge p^\vartheta/2$ and
$p^m+1\le(4/3)p^m$, the fraction is at least
$p^{\min\{\vartheta,m+1\}-m}/3$. This gives the uniform error term.
For split forms and $m\le\vartheta\le m+1$, use
\[
 \lfloor p^\vartheta\rfloor
 \ge p^\vartheta-1
 \ge p^\vartheta(1-p^{-m}).
\]
For $\vartheta\ge m+1$, take $A=H$, and for $\vartheta\le m$ use
$\Lambda_{S_j}(\vartheta)\ge0$.
\end{proof}

Consequently, under Conjecture~\ref{conj_main}, the localized
exponent converges to $\Phi_m(\vartheta)$ as $p\to\infty$, uniformly
in $\vartheta$, the nonzero level, and the choice of nondegenerate
form. Both split and nonsplit forms attain this limit.

\paragraph{Relation to weighted restriction.}
The localized norm is a weighted $L^2$ extension norm with weight
$\1_F$. This connects its concentration question with weighted
restriction problems. In the Euclidean Mizohata--Takeuchi problem,
the proposed control involves the largest mass of the weight along
a line, or inside a tube in a localized formulation (see
\cite[Conjecture~1.2]{CairoZhangMT}). Here the proposed bound depends
only on the cardinality of $F$ and takes the supremum over all sets
of that size. For related comparisons of localized spherical and
paraboloid extension estimates, see
\cite{IosevichZhangWeighted}.

\subsection{Reduction to the critical support size} \label{subsec:critical-support-scale}

The prime-field conjecture is equivalent to its assertion at
$\vartheta=m$. The following proposition proves this
reduction and keeps track of the normalization.

\begin{proposition}\label{prop:endpoint-propagation}
Suppose that $\sup_{j\ne0}\Lambda_{S_j}(m)\le\lambda$, where
$\lambda\ge0$, and put
\[
 c_Q:=\frac12\log_p\frac{p^{2m}}{|S_j|}
 =\frac12-\frac12\log_p(1-\epsilon_Qp^{-m}).
\]
This value is independent of the nonzero level $j$. Then
\[
 \sup_{j\ne0}\Lambda_{S_j}(\vartheta)
 \le
 \begin{cases}
 \min\{\lambda,\vartheta/2,c_Q\},&0\le\vartheta\le m,\\[1mm]
 \min\{\lambda+(\vartheta-m)/2,c_Q\},&m\le\vartheta\le2m.
 \end{cases}
\]
\end{proposition}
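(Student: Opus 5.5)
The bound combines four elementary estimates, taking the minimum in each range. Throughout we use the restriction formulation~\eqref{eq:localized-horizontal-restriction}, with $\operatorname{supp}g\subseteq F$.

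\emph{The bound $c_Q$.} This is the Plancherel bound from Lemma~\ref{lem:global-auxiliary-bounds}:
\[
 \|\widehat g\|_{L^2(S_j,d\sigma_j)}^2\le\frac{p^{2m}}{|S_j|}\|g\|_2^2=p^{2c_Q}\|g\|_2^2,
\]
valid for every $F$. The formula for $c_Q$ comes from $|S_j|=p^{2m-1}-\epsilon_Qp^{m-1}$, which does not depend on $j$.

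\emph{The bound $\vartheta/2$.} Apply Cauchy--Schwarz pointwise: $|\widehat g(\xi)|^2\le|F|\,\|g\|_2^2$. Averaging over $S_j$ gives a squared norm of at most $|F|\le p^\vartheta$.

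\emph{The bound $\lambda$ for $\vartheta\le m$.} This is monotonicity of $\Lambda_{S_j}$ in $\vartheta$: any set with $|F|\le p^\vartheta$ also satisfies $|F|\le p^m$.

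\emph{The bound $\lambda+(\vartheta-m)/2$ for $m\le\vartheta\le2m$.} This step is the only one with content. Given $F$ with $|F|\le p^\vartheta$, partition it into $N\le\lceil p^{\vartheta-m}\rceil\le 2p^{\vartheta-m}$ pieces $F_i$, each of size at most $p^m$. Write $g=\sum_i g\1_{F_i}$. The triangle inequality and Cauchy--Schwarz give
\[
 \|\widehat g\|_{L^2(d\sigma_j)}\le p^{\lambda}\sum_i\|g\1_{F_i}\|_2\le p^{\lambda}N^{1/2}\|g\|_2.
\]
Here $N^{1/2}$ is at most $\sqrt2\,p^{(\vartheta-m)/2}$. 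It is exactly $p^{(\vartheta-m)/2}$ when $p^{\vartheta-m}$ is an integer, but in general the ceiling introduces a constant factor.

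\emph{The main obstacle} is that constant. Since the statement asserts an exact inequality between exponents, a factor $\sqrt2$ would change the exponent by $\log_p\sqrt2$. I would try three ways to remove it. First, use that $\Lambda$ is an infimum over finitely many sets and that $|F|$ is an integer: choose $N=\lceil |F|/p^m\rceil$, so that $N\le |F|/p^m+1$. Second, if that is still lossy, use a sharper chunking argument with pieces of equal size together with the $\vartheta/2$ bound on the last partial piece. Third, and more robustly, replace the triangle inequality by the $TT^*$ form. The quadratic form $\langle \widehat g\,d\sigma_j,\widehat g\rangle$ is bounded by splitting into blocks $(F_i,F_k)$, and each block is bounded by $p^{2\lambda}\|g_i\|\|g_k\|$ using the hypothesis on $F_i\cup F_k$ only if $|F_i\cup F_k|\le p^m$.

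If none of these gives the exact exponent, I would note that the loss is $O(1/\log p)$, which is harmless for Conjecture~\ref{conj_main}. The remaining ranges then follow by taking minima of the four bounds.
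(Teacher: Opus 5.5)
Your bounds $c_Q$ (Plancherel), $\vartheta/2$ (Cauchy--Schwarz) and $\lambda$ for $\vartheta\le m$ (monotonicity) are correct and are the ones the paper uses. The gap is in the one substantive step, $m\le\vartheta\le 2m$. A disjoint partition followed by the triangle inequality and Cauchy--Schwarz only gives $\|\widehat g\|_{L^2(S_j,d\sigma_j)}^2\le p^{2\lambda}\lceil |F|/p^m\rceil\,\|g\|_2^2$. The factor $\lceil|F|/p^m\rceil$ can be nearly twice $p^{\vartheta-m}$: take $|F|=p^\vartheta=p^m+1$. So you prove the bound only up to an extra $O(1/\log p)$, not the stated inequality, as your fallback remark concedes.

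None of your three repairs removes this loss.
\begin{itemize}
\item Choosing $N=\lceil |F|/p^m\rceil$ is what you already did.
\item Treating a last piece of size $r<p^m$ by the $\vartheta/2$ bound costs $\min\{p^{2\lambda},r\}$, where only $p^{2\lambda}r/p^m$ is allowed. This fails whenever $\lambda<m/2$. When $\lambda\ge m/2$ the target already follows from the $\vartheta/2$ bound alone.
\item In the $TT^*$ version, bounding each block by $p^{2\lambda}\|g_i\|_2\|g_k\|_2$ resums to $p^{2\lambda}(\sum_i\|g_i\|_2)^2\le p^{2\lambda}N\|g\|_2^2$, which is the same loss. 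That block bound needs only the hypothesis on $F_i$ and $F_k$ separately, so the union condition adds nothing. Shrinking the pieces so that $|F_i\cup F_k|\le p^m$ only doubles $N$.
\end{itemize}

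The missing idea is to use \emph{overlapping} subsets of size exactly $p^m$ (an integer, since $p$ is prime) that cover every point of $F$ equally often, and convexity instead of the triangle inequality. The paper takes $A\subseteq F$ uniformly at random with $|A|=p^m$ and sets $g_A=g\1_A$. Then $\mathbb E\, g_A=(p^m/|F|)g$ and $\mathbb E\|g_A\|_2^2=(p^m/|F|)\|g\|_2^2$, so
\[
 \Bigl(\frac{p^m}{|F|}\Bigr)^2\|\widehat g\|_{L^2(S_j,d\sigma_j)}^2
 =\|\mathbb E\,\widehat{g_A}\|_{L^2(S_j,d\sigma_j)}^2
 \le\mathbb E\|\widehat{g_A}\|_{L^2(S_j,d\sigma_j)}^2
 \le p^{2\lambda}\,\mathbb E\|g_A\|_2^2
 =p^{2\lambda}\frac{p^m}{|F|}\|g\|_2^2 .
\]
Hence $\|\widehat g\|^2_{L^2(S_j,d\sigma_j)}\le p^{2\lambda}|F|p^{-m}\|g\|_2^2\le p^{2\lambda+\vartheta-m}\|g\|_2^2$, with no ceiling and no constant.

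A deterministic variant also works. Order $F$ cyclically and take its $|F|$ windows of length $p^m$; these cover each point exactly $p^m$ times. Cauchy--Schwarz over the windows then gives the factor $|F|/p^m$. Your disjoint partition is the case where each point is covered once, which is exactly why its count cannot drop below $\lceil|F|/p^m\rceil$.
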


\begin{proof}
When $|F|\le p^m$, use the hypothesis and the trivial estimate
$\|\widehat g\|_{L^2(S_j,d\sigma_j)}\le\|g\|_1
\le |F|^{1/2}\|g\|_2$ for functions supported on $F$.
When $|F|>p^m$, choose $A\subseteq F$ uniformly among subsets of
size $p^m$, and write $g_A=g\1_A$. Then
\[
 \mathbb E g_A=\frac{p^m}{|F|}g,\qquad
 \mathbb E\|g_A\|_2^2=\frac{p^m}{|F|}\|g\|_2^2.
\]
Convexity of the squared Hilbert norm gives
\[
 \left(\frac{p^m}{|F|}\right)^2
 \|\widehat g\|_{L^2(S_j,d\sigma_j)}^2
 \le\mathbb E\|\widehat{g_A}\|_{L^2(S_j,d\sigma_j)}^2
 \le p^{2\lambda}\frac{p^m}{|F|}\|g\|_2^2.
\]
This proves the claimed bound for $|F|>p^m$. Plancherel supplies the
bound $c_Q$ without a support restriction.
\end{proof}

Since $c_Q=1/2+O(p^{-m}/\log p)$ uniformly over both Witt types,
Conjecture~\ref{conj_main} is equivalent to
\[
 \sup_Q\sup_{j\ne0}\Lambda_{S_j}(m)\longrightarrow0
 \qquad\text{as }p\to\infty\text{ through odd primes}.
\]
Indeed, the conjecture implies this limit because $\Phi_m(m)=0$.
Conversely, apply Proposition~\ref{prop:endpoint-propagation} with
$\lambda=\varepsilon/2$ and then take $p$ large enough that
$c_Q\le1/2+\varepsilon/2$. This proves the conjectured bound
uniformly throughout $0\le\vartheta\le2m$.

\subsection{From local restriction to global extension} \label{sec:restriction-extension-transfer}

For completeness, we record how a localized bound implies
global extension estimates away from the endpoint over prime fields. The
argument uses a dyadic decomposition and $\varepsilon$-removal;
it does not supply the endpoint proved earlier.

\begin{proposition}
\label{thm:restriction}
Fix $m\ge1$ and a function $\Lambda:[0,2m]\to[0,\infty)$.
Suppose that, for every $\delta>0$, there is $p_1(\delta)$ such
that, for every odd prime $p\ge p_1(\delta)$ and every
nondegenerate $Q$ on $\F_p^{2m}$,
\[
 \sup_{j\ne0}\Lambda_{S_j}(\vartheta)
 \le\Lambda(\vartheta)+\delta,
 \qquad 0\le\vartheta\le2m.
\]
Assume that
\[
 \varphi:=\sup_{0<\vartheta\le2m}
              \frac{\Lambda(\vartheta)}{\vartheta}<\frac12.
\]
Then, for every $r>2/(1-2\varphi)$, there is a constant $C$ such
that
\[
 R_{S_j}^*(2\to r)\le C
\]
for every odd prime $p$, every nondegenerate $Q$, and every $j\ne0$.
The constant may depend on $m$, $r$, $\varphi$, and the thresholds
$p_1(\delta)$, but not on $p$, $Q$, or $j$.
\end{proposition}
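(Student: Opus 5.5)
The proof reduces, through Lemma~\ref{lem:epsilon-removal}, to an $L^b$ extension bound with loss $p^\varepsilon$ for every $b>2/(1-2\varphi)$. We obtain that bound by dyadic pigeonholing on level sets of the extension.

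\emph{Normalization.} Fix $b$ with $2/(1-2\varphi)<b<r$. Fix $f$ with counting norm $\|f\|_2=1$ on $S_j$, so that $\|f\|_{L^2(d\sigma_j)}=|S_j|^{-1/2}$. Put $G=(f\,d\sigma_j)^\vee$.

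\emph{Trivial bounds.} By Cauchy--Schwarz, $\|G\|_\infty\le\|f\|_{L^1(d\sigma_j)}\le\|f\|_{L^2(d\sigma_j)}=:N$. By Plancherel, $\|G\|_2\lesssim p^{1/2}N$.

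\emph{Level sets and the localized hypothesis.} For dyadic $\alpha\le N$, let
\[
F_\alpha=\{x:\alpha<|G(x)|\le2\alpha\},\qquad |F_\alpha|=p^\vartheta .
\]
Applying the hypothesis on $\Lambda$ with $F=F_\alpha$ gives
\[
\alpha^2p^\vartheta\le\|G\|_{L^2(F_\alpha)}^2\le p^{2\Lambda(\vartheta)+2\delta}N^2\le p^{2\varphi\vartheta+2\delta}N^2 .
\]
Hence $p^{(1-2\varphi)\vartheta}\le p^{2\delta}(N/\alpha)^2$. This yields
\[
\|G\|_{L^b(F_\alpha)}^b\lesssim\alpha^bp^\vartheta\le\alpha^b\bigl(p^{2\delta}(N/\alpha)^2\bigr)^{1/(1-2\varphi)}.
\]
Since $b>2/(1-2\varphi)$, the exponent of $\alpha$ is positive. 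Summing over dyadic $\alpha\le N$ is therefore a convergent geometric series dominated by $\alpha\approx N$, giving $\lesssim p^{C\delta}N^b$.

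\emph{Small levels.} Levels with $\alpha<p^{-10m}N$ are handled crudely. Each such set contains at most $p^{2m}$ points, so together they contribute at most $p^{2m}(p^{-10m}N)^b\le N^b$. Only $O(\log p)$ dyadic levels remain, and each is treated as above. This gives
\[
\|G\|_{L^b}\lesssim p^{C\delta}(\log p)N
\]
for all $p\ge p_1(\delta)$.

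\emph{Trivial edge case.} The case $\vartheta=0$ (a single point) is immediate from $\|G\|_\infty\le N$. For the finitely many $p<p_1(\delta)$, we use the bound $p^{2m/b}$ from Cauchy--Schwarz.

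\emph{Conclusion.} Choosing $\delta$ small relative to $\varepsilon$, the bound holds with loss $C_\varepsilon p^{\varepsilon}$ uniformly for all primes in the family. Lemma~\ref{lem:epsilon-removal} with $\mathcal Q$ the odd primes then upgrades this to a uniform $L^r$ bound for every $r>b$, hence for every $r>2/(1-2\varphi)$.

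\emph{Main obstacle.} The delicate step is bookkeeping the sup defining $\varphi$ near $\vartheta\to0$. The bound $\Lambda(\vartheta)\le\varphi\vartheta$ is exactly what makes the one-line level-set estimate work at every scale. We must also check that $\delta$ enters only as $p^{O(\delta)}$, uniformly in $\vartheta$. This uniformity is guaranteed because the hypothesis is assumed for all $\vartheta$ simultaneously with the same $p_1(\delta)$.
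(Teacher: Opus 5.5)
Your proof is correct, but it takes a different route from the paper: you decompose the output of the extension operator, while the paper decomposes the input of the restriction operator and then dualizes.

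\textbf{The paper's route.} It first turns the hypothesis into $\|\widehat h\|_{L^2(S_j,d\sigma_j)}\le p^\delta|F|^\varphi\|h\|_2$ for $h$ supported on $F$, treating $|F|=1$ via $|\widehat h(\xi)|=\|h\|_2$. It then normalizes an arbitrary input with $\|g\|_t=1$ and splits it into dyadic pieces $g\1_{F_k}$. Using $|F_k|\le 2^{(k+1)t}$ and summing a geometric series gives a strong $L^t\to L^2(S_j,d\sigma_j)$ bound with loss $p^\delta$ for $t<2/(1+2\varphi)$. Duality yields $R^*_{S_j}(2\to t')\lesssim p^\delta$, and Lemma~\ref{lem:epsilon-removal} finishes.

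\textbf{Your route.} You work on the extension side. You take the level sets of $G=(f\,d\sigma_j)^\vee$ and feed each back into the extension form of $\Lambda_{S_j}$. This gives the distributional bound $|\{\alpha<|G|\le2\alpha\}|\le\bigl(p^{2\delta}(N/\alpha)^2\bigr)^{1/(1-2\varphi)}$, and hence the $L^b$ bound directly. There is no duality step and no auxiliary exponent $t$. Your treatment of $\vartheta=0$ through $\|G\|_\infty\le N$ plays the role of the paper's $|F|=1$ case.

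\textbf{Comparison.} Both arguments use the hypothesis in the same way, namely $\Lambda(\vartheta)\le\varphi\vartheta$ applied at $\vartheta=\log_p|F|$. They give the same range, and both need Lemma~\ref{lem:epsilon-removal} to remove the $p^{O(\delta)}$ loss. Your version is slightly more direct; the paper's gives a restriction estimate as an intermediate by-product.

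\textbf{A simplification.} Your ``small levels'' step and the resulting $\log p$ factor are unnecessary. The dyadic sum over all $\alpha<N$ already converges geometrically, as you note yourself, and only finitely many levels are nonempty. The $\log p$ is harmless anyway, since it is absorbed into $p^{\varepsilon}$. The Plancherel bound $\|G\|_2\lesssim p^{1/2}N$ is never used.
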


\begin{proof}
Fix $\delta>0$ and an odd prime $p\ge p_1(\delta)$. Let $h$ be
supported on a nonempty set $F\subseteq V$. For $|F|\ge2$, take
$\vartheta=\log_p|F|$ in the hypothesis. The definition of
$\Lambda_{S_j}$ gives
\begin{equation}\label{eq:sparse-restriction-from-Lambda}
 \|\widehat h\|_{L^2(S_j,d\sigma_j)}
 \le p^\delta|F|^\varphi\|h\|_{L^2(V)}.
\end{equation}
For $|F|=1$, the same inequality follows because
$|\widehat h(\xi)|=\|h\|_2$ for every $\xi$.

Let $1<t<2/(1+2\varphi)$. We claim that
\begin{equation}\label{eq:prime-to-two-restriction}
 \|\widehat g\|_{L^2(S_j,d\sigma_j)}
 \le C_{m,t,\varphi,\delta,p_1}\,p^\delta\|g\|_{L^t(V)}
\end{equation}
for every $g$, uniformly over odd primes, nondegenerate forms, and
nonzero levels. First suppose that $p\ge p_1(\delta)$. By
homogeneity, normalize $\|g\|_t=1$, so that $|g(x)|\le1$. Write
\[
 g=\sum_{k\ge0}g_k,\qquad
 g_k=g\1_{F_k},\qquad
 F_k=\{x:2^{-k-1}<|g(x)|\le2^{-k}\}.
\]
Since $|F_k|\le2^{(k+1)t}$, \eqref{eq:sparse-restriction-from-Lambda}
implies, for every nonzero $g_k$,
\[
 \|\widehat{g_k}\|_{L^2(S_j,d\sigma_j)} \le p^\delta2^{-k}|F_k|^{\varphi+1/2} \le p^\delta2^{t(\varphi+1/2)}
       2^{-k(1-t(\varphi+1/2))}.
\]
The exponent $1-t(\varphi+1/2)$ is positive. The triangle
inequality and the geometric series prove
\eqref{eq:prime-to-two-restriction} for these primes. For the
remaining primes, use
\[
 \|\widehat g\|_{L^2(S_j,d\sigma_j)}
 \le\|g\|_1
 \le p_1(\delta)^{2m(1-1/t)}\|g\|_t.
\]
This proves the claim for all odd primes.

Now fix $2/(1-2\varphi)<r<\infty$, and choose
$r'<t<2/(1+2\varphi)$, where $r'$ is conjugate to $r$.
By duality, \eqref{eq:prime-to-two-restriction} gives
\[
 R_{S_j}^*(2\to t')\lesssim_{m,t,\varphi,\delta,p_1}p^\delta
\]
for every $\delta>0$. The sphere kernel estimate in
Lemma~\ref{lem:kernel-linfty} gives
\[
 |(d\sigma_j)^\vee(x)|\lesssim p^{-m+1/2}\qquad(x\ne0),
\]
uniformly in $Q$ and $j$. Since $r'<t$ implies $r>t'$, apply
Lemma~\ref{lem:epsilon-removal} to odd prime fields to obtain the
claimed bound at $r$. The case $r=\infty$ follows directly from
Cauchy--Schwarz.
\end{proof}

For the conjectured function $\Phi_m$,
\[
 \sup_{0<\vartheta\le2m}\frac{\Phi_m(\vartheta)}{\vartheta}
 =\frac1{2(m+1)}.
\]
Indeed, the ratio increases from zero to this value on $[m,m+1]$
and decreases thereafter. Conjecture~\ref{conj_main} would therefore
imply $R_{S_j}^*(2\to r)\lesssim_{m,r}1$ for every
$r>2(m+1)/m$ over prime fields. Independently of that conjecture,
Theorem~\ref{thm:main-endpoint-three} proves the strong endpoint
and the full range $r\ge2(m+1)/m$ over all odd finite fields.

\subsection{Conditional pinned-distance estimates}

The distance results in Section~\ref{subsec:optimized-pinned-distances} are unconditional.
The next consequence uses Conjecture~\ref{conj_main} and is stated
only over prime fields.

\begin{theorem}
\label{thm:prime-local-pinned}
Assume Conjecture~\ref{conj_main}. Fix $m\ge1$ and
$\varepsilon>0$. For an odd prime $p$, let $Q$ be nondegenerate on
$\F_p^{2m}$ and let $E,F\subseteq\F_p^{2m}$ be nonempty.
Suppose that, for some $0\le\vartheta\le2m$,
\[
 |F|\le p^\vartheta,\qquad
 |E||F|\ge p^{2m+2\Phi_m(\vartheta)+\varepsilon}.
\]
Then, as $p\to\infty$, there is $F'\subseteq F$ with
$|F'|=(1-o(1))|F|$ such that
\[
 |\Delta_y(E)\setminus\{0\}|=(1-o(1))p
 \qquad\text{uniformly for }y\in F'.
\]
The error terms are uniform over $Q$, $E$, $F$, and $\vartheta$.
\end{theorem}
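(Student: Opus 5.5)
The plan is to run the proof of Corollary~\ref{cor_distance_2} again, with the global endpoint replaced by the conjectured localized bound. The energy start \eqref{eq:optimized-pinned-energy-start} reduces everything to controlling
\[
 q^{-2m}\sum_{j\ne0}\|\Psi_j\|_{L^2(F)}^2 .
\]
For each $j\ne0$ put $g_j=\widehat{\1_E}\1_{S_j}$, so that
\[
 \Psi_j=|S_j|\,(g_j\,d\sigma_j)^\vee
 \qquad\text{and}\qquad
 \|g_j\|_{L^2(S_j,d\sigma_j)}^2=|S_j|^{-1}\|g_j\|_2^2 .
\]
Since $|F|\le p^\vartheta$, the definition of $\Lambda_{S_j}(\vartheta)$ in \eqref{eq:localized-horizontal-extension} gives
\[
 \|\Psi_j\|_{L^2(F)}^2\le p^{2\Lambda_{S_j}(\vartheta)}|S_j|\,\|g_j\|_2^2 .
\]
Conjecture~\ref{conj_main}, applied with $\varepsilon/4$ in place of its $\varepsilon$, yields $\Lambda_{S_j}(\vartheta)\le\Phi_m(\vartheta)+\varepsilon/4$ once $p\ge p_0(m,\varepsilon/4)$. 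This bound holds uniformly in $Q$, $j$ and $\vartheta$.

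Next I sum over $j$ and use Plancherel in the form \eqref{eq:direct-pinned-shell-mass}, namely $\sum_j\|g_j\|_2^2\le p^{2m}|E|$, together with $|S_j|\lesssim p^{2m-1}$. This gives
\[
 p^{-2m}\sum_{j\ne0}\|\Psi_j\|_{L^2(F)}^2\lesssim p^{2m-1}|E|\,p^{2\Phi_m(\vartheta)+\varepsilon/2}.
\]
Dividing by $|E|^2|F|/p$ and inserting into \eqref{eq:optimized-pinned-energy-start} leads to
\[
 \frac{\mathcal I^\times(E;F)}{|E|^2|F|/p}\le 1+C_m\frac{p^{2m+2\Phi_m(\vartheta)+\varepsilon/2}}{|E||F|}+\frac3p .
\]
The hypothesis makes the middle term at most $C_mp^{-\varepsilon/2}$. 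This verifies \eqref{eq:almost-every-pin-IEF-hypothesis} with $\varepsilon_p=C_mp^{-\varepsilon/2}+3/p\to0$, and the rate is uniform in $Q$, $E$, $F$ and $\vartheta$.

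For zero distances, $\Phi_m\ge0$ gives $|E||F|\ge p^{2m+\varepsilon}$. Lemma~\ref{lem:zero-distance-incidence} then gives
\[
 \frac{\nu_{E,F}(0)}{|E||F|}\le \frac1p+2p^m(|E||F|)^{-1/2}\le \frac1p+2p^{-\varepsilon/2}=\rho_p\to0 .
\]
Lemma~\ref{lem:almost-every-pin-energy-criterion} now gives the set $F'$ and the conclusion. The small primes below $p_0$ do not affect the asymptotic statement.

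There is no real obstacle, since the conjecture supplies exactly the localized $L^2(F)$ bound that the argument needs. The one point to check is uniformity. The supremum over $j$ in the conjecture must be used with a threshold $p_0$ independent of $\vartheta$, which the conjecture provides. The $o(1)$ terms in Lemma~\ref{lem:almost-every-pin-energy-criterion} depend only on $\varepsilon_p$ and $\rho_p$, which are explicit functions of $p$.
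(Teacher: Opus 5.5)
Your proposal is correct and follows the paper's proof step by step. The shared steps are: apply the conjecture with error $\varepsilon/4$; use $\Psi_j=|S_j|(\widehat{\1_E}\,d\sigma_j)^\vee$ on $L^2(F)$ together with Plancherel to get the $p^{2m-1+2\Phi_m(\vartheta)+\varepsilon/2}|E|$ shell bound; insert it into \eqref{eq:optimized-pinned-energy-start}; control zero distances via Lemma~\ref{lem:zero-distance-incidence} using $\Phi_m\ge0$; and conclude with Lemma~\ref{lem:almost-every-pin-energy-criterion}.
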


\begin{proof}
Apply Conjecture~\ref{conj_main} with error $\varepsilon/4$.
For all sufficiently large $p$, the definition of the localized
norm gives, for every $j\ne0$,
\[
 \|(\widehat{\1_E}\,d\sigma_j)^\vee\|_{L^2(F)}^2
 \le p^{2\Phi_m(\vartheta)+\varepsilon/2}
       \|\widehat{\1_E}\|_{L^2(S_j,d\sigma_j)}^2.
\]
Recall from the preceding section that
$\Psi_j(y)=\sum_{\xi\in S_j}\widehat{\1_E}(\xi)\chi(B(y,\xi))$.
Since $\Psi_j=|S_j|(\widehat{\1_E}\,d\sigma_j)^\vee$ and
$|S_j|$ is independent of $j\ne0$, summing the last bound and
applying Plancherel yield
\[
 \begin{aligned}
 &p^{-2m}\sum_{j\ne0}\sum_{y\in F}|\Psi_j(y)|^2
 \le p^{-2m}|S_j|^2p^{2\Phi_m(\vartheta)+\varepsilon/2}
       \sum_{j\ne0}\|\widehat{\1_E}\|_{L^2(S_j,d\sigma_j)}^2\\
 &\qquad\qquad  \le |S_j|p^{2\Phi_m(\vartheta)+\varepsilon/2}|E| \lesssim p^{2m-1+2\Phi_m(\vartheta)+\varepsilon/2}|E|.
 \end{aligned}
\]
Insert this into \eqref{eq:optimized-pinned-energy-start}. The
assumed size condition gives
\[
 \frac{\mathcal I^\times(E;F)}{|E|^2|F|/p}
 \le1+C_m\frac{p^{2m+2\Phi_m(\vartheta)+\varepsilon/2}}{|E||F|}
       +\frac3p
 \le1+C_mp^{-\varepsilon/2}+\frac3p.
\]
Moreover, $\Phi_m\ge0$, so $|E||F|\ge p^{2m+\varepsilon}$.
Lemma~\ref{lem:zero-distance-incidence} implies
\[
 \frac{\nu_{E,F}(0)}{|E||F|}
 \le p^{-1}+2p^{-\varepsilon/2}=o(1).
\]
Lemma~\ref{lem:almost-every-pin-energy-criterion} now gives the
conclusion.
\end{proof}

\begin{corollary}
\label{cor:prime-distance-conjecture}
Assume Conjecture~\ref{conj_main}. For every $m\ge1$ and
$\varepsilon>0$, every set $E\subseteq\F_p^{2m}$ with
$|E|\ge p^{m+\varepsilon}$ has a subset $E'\subseteq E$ such that
\[
 |E'|=(1-o(1))|E|,\qquad
 |\Delta_y(E)\setminus\{0\}|=(1-o(1))p
 \quad(y\in E'),
\]
as $p\to\infty$ through odd primes, uniformly over nondegenerate
$Q$ of either Witt type.
\end{corollary}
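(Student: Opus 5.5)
This follows directly from Theorem~\ref{thm:prime-local-pinned} with $F=E$. The task is to choose $\vartheta$ so that the size hypothesis $|E|^2\ge p^{2m+2\Phi_m(\vartheta)+\varepsilon}$ holds whenever $|E|\ge p^{m+\varepsilon}$.

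The natural choice is $\vartheta=\log_p|E|$, which makes $|F|=|E|\le p^\vartheta$ automatic. I would then check the required inequality $2\log_p|E|\ge 2m+2\Phi_m(\vartheta)+\varepsilon$ by splitting into the three ranges in the definition \eqref{eq:prime-field-localized-profile}.
\begin{itemize}
\item If $m+\varepsilon\le\vartheta\le m$, the range is empty unless we interpret it as $\vartheta\in[m+\varepsilon,m+1]$.
\item On $[m,m+1]$, $\Phi_m(\vartheta)=(\vartheta-m)/2$. The requirement becomes $2\vartheta\ge 2m+(\vartheta-m)+\varepsilon$, i.e.\ $\vartheta\ge m+\varepsilon$, which is exactly the hypothesis.
\item On $[m+1,2m]$, $\Phi_m=1/2$. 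The requirement is $2\vartheta\ge 2m+1+\varepsilon$, which holds once $\vartheta\ge m+1\ge m+(1+\varepsilon)/2$, assuming $\varepsilon\le1$.
\end{itemize}
We may reduce to $\varepsilon\le1$, since the hypothesis only strengthens as $\varepsilon$ grows. This is also harmless because $\vartheta\le 2m$ always holds. In every range, then, the hypothesis $|E|\ge p^{m+\varepsilon}$ suffices.

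The one subtlety is uniformity. Theorem~\ref{thm:prime-local-pinned} states its error terms uniformly over $\vartheta$, $Q$, $E$ and $F$, so letting $\vartheta$ depend on $|E|$ is harmless. Its conclusion, with $F'=E'\subseteq E$, is precisely the claim. No step is a genuine obstacle; the only care needed is the piecewise check and the reduction to $\varepsilon\le1$.
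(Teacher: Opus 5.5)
Your proposal is correct and is the paper's argument: take $F=E$ and $\vartheta=\log_p|E|=m+t$ in Theorem~\ref{thm:prime-local-pinned}. The paper skips your case split and the reduction to $\varepsilon\le1$ by observing that $2\Phi_m(m+t)=\min\{t,1\}\le t$, so $2m+2t\ge 2m+2\Phi_m(m+t)+\varepsilon$ follows directly from $t\ge\varepsilon$.
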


\begin{proof}
Write $|E|=p^{m+t}$, where $t\ge\varepsilon$. Since
$2\Phi_m(m+t)=\min\{t,1\}\le t$, one has
\[
 |E|^2=p^{2m+2t}
 \ge p^{2m+2\Phi_m(m+t)+\varepsilon}.
\]
Apply Theorem~\ref{thm:prime-local-pinned} with $F=E$ and
$\vartheta=m+t$.
\end{proof}

Thus, the conjectured local estimate would lower the prime-field
threshold for almost every pin from $m+1/3$ to $m+\varepsilon$,
including the threshold $1+\varepsilon$ in dimension two. This
conclusion remains conditional; the global endpoint and the
direct distance estimates proved above do not supply the missing
local bound at support size $p^m$.
\section*{Acknowledgements}
We thank Doowon Koh for helpful discussions and for sharing his ideas.
The second author also thanks Danqing He and Shanlin Huang for useful discussions.

\end{document}